\documentclass[a4paper, 10pt, twoside, notitlepage]{amsart}

\usepackage{amsmath,amscd}
\usepackage{amssymb}
\usepackage{amsthm}
\usepackage{comment}
\usepackage{graphicx, xcolor}

\usepackage{mathrsfs}
\usepackage[ocgcolorlinks, linkcolor=blue]{hyperref}

\usepackage{bm}
\usepackage{bbm}
\usepackage{url}

\usepackage[utf8]{inputenc}
\usepackage{mathtools}
\usepackage{esint}
\usepackage{tikz}
\usepackage{dsfont}
\usepackage{relsize}
\usepackage[shortlabels]{enumitem}
\usepackage{lineno}
\usepackage{verbatim}
\numberwithin{equation}{section}

\allowdisplaybreaks

\mathtoolsset{showonlyrefs}

\graphicspath{{images/}}

\newtheorem{theorem}{Theorem}[section]
\newtheorem{proposition}[theorem]{Proposition}
\newtheorem{lemma}[theorem]{Lemma}
\newtheorem{corollary}[theorem]{Corollary}
\newtheorem{example}[theorem]{Example}

\theoremstyle{definition}
\newtheorem{definition}[theorem]{Definition}

\theoremstyle{remark}
\newtheorem{remark}[theorem]{Remark}

\title[Recovery of rough sources in the inverse MA problem]
{Recovery of Rough Sources in the Inverse Monge--Amp\`ere Problem}

\author[G. Uhlmann]{Gunther Uhlmann}
\address{Department of Mathematics, University of Washington, Seattle, WA 98195-4350, USA}
\email{gunther@math.washington.edu}

\author[P. Zimmermann]{Philipp Zimmermann}
\address{Institute of Mathematics, École Polytechnique Fédérale de Lausanne (EPFL), Lausanne, Switzerland}
\email{philipp.zimmermann@epfl.ch}

\date{\today}

\subjclass[2020]{35R30, 35J96, 35D30, 44A12}
\keywords{Monge--Amp\`ere equation, inverse source problem, Alexandrov solution,
gradient-weighted Monge--Amp\`ere equation, global Cauchy data, X-ray transform}

\newcommand{\R}{{\mathbb R}}

\newcommand{\N}{{\mathbb N}}

\newcommand{\eps}{\varepsilon}

\newcommand{\cL}{\mathcal L}
\newcommand{\cC}{\mathcal C}

\DeclareMathOperator*{\argmax}{arg\,max}

\DeclareMathOperator{\Div}{div} 
\DeclareMathOperator{\dist}{dist} 

\newcommand{\weakstar}{\overset{\ast}{\rightharpoonup}}

\newcommand{\res}{\mathbin{\vrule height 1.6ex depth 0pt width
0.13ex\vrule height 0.13ex depth 0pt width 1.3ex}}

\begin{document}

\begin{abstract}

We prove that every nonnegative $L^1$ source in the Monge--Amp\`ere equation
\[
   \det D^2u=f\quad\text{in }\Omega,
\]
on a bounded uniformly convex $C^{1,1}$ domain $\Omega\subset\mathbb R^n$, $n\geq2$, is uniquely determined by generalized boundary measurements of Alexandrov solutions. The measurement is the extremal boundary slope, which extends the classical outward normal derivative and may take the value $+\infty$. For each direction $\omega\in\mathbb S^{n-1}$, it suffices to know these slopes for cylindrical Dirichlet data $\frac{t_k^\omega}{2}|P_\omega x|^2$ along any sequence $t_k^\omega\to\infty$, where $P_\omega$ projects onto $\omega^\perp$. The boundary value and extremal slope determine the interior subgradient image up to a Lebesgue null set. An exact change of variables formula then converts equality of these images into integral identities for the source, and a large-amplitude limit recovers its Euclidean X-ray transform. This yields uniqueness without a positive lower bound on the source. For nonnegative $L^\infty$ sources, the extremal slopes are bounded and agree with the interior normal derivatives, giving a Dirichlet-to-interior-Neumann formulation. We also extend the inverse argument to the gradient-weighted equation
\[
   \vartheta(\nabla u)\det D^2u=f,
\]
where the known weight $\vartheta$ is locally integrable, Borel measurable, and bounded below by a positive constant. We prove existence of the corresponding Alexandrov solutions and show that the same cylindrical boundary-slope data uniquely determine every nonnegative $L^1$ source.

	\end{abstract}
	\maketitle

	\tableofcontents

\section{Introduction}
\label{sec:intro}

We study the recovery of an unknown nonnegative source $f$ in the Monge--Amp\`ere problem
\begin{equation}
\label{eq:dirichlet}
\begin{cases}
\det D^2u=f & \text{in }\Omega,\\
u=\varphi & \text{on }\partial\Omega,
\end{cases}
\end{equation}
from boundary measurements obtained by varying $\varphi$. Our aim is to establish uniqueness for sources $f\in L^1(\Omega)$, allowing both roughness and degeneracy. In this regime, the equation is understood in the Alexandrov sense, and the classical normal derivative is replaced by an extremal boundary slope. We prove that these generalized measurements determine $f$ in every dimension $n\geq 2$. Moreover, only cylindrical boundary values along an unbounded sequence of amplitudes in each direction are needed. We further show that the inverse argument is robust under the introduction of a known positive gradient weight: it also applies to
\begin{equation}
\label{eq:intro-gradient-weighted-MA}
\begin{cases}
\vartheta(\nabla u)\det D^2u=f & \text{in }\Omega,\\
u=\varphi & \text{on }\partial\Omega,
\end{cases}
\end{equation}
provided that $\vartheta$ is a locally integrable Borel function bounded below by a positive constant. We establish the required forward existence result in Section~\ref{sec:weighted-MA-extension}.

For linear elliptic equations, varying the Dirichlet value generally does not remove the nonuniqueness of an inverse source problem; see \cite{isakov1990inverse}. For example, if $u$ solves $-\Delta u=f$ and $v\in C_c^\infty(\Omega)$, then $u+v$ solves the equation with source $f-\Delta v$ and has the same Dirichlet and Neumann traces as $u$. This ambiguity persists for every boundary excitation. For the Monge--Amp\`ere equation, however, the dependence of the solution on the boundary value is nonlinear. Different boundary values produce different gradient geometries through which the source can be probed. Recent works exploit this mechanism to establish source uniqueness in the smooth setting \cite{LiimatainenLin,CarsteaGhosh,UhlmannZimmermann}. Here we develop a convex-analytic formulation that extends the large-amplitude approach to nonnegative $L^1$ sources and, through a weighted subgradient-image measure, to the gradient-dependent equation \eqref{eq:intro-gradient-weighted-MA}.

\subsection{Alexandrov solutions and boundary measurements}

For the inverse problem, we assume that $\Omega\subset\mathbb R^n$, $n\geq2$, is a bounded uniformly convex domain with $C^{1,1}$ boundary. For a smooth positive source and smooth boundary data on a smooth uniformly convex domain, classical Dirichlet theory gives a unique smooth convex solution; see \cite{CNS}. Under the weaker assumption $f\in L^1(\Omega)$, $f\geq0,$ we instead use Alexandrov solutions \cite{AL4}. For a convex function $u\colon\Omega\to\mathbb R$, its Monge--Amp\`ere measure is defined by
\[
\mu_u(A)\vcentcolon=\mathcal L^n(\partial u(A)),\qquad A\subset\Omega,
\]
where $\partial u(A)$ is the union of the subdifferentials at points of $A$. A convex $u\in C(\overline\Omega)$ solves \eqref{eq:dirichlet} in the Alexandrov sense if $\mu_u=f\,dx$ as Borel measures on $\Omega$ and $u|_{\partial\Omega}=\varphi$. This formulation requires neither pointwise second derivatives nor strict positivity of $f$. The Dirichlet problem has a unique solution for every continuous boundary value; see Section~\ref{sec:Alexandrov-sol}.

To define the boundary measurement, let $\varphi\in C^1(\partial\Omega)$ and extend $u$ to the proper lower semicontinuous convex function $\bar u$ that equals $u$ on $\overline\Omega$ and $+\infty$ outside $\overline\Omega$. Write $\bar\partial u(x)\vcentcolon=\partial\bar u(x)$. At a boundary point $x$, the tangential component of any supporting vector is fixed by $\varphi$, while its normal component ranges over a ray. We therefore define the \emph{extremal boundary slope} by
\[
q_u(x)\vcentcolon=
\inf\{a\in\mathbb R:\nabla_\tau\varphi(x)+a\nu(x)\in\bar\partial u(x)\},
\qquad \inf\varnothing=+\infty,
\]
where $\nu$ is the outward unit normal and $\nabla_\tau$ is the tangential gradient; see Proposition~\ref{def:slope}. If $q_u(x)<\infty$, then
\[
\bar\partial u(x)=\nabla_\tau\varphi(x)+[q_u(x),\infty)\nu(x).
\]
If $q_u(x)=+\infty$, the boundary subdifferential is empty. These results are established in Lemma~\ref{lem:ray}. Moreover, the function $q_u$ is Borel measurable and agrees with $\partial_\nu u$ whenever $u\in C^1(\overline\Omega)$; see Section~\ref{sec:one-side-slope}.

For a nonnegative $f\in L^1(\Omega)$, the \emph{generalized global Cauchy data set} is
\begin{equation}
\label{eq:Global-Cauchy}
\mathcal C_f^{\mathrm{glob}}
\vcentcolon=
\{(\varphi,q_{u_{f,\varphi}}):\varphi\in C^1(\partial\Omega)\},
\end{equation}
where $u_{f,\varphi}$ is the unique Alexandrov solution of \eqref{eq:dirichlet}; see Section~\ref{subsec:Def-Cauchy-data}. The measurements are pointwise boundary functions, possibly taking the value $+\infty$. The term \emph{global} refers to the range of boundary values: they are not restricted to a fixed neighborhood of a background datum. Our uniqueness proof uses only a particular family of smooth boundary values from this data set.

\subsection{Main results}
\label{sec:main-results}

Our first main result shows that the nonnegative $L^1$ source in the Monge--Ampère problem \eqref{eq:dirichlet} is uniquely determined by the extremal boundary slopes $q_u$ corresponding to cylindrical boundary data of arbitrarily large amplitude.

\begin{definition}
For each unit vector $\omega\in\mathbb S^{n-1}$, define $\Phi_\omega\colon\mathbb R^n\to\mathbb R$ by
\begin{equation}
\label{eq:quadratic-tests}
\Phi_\omega(x)\vcentcolon=\frac12|P_\omega x|^2,
\quad\text{with}\quad
P_\omega y\vcentcolon=y-(\omega\cdot y)\omega.
\end{equation}
We call $t\Phi_\omega|_{\partial\Omega}$, $t>0$,
\emph{cylindrical boundary data}.
\end{definition}

The following theorem shows that, for each direction $\omega$, it suffices to know the boundary slopes for a sequence of cylindrical boundary data $t_k^\omega\Phi_\omega|_{\partial\Omega}$ with $t_k^\omega\to\infty$.

\begin{theorem}[Cylindrical-data uniqueness]
\label{thm:cylindrical}
    Let $\Omega\subset\R^n$, $n\geq 2$, be a bounded uniformly convex $C^{1,1}$ domain and let $f_1,f_2\in L^1(\Omega)$ be nonnegative. For $j=1,2$, $\omega\in \mathbb S^{n-1}$, and $t>0$, let $u_{j,t}^{\omega}\in C(\overline{\Omega})$ be the unique Alexandrov solution of
\begin{equation}
\label{eq:MA-main-thm}
\begin{cases}
    \det D^2u=f_j\quad&\text{in }\Omega,\\
     u=t\Phi_\omega
 \quad&\text{on }\partial\Omega,
\end{cases}
\end{equation}
provided by Proposition~\ref{prop:forward}. Suppose that, for every $\omega\in \mathbb{S}^{n-1}$, there is a sequence $(t_k^\omega)_{k\in\mathbb N}\subset(0,\infty)$ with $t_k^\omega\to\infty$ such that the boundary slopes corresponding to the cylindrical data $t_k^\omega \Phi_\omega$ agree:
\begin{equation}
\label{eq:countably-equal-slopes}
 q_{u_{1,t_k^\omega}^{\omega}}  =q_{u_{2,t_k^\omega}^{\omega}}
 \quad\text{on }\partial\Omega,
 \qquad\text{for every }k\in\N;
\end{equation}
see Proposition~\ref{def:slope}. Then $f_1=f_2$ almost everywhere in $\Omega$.
\end{theorem}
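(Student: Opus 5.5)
The plan follows the outline announced in the abstract and has three stages. First, the boundary data determine the subgradient image. Second, an exact change of variables turns equality of images into integral identities for the sources. Third, a large-amplitude limit recovers the X-ray transform.

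\emph{Step 1: the boundary data determine the subgradient image.} Fix $\omega$ and $t=t_k^\omega$, and write $u_j=u_{j,t}^\omega$. I claim that the set $\partial u_j(\Omega)$ is determined, up to a Lebesgue null set, by $\varphi=t\Phi_\omega|_{\partial\Omega}$ together with $q_{u_j}$. Take $p\in\mathbb R^n$. The affine function $x\mapsto u_j(x)-p\cdot x$ attains its minimum on $\overline\Omega$.
\begin{itemize}
\item[(a)] If the minimum is attained in $\Omega$, then $p\in\partial u_j(\Omega)$.
\item[(b)] Otherwise $p\in\bar\partial u_j(x)$ for some boundary point $x$. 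By Lemma~\ref{lem:ray}, this means $p\in \nabla_\tau\varphi(x)+[q_{u_j}(x),\infty)\nu(x)$.
\end{itemize}
Now consider the set
\[
B_j\vcentcolon=\bigcup_{x\in\partial\Omega}\bigl(\nabla_\tau\varphi(x)+(q_{u_j}(x),\infty)\nu(x)\bigr).
\]
A point $p$ in $B_j$ cannot attain an interior minimum: moving inward from $x$ would strictly decrease $u_j-p\cdot x$, by the extremality of $q_{u_j}$. Hence $\mathbb R^n\setminus\partial u_j(\Omega)$ agrees with $B_j$ up to the endpoint set $\{\nabla_\tau\varphi(x)+q_{u_j}(x)\nu(x)\}$. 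That endpoint set is the image of the $(n-1)$-dimensional boundary. I expect to argue it is null, or, more robustly, to use the standard fact that points $p$ supporting at two distinct points form a null set. So equal slopes give $\mathcal L^n(\partial u_1(\Omega)\,\triangle\,\partial u_2(\Omega))=0$.

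\emph{Step 2: integral identities.} For a bounded Borel function $g\ge0$ on $\mathbb R^n$, the exact change of variables for Alexandrov solutions reads
\[
\int_{\partial u(\Omega)}g(p)\,dp=\int_\Omega g(\nabla u(x))f(x)\,dx.
\]
It holds because $\nabla u$ exists a.e., and $p$ belonging to subdifferentials at two points is a null set. Taking $g=1$ already gives $\int f_1=\int f_2$. To get more identities I would use additional tests. The slope data determine all of $\bar\partial u_j$ on the boundary. Combined with the Legendre transform $u_j^*(p)=\sup_{\overline\Omega}(p\cdot x-u_j)$, which is determined on the complement of the interior image by the boundary values, this gives $\nabla u_j^*$ off the interior image. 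The interior image itself is common to both solutions. Then, for $g(p)=h(\nabla u^*(p))$, I get
\[
\int_\Omega h(x)f_1\,dx=\int_\Omega h(x)f_2\,dx
\]
if $\nabla u_1^*=\nabla u_2^*$ on the image. That equality is not given directly, so instead I would rely on scaling in $t$.

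\emph{Step 3: large-amplitude limit.} Set $v_j=u_j/t$. This solves $\det D^2v_j=f_j/t^n$ with boundary value $\Phi_\omega$. As $t\to\infty$, $v_j$ converges uniformly to the solution with zero source, namely the convex envelope of $\Phi_\omega$ restricted to the boundary. That envelope is affine along lines parallel to $\omega$, and equals $\Phi_\omega$ there since $\Phi_\omega$ is independent of $\omega\cdot x$. The rescaled image $\partial v_j(\Omega)=t^{-1}\partial u_j(\Omega)$ carries the measure $f_j/t^n$. Step 2 with $g(p)=\psi(P_\omega p)$, pulled back, gives
\[
\int_\Omega \psi(P_\omega\nabla v_{1})f_1=\int_\Omega\psi(P_\omega\nabla v_{2})f_2
\]
for every bounded $\psi$ on $\omega^\perp$. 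Since $P_\omega\nabla v_j\to P_\omega x$ in measure (by convergence of gradients of convex functions a.e.) and $f_j\in L^1$, dominated convergence yields
\[
\int_\Omega\psi(P_\omega x)f_1=\int_\Omega\psi(P_\omega x)f_2.
\]
This says exactly that the X-ray transforms of $f_1$ and $f_2$ along direction $\omega$ coincide a.e. on $\omega^\perp$. Equal X-ray transforms in all directions means equal Fourier transforms on every hyperplane through the origin, hence $f_1=f_2$ a.e. by Fourier slice.

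\emph{Main obstacle.} The hardest step is justifying the convergence of $P_\omega\nabla v_j$ to $P_\omega x$. Near $\partial\Omega$ the gradients may blow up, since $q$ can be $+\infty$. Uniform convergence of convex functions gives only a.e. gradient convergence in the interior, at points of differentiability of the limit. The limit is differentiable everywhere in $\Omega$ here, and $\psi$ is bounded, so this should suffice.
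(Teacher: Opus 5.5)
Your proposal is correct, and its architecture is the paper's. Your open-ray set $B_j$ is an equivalent reformulation of the characterization of $\R^n\setminus\mathrm D_u$ via boundary Legendre maximizers in Proposition~\ref{prop:D-characterization}. Step~2 is Proposition~\ref{prop:weighted}, and Step~3 is Lemma~\ref{lem:limit} followed by the same dominated-convergence and Fourier-slice argument.

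The only genuine divergence is how you show that the endpoint set $\mathrm S=\{\nabla_\tau\varphi(x)+q(x)\nu(x):q(x)<\infty\}$ is negligible. Your first suggestion does not work as stated. The map $x\mapsto\nabla_\tau\varphi(x)+q_u(x)\nu(x)$ is merely Borel and possibly unbounded, so being parametrized by an $(n-1)$-dimensional set says nothing about the measure of its image. The paper (Lemma~\ref{lem:S-null}) instead realizes $\mathrm S_u$ as the image of the graph of $q_u$ under the Lipschitz map $(x,a)\mapsto\nabla_\tau\varphi(x)+a\nu(x)$. That graph is $\mathcal H^n$-null by Fubini precisely because $q_u$ is Borel (Lemma~\ref{lem:measurable}), and the Lipschitz bound is where $C^{1,1}$ regularity of $\partial\Omega$ and $\varphi$ is used.

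Your second suggestion does work and is lighter, once you note that only part of $\mathrm S$ matters. Since $B\cap\mathrm D_{u_j}=\varnothing$ and $\R^n\setminus\mathrm D_{u_j}\subset B\cup\mathrm S$ for both $j$, you get $\mathrm D_{u_1}\setminus\mathrm D_{u_2}\subset\mathrm S\cap\mathrm D_{u_1}$. Each such $p$ is a subgradient of $\bar u_1$ both at a boundary point and at an interior point. It is therefore a nondifferentiability point of the Lipschitz conjugate $p\mapsto\max_{\overline\Omega}(p\cdot x-u_1(x))$ (Proposition~\ref{prop:legendre-transform}), and these form a null set by Rademacher. The same holds symmetrically for $\mathrm D_{u_2}\setminus\mathrm D_{u_1}$. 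This route avoids both the measurability of $q_u$ and the $C^{1,1}$ hypothesis in this step. The paper's lemma proves the stronger fact $\cL^n(\mathrm S_u)=0$, which the theorem does not actually need.

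Three small repairs.
\begin{enumerate}
\item In Step~1 the sign is reversed. Since $p_0=\nabla_\tau\varphi(x)+q_{u_j}(x)\nu(x)\in\bar\partial u_j(x)$ (Lemma~\ref{lem:ray}) and $\nu(x)\cdot(z-x)<0$, you get $u_j(z)-p\cdot z>u_j(x)-p\cdot x$ for all $z\in\Omega$. So moving inward \emph{increases} $u_j-p\cdot x$.
\item In Step~2, the reason the nondifferentiability set $N$ contributes nothing is $\cL^n(\partial u(N))=\mu_u(N)=\int_N f\,dx=0$. This is exactly where absolute continuity enters.
\item In Step~3 you need $\psi\in C_c(\omega^\perp)$, not merely bounded, to pass to the limit pointwise.
\end{enumerate}
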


Thus the global formulation is an immediate consequence of the restricted measurement theorem.

\begin{corollary}[Global uniqueness]
\label{thm:main}
Let $\Omega\subset\R^n$, $n\geq 2$, be a bounded uniformly convex $C^{1,1}$ domain and let $f_1,f_2\in L^1(\Omega)$ be nonnegative. Assume that the global Cauchy data sets agree:
\[
   \cC_{f_1}^{\mathrm{glob}}=\cC_{f_2}^{\mathrm{glob}};
\]
see Proposition~\ref{def:gen-cauchy-data}. Then $f_1=f_2$ almost everywhere in $\Omega$.
\end{corollary}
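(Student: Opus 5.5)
The plan is to reduce the corollary directly to Theorem~\ref{thm:cylindrical}: equality of the global Cauchy data sets forces the boundary slopes to agree for every $C^1$ boundary value, and in particular for all cylindrical data $t\Phi_\omega|_{\partial\Omega}$.

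\emph{Step 1: the data sets are graphs.} By Proposition~\ref{prop:forward}, for each $\varphi\in C^1(\partial\Omega)$ and $j=1,2$ there is a unique Alexandrov solution $u_{f_j,\varphi}$. Hence its extremal slope $q_{u_{f_j,\varphi}}$ (Proposition~\ref{def:slope}) is uniquely determined by $\varphi$. So $\mathcal C_{f_j}^{\mathrm{glob}}$ is the graph of the map $\varphi\mapsto q_{u_{f_j,\varphi}}$ on $C^1(\partial\Omega)$.

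\emph{Step 2: the slopes agree for every $\varphi$.} Fix $\varphi\in C^1(\partial\Omega)$. Then $(\varphi,q_{u_{f_1,\varphi}})\in\mathcal C_{f_1}^{\mathrm{glob}}=\mathcal C_{f_2}^{\mathrm{glob}}$. So there is some $\varphi'\in C^1(\partial\Omega)$ with
\[
(\varphi,q_{u_{f_1,\varphi}})=(\varphi',q_{u_{f_2,\varphi'}}).
\]
Comparing first components gives $\varphi'=\varphi$. Comparing second components then gives $q_{u_{f_1,\varphi}}=q_{u_{f_2,\varphi}}$ pointwise on $\partial\Omega$, as extended-real-valued functions.

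\emph{Step 3: specialize to cylindrical data.} For $\omega\in\mathbb S^{n-1}$ and $t>0$, the function $t\Phi_\omega$ is a quadratic polynomial on $\mathbb R^n$. Since $\partial\Omega$ is $C^{1,1}$, its restriction $t\Phi_\omega|_{\partial\Omega}$ lies in $C^1(\partial\Omega)$. Moreover $u_{f_j,t\Phi_\omega}=u^\omega_{j,t}$ by uniqueness.

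Choose $t_k^\omega\vcentcolon=k$ for every $\omega$. By Step 2, \eqref{eq:countably-equal-slopes} holds for every $k\in\mathbb N$ and every $\omega\in\mathbb S^{n-1}$. Theorem~\ref{thm:cylindrical} then yields $f_1=f_2$ almost everywhere in $\Omega$.

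There is no real obstacle here. The only point needing care is Step 2: equality of the data sets is a set equality, not an equality of maps, so one must use uniqueness of the Alexandrov solution to match the pairs by their first components. One should also note that Step 2 compares the slopes as functions with values in $\mathbb R\cup\{+\infty\}$, which is exactly the sense in which \eqref{eq:countably-equal-slopes} is stated.
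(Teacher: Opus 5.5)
Your proof is correct and matches the paper's argument. The paper also uses uniqueness of the Alexandrov Dirichlet problem to match pairs in $\mathcal C_{f_1}^{\mathrm{glob}}=\mathcal C_{f_2}^{\mathrm{glob}}$ by their Dirichlet component, obtains $q_{u^\omega_{1,t}}=q_{u^\omega_{2,t}}$, and applies Theorem~\ref{thm:cylindrical} with $t_k^\omega=k$; the only difference is that it specializes to cylindrical data $t\Phi_\omega|_{\partial\Omega}$ from the outset rather than first proving slope equality for every $\varphi\in C^1(\partial\Omega)$.
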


For nonnegative $L^1$ sources, the supporting slope may be infinite. For nonnegative $L^\infty$ sources and $\varphi\in C^{1,1}(\partial\Omega)$, we prove that $u_{f,\varphi}$ is Lipschitz on $\overline\Omega$ and that $q_{u_{f,\varphi}}$ is bounded. It also agrees pointwise with the interior normal derivative
\begin{equation}
\label{eq:intro-interior-normal}
\partial_\nu^-u_{f,\varphi}(x)
\vcentcolon=
\lim_{r\downarrow0}
\frac{u_{f,\varphi}(x)-u_{f,\varphi}(x-r\nu(x))}{r};
\end{equation}
see Proposition~\ref{prop:boundary-Lipschitz} and Lemma~\ref{lem:radial-slope}. Consequently, the map
\begin{equation}
\label{eq:DT-one-sided-slope}
\Lambda_f^-\colon C^{1,1}(\partial\Omega)\to\mathcal B_b(\partial\Omega),
\qquad
\Lambda_f^-(\varphi)(x)\vcentcolon=\partial_\nu^-u_{f,\varphi}(x),
\end{equation}
is well-defined, where $\mathcal B_b(\partial\Omega)$ denotes the space of bounded Borel functions. This gives the following formulation of uniqueness.

\begin{corollary}[Dirichlet-to-interior-Neumann formulation]
\label{cor:Linfty}
 Let $\Omega\subset\R^n$, $n\geq 2$, be a bounded uniformly convex $C^{1,1}$ domain and let $f_1,f_2\in L^\infty (\Omega)$ be nonnegative.  If $\Lambda_{f_1}^-=\Lambda_{f_2}^-$ pointwise, that is,
\[
\Lambda_{f_1}^-(\varphi)(x)
=
\Lambda_{f_2}^-(\varphi)(x),\quad x\in\partial\Omega,
\]
for every $\varphi\in C^{1,1}(\partial\Omega)$, then $f_1=f_2$ almost everywhere in $\Omega$.
\end{corollary}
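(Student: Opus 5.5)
The plan is to reduce Corollary~\ref{cor:Linfty} to Theorem~\ref{thm:cylindrical}. Since $\Lambda_f^-$ is only defined on $C^{1,1}$ data, the job is to show two things: the cylindrical data lie in $C^{1,1}(\partial\Omega)$, and for $L^\infty$ sources equality of interior normal derivatives forces equality of the extremal slopes $q_u$.

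First, fix $\omega\in\mathbb S^{n-1}$ and $t>0$. The function $t\Phi_\omega$ is a quadratic polynomial on $\R^n$, so its restriction to the $C^{1,1}$ boundary lies in $C^{1,1}(\partial\Omega)$ and is an admissible input for $\Lambda_{f_j}^-$. Let $u_{j,t}^\omega$ be the Alexandrov solutions from Proposition~\ref{prop:forward}. These are the same functions $u_{f_j,t\Phi_\omega}$ that enter the definition of $\Lambda_{f_j}^-$, by uniqueness of the Alexandrov Dirichlet problem.

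Second, use the regularity results announced before the corollary, Proposition~\ref{prop:boundary-Lipschitz} and Lemma~\ref{lem:radial-slope}. Because $f_j\in L^\infty$ and $t\Phi_\omega|_{\partial\Omega}\in C^{1,1}$, each $u_{j,t}^\omega$ is Lipschitz on $\overline\Omega$, its slope $q_{u_{j,t}^\omega}$ is finite and bounded, and
\[
q_{u_{j,t}^\omega}(x)=\partial_\nu^-u_{j,t}^\omega(x)=\Lambda_{f_j}^-(t\Phi_\omega|_{\partial\Omega})(x)\qquad\text{for every }x\in\partial\Omega .
\]
The hypothesis $\Lambda_{f_1}^-=\Lambda_{f_2}^-$ pointwise then gives $q_{u_{1,t}^\omega}=q_{u_{2,t}^\omega}$ on all of $\partial\Omega$, for every $t>0$.

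Third, take $t_k^\omega=k$, say. Then \eqref{eq:countably-equal-slopes} holds for every $\omega$ and every $k$. Since $L^\infty(\Omega)\subset L^1(\Omega)$ on the bounded domain $\Omega$, Theorem~\ref{thm:cylindrical} applies and gives $f_1=f_2$ almost everywhere.

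The main obstacle is the pointwise identification $q_u=\partial_\nu^-u$ at every boundary point, not merely almost everywhere. This needs the interior normal difference quotient to converge and to coincide with the infimal supporting slope. I take this from Lemma~\ref{lem:radial-slope}. If a proof were needed, I would argue as follows. Convexity of $u$ along the segment $r\mapsto x-r\nu(x)$ makes the quotient monotone, and the Lipschitz bound makes the limit finite. One inequality holds because any supporting vector $\nabla_\tau\varphi(x)+a\nu(x)$ bounds the quotient from below. The converse needs a supporting vector with normal component equal to the limit. This requires the Lipschitz estimate, built from uniform convexity and $C^{1,1}$ barriers, to control tangential directions.
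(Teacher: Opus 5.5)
Your proposal is correct and matches the paper's proof. Proposition~\ref{prop:boundary-Lipschitz} and Lemma~\ref{lem:radial-slope} give $\Lambda_{f_j}^-(\varphi)=q_{u_{f_j,\varphi}}$ pointwise on $\partial\Omega$ for every $\varphi\in C^{1,1}(\partial\Omega)$, in particular for $\varphi=t\Phi_\omega|_{\partial\Omega}$, and Theorem~\ref{thm:cylindrical} with $t_k^\omega=k$ then concludes. (In your optional sketch of Lemma~\ref{lem:radial-slope}, the Lipschitz bound is used to bound the interior subgradients at $x-r_k\nu(x)$ so that a limit subgradient at $x$ can be extracted, while the boundary datum forces its tangential part to be $\nabla_\tau\varphi(x)$, rather than the barrier controlling tangential directions.)
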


The $L^\infty$ assumption is needed here to identify the extremal boundary slope with the interior normal derivative. The uniqueness result itself already holds for nonnegative $L^1$ sources. We do not assert this identification at arbitrary $L^1$ regularity.

Our second main result shows that the same restricted boundary
measurements determine the source in the gradient-weighted problem
\eqref{eq:intro-gradient-weighted-MA}. Let
$\vartheta\colon\mathbb R^n\to(0,\infty)$ be a locally integrable
Borel function. Denote by $\lambda_\vartheta$ the outer measure
induced by the weighted Lebesgue measure
$\vartheta(p)\,dp$, and define the
\emph{$\vartheta$-weighted Monge--Ampère measure} of a convex
function $u$ by
\[
\mu_u^\vartheta(A)
\vcentcolon=
\lambda_\vartheta(\partial u(A)),
\qquad A\subset\Omega.
\]
A convex function $u\in C(\overline\Omega)$ is an Alexandrov
solution of the weighted Dirichlet problem \eqref{eq:intro-gradient-weighted-MA} if $\mu_u^\vartheta=f\,dx$ as Borel measures on $\Omega$ and it assumes the prescribed boundary value pointwise. The extremal
boundary slope is defined exactly as in the unweighted problem,
because its definition depends only on the convex function and its
boundary value.

\begin{theorem}[Cylindrical-data uniqueness for gradient weights]
\label{thm:weighted-MA-uniqueness}
Let $\Omega\subset\mathbb R^n$, $n\geq2$, be a bounded uniformly
convex $C^{1,1}$ domain, and let
$\vartheta\colon\mathbb R^n\to(0,\infty)$ be a known locally
integrable Borel function satisfying
\begin{equation}
\label{eq:bounds-weight-vartheta}
\vartheta(p)\geq c_\vartheta>0
\qquad\text{for a.e. }p\in\mathbb R^n.
\end{equation}
Let $f_1,f_2\in L^1(\Omega)$ be nonnegative. For
$j=1,2$, $\omega\in\mathbb S^{n-1}$, and $t>0$, let
$u_{j,t}^\omega\in C(\overline\Omega)$ be any Alexandrov solution of
\begin{equation}
\label{eq:weighted-MA-main}
\begin{cases}
\vartheta(\nabla u)\det D^2u=f_j
&\text{in }\Omega,\\
u=t\Phi_\omega
&\text{on }\partial\Omega.
\end{cases}
\end{equation}
Assume that, for every $\omega\in\mathbb S^{n-1}$, there exists a
sequence $(t_k^\omega)_{k\in\mathbb N}\subset(0,\infty)$ with
$t_k^\omega\to\infty$ such that
\[
q_{u_{1,t_k^\omega}^\omega}
=
q_{u_{2,t_k^\omega}^\omega}
\quad\text{on }\partial\Omega,
\qquad k\in\mathbb N.
\]
Then $f_1=f_2$ almost everywhere in $\Omega$.
\end{theorem}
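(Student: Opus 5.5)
The plan follows the unweighted argument of Theorem~\ref{thm:cylindrical}, with Lebesgue measure on the gradient side replaced by $\lambda_\vartheta$. Fix $\omega$ and $t=t_k^\omega$, and write $u_j=u^\omega_{j,t}$.

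\emph{Step 1: equal data give equal subgradient images.} Both $u_1$ and $u_2$ have boundary value $t\Phi_\omega$, and the hypothesis gives $q_{u_1}=q_{u_2}$ on $\partial\Omega$. I would show that the boundary value and the extremal slope determine $\partial u_j(\Omega)$ up to a Lebesgue null set. The key fact is that $\partial u_j(\Omega)$ is, up to a null set, the set of $p$ such that the affine function $x\mapsto p\cdot x-\max_{\overline\Omega}(p\cdot x-u_j)$ supports $u_j$ at an interior point. For almost every $p$, $\max_{\overline\Omega}(p\cdot x-\bar u_j)$ is attained either in the interior or only on the boundary. The boundary case is characterized by $p\in\bar\partial u_j(x)$ for some $x\in\partial\Omega$, that is, $p\in\nabla_\tau\varphi(x)+[q_{u_j}(x),\infty)\nu(x)$ by Lemma~\ref{lem:ray}. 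Hence the boundary-attained set depends only on $(\varphi,q_{u_j})$. The set of $p$ attained at both interior and boundary points is null, by the standard Alexandrov lemma that gradients shared by two distinct points form a null set. Since this step does not use the equation at all, it carries over verbatim from the unweighted case. Because $\lambda_\vartheta\ll\mathcal L^n$ ($\vartheta$ is locally integrable), the images then also agree $\lambda_\vartheta$-a.e.

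\emph{Step 2: integral identities.} I would use a weighted exact change of variables: for convex $u$ with $\mu_u^\vartheta=f\,dx$ and Borel $g\ge0$ on $\mathbb R^n$,
\[
\int_\Omega g(\nabla u(x))f(x)\,dx=\int_{\partial u(\Omega)}g(p)\vartheta(p)\,dp .
\]
To prove it, use that $\nabla u$ exists a.e. and that the set of $p$ with multiple preimages is null, so the pushforward of $\mu^\vartheta_u$ under $\partial u$ equals $\vartheta\,dp$ restricted to $\partial u(\Omega)$. The weight enters only on the $p$-side. Combined with Step 1, this gives
\[
\int_\Omega g(\nabla u_1)f_1\,dx=\int_\Omega g(\nabla u_2)f_2\,dx .
\]
A problem is that $\nabla u_1\ne\nabla u_2$, so the large-amplitude limit must be used to decouple the two sides. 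The forward existence established in Section~\ref{sec:weighted-MA-extension} supplies the solutions. Since the theorem allows any Alexandrov solution, uniqueness is not needed.

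\emph{Step 3: large-amplitude limit (the main obstacle).} Rescale $v_j=u_j/t$. Then $\mu^{\vartheta_t}_{v_j}=f_j/t^{n}\cdot(\text{weight factor})$; more precisely, $\mathcal L^n(\partial v_j(A))=t^{-n}\mathcal L^n(\partial u_j(A))\le t^{-n}c_\vartheta^{-1}\int_Af_j$. The lower bound $c_\vartheta$ is used here to make the unweighted Monge--Amp\`ere mass of $v_j$ tend to $0$. By Alexandrov stability, $v_j\to w$ uniformly, where $w$ is the convex envelope solving $\det D^2w=0$ with data $\Phi_\omega$. Uniform convexity of $\Omega$ shows $w$ is affine along lines parallel to $\omega$, giving $w=\Phi_\omega$ exactly, since $\Phi_\omega$ is convex and constant in $\omega$. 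Hence $\nabla v_j\to P_\omega x$ a.e. by convergence of gradients of convex functions.

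I would choose test functions $g(p)=h(p/t)\psi(p\cdot\omega)$, where $h\in C_c(\mathbb R^n)$ and $\psi$ is bounded continuous. Dominated convergence (using $f_j\in L^1$ and boundedness of $g$) applied to the identity of Step 2 then needs control of the $\omega$-component $p\cdot\omega$ of $\nabla u_j$, which does not scale. This is the hard part. I expect to handle it as in the unweighted proof: first take $\psi\equiv1$ to get $\int h(P_\omega x)f_1=\int h(P_\omega x)f_2$ for all $h$. That says that $f_1$ and $f_2$ have equal integrals over every line parallel to $\omega$, i.e.\ equal X-ray transforms $Xf_1(\omega,\cdot)=Xf_2(\omega,\cdot)$ a.e.\ on $\omega^\perp$. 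Varying $\omega$ over $\mathbb S^{n-1}$ and using injectivity of the X-ray transform on $L^1$ compactly supported functions (via the Fourier slice theorem) yields $f_1=f_2$ a.e. The only new verifications relative to the unweighted case are the weighted change of variables in Step 2 and the mass bound using $c_\vartheta$ in Step 3.
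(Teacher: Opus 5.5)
Your proposal is correct. Steps~2 and~3 follow the paper's proof essentially verbatim: the weighted change of variables, the bound $\mathcal L^n(\partial U_{j,k}(A))\le t_k^{-n}c_\vartheta^{-1}\int_A f_j\,dx$, source stability toward $\Phi_\omega$, a.e.\ gradient convergence, and the Fourier-slice argument. In Step~2, the point on which your sketch relies is that the nondifferentiability set $\mathcal N$ satisfies $\lambda_\vartheta(\partial u(\mathcal N))=\int_{\mathcal N}f\,dx=0$; this is where $\mu_u^\vartheta=f\,dx$ enters.

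The genuine difference is Step~1. The paper cites Corollary~\ref{cor:D-equality}. Its proof characterizes $p\notin\mathrm D_u$, for $p\notin\mathrm S_u$, through the boundary Legendre maximizers $\mathcal X_\varphi(p)$. It then needs Lemma~\ref{lem:S-null}, that the boundary slope image $\mathrm S_u$ is Lebesgue null. That lemma uses the Borel measurability of $q_u$ (Lemma~\ref{lem:measurable}) and Lipschitz parametrizations requiring $\partial\Omega\in C^{1,1}$ and $\varphi\in C^{1,1}$.

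You argue instead that $\bar\partial u(\overline\Omega)=\mathbb R^n$ by compactness, so $\mathrm D_u\cup\mathrm B_u=\mathbb R^n$. The set $\mathrm B_u$ is fixed by $(\varphi,q_u)$ via Lemma~\ref{lem:ray}, and $\mathrm D_u\cap\mathrm B_u$ is null. This gives $\mathrm D_{u_1}\mathbin\triangle\mathrm D_{u_2}\subset(\mathrm D_{u_1}\cap\mathrm B)\cup(\mathrm D_{u_2}\cap\mathrm B)$. Your route needs only convexity and $C^1$ regularity, and it avoids the measurability of $q_u$ entirely.

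What the paper's route buys is a data-determined exceptional set: $\mathrm D_u\setminus\mathrm S_u$ is recovered exactly from $(\varphi,q_u)$. Your exceptional sets $Z_{u_j}$ depend on the unknown solutions, which is harmless for uniqueness.

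One detail to add: Lemma~\ref{lem:unique-subgradients} is stated for pairs of interior points. Suppose $p\in\partial u(x)\cap\bar\partial u(y)$ with $x\in\Omega$ and $y\in\partial\Omega$. Then $z\mapsto u(z)-p\cdot z$ attains its minimum over $\overline\Omega$ along all of $[x,y]$. Hence $p\in\partial u(z)$ for every $z$ in the open segment, which lies in $\Omega$, and so $p\in Z_u$. Alternatively, use the Legendre transform of $\bar u$.

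Two smaller remarks on Step~3. First, the ``hard part'' about the $\omega$-component is not an obstacle. The paper's test function $H_k(p)=\psi(P_\omega p/t_k)$ is bounded, so dominated convergence needs only $\nabla U_{j,k}\to P_\omega x$ a.e.; your choice $h(p/t)$ with $h\in C_c(\mathbb R^n)$ works equally well. Second, $w=\Phi_\omega$ follows directly from $\mu_{\Phi_\omega}=0$ (since $\nabla\Phi_\omega$ takes values in $\omega^\perp$) together with uniqueness. No uniform convexity is needed for that step.
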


The existence of the solutions appearing in
Theorem~\ref{thm:weighted-MA-uniqueness} is proved in
Theorem~\ref{thm:weighted-forward-existence}. Under the present minimal
assumptions on $\vartheta$, uniqueness for the weighted forward Dirichlet
problem is not asserted. Thus $u_{j,t}^\omega$ may be chosen from a
possibly non-singleton solution set; the conclusion holds for any such
families whose boundary slopes satisfy the stated identities. No upper
bound or continuity assumption on $\vartheta$ is used in the inverse
argument.

\subsection{Relation to earlier work}

Recent inverse boundary results for the Monge--Amp\`ere equation \cite{UhlmannZimmermann, LiimatainenLin,CarsteaGhosh} use either linearization near a background solution or large-amplitude boundary values. The distinction matters both for the available measurements and for the regularity of the source. We restrict the discussion to aspects most relevant to the present work. A more detailed comparison can be found in
\cite{UhlmannZimmermann}.

In our earlier work \cite{UhlmannZimmermann}, we studied equations of the form
\[
\det D^2u=a(x,u,\nabla u)
\]
near a fixed uniformly convex background solution. The first linearization leads to an anisotropic elliptic equation whose principal coefficient is the cofactor matrix of the background Hessian, while higher linearizations probe derivatives of the nonlinearity. In the pure source case, this approach gives uniqueness for positive H\"older continuous sources in dimension two, and in higher dimensions under the corresponding real-analytic hypotheses; see \cite[Corollary~8.1 and Remarks~8.2--8.3]{UhlmannZimmermann}. These results use local perturbations of compatible background Cauchy data. The present theorem uses unbounded-amplitude boundary values and applies to nonnegative $L^1$ sources in every dimension $n\geq2$. It does not establish uniqueness from local data near a fixed background.

Liimatainen and Lin \cite{LiimatainenLin} study source recovery in dimension two for smooth positive sources using a local classical Dirichlet-to-Neumann map and prescribed boundary information about the source. Their approach combines first and second linearizations with planar anisotropic inverse methods.

The closest large-amplitude result is due to C{\^a}rstea and Ghosh \cite{CarsteaGhosh}, who prove uniqueness for smooth strictly positive sources on smooth uniformly convex domains in every dimension $n\geq2$. They use the same cylindrical boundary profiles as in \eqref{eq:quadratic-tests} and extract line integrals of the source from an asymptotic correction to the leading profile. Their argument relies on bounded second derivatives of the chordwise correction, with estimates involving second derivatives of the
source, in the determinant expansion and comparison barriers. Consequently, the argument does not directly extend to rough sources; a lower-regularity version would require additional justification. We avoid these derivative estimates by replacing the correction analysis with an exact identity for Monge--Amp\`ere measures. This permits $L^1$ sources that may vanish, $C^{1,1}$ boundaries, and solutions without classical normal derivatives. It also shows that one unbounded sequence of amplitudes in each direction is sufficient.

\subsection{Strategy of the uniqueness proof}

The uniqueness proof begins with studying the interior subgradient image
\[
\mathrm D_u\vcentcolon=\partial u(\Omega).
\]
We show that the boundary value $\varphi$ and the extremal boundary slope $q_u$ determine $\mathrm D_u$ up to a Lebesgue null set. The ambiguity is confined to the boundary slope image
\[
\mathrm S_u\vcentcolon=
\{\nabla_\tau\varphi(x)+q_u(x)\nu(x):x\in\partial\Omega,\ q_u(x)<\infty\},
\]
whose nullity follows from the measurability of $q_u$ and the Lipschitz regularity of the boundary geometry.

Next, absolute continuity of $\mu_u=f\,dx$ ensures that the nondifferentiability set of $u$ has zero Monge--Amp\`ere measure. This yields the exact change of variables formula
\[
\int_\Omega H(\nabla u(x))f(x)\,dx
=
\int_{\mathrm D_u}H(p)\,dp
\]
for every bounded Borel function $H$. Equal boundary values and slopes therefore give equal weighted gradient integrals, even though the corresponding solutions and sources may differ.

Finally, for the boundary value $t\Phi_\omega$, the normalized solution $U_t=t^{-1}u_t$ has source $t^{-n}f$ and fixed boundary value $\Phi_\omega$. Stability of Alexandrov solutions under source perturbations gives $U_t\to\Phi_\omega$ uniformly, and convexity gives
\[
\nabla U_t(x)\to P_\omega x
\qquad\text{for almost every }x\in\Omega.
\]
Applying the integral identity to the normalized solutions with $H(p)=\psi(P_\omega p)$ and passing to the limit gives
\[
\int_\Omega\psi(P_\omega x)(f_1-f_2)(x)\,dx=0,
\qquad \psi\in C_c(\omega^\perp).
\]
Thus the X-ray transform of the zero extension of $f_1-f_2$ vanishes in every direction. The Fourier-slice identity and injectivity of the Fourier transform prove uniqueness.

Absolute continuity enters through the change of variables formula, rather than through a smoothness assumption on the solution. Singular Monge--Amp\`ere measures may charge its nondifferentiability set, so the single-valued-gradient argument need not apply. We do not claim that absolute continuity is a sharp uniqueness threshold; Section~\ref{sec:scope-limitations} discusses this limitation.

For the gradient-weighted equation \eqref{eq:intro-gradient-weighted-MA}, we replace Lebesgue measure in gradient space by the known measure $\vartheta(p)\,dp$ and define
\[
\mu_u^\vartheta(A)
=
\int_{\partial u(A)}\vartheta(p)\,dp.
\]
The corresponding change-of-variables formula is
\[
\int_\Omega H(\nabla u(x))f(x)\,dx
=
\int_{\mathrm D_u}H(p)\vartheta(p)\,dp.
\]
Thus equality of the boundary values and slopes again gives equality of the source integrals used in the large-amplitude argument. The positive lower bound for $\vartheta$ also forces the ordinary Monge--Amp\`ere masses of the normalized solutions to vanish as the amplitude tends to infinity. Consequently, the same cylindrical limit and the same Euclidean X-ray transform arise despite the gradient dependence of the equation.

\subsection{Structure of the paper}

Section~\ref{sec:prelim-Alexandrov-sol} introduces the notation and convex-analytic preliminaries, including a defining-function characterization of uniformly convex $C^{1,1}$ domains. Section~\ref{sec:Alexandrov-sol} reviews Monge--Amp\`ere measures and the Alexandrov Dirichlet theory. Section~\ref{sec:one-side-slope} constructs the extremal boundary slope and compares it with interior and classical normal derivatives. Sections~\ref{sec:det-subgradient-image} and~\ref{sec:Alexandrov-change-of-var} establish the boundary determination of the subgradient image and the integral identity, respectively. Section~\ref{sec:large-amplitude-bdry-values} treats the large-amplitude limit, which is used in Section~\ref{sec:Proof-uniqueness} to prove the uniqueness results for the unweighted equation. Section~\ref{sec:weighted-MA-extension} introduces weighted Monge--Amp\`ere measures and extends the inverse result to known positive gradient weights. Section~\ref{sec:scope-limitations} discusses the scope of the method. The defining-function representation theorem is proved in Appendix~\ref{sec:representation-thm}.

\section{Notation and convex-analytic preliminaries}
\label{sec:prelim-Alexandrov-sol}

In this section we introduce notational conventions and review convex-analytic preliminaries. Section~\ref{subsec:notation} contains our conventions on measures,
which are regarded here as outer measures, and introduces the
classical function spaces used throughout the paper. Sections~\ref{subsec:convex-domains}, \ref{sec:strictly-convex-domains}, \ref{subsec:uniform-convex-domains} discuss convex, strictly convex, and uniformly convex domains, respectively. Section~\ref{subsec:convex-functions} recalls definitions and properties of convex functions.

\subsection{Notation}
\label{subsec:notation}

Throughout the article, $x\cdot y$ and $|x|$ denote the Euclidean inner product and norm on $\R^n$. We write $B_r(x)$ for the open ball of radius $r>0$ and center $x$. To simplify the notation, we will abbreviate $B_r(0)$ by $B_r$. The boundary of the unit ball $\partial B_1$ is 
\[
 \mathbb{S}^{n-1}\vcentcolon =\partial B_1= \{x\in\R^n:|x|=1\}.
\] 

The $n$-dimensional identity matrix is denoted by $I=(\delta_{ij})_{1\leq i,j\leq n}$, where 
\[
    \delta_{ij}=\begin{cases}
        1,\quad&i=j,\\
        0,\quad&i\neq j
    \end{cases}
\]
is the Kronecker delta. For $a,b\in\R^n$, the tensor product $a\otimes b$ is the matrix with entries $(a\otimes b)_{ij}=a_i b_j$. It satisfies 
\[
    (a\otimes b)v=(b\cdot v)a,\quad \text{for all }v\in\R^n.
\]
In particular, for any $\omega\in \mathbb{S}^{n-1}$, 
\[
    Q_\omega\colon \R^n\to\R^n,\quad Q_\omega v \vcentcolon = (\omega\otimes\omega)v=(\omega\cdot v)\omega\in\R^n
\]
is the orthogonal projection onto the linear span of $\omega$, denoted as $\operatorname{span}\{\omega\}$. Thus,
\[
     P_\omega\vcentcolon =I-Q_\omega=I-\omega\otimes\omega
\]
is the orthogonal projection onto the orthogonal complement
\[
    \omega^\perp\vcentcolon = \{x\in\R^n:x\cdot\omega=0\}.
\]

\subsubsection{Measures}
\label{sec:measures}

We denote the $n$-dimensional Lebesgue measure by $\cL^n$ and, as usual, we write $dx$ for integration against it. For a measure $\mu$ on a set $X$, a map $T\colon X\to Y$, and a set $A\subset X$, the symbols $T_\#\mu$ and $\mu\res A$ denote the pushforward of $\mu$ under $T$ and the restriction of $\mu$ to $A$. Explicitly,
\[
    T_\#\mu(B)=\mu(T^{-1}(B))\quad\text{and}\quad (\mu\res A)(C)=\mu(A\cap C)
\]
for any sets $B\subset Y$ and $C\subset X$. Here and elsewhere, we use the usual operations from set theory, including union $A\cup B$, intersection $A\cap B$, complement $A^c=X\setminus A$, difference $A\setminus B=A\cap B^c$, and symmetric difference 
\[
    A\mathbin\triangle B=(A\setminus B)\cup (B\setminus A).
\]
If not otherwise specified, a measure on a set $X$ is an outer measure \cite{evans-mt}, that is, a set function $\mu\colon 2^X\to [0,\infty]$ such that
\begin{enumerate}[(i)]
    \item $\mu(\varnothing)=0$,
    \item $\mu$ is $\sigma$-subadditive, meaning that,
    \[
        A\subset \cup_{k=1}^{\infty}A_k\quad \Longrightarrow \quad \mu(A)\leq \sum_{k=1}^\infty \mu(A_k).
    \]
\end{enumerate}
Recall that in this context a set $A\subset X$ is called $\mu$-measurable, or measurable in the Carathéodory sense, whenever 
\[
    \mu(S)=\mu(S\cap A)+\mu(S\setminus A)
\]
for any set $S\subset X$. It turns out that the class of $\mu$-measurable sets, denoted by $\Sigma_\mu\subset 2^X$, forms a $\sigma$-algebra and hence $\nu\vcentcolon = \mu|_{\Sigma_\mu}\colon \Sigma_\mu \to [0,\infty]$ is a measure in the abstract measure theory sense, meaning that $\nu(\varnothing)=0$ and $\nu$ is $\sigma$-additive, that is, 
\[
    \nu\bigg(\bigcup_{k\geq 1}A_k\bigg)=\sum_{k\geq 1}\nu(A_k)
\]
for every pairwise disjoint countable family $(A_k)_{k\in\N}\subset\Sigma_\mu$. 

Furthermore, we use the following terminology for measures $\mu$ on a locally compact Hausdorff space $X$ (cf., e.g., \cite{evans-mt,federer2014geometric}):
\begin{enumerate}[(a)]
    \item $\mu$ is a \emph{Borel measure}, if every Borel set $B$ is $\mu$-measurable.
    \item $\mu$ is \emph{Borel regular}, if $\mu$ is a Borel measure and for every set $A\subset X$ there exists a Borel set $B$ such that $A\subset B$ and $\mu(A)=\mu(B)$.
    \item $\mu$ is \emph{locally finite}, if $\mu(K)<\infty$ for every compact set $K\subset X$.
    \item $\mu$ is a \emph{Radon measure}, if $\mu$ is a locally finite Borel measure such that 
    \begin{equation}
    \label{eq:inner-regular}
        \mu(V)=\sup_{K\subset V: K\text{ compact}}\mu(K)
    \end{equation}
    for any open set $V\subset X$ and
    \begin{equation}
    \label{eq:outer-regular}
        \mu(A)=\inf_{A\subset U: U\text{ open}}\mu(U)
    \end{equation}
    for any set $A\subset X$.
\end{enumerate}

\begin{remark}
    We note that the outer regularity condition \eqref{eq:outer-regular}, in particular, ensures that $\mu$ is Borel regular. Recall that in the case $X=\R^n$, every locally finite Borel regular measure is automatically a Radon measure \cite[Chapter~1, Theorem~4]{evans-mt}.
\end{remark}

\begin{example}[Hausdorff measure]
An important example of a Borel regular measure is the $s$-dimensional Hausdorff measure $\mathcal{H}^s$ (see~\cite[Section~2]{evans-mt}). For $s\geq 0$ and $A\subset\R^n$, it is defined as
\begin{equation}
\label{eq:Hausdorff-measure}
    \mathcal{H}^s(A)\vcentcolon = \lim_{\delta\to 0}\mathcal{H}^s_{\delta}(A)=\sup_{0<\delta<\infty}\mathcal{H}^s_{\delta}(A),
\end{equation}
where
\[
    \mathcal{H}^s_{\delta}(A)\vcentcolon = \inf\left\{\sum_{j\geq 1}\omega_s\left(
    \frac{\operatorname{diam}C_j}{2}\right)^s : A\subset \bigcup_{j\geq 1}C_j,\,\operatorname{diam} C_j\leq \delta\right\},
\]
with normalization
\[
    \omega_s\vcentcolon =\frac{\pi^{s/2}}{\Gamma(s/2+1)}.
\]
Here, $\Gamma(z)=\int_0^{\infty} t^{z-1}e^{-t}\,dt$, $0<z<\infty$, is the usual Gamma function. In contrast to the Lebesgue measure, $\mathcal{H}^s$ is not a Radon measure for $0\leq s<n$.
\end{example}

\paragraph{\noindent\textit{Convention on inequalities between Borel measures.}}

Let $\mu_1,\mu_2\colon 2^X\to[0,\infty]$ be Borel measures on a locally compact Hausdorff space $X$. We write
\[
\mu_1\leq\mu_2
\quad\text{as Borel measures on }X
\]
if
\[
\mu_1(B)\leq\mu_2(B)
\qquad\text{for every Borel set }B\subset X.
\]
Similarly, $\mu_1=\mu_2$ as Borel measures on $X$ means that
\[
\mu_1(B)=\mu_2(B)
\qquad\text{for every Borel set }B\subset X.
\]
Unless explicitly qualified by the phrase ``as Borel measures,'' an equality or inequality between such measures is understood as an equality or inequality of outer measures; that is, it is required to hold on every subset of $X$.\\

\paragraph{\noindent\textit{Relation between measures and Monge--Ampère equations}}

The main objects of study in this work are Monge--Ampère equations of the type
\begin{equation}
\label{eq:MA-measure-theory}
    \det D^2 u=f\quad\text{in }\Omega,
\end{equation}
where $f$ is only assumed to be nonnegative and to belong to $L^1(\Omega)$. Under this assumption, a convex solution $u\colon \Omega\to\mathbb R$ cannot in general be expected to belong to $C^2$. We must therefore reinterpret \eqref{eq:MA-measure-theory} in a generalized sense; see Section~\ref{subsec:def-alexandrov}. To this end, we associate with $u$ its Monge--Ampère measure $\mu_u$.

In the standard construction, one first introduces a
$\sigma$-algebra $\mathcal F_u$ on which $\mu_u$ is $\sigma$-additive; see the proof of
\cite[Theorem~2.3]{Figalli}. Although $\mathcal F_u$ contains the Borel $\sigma$-algebra of $\Omega$, it depends on the convex function $u$. Since both $f$ and $u$ are unknown in the inverse problems considered here, it is convenient to avoid working with an ambient $\sigma$-algebra that depends on the unknown solution.

We therefore construct $\mu_u$ as an outer measure, and Theorem~\ref{lem:MA-measure} will show that it is a Borel measure. Consequently, the Monge--Ampère measures corresponding to different convex functions can be compared on the Borel $\sigma$-algebra.

\subsubsection{Function spaces}

Finally, we introduce some classical function spaces that we will need later. The symbols 
\[
    C^k(U),C^k_c(U),C^k(\overline{U}),\quad k\in\N_0\cup \{\infty\},
\]
denote the space of $k$-times continuously differentiable functions on an open set $U$, the space of $C^k(U)$-functions that have compact support in $U$, and the space of $C^k(U)$-functions whose derivatives up to order $k$ can be continuously extended to $\overline{U}$ and are bounded on this set. For $0<\alpha\leq 1$, $k\in\N_0$, and an open set $U\subset \R^n$, the spaces $C^{k,\alpha}(\overline{U})$ consist of all $C^k(\overline{U})$-functions such that the $k$-th order derivatives $\nabla^k u=(\partial^{\beta} u)_{|\beta|=k}$ are $\alpha$-H\"older continuous, that is, $v=\partial^\beta u$, $|\beta|=k$, satisfies
\[
    [v]_{C^{0,\alpha}(\overline{U})}\vcentcolon =\sup_{x\neq y\in U}\frac{|v(x)-v (y)|}{|x-y|^{\alpha}}<\infty.
\]
These spaces are endowed with the norms
\[
    \|u\|_{C^k(\overline{U})}\vcentcolon = \sum_{\ell=0^k}\|\nabla^{\ell}u\|_{L^{\infty}(U)}
\]
and
\[
    \|u\|_{C^{k,\alpha}(\overline{U})}= \|u\|_{C^k(\overline{U})}+[\nabla^k u]_{C^{0,\alpha}(\overline{U})},
\]
respectively. Moreover, we shall write $u\in C^{k,\alpha}(U)$, if $u\in C^{k,\alpha}(\overline{V})$ for any $V\Subset U$. 

For $k\in\N_0$ and $0\leq \alpha\leq 1$, we say that an open bounded set $U\subset \R^n$ is a $C^{k,\alpha}$ domain if it can be locally straightened by $C^{k,\alpha}$ diffeomorphisms; see \cite[Section~6.2]{trudinger2015elliptic}. In the range $k\geq 1$, this is equivalent to the fact that $\partial U$ can be represented by a $C^{k,\alpha}$ function depending on $n-1$ variables.

As usual $L^p(\Omega)$, $1\leq p\leq \infty$, stands for the classical Lebesgue spaces with respect to $\mathcal{L}^n$ and Lebesgue spaces over $\partial\Omega$ are endowed with the $(n-1)$-dimensional Hausdorff measure $\mathcal{H}^{n-1}$. Moreover, we write $\mathcal{B}(\partial\Omega)$ for the space of Borel measurable functions $g\colon \partial\Omega \to [-\infty,\infty]$ and $\mathcal B_b(\partial\Omega)$ for the bounded Borel measurable functions. Finally, the Sobolev space of order $k\in\N$ and integrability $1\leq p\leq\infty$ is denoted by $W^{k,p}(\Omega)$. These spaces carry the norm
\[
    \|u\|_{W^{k,p}(\Omega)}\vcentcolon = \left(\sum_{\ell=0}^k \|\nabla^\ell u\|_{L^p(\Omega)}^p\right)^{1/p},
\]
with the usual modification for $p=\infty$.

\subsection{Convex domains}
\label{subsec:convex-domains}

In this section, we discuss convex domains, with a particular focus on domains with $C^1$ regular boundaries. Let us first recall the definition of convex sets and supporting hyperplanes.
\begin{enumerate}[(A)]
    \item A set $A\subset \R^n$ is said to be \emph{convex} if $x_0,x_1\in A$ implies that
\[
     x_t\vcentcolon =x_0+t(x_1-x_0)=(1-t)x_0+t x_1\in A,\,\text{for all }0\leq t\leq 1.
\]
    \item An \emph{(affine) hyperplane} in $\R^n$ is a set of the form
\[
    H=[\ell=\alpha]\vcentcolon =\{z\in\mathbb R^n:\ell(z)=\alpha\},
\]
where $\ell\colon\mathbb R^n\to\mathbb R$ is a nonzero linear functional and $\alpha\in\mathbb R$.
\item  Let $F\subset\mathbb R^n$ be a closed convex set. A \emph{supporting hyperplane} of $F$ is a hyperplane $H$ with the following two properties:
\begin{enumerate}[(a)]
    \item $F$ is contained in one of the two closed half-spaces determined by $H$
    \item and $F\cap H\neq\varnothing$.
\end{enumerate}
    \item Let $F\subset\mathbb R^n$ be a closed convex set. A point $x\in F$ is called \emph{exposed}, if there exists a supporting hyperplane $H$ such that $F\cap H=\{x\}$.
\end{enumerate}
We have the following simple lemma.

\begin{lemma}
\label{lemma:convex-dom}
    Let $\Omega\subset\R^n$ be a bounded open convex set.
    \begin{enumerate}[(i)]
        \item\label{hyperplances-conv} For every $x\in \partial\Omega$, there exists a supporting hyperplane of $\overline{\Omega}$ containing $x$.
        \item\label{convex-c1-dom} Assume, in addition, that $\partial\Omega$ is of class $C^1$ and, for every $x\in\partial\Omega$, define the affine tangent hyperplane
        \begin{equation}
        \label{eq:affine-tangent-plane}
            H_x \vcentcolon= x+T_x\partial\Omega = x+\nu(x)^\perp
            = \left\{ y\in\mathbb R^n: \nu(x)\cdot(y-x)=0 \right\}.
        \end{equation}
        Then $H_x$ is the unique supporting hyperplane of $\overline\Omega$ containing $x$. Moreover,
        \begin{equation}
        \label{eq:left-tangent-plane}
            \overline\Omega \subset
            \left\{ y\in\mathbb R^n: \nu(x)\cdot(y-x)\leq0 \right\}.
        \end{equation}
    \end{enumerate}
\end{lemma}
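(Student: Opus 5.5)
For part (i) I would use the supporting hyperplane theorem for the closed convex set $\overline\Omega$. Fix $x\in\partial\Omega$. Since $\Omega$ is open and convex, $\overline\Omega$ is convex with interior $\Omega$, so $x\notin\operatorname{int}\overline\Omega$. I would choose points $x_k\notin\overline\Omega$ with $x_k\to x$; this is possible because $x$ is not an interior point of $\overline\Omega$. For each $k$, let $\pi_k$ be the metric projection of $x_k$ onto $\overline\Omega$. The vector $e_k\vcentcolon=(x_k-\pi_k)/|x_k-\pi_k|$ is a unit vector satisfying $e_k\cdot(y-\pi_k)\le 0$ for all $y\in\overline\Omega$; this is the variational inequality for projections onto closed convex sets. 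Passing to a subsequence, $e_k\to e\in\mathbb S^{n-1}$, and $\pi_k\to x$ since $|\pi_k-x|\le|x_k-x|$. In the limit, $e\cdot(y-x)\le0$ for all $y\in\overline\Omega$. Hence $H=\{e\cdot(y-x)=0\}$ is a supporting hyperplane of $\overline\Omega$ containing $x$.

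For part (ii), let $H=\{e\cdot(y-x)=0\}$ be any supporting hyperplane at $x$, with $e$ chosen so that $\overline\Omega\subset\{e\cdot(y-x)\le0\}$; by (i) at least one exists. I will show that $e=\nu(x)$. Since $\partial\Omega$ is $C^1$, every tangent vector $v\in T_x\partial\Omega$ is realized by a $C^1$ curve $\gamma\colon(-\delta,\delta)\to\partial\Omega$ with $\gamma(0)=x$ and $\gamma'(0)=v$. The function $s\mapsto e\cdot(\gamma(s)-x)$ is $\le0$ and vanishes at $s=0$, so it has a maximum there, which gives $e\cdot v=0$. Therefore $e\perp T_x\partial\Omega=\nu(x)^\perp$, and so $e=\pm\nu(x)$.

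To fix the sign, I use the defining property of the outward normal: $x-r\nu(x)\in\Omega$ for small $r>0$. This follows from the local graph representation of $\partial\Omega$, with $\Omega$ lying on the side opposite to $\nu$. If $e=-\nu(x)$, the supporting inequality at $y=x-r\nu(x)$ would give $-\nu(x)\cdot(-r\nu(x))=r\le0$, a contradiction. Hence $e=\nu(x)$. This shows that $H=H_x$, which is uniqueness, and also yields \eqref{eq:left-tangent-plane}.

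The only mildly delicate step is justifying $x-r\nu(x)\in\Omega$ from the $C^1$ graph description together with the convention that $\nu$ points outward. I would argue it in local coordinates: there $\Omega=\{y_n<g(y')\}$ and $\nu(x)$ is proportional to $(-\nabla g(x'),1)$, so the point $x-r\nu(x)$ lies strictly below the graph for small $r>0$.
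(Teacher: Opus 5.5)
Your argument is correct. Part (ii) is the paper's proof. You take an arbitrary supporting hyperplane through $x$ with oriented unit normal, use $C^1$ boundary curves to get orthogonality to $T_x\partial\Omega$, and fix the sign with the inward point $x-r\nu(x)\in\Omega$. Your local-graph justification of that last fact is a detail the paper only asserts.

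Part (i) is where you genuinely differ. The paper applies the first geometric form of Hahn--Banach to the disjoint convex sets $\Omega$ (open) and $\{x\}$. This gives a linear functional $\ell$ with $\ell\le\alpha\le\ell(x)$ on $\Omega$, and continuity together with $x\in\overline\Omega$ forces $\ell(x)=\alpha$. You instead build the normal as a limit of projection directions $e_k=(x_k-\pi_k)/|x_k-\pi_k|$ from exterior points $x_k\to x$.

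Your route is self-contained in $\mathbb R^n$. It uses only the variational inequality for projections and compactness of $\mathbb S^{n-1}$, and it produces a hyperplane through $x$ directly. The cost is an extra input, the identity $\operatorname{int}\overline\Omega=\Omega$ for convex $\Omega$, which you need to find exterior points converging to $x$. The separation argument needs only $x\notin\Omega$, which is immediate from openness, and it does not rely on finite dimensionality.
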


\begin{proof}
    The first assertion \ref{hyperplances-conv} is an immediate consequence of the Hahn--Banach theorem \cite[Theorem~1.6]{Brezis} with $A=\Omega$ and $B=\{x\}$. 

    Thus, it remains to establish \ref{convex-c1-dom}. Fix $x\in\partial\Omega$, and let $H$ be a supporting hyperplane of $\overline\Omega$ containing $x$, whose existence follows from assertion~\ref{hyperplances-conv}. There exist
$q\in\mathbb S^{n-1}$ and $\alpha\in\mathbb R$ such that
\[
H
=
[\ell_q=\alpha],\quad\text{with}\quad \ell_q(y)=q\cdot y.
\]
Replacing $(q,\alpha)$ by $(-q,-\alpha)$ if necessary, we may assume
that
\[
\overline\Omega
\subset
[\ell_q\leq \alpha].
\]
Since $x\in H$, we have $\alpha=q\cdot x$, and hence
\begin{equation}\label{eq:supporting-normal-m}
q\cdot(y-x)\leq0
\qquad
\text{for every }y\in\overline\Omega.
\end{equation}

Now, let $\tau\in T_x\partial\Omega$. Since $\partial\Omega$ is of class $C^1$, there exists a $C^1$ curve $\gamma:(-\varepsilon,\varepsilon)\to\partial\Omega$ such that
\[
\gamma(0)=x,
\qquad
\gamma'(0)=\tau.
\]
By \eqref{eq:supporting-normal-m},
\[
q\cdot(\gamma(t)-x)\leq0
\qquad
\text{for every }|t|<\varepsilon.
\]
Dividing by $t>0$ and letting $t\downarrow0$ gives $q\cdot\tau\leq0$, whereas dividing by $t<0$ and letting $t\uparrow0$ gives $q\cdot\tau\geq0$.
Hence $q\cdot\tau=0$. Since this holds for every $\tau\in T_x\partial\Omega$, the vector $q$ is orthogonal to $T_x\partial\Omega$. Because $q$ and $\nu(x)$ are unit vectors, $q=\pm\nu(x)$.

To determine the sign, recall that, since $\partial\Omega$ is of class $C^1$ and $\nu(x)$ is the outward unit normal, there exists $\varepsilon_0>0$ such that
\[
x-s\nu(x)\in\Omega
\qquad\text{for every }0<s<\varepsilon_0.
\] 
If $q=-\nu(x)$, then
\[
q\cdot\bigl((x-s\nu(x))-x\bigr)=s>0,
\]
contradicting \eqref{eq:supporting-normal-m}. Hence $q=\nu(x)$. Consequently,
\[
H
=
\{y\in\mathbb R^n:\nu(x)\cdot(y-x)=0\}
=
H_x,
\]
and \eqref{eq:supporting-normal-m} becomes
\[
\overline\Omega
\subset
\{y\in\mathbb R^n:\nu(x)\cdot(y-x)\leq0\},
\]
which proves \eqref{eq:left-tangent-plane}.

Since the preceding argument applies to every supporting hyperplane
of $\overline\Omega$ containing $x$, each such hyperplane equals
$H_x$. Thus $H_x$ is unique.
\end{proof}

\subsection{Strictly convex domains}
\label{sec:strictly-convex-domains}

In this section, we define strictly convex domains, which naturally appear in the well-posedness theory of the Monge--Ampère equation. Strict convexity excludes nontrivial contact sets
with supporting hyperplanes.

We will first give the definition for smooth domains. If $\Omega$ is a convex $C^1$ domain, then, motivated by \eqref{eq:left-tangent-plane}, \emph{strict convexity} can be expressed as 
\begin{equation}
\label{eq:strictly-convex}
\nu(x)\cdot(y-x)<0
\qquad
\text{for every }x\in\partial\Omega
\text{ and }y\in\overline\Omega\setminus\{x\}.
\end{equation}

Using the hyperplane formulation of Lemma~\ref{lemma:convex-dom}, we may also give a more geometric formulation of strict convexity that does not require any regularity of the boundary. 
\begin{definition}[{Strictly convex domains, \cite[Section~2.4.2]{Figalli}}]
\label{def:strictly-convex-domains}
    An open convex set $\Omega$ is called
\emph{strictly convex} if every supporting hyperplane of
$\overline\Omega$ intersects $\overline\Omega$ at exactly one point.
\end{definition}

For a convex domain with $C^1$ boundary, this geometric definition is
equivalent to \eqref{eq:strictly-convex}. 
\begin{corollary}[Supporting hyperplanes and strict convexity]
\label{cor:supporting-hyperplanes}
    Let $\Omega\subset\mathbb R^n$ be a bounded convex domain with $C^1$ boundary. Then Definition~\ref{def:strictly-convex-domains} and property \eqref{eq:strictly-convex} are equivalent.
\end{corollary}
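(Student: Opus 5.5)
The plan is to prove the two implications separately, using Lemma~\ref{lemma:convex-dom}\ref{convex-c1-dom}. That lemma says that for every $x\in\partial\Omega$ the affine tangent plane $H_x$ is the unique supporting hyperplane of $\overline\Omega$ through $x$, and that $\overline\Omega$ lies in the closed half-space $\{\nu(x)\cdot(y-x)\le 0\}$.

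\emph{Definition~\ref{def:strictly-convex-domains} implies \eqref{eq:strictly-convex}.} Fix $x\in\partial\Omega$ and $y\in\overline\Omega\setminus\{x\}$. By \eqref{eq:left-tangent-plane} we already have $\nu(x)\cdot(y-x)\le0$, so we only need to exclude equality. If $\nu(x)\cdot(y-x)=0$, then $y\in H_x\cap\overline\Omega$. Since $H_x$ is a supporting hyperplane, the hypothesis gives $H_x\cap\overline\Omega=\{x\}$, which contradicts $y\neq x$. Hence the inequality is strict.

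\emph{\eqref{eq:strictly-convex} implies Definition~\ref{def:strictly-convex-domains}.} Let $H$ be a supporting hyperplane of $\overline\Omega$, and pick $x\in H\cap\overline\Omega$.

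The first step is to show $x\in\partial\Omega$. If $x$ were an interior point, a small ball around $x$ would lie in $\overline\Omega$. That ball meets both open half-spaces determined by $H$, which contradicts the fact that $\overline\Omega$ lies in one closed half-space.

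Since $x\in\partial\Omega$, Lemma~\ref{lemma:convex-dom}\ref{convex-c1-dom} gives $H=H_x$. Now take any $y\in H\cap\overline\Omega$. Then $\nu(x)\cdot(y-x)=0$, and by \eqref{eq:strictly-convex} this forces $y=x$. Therefore $H\cap\overline\Omega=\{x\}$.

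The only step needing care is that a supporting hyperplane can touch $\overline\Omega$ only at boundary points. This follows from the interior-ball argument above, so no real obstacle is expected.
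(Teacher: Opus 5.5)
Your proposal is correct and follows essentially the same argument as the paper. Both directions rest on Lemma~\ref{lemma:convex-dom}\ref{convex-c1-dom}; for the converse, you first place the contact point on $\partial\Omega$ (a hyperplane through an interior point has points of $\Omega$ on both sides), then identify $H=H_x$ and apply \eqref{eq:strictly-convex}.
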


\begin{proof}
Suppose first that $\Omega$ is strictly convex in the sense of
Definition~\ref{def:strictly-convex-domains}. Fix
$x\in\partial\Omega$ and let $H_x=x+T_x\partial\Omega$. By Lemma~\ref{lemma:convex-dom}\ref{convex-c1-dom}, $H_x$ is a
supporting hyperplane of $\overline\Omega$ containing $x$. Hence,
Definition~\ref{def:strictly-convex-domains} implies that
\[
H_x\cap\overline\Omega=\{x\}.
\]
On the other hand, \eqref{eq:left-tangent-plane} gives
\[
\nu(x)\cdot(y-x)\leq0
\qquad
\text{for every }y\in\overline\Omega.
\]
If equality held for some
$y\in\overline\Omega\setminus\{x\}$, then $y\in H_x$, contradicting
$H_x\cap\overline\Omega=\{x\}$. Therefore
\[
\nu(x)\cdot(y-x)<0
\qquad
\text{for every }y\in\overline\Omega\setminus\{x\}.
\]
Since $x\in\partial\Omega$ was arbitrary, \eqref{eq:strictly-convex}
holds.

Conversely, suppose that $\Omega$ satisfies
\eqref{eq:strictly-convex}, and let $H$ be any supporting
hyperplane of $\overline\Omega$. By definition, $H\cap\overline\Omega\neq\varnothing$. Choose $x\in H\cap\overline\Omega$. Necessarily $x\in\partial\Omega$, since every hyperplane through an interior point of $\Omega$ has points of $\Omega$ on both sides. By Lemma~\ref{lemma:convex-dom}\ref{convex-c1-dom}, $H_x$ is the unique
supporting hyperplane of $\overline\Omega$ containing $x$. Hence $H=H_x$. Now \eqref{eq:strictly-convex} gives
\[
\nu(x)\cdot(y-x)<0
\qquad
\text{for every }y\in\overline\Omega\setminus\{x\}.
\]
Consequently, no point of
$\overline\Omega\setminus\{x\}$ belongs to $H_x$, and therefore
\[
H\cap\overline\Omega
=
H_x\cap\overline\Omega
=
\{x\}.
\]
Thus every supporting hyperplane of $\overline\Omega$ intersects
$\overline\Omega$ at exactly one point, so $\Omega$ is strictly
convex in the sense of
Definition~\ref{def:strictly-convex-domains}.
\end{proof}

\begin{remark}
Corollary~\ref{cor:supporting-hyperplanes} shows that, for a strictly
convex domain with $C^1$ boundary, every boundary point is exposed.
In particular, the boundary of such a domain contains no nontrivial
line segment.
\end{remark}

\subsection{Convex functions and subdifferentials}
\label{subsec:convex-functions}

Let $\Omega\subset\mathbb R^n$ be a convex set. A function
$u\colon\Omega\to\mathbb R$ is called \emph{convex} if
\[
u((1-t)x+ty)
\leq
(1-t)u(x)+tu(y)
\]
for every $x,y\in\Omega$ and $0\leq t\leq1$.

Suppose that $\Omega$ is open and $u$ is differentiable at $x\in\Omega$. Then convexity
implies the supporting-plane inequality
\begin{equation}
\label{eq:gradient-supporting-inequality}
u(y)
\geq
u(x)+\nabla u(x)\cdot(y-x)
\qquad
\text{for every }y\in\Omega.
\end{equation}
Indeed, for $0<t\leq1$, convexity gives
\[
u(x+t(y-x))
\leq
(1-t)u(x)+tu(y).
\]
Consequently,
\[
u(y)-u(x)
\geq
\frac{u(x+t(y-x))-u(x)}{t}.
\]
Letting $t\downarrow0$ yields
\eqref{eq:gradient-supporting-inequality}. Thus the affine function
\[
\ell_x(y)
\vcentcolon=
u(x)+\nabla u(x)\cdot(y-x)
\]
touches $u$ from below at $x$. 

For a convex function that is not differentiable, there
may be more than one affine function touching its graph from below. Their slopes are described by the subdifferential $\partial u(x)$. For example, the function $\R^n\ni x\mapsto u(x)=|x|$ is not
differentiable at the origin, and every affine function
\[
y\mapsto \ell_p(y)=p\cdot y,
\qquad |p|\leq1,
\]
supports $u$ from below at the origin. Accordingly, $\partial u(0)=\overline B_1$.

\begin{definition}[Subdifferential, \cite{rockafellar1966characterization}]
\label{def:subdiff}
Let $u\colon\Omega\to\mathbb R$ be convex. For $x\in\Omega$, the
\emph{subdifferential of $u$ at $x$} is
\begin{equation}
\label{eq:subdifferential}
\partial u(x)
\vcentcolon=
\left\{
p\in\mathbb R^n:
u(y)\geq u(x)+p\cdot(y-x)
\text{ for every }y\in\Omega
\right\}.
\end{equation}
The elements of $\partial u(x)$ are called \emph{subgradients} of
$u$ at $x$.
\end{definition}

\begin{remark}[{Subdifferential at differentiability points}]
\label{rem:rel-subgrad-grad}
If $u$ is differentiable at $x\in\Omega$, then
\begin{equation}
\label{eq:classical-der-at-diff-points}
\partial u(x)=\{\nabla u(x)\}.
\end{equation}
Conversely, if $\partial u(x)$ is a singleton for some point $x\in \Omega$, then $u$ is differentiable at this point and $\partial u(x)=\{\nabla u(x)\}$. This follows from \cite[Theorem~25.1]{Rockafellar}. Thus the subdifferential is a
natural set-valued extension of the classical gradient.
\end{remark}

The next result tells us that convex functions are almost everywhere differentiable.

\begin{lemma}[{Differentiability of convex functions,\cite[Corollary~A.23]{Figalli}}]
\label{lem:diff-convex}
    Let $\Omega\subset\R^n$ be an open convex set and suppose $u\colon \Omega\to\R$ is convex. Then $u$ is locally Lipschitz continuous and
    \begin{equation}
    \label{eq:Lipschitz}
        \|\nabla u\|_{L^{\infty}(K)}\leq \frac{2\|u\|_{L^{\infty}(\Omega')}}{\dist(K,\partial\Omega')}
    \end{equation}
    for every $K\Subset\Omega'\Subset \Omega$.
\end{lemma}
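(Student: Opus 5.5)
The statement to prove is Lemma~\ref{lem:diff-convex}: a convex $u$ on an open convex $\Omega$ is locally Lipschitz, with the gradient bound \eqref{eq:Lipschitz}. The plan is first to prove a Lipschitz estimate for difference quotients on $K$. Almost-everywhere differentiability then follows from Rademacher's theorem, and the bound on $\nabla u$ is inherited from the difference quotient bound.

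\textbf{Step 1: a one-dimensional slope estimate.} Fix $K\Subset\Omega'\Subset\Omega$ and set $d=\dist(K,\partial\Omega')$ and $M=\|u\|_{L^\infty(\Omega')}$. Here $M<\infty$: $\overline{\Omega'}$ is a compact subset of $\Omega$, and convex functions are bounded above on compact subsets of an open convex set, since they are bounded by their values at the vertices of covering simplices. A lower bound follows from the upper bound via the reflection inequality $u(x)\ge 2u(x_0)-u(2x_0-x)$. Alternatively, one may take $M=\infty$ as trivially admissible and only use finiteness when it holds.

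Take $x,y\in K$ with $x\neq y$, and let $e=(y-x)/|y-x|$. Consider the line through $x$ and $y$, and the points $x_-=x-se$ and $y_+=y+se$ for $s<d$. These lie in $\Omega'$, since $\dist(x,\partial\Omega')\ge d$ and the segment is contained in the ball $B_d(x)\subset\Omega'$ (or the analogous ball around $y$). Letting $s\uparrow d$ is allowed.

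\textbf{Step 2: monotonicity of slopes.} Convexity of $g(\tau)=u(x+\tau e)$ means its difference quotients are nondecreasing. Hence
\[
\frac{u(x)-u(x_-)}{s}\le \frac{u(y)-u(x)}{|y-x|}\le \frac{u(y_+)-u(y)}{s}.
\]
Both outer quotients are bounded in absolute value by $2M/s$. Letting $s\uparrow d$ gives $|u(y)-u(x)|\le (2M/d)|y-x|$. The same argument applies with $K$ replaced by a slightly larger compact set, so $u$ is locally Lipschitz on $\Omega$.

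\textbf{Step 3: the gradient bound.} By Rademacher's theorem, $u$ is differentiable almost everywhere. Convexity in fact gives differentiability a.e. directly, since the one-sided directional derivatives exist everywhere, but Rademacher suffices. At any differentiability point $x$ in the interior of $K$, each directional derivative is a limit of difference quotients bounded by $2M/d$, so $|\nabla u(x)|\le 2M/d$. This yields \eqref{eq:Lipschitz}.

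If $K$ has empty interior, the $L^\infty$ norm over $K$ should be interpreted through the subgradients. By Step 2, every $p\in\partial u(x)$ with $x\in K$ satisfies $|p|\le 2M/d$: test the supporting inequality at $y=x+se$ with $e=p/|p|$.

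\textbf{Main obstacle.} The only delicate point is the geometric one: keeping the extended points $x_-$ and $y_+$ inside $\Omega'$ while still obtaining the sharp constant $d$. This is handled by extending by a length $s<d$ only from the nearest endpoint and then letting $s\uparrow d$.
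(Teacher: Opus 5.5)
Your proof is correct, but it is organized differently from the argument behind the paper's citation. The paper gives no proof of its own and cites \cite[Corollary~A.23]{Figalli}, which sits right after the subgradient statement \cite[Lemma~A.22]{Figalli}. The paper uses that lemma in Lemma~\ref{lemma:compactness-subgradinets}, and it uses the same trick in the proofs of Lemma~\ref{lem:gradient-convergence}\ref{conv-subgrad-2} and Lemma~\ref{lem:radial-slope}.

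The standard route there bounds subgradients first. For $x\in K$ and $0\neq p\in\partial u(x)$, test the supporting inequality at $x+sp/|p|$ with $s<d$; this gives $s|p|\leq u(x+sp/|p|)-u(x)\leq 2M$. The Lipschitz estimate then follows from $|u(y)-u(x)|\leq\max\{|p_x|,|p_y|\}\,|y-x|$. That route needs $\partial u(x)\neq\varnothing$ at every point of $\Omega$, i.e.\ a supporting hyperplane to the epigraph. In exchange it gives at once the boundedness of $\partial u(K)$, which the paper needs for local finiteness of $\mu_u$.

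Your three-slope argument along the extended chord $[x-se,\,y+se]$ is more elementary. It uses only one-dimensional convexity and the convexity of $\Omega$: the chord must stay in the domain of $u$, while only its four relevant points need to lie in $\Omega'$, which you handle correctly even though $\Omega'$ need not be convex. It never uses the existence of subgradients. Your closing subgradient estimate still recovers boundedness, though not nonemptiness, of $\partial u(K)$.

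One step needs tightening. In Step~3 you bound $\nabla u$ only at differentiability points in $\operatorname{int}K$ and single out $\operatorname{int}K=\varnothing$ as the exceptional case. But $\|\nabla u\|_{L^\infty(K)}$ is an essential supremum over all of $K$, and $K\setminus\operatorname{int}K$ can have positive Lebesgue measure even when $\operatorname{int}K\neq\varnothing$. The fix is already in your last paragraph: at every differentiability point $x\in K$ you have $\nabla u(x)\in\partial u(x)$ by \eqref{eq:gradient-supporting-inequality}. Your subgradient bound then gives $|\nabla u(x)|\leq 2M/d$ there, with no case distinction.

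Two smaller remarks. Treating $M$ as a pointwise bound on $\Omega'$ is fine, because Step~2 run with $M=\sup_{\Omega'}|u|$ already gives continuity, so the pointwise and essential suprema over the open set $\Omega'$ agree. Also, the existence of one-sided directional derivatives does not by itself give a.e.\ differentiability, but you rely on Rademacher's theorem anyway.
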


An important property of convex functions $v\colon \Omega\to\R$ is that their subgradients (up to a set of measure zero) are unique.

\begin{lemma}[{Unique subgradients, \cite[Lemma~A.30]{Figalli}}]
\label{lem:unique-subgradients}
    Let $\Omega\subset\R^n$ be an open convex set. Suppose that $v\colon \Omega \to \R$ is convex. Then the set of non-unique subgradients
    \begin{equation}
    \label{eq:equal-subgradients}
        Z_v\vcentcolon = \{p\in\R^n:p\in \partial v(x)\cap \partial v(y)\text{ for some }x\neq y\in \Omega\}
    \end{equation}
    has Lebesgue measure zero.
\end{lemma}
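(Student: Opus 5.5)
The statement to prove is Lemma~\ref{lem:unique-subgradients}: the set $Z_v$ of slopes that are subgradients at two distinct points has Lebesgue measure zero. The plan is to pass to the Legendre transform $v^*(p)\vcentcolon=\sup_{x\in\Omega}\bigl(p\cdot x-v(x)\bigr)$. This is a convex function on $\R^n$ with values in $(-\infty,+\infty]$. It is finite at every $p\in\partial v(\Omega)$, since for $p\in\partial v(x_0)$ the supremum is attained at $x_0$.

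\textbf{Step 1: duality.} First show that $p\in\partial v(x)$ implies $x\in\partial v^*(p)$. Indeed, $p\in\partial v(x)$ gives $v^*(p)=p\cdot x-v(x)$. For any $q$ we have $v^*(q)\geq q\cdot x-v(x)=v^*(p)+x\cdot(q-p)$. Consequently, if $p\in\partial v(x)\cap\partial v(y)$ with $x\neq y$, then $\partial v^*(p)\supset\{x,y\}$ contains two points, so $v^*$ is not differentiable at $p$.

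\textbf{Step 2: reduction to Lemma~\ref{lem:diff-convex}.} Let $D=\{v^*<\infty\}$, a convex set containing $Z_v$. Since $\partial D$ is the boundary of a convex set, it has measure zero, so it suffices to treat $Z_v\cap\operatorname{int}D$. On the open convex set $\operatorname{int}D$, $v^*$ is a finite convex function. By Lemma~\ref{lem:diff-convex} it is locally Lipschitz, hence differentiable almost everywhere by Rademacher's theorem. At a differentiability point the subdifferential is a singleton by Remark~\ref{rem:rel-subgrad-grad}. Step~1 then shows that $Z_v\cap\operatorname{int}D$ lies in the nondifferentiability set of $v^*$, which is null.

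\textbf{Expected obstacle.} The main subtlety is the case where $\operatorname{int}D$ is empty or $D$ is degenerate. A proper convex set with empty interior lies in an affine hyperplane and is therefore null, so this case is harmless. One must also note that the conjugate inequality in Step~1 uses only points $x\in\Omega$, where $v$ is finite, so no issue arises from taking the supremum over an open set.
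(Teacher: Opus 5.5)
Your proof is correct and follows the same idea as the paper's Remark~\ref{proof-unique-subgradients}: two distinct contact points $x\neq y$ give two subgradients of the Legendre transform $v^*$ at $p$, so $Z_v$ lies in the nondifferentiability set of $v^*$, which is null by Rademacher's theorem. The only difference is how unbounded $\Omega$ is handled: the paper exhausts $\Omega$ by $\Omega\cap B_k$, on which $v^*$ is finite and globally Lipschitz, whereas you keep $\Omega$ and work on the interior of the effective domain $\{v^*<\infty\}$, relying instead on the standard fact that the boundary of a convex set is Lebesgue-null.
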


\begin{remark}
\label{proof-unique-subgradients}
    When $\Omega\subset\R^n$ is bounded, then this result follows from Rademacher's theorem \cite[Section~3.1]{evans-mt} applied to the \emph{Legendre transform} of $v$, defined as the map $v^*\colon \R^n\to\R\cup\{\infty\}$ with
    \begin{equation}
    \label{eq:Legendre}
     v^*(p)\vcentcolon = \sup_{x\in\Omega}(x\cdot p-v(x)),\quad p\in\R^n.
    \end{equation}
    It can indeed be invoked, since the boundedness of $\Omega$ implies the Lipschitz continuity of $v^*$. If $\Omega\subset\R^n$ is a general open convex set, then one writes $\Omega=\bigcup_{k\in\N} \Omega_k$ with $\Omega_k=\Omega\cap B_k$. Since the bounded case applies to 
    \[
         Z^k_v\vcentcolon = \{p\in\R^n:p\in \partial v(x)\cap \partial v(y)\text{ for some }x\neq y\in \Omega_k\},
    \]
    the inclusion $Z_v\subset \bigcup_{k\in\N} Z_v^k$ completes the proof.
\end{remark}

Some other properties of the subgradient, which we shall need frequently are collected in the next lemma.

\begin{lemma}[Compactness of subgradients]
\label{lemma:compactness-subgradinets}
    Let $\Omega\subset\R^n$ be an open convex domain and suppose $u\colon \Omega\to\R$ is a convex function. For any compact set $K\subset\Omega$, the image $\partial u(K)$ is compact. If, in addition, the compact set $K$ is nonempty, then $\partial u(K)\neq \varnothing$. 
\end{lemma}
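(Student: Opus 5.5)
We prove the two assertions separately: compactness of $\partial u(K)$ by a local Lipschitz bound plus a closed-graph argument, and nonemptiness by the supporting hyperplane theorem applied to the epigraph.

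\emph{Boundedness.} Since $K$ is compact and $\Omega$ is open, choose an open set $\Omega'$ with $K\Subset\Omega'\Subset\Omega$, for instance a $\delta$-neighbourhood of $K$ with $2\delta<\dist(K,\partial\Omega)$. Set $\delta=\dist(K,\partial\Omega')>0$. By Lemma~\ref{lem:diff-convex}, $u$ is bounded on $\overline{\Omega'}$, say by $M$. For $x\in K$ and $p\in\partial u(x)$ with $p\neq0$, test the subgradient inequality \eqref{eq:subdifferential} at $y=x+\delta' p/|p|\in\Omega'$ with $0<\delta'<\delta$. This gives $\delta'|p|\leq u(y)-u(x)\leq 2M$. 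Hence $|p|\leq 2M/\delta'$, so $\partial u(K)$ is bounded.

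\emph{Closedness.} Take $p_k\in\partial u(x_k)$ with $x_k\in K$ and $p_k\to p$. By compactness of $K$, pass to a subsequence with $x_k\to x\in K$. For each fixed $y\in\Omega$ we have $u(y)\geq u(x_k)+p_k\cdot(y-x_k)$. The function $u$ is continuous on $\Omega$, again by Lemma~\ref{lem:diff-convex}, so letting $k\to\infty$ yields $u(y)\geq u(x)+p\cdot(y-x)$. Thus $p\in\partial u(x)\subset\partial u(K)$, and $\partial u(K)$ is closed. Together with boundedness, it is compact.

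\emph{Nonemptiness.} It suffices to show $\partial u(x)\neq\varnothing$ for a single $x\in K$. The epigraph $E=\{(y,s)\in\Omega\times\R:s\geq u(y)\}$ is convex, and its interior $\{(y,s): y\in\Omega,\ s>u(y)\}$ is nonempty by continuity of $u$. The point $(x,u(x))$ does not lie in this interior. The Hahn--Banach separation theorem, used as in Lemma~\ref{lemma:convex-dom}\ref{hyperplances-conv}, gives $(q,\beta)\neq0$ with
\[
q\cdot y+\beta s\leq q\cdot x+\beta u(x)\qquad\text{for all }(y,s)\in E.
\]
Letting $s\to\infty$ forces $\beta\leq0$. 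If $\beta=0$, then $q\cdot(y-x)\leq0$ for all $y$ in a neighbourhood of the interior point $x$, which forces $q=0$, a contradiction. So $\beta<0$. Dividing by $-\beta$ and taking $s=u(y)$ shows that $p=-q/\beta$ satisfies $u(y)\geq u(x)+p\cdot(y-x)$ for all $y\in\Omega$, i.e.\ $p\in\partial u(x)$.

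The only delicate step is excluding the vertical case $\beta=0$ in the separation argument; it is exactly where we use that $x$ is an interior point of $\Omega$. Everything else is routine.
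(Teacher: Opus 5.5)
Your proof is correct, and it is essentially the standard argument that the paper obtains by citation. The paper invokes \cite[Lemma~A.22]{Figalli} for compactness, and for nonemptiness it invokes \cite[Theorem~24.7]{Rockafellar}, applied to a closed (i.e.\ lower semicontinuous) proper convex function, with continuity supplied by Lemma~\ref{lem:diff-convex}; you instead write out the local-bound/closed-graph argument and the non-vertical supporting hyperplane to the epigraph directly on $\Omega$. The remaining issues are cosmetic: $\delta$ is reused for two different quantities, and separating $(x,u(x))$ from the open set $\operatorname{int}E$ first gives the inequality only for $s>u(y)$, which you then extend to $s=u(y)$ by letting $s\downarrow u(y)$.
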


\begin{proof}
    The first part follows from \cite[Lemma~A.22]{Figalli}. The second part follows from \cite[Theorem~24.7]{Rockafellar}, the fact that a proper convex function is closed if and only if it is lower semicontinuous, and Lemma~\ref{lem:diff-convex}.
\end{proof}

We shall repeatedly use the following elementary convergence fact for subgradients.

\begin{lemma}[Convergence of subgradients]
\label{lem:gradient-convergence}
Let $\Omega\subset\R^n$ be an open convex domain. Suppose $u_k\colon \Omega\to\R$ and $u\colon \Omega\to\R$ are convex.
\begin{enumerate}[(i)]
    \item\label{conv-subgrad-1} If $u_k\to u$ locally uniformly, and the sequences $x_k\in\Omega$, $p_k\in\partial u_k(x_k)$, $k\in\N$, satisfy
    \[
        x_k\to x\in\Omega\quad\text{and}\quad p_k\to p, \quad k\to\infty,
    \]
    then $p\in\partial u(x)$.
    \item\label{conv-subgrad-2}  If $u_k\to u$ locally uniformly, then
\begin{equation}
\label{eq:convergence-of-gradients}
  \nabla u_k(x)\to\nabla u(x),\qquad \text{as }k\to\infty
\end{equation}
    for a.e.~$x\in\Omega$.
\end{enumerate}
\end{lemma}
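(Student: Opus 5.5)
For part \ref{conv-subgrad-1} we pass to the limit in the defining supporting-plane inequality. Since $p_k\in\partial u_k(x_k)$, Definition~\ref{def:subdiff} gives
\[
u_k(y)\geq u_k(x_k)+p_k\cdot(y-x_k)\qquad\text{for every }y\in\Omega .
\]
Fix $y\in\Omega$. We first handle the term $u_k(x_k)$. Choose a compact neighbourhood $K\subset\Omega$ of $x$; then $x_k\in K$ for all large $k$. We split
\[
|u_k(x_k)-u(x)|\leq \sup_K|u_k-u|+|u(x_k)-u(x)|.
\]
The first term tends to $0$ by local uniform convergence. The second tends to $0$ because $u$ is continuous, being locally Lipschitz by Lemma~\ref{lem:diff-convex}. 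The other terms converge directly: $u_k(y)\to u(y)$, and $p_k\cdot(y-x_k)\to p\cdot(y-x)$. Letting $k\to\infty$ gives $u(y)\geq u(x)+p\cdot(y-x)$. Since $y\in\Omega$ was arbitrary, $p\in\partial u(x)$.

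For part \ref{conv-subgrad-2} we first choose a full-measure set of points. Let $N$ be the set of points of $\Omega$ where $u$ fails to be differentiable. Let $N_k$ be the corresponding set for $u_k$. By Lemma~\ref{lem:diff-convex}, and Rademacher's theorem applied to these locally Lipschitz functions, each of $N$ and $N_k$ is Lebesgue null. Hence $E:=\Omega\setminus(N\cup\bigcup_k N_k)$ has full measure. Fix $x\in E$. Then $\nabla u_k(x)$ exists for every $k$, $\nabla u(x)$ exists, and $\partial u_k(x)=\{\nabla u_k(x)\}$ by Remark~\ref{rem:rel-subgrad-grad}.

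Next we show that the sequence $(\nabla u_k(x))_k$ is bounded. Choose $r>0$ with $\overline B_{2r}(x)\subset\Omega$, and apply \eqref{eq:Lipschitz} with $K=\overline B_r(x)$ and $\Omega'=B_{2r}(x)$ to obtain
\[
|\nabla u_k(x)|\leq \frac{2\|u_k\|_{L^\infty(B_{2r}(x))}}{r}.
\]
The right-hand side is bounded in $k$, because $u_k\to u$ uniformly on $\overline B_{2r}(x)$ and $u$ is bounded there. (If one prefers to avoid evaluating an $L^\infty$ bound pointwise, the subgradient inequality tested at points $x\pm r e$ gives the same bound directly.)

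We conclude with a compactness argument. Let $p$ be any cluster point of $(\nabla u_k(x))_k$. Applying part \ref{conv-subgrad-1} along the corresponding subsequence, with $x_k=x$, gives $p\in\partial u(x)=\{\nabla u(x)\}$. So every cluster point of the bounded sequence equals $\nabla u(x)$, and therefore the whole sequence converges to $\nabla u(x)$.

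The only step needing care is the uniform bound on the gradients. Even there, local uniform convergence makes the bound routine, so we expect no real obstacle.
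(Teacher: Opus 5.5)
Your proposal is correct and follows the paper's proof essentially step by step. For (i) you pass to the limit in the subgradient inequality. For (ii) you bound $\nabla u_k(x)$ at common differentiability points, identify every cluster point as $\nabla u(x)$ via (i) and uniqueness of the subgradient, and conclude with Rademacher's theorem. The only cosmetic difference is that your main gradient bound comes from \eqref{eq:Lipschitz}, whereas the paper tests the subgradient inequality at $x+r\,p_k/|p_k|$; your parenthetical alternative is exactly the paper's argument.
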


\begin{proof}
   \ref{conv-subgrad-1}: For each $k\in\N$, we have
\[
 u_k(z)\geq u_k(x_k)+p_k\cdot(z-x_k),\quad \text{for all }z\in\Omega.
\]
Since the convergence $u_k\to u$ is uniform on compact sets and $u$ is continuous, we have $u_{\ell}(x_{\ell})\to u(x)$, as $\ell\to\infty$. Thus passing to the limit $k\to\infty$ yields
\[
 u(z)\geq u(x)+p\cdot(z-x),\quad  \text{for all }z\in\Omega.
\]
Hence $p\in\partial u(x)$. 

\ref{conv-subgrad-2}: We first show that \eqref{eq:convergence-of-gradients} holds for every point in which $u_k$, $k\in\N$, and $u$ are differentiable. Suppose that $x\in \Omega$ is such a differentiablity point. Set $p_k\vcentcolon = \nabla u_k(x)$, for $k\in\N$.
We first show that $(p_k)_{k\in\N}$ is bounded. Choose $r>0$ such that
$\overline B_r(x)\subset\Omega$. Local uniform convergence implies
that there exists $M>0$ such that
\[
|u_k(y)|\leq M
\qquad
\text{for every }y\in\overline B_r(x)
\]
and every sufficiently large $k$. If $p_k\neq0$, apply the
subgradient inequality with
\[
y_k\vcentcolon = x+r\frac{p_k}{|p_k|}\in\overline B_r(x).
\]
Then
\[
M
\geq
u_k(y_k)
\geq
u_k(x)+p_k\cdot(y_k-x)
=
u_k(x)+r|p_k|
\geq
-M+r|p_k|.
\]
Consequently, $(p_k)_{k\in\N}$ is bounded and every subsequence of $(p_k)_{k\in\N}$ has a further convergent subsequence. If $p_{k_j}\to q$, the first part of the proof, applied with $x_{k_j}=x$, shows that $q\in\partial u(x)$. Since $u$ is differentiable at $x$, Remark~\ref{rem:rel-subgrad-grad} gives $q=\nabla u(x)$. Thus every convergent subsequence of $(p_k)_{k\in\N}$ converges to the same limit $\nabla u(x)$. It follows that the entire sequence converges to $\nabla u(x)$, which establishes \eqref{eq:convergence-of-gradients} for all common differentiability points of $u_k$, $k\in\N$, and $u$.

Finally, by Lemma~\ref{lem:diff-convex}, the functions $u_k$, $k\in\N$, and $u$ are locally Lipschitz in $\Omega$. Thus, Rademacher's theorem \cite[Section~3.1]{evans-mt} implies that the function $u_k,u$ are differentiable almost everywhere. Hence, the convergence \eqref{eq:convergence-of-gradients} holds for a.e.~$x\in\Omega$.
\end{proof}

\subsection{Uniformly convex \texorpdfstring{$C^{1,1}$}{C11} domains and their representation}
\label{subsec:uniform-convex-domains}

In this section, we define the notion of uniform convexity, which imposes a quantitative quadratic separation of the boundary from every tangent hyperplane. The main result of this section establishes a representation theorem for uniformly convex $C^{1,1}$ domains.

\begin{definition}
\label{def:uniformly-convex-domains}
    Let $\Omega\subset\R^n$ be a bounded $C^1$ domain. Then $\Omega$ is called \emph{uniformly convex}, if there exists a constant $\kappa_\Omega>0$ such that
\begin{equation}
\label{eq:geometric-uniform-convexity}
\nu(x)\cdot(y-x)
\leq
-\kappa_\Omega|y-x|^2
\qquad
\text{for all }x\in\partial\Omega
\text{ and }y\in\overline\Omega,
\end{equation}
where $\nu\in C^0(\partial\Omega)$ is the outward pointing unit normal field to $\partial\Omega$. 
\end{definition}

Lemma~\ref{lem:defining-function} shows, in particular, that a
uniformly convex $C^{1,1}$ domain is convex. Moreover,
\eqref{eq:geometric-uniform-convexity} then immediately implies
\eqref{eq:strictly-convex}, so such a domain is strictly convex. Moreover, this result establishes an equivalent reformulation of uniformly convex domains through defining-functions, which will be needed in the barrier argument (Proposition~\ref{prop:boundary-Lipschitz}). Since the proof is rather technical and lengthy we present it in Appendix~\ref{sec:representation-thm}.

\begin{lemma}[Defining function formulation]
\label{lem:defining-function}
Let $\Omega\subset\mathbb R^n$ be a bounded domain with $C^{1,1}$
boundary. Then the following conditions are equivalent:
\begin{enumerate}[(i)]
\item\label{unif-convex-1} $\Omega$ is uniformly convex in the sense of Definition~\ref{def:uniformly-convex-domains}.
\item\label{unif-convex-2} There exist an open convex neighborhood $U$ of $\overline\Omega$, a function $\rho\in C^{1,1}(U)$, and a constant $c_0>0$ such that $\rho$ is \emph{uniformly convex} with constant $c_0$, meaning that
\begin{equation}
\label{eq:uniform-convex-fct}
    D^2\rho\geq c_0I\quad\text{a.e. in }U,
\end{equation}
and $\overline{\Omega}$ can be represented as
\begin{equation}
\label{eq:strong-defining-function}
    \Omega=\{x\in U:\rho(x)<0\}\quad\text{and}\quad
\partial\Omega=\{x\in U: \rho(x)=0\}.
\end{equation}
\end{enumerate}
In fact, under \ref{unif-convex-1}, one may take
$U=\mathbb R^n$. Moreover, every defining function $\rho$ satisfying \ref{unif-convex-2} obeys
\begin{equation}\label{eq:rep-normal}
\nu(x)
=
\frac{\nabla\rho(x)}{|\nabla\rho(x)|},
\qquad x\in\partial\Omega.
\end{equation}
\end{lemma}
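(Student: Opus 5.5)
We prove the two implications separately, together with the normal formula \eqref{eq:rep-normal}.

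\emph{The implication \ref{unif-convex-2}$\Rightarrow$\ref{unif-convex-1} and formula \eqref{eq:rep-normal}.} Since $\rho\in C^{1,1}(U)$ is convex on the convex set $U$ and $\Omega$ is a nonempty sublevel set, we first note that $\min\rho<0$. At any $x\in\partial\Omega$ convexity gives $\nabla\rho(x)\cdot(y-x)\le\rho(y)-\rho(x)<0$ for $y\in\Omega$, so $\nabla\rho(x)\neq0$. The implicit function theorem then shows that $\partial\Omega=\{\rho=0\}$ is a $C^{1,1}$ hypersurface with normal parallel to $\nabla\rho(x)$. Because $\rho<0$ inside $\Omega$, the outward direction is $+\nabla\rho$, which gives \eqref{eq:rep-normal}.

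Next we integrate $D^2\rho\ge c_0I$ along segments. This is legitimate since $\nabla\rho$ is Lipschitz, so $t\mapsto\nabla\rho(x+t(y-x))\cdot(y-x)$ is absolutely continuous with a.e.\ derivative at least $c_0|y-x|^2$; a Fubini argument handles the a.e.\ issue on segments. The result is
\[
\rho(y)\ge\rho(x)+\nabla\rho(x)\cdot(y-x)+\tfrac{c_0}{2}|y-x|^2 .
\]
For $x\in\partial\Omega$ and $y\in\overline\Omega$ we have $\rho(y)\le0=\rho(x)$. Hence $\nabla\rho(x)\cdot(y-x)\le-\tfrac{c_0}{2}|y-x|^2$. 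Dividing by $|\nabla\rho(x)|\le M:=\sup_{\partial\Omega}|\nabla\rho|<\infty$ gives \eqref{eq:geometric-uniform-convexity} with $\kappa_\Omega=c_0/(2M)$.

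\emph{The implication \ref{unif-convex-1}$\Rightarrow$\ref{unif-convex-2}, with $U=\R^n$.} This is the hard direction.

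First, $\Omega$ is convex. Indeed, \eqref{eq:geometric-uniform-convexity} puts $\overline\Omega$ inside $\bigcap_{x\in\partial\Omega}\{\nu(x)\cdot(y-x)\le0\}$. Conversely, a point $z$ outside $\overline\Omega$ is separated by the tangent plane at a nearest boundary point $x$, where $z-x$ is parallel to $\nu(x)$. So $\overline\Omega$ is an intersection of half-spaces.

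Second, \eqref{eq:geometric-uniform-convexity} upgrades to a ball condition: $\overline\Omega\subset\overline B_R(x-R\nu(x))$ with $R=1/(2\kappa_\Omega)$. To see this, expand $|y-x+R\nu|^2$ and use the inequality; the computation is routine.

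Third, we build a candidate defining function. A natural choice is the gauge-type function
\[
\rho(y)=\sup_{x\in\partial\Omega}\Bigl(\tfrac12|y-(x-R\nu(x))|^2-\tfrac{R^2}{2}\Bigr).
\]
It vanishes exactly on $\partial\Omega$, is negative in $\Omega$, and is a supremum of $1$-convex quadratics, so $D^2\rho\ge I$ in the viscosity/a.e.\ sense.

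The difficulty is $C^{1,1}$ regularity. A supremum of quadratics is semiconvex but need not be $C^{1,1}$. The upper bound $D^2\rho\le CI$ needs the interior ball condition supplied by the $C^{1,1}$ boundary: there is a uniform radius $r$ with $B_r(x-r\nu(x))\subset\Omega$. It also needs uniqueness of the maximizing $x$ near $\partial\Omega$, so that $\rho$ is semiconcave there.

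I therefore expect a safer route. Use the signed distance $d$, which is $C^{1,1}$ in a tubular neighbourhood $\{|d|<r\}$ by the two-sided ball conditions; uniform convexity gives $D^2d\ge\kappa' P_{\nu}$ in the tangential directions near $\partial\Omega$. Then set
\[
\rho=d+A d^2
\]
near the boundary, to gain convexity in the normal direction. Finally, glue $\rho$ to $\tfrac{c}{2}|y-y_0|^2-C$ in the interior and far away, using a $C^{1,1}$ convex regularized maximum, for example $\max$ smoothed by mollifying in $\R^2$. The gluing must preserve both the lower Hessian bound and the zero set.

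Verifying the Hessian bounds of $d$ a.e.\ under only $C^{1,1}$ regularity is the main obstacle. Here I would work with the nearest-point projection, which is Lipschitz, and obtain the tangential curvature lower bound from the interior-ball comparison $B_R(x-R\nu(x))\supset\Omega$.
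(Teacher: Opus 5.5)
Your proof of (ii)$\Rightarrow$(i) and of \eqref{eq:rep-normal} is correct, and so is your convexity argument for $\Omega$ under (i); both are essentially the paper's. You integrate $D^2\rho\ge c_0I$ along segments where the paper mollifies, and you obtain the same constant $\kappa_\Omega=c_0/(2\max_{\partial\Omega}|\nabla\rho|)$. The direction (i)$\Rightarrow$(ii), however, is only a plan. Its analytic core, the a.e.\ bound $D^2d\ge\kappa'P$, is explicitly deferred. Closing it requires three things:
\begin{enumerate}[(a)]
\item tangential differentiability of $\nu$ off an $\mathcal H^{n-1}$-null set $N$, with $D_\tau\nu(x)\cdot\tau\ge2\kappa_\Omega|\tau|^2$ (this is Lemma~\ref{lem:second-order-boundary-geometry}, or your enclosing-ball comparison at good points);
\item nullity of the image of $N\times(-r,r)$ under the Lipschitz map $(x,t)\mapsto x+t\nu(x)$;
\item at $y=x+t\nu(x)$ with $x\notin N$, differentiability of the Lipschitz inverse $y\mapsto(\pi(y),d(y))$. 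This gives $D^2d(y)=D\nu(x)(I+tD\nu(x))^{-1}$ on $T_x\partial\Omega$ and $D^2d(y)\nu(x)=0$.
\end{enumerate}

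More seriously, as written your construction does not cover the exterior. You only claim $d\in C^{1,1}$ on $\{|d|<r\}$, so on $\{d\ge r\}$ your function must be the round quadratic $q$. A regularized maximum satisfies $f\ge\max\{d+Ad^2,q\}$. Hence $f=0$ on $\partial\Omega$ forces $q\le0$ on $\partial\Omega$, and therefore on $\overline\Omega$. On the other hand, $f=q$ near the outer edge of the tube forces $q>0$ there. The ball $\{q\le0\}$ would then satisfy $\overline\Omega\subset\{q\le0\}\subset\{d<r\}$. This fails for a sufficiently eccentric ellipse, whose $r$ is at most its minimal radius of curvature.

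The remedy uses convexity. The metric projection onto $\overline\Omega$ is $1$-Lipschitz on the whole exterior, so $\nabla d=\nu\circ\pi$ is Lipschitz on $\{d>-r\}$. There, $d+Ad^2$ has normal Hessian eigenvalue $2A$ and tangential eigenvalues $(1+2Ad)\kappa_i/(1+d\kappa_i)$, where $\kappa_i\ge2\kappa_\Omega$ are the eigenvalues of $D\nu$ at $\pi(y)$. These are $\ge\min\{\kappa_i,2A\}$ for $d\ge0$ and $\ge\kappa_i/2$ for $-1/(4A)\le d\le0$. So $d+Ad^2$ already works on all of $\{d>-\delta\}$, and only an interior gluing remains. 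For that you need a nearly flat quadratic whose level $-C$ lies strictly between $-\delta+A\delta^2$ and $-\delta/2+A\delta^2/4$, the values of $d+Ad^2$ on $\{d=-\delta\}$ and $\{d=-\delta/2\}$.

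The paper avoids tubes and gluing altogether. It takes the Minkowski gauge $p$ of $\Omega-x_*$ and sets $F=\frac12p^2$. Two-homogeneity upgrades $C^{1,1}$ on an annulus, obtained from the $C^{1,1}$ inverse of the radial projection $\partial(\Omega-x_*)\to\mathbb S^{n-1}$, to $C^{1,1}(\R^n)$. The identity $D^2F(t\xi)=D^2F(\xi)$ reduces the lower bound to good boundary points. There $D^2F(\xi)[\tau,\tau]=\mathrm{II}_\xi(\tau,\tau)/(\xi\cdot\nu(\xi))$, $D^2F(\xi)[\xi,\xi]=1$ and $D^2F(\xi)[\xi,\tau]=0$, and one sets $\rho=2F(\cdot-x_*)-1$.
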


\begin{remark}\label{rem:Figalli-defining-function}
 We point out that in the smooth
setting, \cite[Section~3.1.2, especially footnote~1 on
pp.~40--41]{Figalli} constructs a uniformly convex function
vanishing on $\partial\Omega$ by first using
\[
-\dist(\,\cdot\,,\partial\Omega)
+\dist(\,\cdot\,,\partial\Omega)^2
\]
near the boundary and then extending it as a uniformly convex
function through the interior. Lemma~\ref{lem:defining-function} gives the corresponding $C^{1,1}$ defining-function characterization used here.
\end{remark}

\section{Alexandrov solutions}
\label{sec:Alexandrov-sol}

In this section, we review the theory of generalized solutions of the Monge--Ampère equation, nowadays called \emph{Alexandrov solutions}. A.~D.~Alexandrov introduced the higher-dimensional theory of generalized solutions in his study of the Dirichlet problem \cite{AL4}, building on his earlier work on convex surfaces \cite{AL0,AL1,AL2,AL3}. Standard references for the modern theory are the monographs \cite{Gutierrez,Figalli,le2024analysis,aubin2012nonlinear}. 

The motivation for and precise definition of Alexandrov solutions
are given in Section~\ref{subsec:def-alexandrov}. On the way we also introduce the Monge--Ampère measures $\mu_u$, whose properties are studied in Section~\ref{subsec:prop-MA-meas}. Then in Section~\ref{sec:Dirichlet-theory} we revisit the Dirichlet theory of Alexandrov solutions.

\subsection{Definition of Alexandrov solutions}
\label{subsec:def-alexandrov}

An Alexandrov solution of the Monge--Ampère equation
\begin{equation}
\label{eq:mot-MA-eq}
    \det D^2 u=f\quad \text{in }\Omega
\end{equation}
is a convex function $u\colon \Omega\to \R$, which solves \eqref{eq:mot-MA-eq} only in a generalized or weak sense. Here and throughout this section, $\Omega\subset\R^n$ denotes an open convex set. Similar to elliptic PDEs in divergence form, such as the conductivity equation $\Div(A\nabla v)=0$, an appropriate notion of generalized or weak solutions should have the following properties:
\begin{enumerate}[(a)]
    \item\label{lower-reg} \textit{regularity}: Weak formulation makes sense for convex functions $u$, which are not twice differentiable.
    \item\label{strong-are-weak} \textit{strong $\Rightarrow$ weak}: Every strong solution of \eqref{eq:mot-MA-eq} is a weak solution.
    \item\label{weak-eq-strong} \textit{weak = strong}: If $u\in C^2(\Omega)$ is a weak solution of \eqref{eq:mot-MA-eq} with $f\in C(\Omega)$, then $u$ solves the PDE pointwise.
\end{enumerate}
For the divergence form equation $\Div(A\nabla v)=0$ the weak formulation is found by integration by parts. For the Monge--Ampère equation \eqref{eq:mot-MA-eq}, the appropriate replacement for integration by parts is provided by the \emph{area formula} \cite[Section~3.3]{evans-mt}, which can be formulated as follows. For every Lipschitz function $F\colon \R^n\to\R^m$ ($m\geq n$) and every Lebesgue measurable set $A\subset \R^n$ one has 
\begin{equation}
\label{eq:area-formula}
    \int_{A}JF\,dx=\int_{\R^m}\mathcal{H}^0(A\cap F^{-1}(y))\,d\mathcal{H}^n (y).
\end{equation}
We recall that $\mathcal{H}^0$ is the counting measure and $JF$ is the Jacobian of $DF\colon \R^n\to\R^m$, which can be calculated as
\[
    JF(x)\vcentcolon = \sqrt{\det [(DF(x))^T(DF(x))]}\quad\text{for a.e. }x\in\R^n.
\]
This implies the next result.
\begin{lemma}
\label{lemma:Area-formula-grad}
    Let $\Omega\subset\R^n$ be an open convex set and assume that $u\in C^{1,1}(\overline{\Omega})$ is convex. Then, for every Lebesgue measurable set $A\subset\Omega$,
    \begin{equation}
\label{eq:area-formula-gradient}
    \int_A \det D^2 u\,dx= \mathcal{L}^n(\nabla u(A)).
\end{equation}
\end{lemma}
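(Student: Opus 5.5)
The plan is to apply the area formula \eqref{eq:area-formula} with $m=n$ to the map $F=\nabla u$. Since $u\in C^{1,1}(\overline\Omega)$, the gradient $\nabla u$ is Lipschitz on $\overline\Omega$. By Kirszbraun's theorem, or by a componentwise McShane extension, it extends to a Lipschitz map $F\colon\R^n\to\R^n$. Rademacher's theorem then gives that $DF=D^2u$ exists a.e. in $\Omega$. Convexity of $u$ forces $D^2u\geq0$ at every such point, since there $D^2u$ is the derivative of the monotone map $\nabla u$. Hence $JF=|\det D^2u|=\det D^2u$ a.e. in $\Omega$.

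For a Lebesgue measurable $A\subset\Omega$, \eqref{eq:area-formula} yields
\[
\int_A\det D^2u\,dx=\int_{\R^n}\mathcal H^0\bigl(A\cap(\nabla u)^{-1}(p)\bigr)\,dp ,
\]
using $\mathcal H^n=\mathcal L^n$ on $\R^n$. The integrand is at least $\mathbbm 1_{\nabla u(A)}$, which gives the inequality $\geq\mathcal L^n(\nabla u(A))$. The converse requires that the multiplicity exceed one only on a null set. A value $p$ taken at two distinct points $x\neq y$ of $\Omega$ satisfies $p\in\partial u(x)\cap\partial u(y)$ by Remark~\ref{rem:rel-subgrad-grad}. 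So $p$ lies in the set $Z_u$ of Lemma~\ref{lem:unique-subgradients}, which has Lebesgue measure zero. Therefore $\mathcal H^0(A\cap(\nabla u)^{-1}(p))=\mathbbm 1_{\nabla u(A)}(p)$ for a.e.\ $p$, and \eqref{eq:area-formula-gradient} follows.

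The step needing the most care is measure-theoretic bookkeeping rather than analysis. First, $\nabla u(A)$ must be Lebesgue measurable so that its measure is meaningful. Here $A$ is a countable union of compact sets plus a null set, and Lipschitz maps send compacts to compacts and null sets to null sets, so the image is measurable. Second, the extension $F$ must agree with $\nabla u$ on $A$, which holds by construction. Third, if $\Omega$ is unbounded one works on $A\cap B_k$ and uses monotone convergence.
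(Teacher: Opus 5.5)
Your proposal is correct and follows essentially the same route as the paper's proof: a Kirszbraun extension of $\nabla u$, the area formula with $JF=\det D^2u\geq 0$ a.e.\ in $\Omega$, and Lemma~\ref{lem:unique-subgradients} to reduce the multiplicity $\mathcal H^0(A\cap(\nabla u)^{-1}(p))$ to the indicator of $\nabla u(A)$ for a.e.\ $p$. Your explicit check that $\nabla u(A)$ is Lebesgue measurable fills in a detail the paper leaves implicit, while the reduction to $A\cap B_k$ for unbounded $\Omega$ is unnecessary, since $\nabla u$ is globally Lipschitz and the area formula holds for arbitrary Lebesgue measurable $A$.
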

\begin{proof}
    First, by  Kirszbraun's theorem \cite[Section~2.10.34]{federer2014geometric}, we may choose a Lipschitz function $V\colon \R^n\to\R^n$ such that $V=\nabla u$ on $\overline{\Omega}$. Thus, applying the area formula \eqref{eq:area-formula} with $F=V$, we get
    \[   
    \int_{A}JV\,dx=\int_{\R^n}\mathcal{H}^0(A\cap V^{-1}(y))\,d\mathcal{H}^n(y).
    \]
    Since $u\in C^{1,1}(\overline{\Omega})$ is convex, we deduce that $D^2u$ is symmetric and nonnegative for a.e.~$x\in\Omega$. Therefore, 
    \[
        JV=|\det DV|=|\det D^2 u|=\det D^2 u\quad\text{in }\Omega.
    \]
    Thus, $A\subset \Omega$, $V=\nabla u$ in $\Omega$, and $\mathcal{H}^n=\mathcal{L}^n$ on $\R^n$ \cite[Section~2.2]{evans1991measure}, imply that
     \[   \int_{A}\det D^2 u\,dx=\int_{\R^n}\mathcal{H}^0(A\cap (\nabla u)^{-1}(y))\,dy.
    \]
    The right hand side can be simplified as follows
    \[
        \begin{split}
            \int_{\R^n}\mathcal{H}^0(A\cap (\nabla u)^{-1}(y))\,dy&=\int_{\R^n}\mathcal{H}^0(\{x\in A: \nabla u(x)=y\})\,dy\\
            &=\int_{\nabla u(A)}\mathcal{H}^0(\{x\in A: \nabla u(x)=y\})\,dy\\
            &=\mathcal{L}^n(\nabla u(A)),
        \end{split}
    \]
    where in the last step we applied Lemma~\ref{lem:unique-subgradients}. This completes the proof.
\end{proof}

Recall that for every convex function $v\vcentcolon \Omega\to\R$ one has $\partial v(x)=\{\nabla v(x)\}$, whenever $v$ is differentiable at $x\in\Omega$. Thus formula \eqref{eq:area-formula-gradient} can be rewritten as
\begin{equation}
\label{eq:mot-MA-meas}
    \int_A \det D^2 u\,dx= \mathcal{L}^n(\partial u(A)),
\end{equation}
when $u\in C^{1,1}(\overline{\Omega})$. The right-hand side remains meaningful for an arbitrary convex function $u\vcentcolon \Omega\to\mathbb R$ and every set $A\subset\Omega$. This suggests giving it its own name and defining generalized solutions through it.

\begin{definition}
\label{def:MA-measure}
    Let $u\colon \Omega\to\R$ be a convex function, defined on a convex open set $\Omega\subset\R^n$. Then the \emph{Monge--Ampère measure} induced by $u$ is defined as
\begin{equation}
\label{eq:MA-measure}
  \mu_u\colon 2^\Omega \to [0,\infty],\quad \mu_u(A)=\cL^n(\partial u(A)).
\end{equation}
\end{definition}

Recall that, according to our convention, measures are outer measures defined on all subsets of the underlying space. This differs slightly from the convention in
\cite[proof of Theorem~2.3, Step~1]{Figalli}, where $\mu_u$ is considered on the $u$-dependent $\sigma$-algebra
\[
    \mathcal{F}_u\vcentcolon = \{A\subset \Omega: \partial u(A)\text{ is Lebesgue measurable}\},
\]
which contains the Borel sets in $\Omega$. The next result, Lemma~\ref{lem:MA-measure}, shows that every Borel set is $\mu_u$-measurable and thereby justifies the name Monge--Ampère measure. It also establishes its connection with the classical Monge--Ampère operator.

\begin{lemma}[{Monge--Ampère measure}]
\label{lem:MA-measure}
    Let $\Omega\subset\R^n$ be an open convex set. Assume that $u\colon\Omega\to\R$ is convex and let $\mu_u$ be the Monge--Amp\`ere measure defined by \eqref{eq:MA-measure}.
    \begin{enumerate}[(i)]
        \item\label{general-case} The set function $\mu_u$ is a locally finite Borel measure on $\Omega$.
        \item\label{C1-case} If $u\in C^1(\Omega)$, then $\mu_u$ is a Radon measure.
        \item\label{C11-case} If $u\in C^{1,1}(\overline{\Omega})$, then $\mu_u=(\det D^2u)\,dx$ with $\det D^2u\in L^\infty(\Omega)$.
    \end{enumerate}
\end{lemma}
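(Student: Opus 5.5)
To prove (i), I will follow the classical route (cf.\ the proof of \cite[Theorem~2.3]{Figalli}), recast for outer measures.

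\emph{Outer measure.} Since $\mathcal L^n$ is an outer measure and $\partial u(\bigcup_k A_k)=\bigcup_k\partial u(A_k)$, the set function $\mu_u$ is monotone, satisfies $\mu_u(\varnothing)=0$, and is $\sigma$-subadditive.

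\emph{Local finiteness.} For compact $K\subset\Omega$, Lemma~\ref{lemma:compactness-subgradinets} shows that $\partial u(K)$ is compact. Hence $\mu_u(K)<\infty$.

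\emph{Borel measurability.} I will use Carathéodory's criterion: an outer measure on a metric space is Borel if it is additive on positively separated sets. So let $A,B\subset\Omega$ with $\dist(A,B)>0$. Then
\[
\partial u(A)\cap\partial u(B)\subset Z_u ,
\]
which is Lebesgue null by Lemma~\ref{lem:unique-subgradients}. Let $E$ be a Lebesgue measurable hull of $\partial u(A)$ and $F$ one of $\partial u(B)$ (these exist because $\mathcal L^n$ is Borel regular). A hull of $Z_u$ is still null. Using the Carathéodory measurability of $E$ and removing a null set from the overlap, I expect
\[
\mathcal L^n(\partial u(A)\cup\partial u(B))=\mathcal L^n(\partial u(A))+\mathcal L^n(\partial u(B)).
\]
A cleaner alternative is to prove directly that $\mu_u$ is Carathéodory-additive on every set $S$ split by a Borel set $A$. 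For that I would first show that $\partial u(A)$ is Lebesgue measurable for compact $A$ (it is compact), then for $F_\sigma$ sets, and then use $\partial u(A^c)\setminus\partial u(A)\subset\partial u(A^c)$ together with $\partial u(A)\cap\partial u(A^c)\subset Z_u$. The main obstacle is exactly this step: handling non-measurable $S$ rigorously. My first attempt will be the metric-outer-measure criterion, which only needs the null set $Z_u$ and measurable hulls.

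\emph{Radon property, (ii).} Here I will invoke the remark after the definition of Radon measures: on open subsets of $\mathbb R^n$, a locally finite Borel regular measure is Radon. It therefore suffices to show Borel regularity when $u\in C^1$. In that case $\partial u=\nabla u$ is continuous and single valued. Given $A$, take a Borel (even $G_\delta$) set $G\supset\partial u(A)$ with the same measure, and set $B=(\nabla u)^{-1}(G)$. This $B$ is Borel, contains $A$, and satisfies $\nabla u(B)\subset G$, so $\mu_u(B)\le\mathcal L^n(G)=\mu_u(A)$. Combined with monotonicity, $\mu_u(B)=\mu_u(A)$.

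\emph{Part (iii).} For $u\in C^{1,1}(\overline\Omega)$, Lemma~\ref{lemma:Area-formula-grad} together with $\partial u=\{\nabla u\}$ gives $\mu_u(A)=\int_A\det D^2u\,dx$ for every Lebesgue measurable $A$, in particular for every Borel set. Since $D^2u$ is bounded a.e., we get $\det D^2u\in L^\infty(\Omega)$.
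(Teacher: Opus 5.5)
Apart from one step, your plan coincides with the paper's. The outer-measure and local-finiteness arguments, the Borel-regularity argument in (ii) via $(\nabla u)^{-1}(G)$, and the area-formula identity in (iii) are the same.

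\textbf{The gap: Borel measurability in (i).} It lies in the route you commit to first. From $\partial u(A)\cap\partial u(B)\subset Z_u$ and measurable hulls alone you cannot obtain $\mathcal L^n(\partial u(A)\cup\partial u(B))=\mathcal L^n(\partial u(A))+\mathcal L^n(\partial u(B))$. If $E$ is a hull of $\partial u(A)$, splitting by $E$ leaves the term $\mathcal L^n(\partial u(B)\cap E)$. The set $\partial u(B)\cap E$ is not contained in $Z_u$, because the hull contains points that are not subgradients at points of $A$, so there is no null set to remove. In fact, your argument uses $\dist(A,B)>0$ only through $A\cap B=\varnothing$. If it worked, it would give additivity for every disjoint pair, and hence every subset of $\Omega$ would be $\mu_u$-measurable. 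For $u(x)=\frac12|x|^2$ one has $Z_u=\varnothing$ and $\mu_u(A)=\mathcal L^n(A)$ for all $A\subset\Omega$. This would make every subset of $\Omega$ Lebesgue measurable, which is false.

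\textbf{How to repair it.} What is missing is a Lebesgue measurable set that contains $\partial u(A)$ and meets $\partial u(B)$ only inside $Z_u$; this is where the positive distance must enter. The set $G_A\vcentcolon=\partial u(\overline A\cap\Omega)$ does this, although it is not a hull of $\partial u(A)$ in general.
\begin{itemize}
\item $\overline A\cap\Omega$ is a countable union of compact subsets of $\Omega$, so $G_A$ is $F_\sigma$ by Lemma~\ref{lemma:compactness-subgradinets}.
\item $\overline A\cap B=\varnothing$, so $\partial u(B)\cap G_A\subset Z_u$.
\end{itemize}
Testing the measurability of $G_A$ on $\partial u(A)\cup\partial u(B)$ then gives the additivity, and Carath\'eodory's criterion finishes (i). This repaired route is slightly more economical than the paper's. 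It needs measurability of $\partial u$ only on relatively closed sets, which is immediate, rather than on all Borel sets.

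\textbf{Your fallback is the paper's route.} The paper takes from \cite[Theorem~2.3, Step~1]{Figalli} that $\partial u(A)$ is Lebesgue measurable for every Borel $A$. The underlying reason is that such sets form a $\sigma$-algebra containing the compact sets, since $\partial u(\Omega\setminus A)\mathbin\triangle(\partial u(\Omega)\setminus\partial u(A))\subset Z_u$. With that in hand, a nonmeasurable test set $S$ is no obstacle at all. The sets $C=\partial u(S\cap A)\subset M=\partial u(A)$ and $D=\partial u(S\setminus A)$ satisfy $D\cap M\subset Z_u$. Splitting $C\cup D$ by $M$ therefore yields $\mu_u(S)=\mu_u(S\cap A)+\mu_u(S\setminus A)$ in a few lines.

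\textbf{A smaller omission in (iii).} By the paper's convention, the identity in (iii) is an equality of outer measures on all subsets of $\Omega$, whereas you prove it only on measurable sets. The remaining step is short. Both sides are Borel regular: for $\mu_u$ this follows from (ii), since $C^{1,1}(\overline\Omega)\subset C^1(\Omega)$. They agree on Borel sets, and hence agree on every subset.
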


\begin{proof}
  \ref{general-case}: We first show that $\mu_u$ is an (outer) measure. Clearly, $\mu_u(\varnothing)=0$. The $\sigma$-subadditivity of $\mu_u$ follows from the identity
\[
    \partial u \left(\bigcup_{j=1}^{\infty} A_j\right)=\bigcup_{j=1}^{\infty}\partial u(A_j)
\]
and the $\sigma$-subadditivity of $\mathcal{L}^n$. Thus $\mu_u$ is a measure.

We next prove that every Borel subset of $\Omega$ is
$\mu_u$-measurable. Introduce the exceptional set
\[
Z_u
\vcentcolon=
\left\{
p\in\mathbb R^n:
p\in\partial u(x)\cap\partial u(y)
\text{ for some }x,y\in\Omega,\ x\neq y
\right\}.
\]
By Lemma~\ref{lem:unique-subgradients}, $\cL^n(Z_u)=0$. Moreover, the proof of \cite[Theorem~2.3, Step~1]{Figalli} shows that $\partial u(A)$ is Lebesgue measurable whenever $A\subset\Omega$ is Borel.

Let $S\subset\Omega $ be arbitrary and let $A\subset\Omega $ be Borel. We need to verify that
\[
    \mu_u(S)=\mu_u(A\cap S)+\mu_u(S\setminus A).
\]
To this end, set
\[
C\vcentcolon=\partial u(S\cap A),
\qquad
D\vcentcolon=\partial u(S\setminus A),
\qquad
M\vcentcolon=\partial u(A).
\]
Then
\[
\partial u(S)=C\cup D,
\quad 
C\subset M, \quad\text{and}\quad D\cap M\subset Z_u.
\]
By \cite[Theorem~2.3, Step~1]{Figalli}, $M$ is Lebesgue measurable. Setting $D_0=D\setminus M$, we have
\[
\cL^n(D_0)\leq \cL^n(D)=\cL^n(D\setminus M)+\cL^n(D\cap M)\leq \cL^n(D_0)+\cL^n(Z_u)=\cL^n(D_0).
\]
Similarly,
\[
\cL^n(C\cup D_0)\leq \cL^n(C\cup D)\leq \cL^n(C\cup D_0)+\cL^n(D\cap M)=\cL^n(C\cup D_0).
\]
Since $C\subset M$ and $D_0\subset \R^n\setminus M$, the $\mathcal{L}^n$-measurablity of $M$ gives
\[
    \cL^n(C\cup D_0)=\cL^n(C)+\cL^n(D_0), 
\]
because $(C\cup D_0)\cap M=C$ and $(C\cup D_0)\setminus M=D_0$. Therefore,
\[
\begin{aligned}
\mu_u(S)
&=\cL^n(C\cup D)\\
&=\cL^n(C\cup D_0)\\
&=\cL^n(C)+\cL^n(D_0)\\
&=\cL^n(C)+\cL^n(D)\\
&=\mu_u(S\cap A)+\mu_u(S\setminus A).
\end{aligned}
\]
Thus $A$ is $\mu_u$-measurable. Hence $\mu_u$ is a Borel (outer) measure.

Local finiteness is then immediate. If $K\Subset \Omega$ is compact, \cite[Lemma~A.22]{Figalli} implies that $\partial u(K)$ is compact, and therefore locally finiteness of $\mathcal{L}^n$ gives
\[
\cL^n\bigl(\partial u(K)\bigr)
<\infty.
\]
Hence $\mu_u$ is locally finite.

\ref{C1-case}: From now on assume that $u\in C^1(\Omega)$. It remains to show that $\mu_u$ is Borel regular. First note that since $u\in C^1(\Omega)$, Remark~\ref{rem:rel-subgrad-grad} gives
\[
\partial u(x)=\{\nabla u(x)\},
\qquad x\in\Omega.
\]
Consequently,
\[
\mu_u(C)=\mathcal L^n(\nabla u(C))
\qquad\text{for every }C\subset\Omega.
\] 
Now, let $A\subset \Omega$ be an arbitrary subset. Since $\mathcal{L}^n$ is Borel regular, there exists a Borel set $B\subset \R^n$ such that $\nabla u(A)\subset B$ and $\mathcal{L}^n(B)=\mu_u(A)$. The continuity of $\nabla u$ implies that $S\vcentcolon = (\nabla u)^{-1}(B)$ is a Borel set. Since
\[
    A\subset (\nabla u)^{-1}((\nabla u)(A))\subset (\nabla u)^{-1}(B)=S
\]
and
\[
    (\nabla u)(S)=(\nabla u)(\nabla u)^{-1}(B)\subset B,
\]
we get
\[
    \mu_u(A)\leq \mu_u(S)\leq \mathcal{L}^n(B)=\mu_u(A).
\]
Thus $\mu_u(S)=\mu_u(A)$, which proves that $\mu_u$ is Borel regular. Since $\mu_u$ is also locally finite, it is a Radon measure.

\ref{C11-case}: Finally, assume that the convex function $u$ belongs to $C^{1,1}(\overline{\Omega})$. Since $\Omega$ is a convex open set, $u\in C^{1,1}(\overline{\Omega})$ implies that $u\in W^{2,\infty}(\Omega)$ (see \cite[Proposition~9.3]{Brezis}). Hence, $\det D^2 u\in L^{\infty}(\Omega)$. Moreover, convexity of $u$ implies that $D^2u$ is positive semidefinite almost everywhere and hence
$\det D^2u\geq0$ almost everywhere.

Since $u$ is differentiable everywhere, Remark~\ref{rem:rel-subgrad-grad} gives $\partial u(x)=\{\nabla u(x)\}$. Therefore Lemma~\ref{lemma:Area-formula-grad} yields
\begin{equation}
\label{eq:equality-Lebesgue-sets}
\mu_u(A)
=
\mathcal L^n(\partial u(A))
=
\mathcal L^n(\nabla u(A))
=
\int_A\det D^2u(x)\,dx,
\end{equation}
for every Lebesgue measurable set $A\subset\Omega$.  In particular,
\[
\mu_u=(\det D^2u)\,dx
\]
as Borel measures on $\Omega$, meaning that 
\[
    \mu_u(B)
=
\int_B \det D^2u(x)\,dx\quad\text{for all Borel sets }B\subset \Omega.
\]
Hence $\mu_u\ll\mathcal L^n\res\Omega$, and its density $\det D^2 u$ belongs to $L^\infty(\Omega)$.

Let $\nu$ be the Borel regular extension of $\det D^2u\,dx$, that is, 
\[
    \nu(A)\vcentcolon = \inf \left\{\int_B \det D^2 u\,dx:A\subset B,\,B\subset\Omega\text{  Borel}\right\},\quad A\subset \Omega;
\]
cf., proof of Proposition~\ref{prop:weak-convergence-borel-measures}. Since $\det D^2u\in L^\infty(\Omega)$, $\nu$ is a Radon measure. By \ref{C1-case}, $\mu_u$ is Radon as well. Moreover, the measures $\mu_u$ and $\nu$ agree on Borel sets by
\eqref{eq:equality-Lebesgue-sets}. Hence, for every
$A\subset\Omega$, outer regularity gives
\[
\begin{aligned}
\mu_u(A)
&=
\inf\{\mu_u(V):A\subset V,\ V\subset\Omega\text{ open}\}\\
&=
\inf\left\{\int_V \det D^2 u\,dx:A\subset V,\ V\subset\Omega\text{ open}\right\}\\
&=
\inf\{\nu(V):A\subset V,\ V\subset\Omega\text{ open}\}\\
&=
\nu(A).
\end{aligned}
\]
Consequently, $\mu_u=(\det D^2u)\,dx$ as (outer) measures on $\Omega$.
\end{proof}

\begin{remark}[Singular Monge--Ampère measures]
     We note that Lebesgue's decomposition theorem \cite[Section~1.6]{evans-mt} demonstrates the existence of a nonnegative function $f\in L^1_{\mathrm{loc}}(\Omega)$ and a Radon measure $\mu_u^{\mathrm{s}}$, singular with respect to $\mathcal L^n$, such that
\[
\mu_u(B)
=
\int_B f\,dx+\mu_u^{\mathrm{s}}(B)
\qquad
\text{for every Borel set }B\subset\Omega.
\]
Indeed, the application of this decomposition theorem can be justified rigorously in at least two ways. One possibility is to notice that 
\[
    C_c(\Omega)\ni \varphi\mapsto L_u \varphi\vcentcolon = \int_\Omega \varphi\,d\mu_u
\]
defines a continuous functional with respect to its natural locally convex topology. Indeed,
for every compact set $K\subset\Omega$,
\[
|L_u(\varphi)|
\leq
\mu_u(K)\|\varphi\|_{L^\infty(\Omega)}
\qquad
\text{whenever }\operatorname{supp}\varphi\subset K.
\]
Hence, Riesz's representation theorem \cite[Section~1.8]{evans-mt} yields the existence of a unique Radon measure $\widetilde{\mu}_u$ such that
\[
    \int_\Omega \varphi\,d\mu_u=\int_\Omega \varphi\,d\widetilde{\mu}_u,\quad \varphi\in C_c(\Omega).
\]
Alternatively, the construction in the proof of
Proposition~\ref{prop:weak-convergence-borel-measures} shows directly that $\widetilde\mu_u$ is the Borel regularization of $\mu_u$, namely
\[
    \widetilde \mu_u (A)=\widehat \mu_u(A)\vcentcolon =\inf\bigl\{
\mu_u (B):
A\subset B,\ B\subset\Omega\text{ Borel}
\bigr\},\quad A\subset \Omega.
\]
\end{remark}

The next examples show that the singular part can concentrate on sets of different dimensions (see also \cite[Example~2.2]{Figalli}).

\begin{example}[Singular Monge--Ampère measures]
\label{Example:singular-MA-measures}
The following examples illustrate two prototypical types of singular Monge--Ampère measures. In the first, the measure has an atom concentrated at a single point, whereas in the second, its singular part is supported on a hyperplane.
\begin{enumerate}[(i)]
    \item\label{first-sing-MA}  If $\Omega\subset \R^n$ is a convex open set containing $x_0\in\R^n$ and $M>0$, then 
\[
u\colon \Omega\to \R,\quad u(x)=M|x-x_0|
\]
describes geometrically a cone and we have
\[
    \partial u(x)=\begin{cases}
       \big\{ M\frac{x-x_0}{|x-x_0|}\big\},\quad & x\neq x_0,\\
        \overline{B_M},\quad & x=x_0.
    \end{cases}
\]
Indeed, $p\in\partial u(x_0)$ if and only if
\[
p\cdot(z-x_0)\leq M|z-x_0|
\qquad\text{for every }z\in\Omega.
\]
Since $x_0$ is an interior point of $\Omega$, this is equivalent to $|p|\leq M$. Consequently,
\[
\mu_u
=
M^n\mathcal L^n(B_1)\,\delta_{x_0}
\] 
is a purely atomic measure, which is concentrated at the vertex $x_0$.\\

More generally, the existence theory in Section~\ref{sec:Dirichlet-theory} shows that, on a bounded convex domain, finite sums of positive Dirac measures can occur as Monge--Ampère measures.
    \item Let $e\in\mathbb S^{n-1}$, $c\in\mathbb R$, and consider the hyperplane
\[
H\vcentcolon = \{x\in\mathbb R^n:x\cdot e=c\}.
\]
Suppose that $\Omega\subset\R^n$ is a convex open set with $\Omega\cap H\neq \varnothing$. The convex function
\[
u\colon \Omega\to \R,\quad u(x)
=
\frac12|x|^2+\frac12|x\cdot e-c|
\]
satisfies
\begin{equation}
\label{eq:MA-meas-hyper}
\mu_u
=
\mathcal L^n\res\Omega
+
\mathcal H^{n-1}\res(H\cap\Omega).
\end{equation}
To verify this, write
\[
x_\parallel\vcentcolon = x\cdot e,
\qquad
x_\perp\vcentcolon = x-(x\cdot e)e.
\]
The subdifferential sum rule gives
\begin{equation}
\label{eq:subgrad-hyper}
\partial u(x)
=
\begin{cases}
\{x+\frac12e\},
&x_\parallel>c,\\[1mm]
\{x-\frac12e\},
&x_\parallel<c,\\[1mm]
x+[-\frac12,\frac12]e,
&x_\parallel=c.
\end{cases}
\end{equation}
Indeed,
\[
\partial\frac{|x|^2}{2}=\{x\},\quad x\in\R^n,
\]
and
\[
\partial\frac{|x\cdot e-c|}{2}
=
\begin{cases}
\{\frac12e\},&x_\parallel>c,\\
\{-\frac12e\},&x_\parallel<c,\\
[-\frac12,\frac12]e,&x_\parallel=c.
\end{cases}
\]

Let $A\subset\Omega$ be arbitrary and set
\[
A_+\vcentcolon = A\cap\{x_\parallel>c\},
\qquad
A_-\vcentcolon = A\cap\{x_\parallel<c\},
\qquad
A_0\vcentcolon = A\cap H.
\]
Since $H$ and the two half-spaces determined by $H$ are Borel sets,
they are $\mu_u$-measurable. Consequently,
\[
\mu_u(A)
=
\mu_u(A_+)+\mu_u(A_-)+\mu_u(A_0).
\]
Using formula \eqref{eq:subgrad-hyper}, we find
\[
\partial u(A_{\pm})=A_{\pm}\pm\frac12e\quad\text{and} \quad \partial u(A_0)
=
A_0+\left[-\frac12,\frac12\right]e.
\]
Translation invariance of the Lebesgue measure gives
\[
\mu_u(A_+)=\mathcal L^n(A_+),
\qquad
\mu_u(A_-)=\mathcal L^n(A_-).
\]
On the other hand, in the coordinates $(x_\parallel,x_\perp)\in
\mathbb R\times e^\perp$, the set $\partial u(A_0)$ has the product representation
\[
\partial u(A_0)
=
\left[c-\frac12,c+\frac12\right]
\times
\{x_\perp:(c,x_\perp)\in A_0\}.
\]
The product formula for the Lebesgue measure therefore yields
\[
\begin{aligned}
\mu_u(A_0)
&=
\mathcal L^n\bigl(\partial u(A_0)\bigr)\\
&=
\mathcal L^1\left(
\left[c-\frac12,c+\frac12\right]
\right)
\mathcal H^{n-1}(A_0)\\
&=
\mathcal H^{n-1}(A_0).
\end{aligned}
\]
Combining the preceding identities, we obtain
\[
\begin{aligned}
\mu_u(A)
&=
\mathcal L^n(A_+)
+\mathcal L^n(A_-)
+\mathcal H^{n-1}(A\cap H)\\
&=
\mathcal L^n(A\setminus H)
+\mathcal H^{n-1}(A\cap H)\\
&=
\mathcal L^n(A)
+\mathcal H^{n-1}(A\cap H),
\end{aligned}
\]
where the last equality follows from
$\mathcal L^n(H)=0$. Hence
\[
\mu_u
=
\mathcal L^n\res\Omega
+
\mathcal H^{n-1}\res(H\cap\Omega)
\]
as outer measures on $\Omega$.
\end{enumerate}
\end{example}

Now, we formulate the precise definition of Alexandrov solutions to Monge--Ampère equations and associated Dirichlet problems.

\begin{definition}[Alexandrov solution]
\label{def:alexandrov-solution}
Let $\Omega\subset\R^n$ be an open convex set. Assume that $\nu$ is a Borel measure on $\Omega$ and $\varphi\in C(\partial\Omega)$ is a continuous boundary value. 
\begin{enumerate}[(i)]
    \item An \emph{Alexandrov solution} of
\begin{equation}
\label{eq:measure-equation}
 \det D^2u=\nu\quad\text{in }\Omega
\end{equation}
is a convex function $u\colon \Omega\to\R$ such that
\begin{equation}
\label{eq:alexandrov-definition}
 \mu_u=\nu
 \quad\text{as Borel measures in }\Omega,
\end{equation}
that is, $\mu_u(B)=\nu(B)$ for all Borel sets $B\subset \Omega$.

If $\nu=f\,dx$ for some measurable function
$f\colon\Omega\to[0,\infty]$, then we also say that $u$ solves
\[
\det D^2u=f
\quad\text{in }\Omega
\]
in the Alexandrov sense.
    \item An \emph{Alexandrov solution of the Dirichlet problem }
\[
\begin{cases}
    \det D^2u=\nu\quad&\text{in }\Omega,\\
    u=\varphi\quad&\text{on }\partial\Omega
\end{cases}
\]
is a function $u\colon\Omega\to\mathbb R$ such that
\begin{enumerate}[(a)]
\item $u\in C(\overline\Omega)$,
\item $u$ is an Alexandrov solution of
      \eqref{eq:measure-equation}, and
\item $u=\varphi$ pointwise on $\partial\Omega$.
\end{enumerate}
\end{enumerate}
\end{definition}

\begin{remark}[Alexandrov vs. classical solutions]
\label{rem:Alexandrov-Classica-sol}
Suppose that $\nu=f\,dx$, for some nonnegative $f\in L^1_{\rm loc}(\Omega)$. If $u\in C^{1,1}(\overline\Omega)$ is convex, then Lemma~\ref{lem:MA-measure} gives $\mu_u=(\det D^2u)\,dx$. Consequently, $u$ is an Alexandrov solution of $\det D^2u=f$ if and only if $\det D^2u=f$ almost everywhere in $\Omega$. Thus, in the $C^{1,1}$ class, the Alexandrov and almost-everywhere notions of solution agree.

In particular, every classical convex solution $u\in C^{1,1}(\overline{\Omega})\cap C^2(\Omega)$ is an Alexandrov solution. Conversely, if an Alexandrov solution belongs to $C^{1,1}(\overline{\Omega})\cap C^2(\Omega)$ and $f$ is continuous, then $\det D^2u=f$ pointwise in $\Omega$ and hence $u$ is a classical solution.
\end{remark}

\begin{remark}
    Note that the convexity of an Alexandrov solution can be viewed as a consistency requirement, since a $C^2$ function $v$ is convex if and only if $D^2 v \geq 0$. 
\end{remark}

\subsection{Properties of Monge--Ampère measures}
\label{subsec:prop-MA-meas}

We record several fundamental properties of Monge--Ampère measures: scaling, superadditivity, monotonicity, and weak continuity under locally uniform convergence. These properties will be used repeatedly below, and are particularly relevant in the Dirichlet theory.

\begin{proposition}[{Scaling, translation invariance, superadditivity, and monotonicity}]
\label{prop:scaling}
Let $\Omega\subset\R^n$ be a convex open set, let $\lambda>0$, let $\ell_{p,c}\colon \R^n\to \R$ be the affine function $\ell_{p,c}(x)\vcentcolon = p\cdot x+c$, and suppose that $u,v$ are convex functions on $\Omega$. The Monge--Ampère measure has the following properties.
\begin{enumerate}[(a)]
    \item\label{scaling} \emph{Scaling}: $\mu_{\lambda u}=\lambda^n\mu_u$.
    \item\label{affine} \emph{Translation invariance}: $\mu_{u + \ell_{p,c} }=\mu_u$.
     \item\label{superadd} \emph{Superadditivity}: $\mu_{u+v}\geq \mu_u+\mu_v$ as Borel measures.
    \item\label{monotonicity} \emph{Monotonicity}: Let $O\subset \Omega$ be a bounded open set such that $u,v\in C(\overline{O})$. If the convex functions satisfy
    \begin{equation}
    \label{eq:monotonicity}
        u\leq v\text{ in }O,\quad
        u=v\text{ on }\partial O,
    \end{equation}
    then $\mu_u(O)\geq \mu_v(O)$.
\end{enumerate}
\end{proposition}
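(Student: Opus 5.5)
The plan is to verify the four properties directly from the definition $\mu_u(A)=\mathcal L^n(\partial u(A))$, reducing each to an identity or inclusion of subgradient images.

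For \ref{scaling} and \ref{affine} the key identities are
\[
\partial(\lambda u)(x)=\lambda\,\partial u(x),
\qquad
\partial(u+\ell_{p,c})(x)=\partial u(x)+p .
\]
Both follow immediately from Definition~\ref{def:subdiff}: multiply the supporting inequality by $\lambda$, respectively add $p\cdot(y-x)$ to both sides. Hence $\partial(\lambda u)(A)=\lambda\,\partial u(A)$ and $\partial(u+\ell_{p,c})(A)=\partial u(A)+p$. Dilation by $\lambda$ multiplies the Lebesgue outer measure by $\lambda^n$, and translation leaves it invariant. This gives both identities as outer measures.

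For \ref{superadd} we work on a Borel set $B$. Note that $\partial u(x)+\partial v(x)\subset\partial(u+v)(x)$ always holds. The plan is to use differentiability points.
\begin{enumerate}[(1)]
\item Let $N\subset\Omega$ be the set where $u$ or $v$ fails to be differentiable. At points $x\notin N$, Remark~\ref{rem:rel-subgrad-grad} gives $\partial(u+v)(x)=\{\nabla u(x)+\nabla v(x)\}$.
\item Replacing $\partial$ by a sum of sets does not directly give measures. The cleaner route is the inequality of Minkowski type $\mathcal L^n(E+F)\ge\mathcal L^n(E)+\mathcal L^n(F)$, which fails in general. So instead I would argue via the Legendre transform or via approximation.
\item Approximate $u,v$ locally uniformly by smooth convex functions $u_k,v_k$ (mollification on slightly smaller convex subdomains). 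For smooth functions, $\det(D^2u_k+D^2v_k)\ge\det D^2u_k+\det D^2v_k$ for nonnegative symmetric matrices (Minkowski's determinant inequality). By Lemma~\ref{lem:MA-measure}\ref{C11-case} this gives $\mu_{u_k+v_k}\ge\mu_{u_k}+\mu_{v_k}$.
\item Pass to the limit using weak continuity of Monge--Ampère measures under locally uniform convergence; that property is listed in this subsection and I assume it is available in the form of weak convergence on compacts.
\item From weak convergence one gets the inequality for open sets and then, by outer regularity of the Borel regularizations, for Borel sets.
\end{enumerate}
I expect this limit passage to be the main obstacle: weak convergence yields only $\limsup$ on compacts and $\liminf$ on opens. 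To combine these into a Borel-set inequality, I would test against nonnegative $\varphi\in C_c(\Omega)$, $\int\varphi\,d\mu_{u+v}\ge\int\varphi\,d\mu_u+\int\varphi\,d\mu_v$, and conclude by uniqueness in the Riesz representation, since all three measures are locally finite Borel and agree with their Radon regularizations on Borel sets.

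For \ref{monotonicity} the plan is the standard sliding-plane argument: show $\partial v(O)\subset\partial u(O)$. Take $p\in\partial v(x_0)$ with $x_0\in O$ and the support $\ell(y)=v(x_0)+p\cdot(y-x_0)$. Then $u-\ell$ is continuous on $\overline O$, satisfies $u-\ell\ge0$ on $\partial O$ (since $u=v\ge\ell$ there), and $(u-\ell)(x_0)\le0$. Lower $\ell$ by the constant $\min_{\overline O}(u-\ell)\le0$. If this minimum is attained in $O$, at a point $x_1$, then $p$ supports $u$ at $x_1$ locally, hence globally on $\Omega$ by convexity, so $p\in\partial u(x_1)$. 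The remaining case is when the minimum is attained only on $\partial O$ and equals $0$. Then $u-\ell\ge0$ with equality at $x_0$, so $x_0$ itself is an interior minimizer, and again $p\in\partial u(x_0)$. Hence $\mu_v(O)\le\mu_u(O)$.
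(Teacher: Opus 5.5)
Your proposal is correct and follows the paper's route. For (a) and (b) the paper uses the same identities $\partial(\lambda u)(x)=\lambda\,\partial u(x)$ and $\partial(u+\ell_{p,c})(x)=\partial u(x)+p$ together with the dilation and translation behaviour of Lebesgue outer measure, while for (c) and (d) it simply cites Figalli's Lemmas~2.7 and~2.9, which are proved by exactly your smooth-approximation argument (Minkowski's determinant inequality plus weak continuity) and your sliding-plane inclusion $\partial v(O)\subset\partial u(O)$.

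One side remark is inaccurate, though you do not use it. For nonempty sets, Brunn--Minkowski does give $\mathcal L^n(E+F)\ge\mathcal L^n(E)+\mathcal L^n(F)$. The actual obstruction to a direct proof of (c) is that $\partial(u+v)(B)$ contains only the union of the pointwise sums $\partial u(x)+\partial v(x)$ over $x\in B$, not the Minkowski sum $\partial u(B)+\partial v(B)$.
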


\begin{proof}
    The properties~\ref{scaling},~\ref{superadd},~and \ref{monotonicity} in the sense of Borel measures are established in \cite[Lemmas~2.7 and~2.9]{Figalli}. The scaling property \ref{scaling} and translation invariance \ref{affine} hold for all subsets $A\subset \Omega$ and not only for Borel sets, since its proof only relies on properties of the subdifferential and the scaling property and translation invariance of the Lebesgue measure, which hold for arbitrary sets; see~\cite[Section~2.1]{evans-mt}.
    
    Hence it remains to prove \ref{affine}. 
    By the sum rule for the subdifferential and Remark~\ref{rem:rel-subgrad-grad}, we have
    \[
        \partial (u+\ell_{p,c})(x)=\partial u(x)+\{\nabla \ell_{p,c}(x)\}=\partial u(x)+\{p\}.
    \]
    Thus, the translation invariance of the Lebesgue measure yields \ref{affine}.
\end{proof}

Before stating weak continuity under local uniform convergence, let us recall the following classical theorem.

\begin{proposition}[Weak convergence of locally finite Borel measures]
\label{prop:weak-convergence-borel-measures}
    Let $\Omega\subset\R^n$ be an open set. Assume that $\nu_k$, $k\in\mathbb N$, and $\nu$ are locally finite Borel measures on $\Omega\subset\mathbb R^n$ (see~Section~\ref{sec:measures}). Then the following conditions are equivalent:
\begin{enumerate}[(i)]
\item $\nu_k\weakstar \nu$, that is, 
\[
\int_\Omega\varphi\,d\nu_k \to
\int_\Omega\varphi\,d\nu
\qquad
\text{for every }\varphi\in C_c(\Omega)
\]
as $k\to\infty$.
\item\label{weak-conv-2} For every compact set $K\subset\Omega$ and every open set
$U\subset\Omega$,
\[
\limsup_{k\to\infty}\nu_k(K)\leq\nu(K)
\quad\text{and}\quad
\nu(U)\leq\liminf_{k\to\infty}\nu_k(U).
\]
\item For every relatively compact Borel set $B\Subset\Omega$
satisfying $\nu(\partial B)=0$,
\[
\nu_k(B)\to\nu(B).
\]
\end{enumerate}
Moreover, it is enough to consider in \ref{weak-conv-2} relatively compact open sets $U\Subset \Omega$.
\end{proposition}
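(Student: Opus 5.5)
The natural plan is to prove the cycle (i)$\Rightarrow$(ii)$\Rightarrow$(iii)$\Rightarrow$(i). The one preliminary point is that the $\nu_k$, $\nu$ are only assumed to be locally finite Borel (outer) measures, not Radon. So I would first replace each by its Borel regularization,
\[
\widehat\nu(A)\vcentcolon=\inf\{\nu(B):A\subset B,\ B\subset\Omega\text{ Borel}\}.
\]
This agrees with $\nu$ on Borel sets and is a locally finite Borel regular measure on the open set $\Omega\subset\mathbb R^n$. By the cited result \cite[Chapter~1, Theorem~4]{evans-mt} (applied on $\Omega$, or to the zero extension on $\mathbb R^n$ of $\widehat\nu\res K_j$ for an exhaustion $K_j$), it is Radon. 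Since all three conditions only involve Borel sets and integrals of continuous compactly supported functions, I may assume from now on that every measure is Radon. In particular, open sets can be approximated from inside by compacts and compacts from outside by open sets.

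For (i)$\Rightarrow$(ii), I would use Urysohn functions. Given compact $K$, pick open $V\Supset K$ with $V\Subset\Omega$ and $\nu(V)<\nu(K)+\varepsilon$, and $\varphi\in C_c(V)$ with $0\le\varphi\le1$, $\varphi=1$ on $K$. Then $\limsup_k\nu_k(K)\le\lim_k\int\varphi\,d\nu_k=\int\varphi\,d\nu\le\nu(V)$. Given open $U$ and compact $K\subset U$, take $\varphi\in C_c(U)$ with $\varphi=1$ on $K$. This gives $\nu(K)\le\int\varphi\,d\nu=\lim_k\int\varphi\,d\nu_k\le\liminf_k\nu_k(U)$, and inner regularity finishes. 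The remark that relatively compact $U$ suffice follows because $\nu(U)=\sup\nu(U')$ over $U'\Subset U$ open, with $\nu_k(U')\le\nu_k(U)$.

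For (ii)$\Rightarrow$(iii), take a relatively compact Borel $B$ with $\nu(\partial B)=0$. Then
\[
\limsup_k\nu_k(B)\le\limsup_k\nu_k(\overline B)\le\nu(\overline B)=\nu(B)
\]
and
\[
\liminf_k\nu_k(B)\ge\liminf_k\nu_k(\operatorname{int}B)\ge\nu(\operatorname{int}B)=\nu(B).
\]
For (iii)$\Rightarrow$(i), fix $\varphi\in C_c(\Omega)$; by splitting into positive and negative parts it suffices to take $\varphi\ge0$. Let $K=\operatorname{supp}\varphi$ and choose a compact neighbourhood $K'$ of $K$. Since $\nu(K')<\infty$, the sets $\{\varphi=t\}$, $t>0$, are disjoint, so $\nu(\{\varphi=t\})=0$ for all but countably many $t$. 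For such $t$ the superlevel set $\{\varphi>t\}$ is relatively compact, and its boundary lies in $\{\varphi=t\}$, so (iii) applies to it. The layer-cake formula gives $\int\varphi\,d\nu_k=\int_0^{\|\varphi\|_\infty}\nu_k(\{\varphi>t\})\,dt$. Dominated convergence then applies, with the integrand converging for a.e. $t$.

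The main obstacle is the uniform bound needed for that dominated convergence, namely $\sup_k\nu_k(\{\varphi>t\})<\infty$. I would obtain it by choosing an open $W$ with $K\subset W\Subset\Omega$ and $\nu(\partial W)=0$. Such a $W$ exists: take a sublevel set $\{\dist(\cdot,K)<s\}$ for one of the all-but-countably-many admissible $s$. By (iii), $\nu_k(W)\to\nu(W)$, so these values are eventually bounded, and monotonicity gives the required bound. The finitely many remaining $k$ are harmless because each $\nu_k$ is locally finite.
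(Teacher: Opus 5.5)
Your proof is correct, and its outer frame matches the paper's. The paper also passes to the Borel regularization $\widehat\eta$, verifying in detail that it is a Borel regular outer measure agreeing with $\eta$ on Borel sets, hence Radon when $\eta$ is locally finite. It also proves the final \emph{Moreover} clause as you do, by exhausting $U$ with relatively compact open $U_j$ and using continuity from below.

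The difference lies in the equivalences themselves. The paper does not prove them; it applies \cite[Section~1.9, Theorem~1]{evans-mt} to $\widehat\nu_k,\widehat\nu$ and asserts, without further detail, that the $\mathbb R^n$ proof carries over to $\Omega$ with relatively compact Borel sets in place of bounded ones. You write that proof out on $\Omega$:
\begin{itemize}
\item Urysohn functions with outer and inner approximation for (i)$\Rightarrow$(ii);
\item closure and interior for (ii)$\Rightarrow$(iii);
\item the layer-cake formula over levels $t$ with $\nu(\{\varphi=t\})=0$ for (iii)$\Rightarrow$(i).
\end{itemize}
You also handle the one step where working on $\Omega$ needs care, namely the uniform bound for dominated convergence. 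You obtain it from a neighbourhood $W=\{\dist(\cdot,K)<s\}\Subset\Omega$ with $\nu(\partial W)=0$, which plays the role a large ball with null boundary plays on $\mathbb R^n$.

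The paper's route is shorter. Yours is self-contained and substantiates the paper's claim that the textbook argument transfers to $\Omega$. In your version the regularization step is actually dispensable. The only approximations you use follow from continuity from below along a compact exhaustion of $U$, and from continuity from above along the relatively compact open sets $\{\dist(\cdot,K)<1/j\}$. You state the properties of $\widehat\nu$ without proof; that is harmless, though it is the part of the paper's proof that receives the most detail.
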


\begin{proof}
Let $\eta$ be a Borel measure on $\Omega$. Define
\[
\widehat\eta\colon 2^\Omega\to[0,\infty],
\qquad
\widehat\eta(A)
\vcentcolon = 
\inf\bigl\{
\eta(B):
A\subset B,\ B\subset\Omega\text{ Borel}
\bigr\}.
\]
We first verify that $\widehat\eta$ is a Borel regular measure.

Clearly, $\widehat\eta(\varnothing)=0$. To prove
$\sigma$-subadditivity, let $A\subset\bigcup_{j=1}^\infty A_j$. For every $\varepsilon>0$, choose Borel sets $B_j\subset\Omega$
such that
\[
A_j\subset B_j
\quad\text{and}\quad
\eta(B_j)\leq\widehat\eta(A_j)+\frac{\varepsilon}{2^j}.
\]
Then $A\subset\bigcup_{j=1}^\infty B_j$, and hence
\[
\widehat\eta(A)
\leq
\eta\left(\bigcup_{j=1}^\infty B_j\right)
\leq
\sum_{j=1}^\infty\widehat\eta(A_j)+\varepsilon.
\]
Letting $\varepsilon\downarrow0$ shows that $\widehat\eta$ is an
outer measure.

Next, every Borel set $E\subset\Omega$ is
$\widehat\eta$-measurable. Indeed, if $A\subset\Omega$ and
$B\supset A$ is Borel, then
\[
\eta(B)
=
\eta(B\cap E)+\eta(B\setminus E)
\geq
\widehat\eta(A\cap E)+\widehat\eta(A\setminus E).
\]
Taking the infimum over all such Borel sets $B$ yields
\[
\widehat\eta(A)
\geq
\widehat\eta(A\cap E)+\widehat\eta(A\setminus E).
\]
The reverse inequality follows from the $\sigma$ subadditivity of $\widehat\eta$. Thus $\widehat\eta$ is a Borel measure.

Moreover,
\begin{equation}
\label{eq:regularization-agrees-on-Borel}
\widehat\eta(B)=\eta(B)
\qquad
\text{for every Borel set }B\subset\Omega.
\end{equation}
Indeed, the inequality $\widehat\eta(B)\leq\eta(B)$ follows by using $B$ itself in the defining infimum. Conversely, if $C\supset B$ is Borel, then $\eta(B)\leq\eta(C)$, and taking the infimum over $C$ gives the reverse inequality.

It remains to verify Borel regularity. Let $A\subset\Omega$. If $\widehat\eta(A)=\infty$, then
\[
\widehat\eta(\Omega)\geq\widehat\eta(A)=\infty,
\]
so that $\Omega$ is a Borel superset of $A$ having the same measure.
Suppose that $\widehat\eta(A)<\infty$. For every $j\in\mathbb N$,
choose a Borel set $C_j\subset\Omega$ such that
\[
A\subset C_j
\quad\text{and}\quad
\eta(C_j)\leq\widehat\eta(A)+\frac1j.
\]
Set
\[
B_j\vcentcolon = \bigcap_{\ell=1}^j C_\ell,
\qquad
B\vcentcolon = \bigcap_{j=1}^\infty B_j.
\]
Then $(B_j)_{j\in\mathbb N}$ is decreasing, $A\subset B$, and $\eta(B_1)<\infty$. Hence continuity from above, together with \eqref{eq:regularization-agrees-on-Borel}, gives
\[
\widehat\eta(B)
=
\eta(B)
=
\lim_{j\to\infty}\eta(B_j)
=
\widehat\eta(A).
\]
Thus $\widehat\eta$ is Borel regular. 

Furthermore, if $\eta$ is locally finite, then for every compact set $K\subset\Omega$,
\[
\widehat\eta(K)=\eta(K)<\infty.
\]
Consequently, $\widehat\eta$ is a Radon measure, whenever $\eta$ is a locally finite Borel measure.

Now, let $\widehat\nu_k$ and $\widehat\nu$
denote the Borel regularizations of $\nu_k$ and $\nu$, respectively. By \eqref{eq:regularization-agrees-on-Borel}, these Radon measures agree with the original measures on all Borel sets. In particular,
\[
\int_\Omega\varphi\,d\widehat\nu_k
=
\int_\Omega\varphi\,d\nu_k,
\qquad
\int_\Omega\varphi\,d\widehat\nu
=
\int_\Omega\varphi\,d\nu
\]
for every $\varphi\in C_c(\Omega)$, and the regularized measures have
the same values as the original measures Borel
subsets of $\Omega$.

Therefore, the assertions follow from \cite[Section~1.9, Theorem~1]{evans-mt}, applied to $\widehat\nu_k$ and $\widehat\nu$. Although the cited theorem is stated on $\mathbb R^n$, the same proof applies on the locally compact space $\Omega$, with relatively compact Borel sets in place of bounded Borel sets.

Finally, we show that it is enough to consider in \ref{weak-conv-2} relatively compact open sets $U\Subset \Omega$. Therefore, let $U\subset \Omega$ be given and define
\[
U_j
\vcentcolon = 
\left\{
x\in U:
\operatorname{dist}(x,\mathbb R^n\setminus U)>\frac1j
\right\}
\cap B_j.
\]
Then $(U_j)_{j\in\mathbb N}$ is an increasing sequence of relatively compact open sets such that
\[
U_j\Subset U
\qquad\text{and}\qquad
U=\bigcup_{j=1}^\infty U_j.
\]
Consequently, if the second condition in \ref{weak-conv-2} holds for all relatively compact open sets, then continuity from below gives
\[
\begin{aligned}
\nu(U)
&=\sup_{j\in\mathbb N}\nu(U_j)\\
&\leq
\sup_{j\in\mathbb N}\liminf_{k\to\infty}\nu_k(U_j)\\
&\leq
\liminf_{k\to\infty}\nu_k(U).
\end{aligned}
\]
Thus, it holds automatically for all open subsets $U\subset \Omega$ as we wanted to demonstrate. This completes the proof.
\end{proof}

\begin{proposition}[{Weak continuity property}]
\label{prop:weak-cont}
    Let $\Omega\subset\R^n$ be a convex open set. Assume that $u_k\colon \Omega\to\R$, $k\in\N$, and $u\colon \Omega\to\R$ are convex functions. If $u_k\to u$ locally uniformly as $k\to\infty$, then the Monge--Ampère measures $\mu_{u_k}$ converge weak-$*$ to $\mu_u$. 
\end{proposition}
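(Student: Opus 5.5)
By Proposition~\ref{prop:weak-convergence-borel-measures}, it suffices to verify condition~\ref{weak-conv-2}: for every compact $K\subset\Omega$ we show $\limsup_k\mu_{u_k}(K)\leq\mu_u(K)$, and for every relatively compact open $U\Subset\Omega$ we show $\mu_u(U)\leq\liminf_k\mu_{u_k}(U)$. Both $\mu_{u_k}$ and $\mu_u$ are locally finite Borel measures by Lemma~\ref{lem:MA-measure}\ref{general-case}, so that proposition applies. The two inequalities concern only the subgradient images, and both come from the closedness of subgradients under limits in Lemma~\ref{lem:gradient-convergence}\ref{conv-subgrad-1}.

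For the compact case we prove the set inclusion
\[
\limsup_{k\to\infty}\partial u_k(K)\vcentcolon=\bigcap_{j\geq1}\bigcup_{k\geq j}\partial u_k(K)\subset\partial u(K).
\]
Let $p$ lie in the left-hand side. Then there are $k_i\to\infty$, $x_{k_i}\in K$, and $p=p_{k_i}\in\partial u_{k_i}(x_{k_i})$. By compactness we may assume $x_{k_i}\to x\in K$, and Lemma~\ref{lem:gradient-convergence}\ref{conv-subgrad-1} gives $p\in\partial u(x)$.

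To pass to measures we need the sets $\bigcup_{k\geq j}\partial u_k(K)$ to lie in a fixed bounded set. Fix $K\Subset\Omega'\Subset\Omega$. Locally uniform convergence gives $\sup_k\|u_k\|_{L^\infty(\Omega')}<\infty$, so the argument of Lemma~\ref{lem:gradient-convergence}\ref{conv-subgrad-2}, applied at an arbitrary point of $K$ with $r=\dist(K,\partial\Omega')$, bounds all subgradients uniformly. Each $\partial u_k(K)$ is compact by Lemma~\ref{lemma:compactness-subgradinets}, hence measurable. Continuity from above for the decreasing unions, which have finite measure, then yields
\[
\limsup_k\mathcal L^n(\partial u_k(K))\leq\mathcal L^n\Big(\bigcap_j\bigcup_{k\geq j}\partial u_k(K)\Big)\leq\mu_u(K).
\]

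The open case is the main obstacle, because the naive reverse inclusion fails. We use the exceptional set $Z_u$ from Lemma~\ref{lem:unique-subgradients}, which is Lebesgue-null, and work with the compact sets $K_j\uparrow U$. The claim is that for a.e.\ $p\in\partial u(K)$, with $K\subset U$ compact, we have $p\in\partial u_k(U)$ for all large $k$.

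To prove the claim, take $p\in\partial u(x_0)\setminus Z_u$ with $x_0\in K$. Then $x_0$ is the unique point where $u-p\cdot x$ attains its minimum over $\Omega$; indeed, any other minimizer would also carry $p$ as a subgradient, putting $p$ in $Z_u$. Choose $\delta>0$ so that $\overline B_\delta(x_0)\subset U$. Uniqueness of the minimizer and continuity give
\[
\min_{\partial B_\delta(x_0)}(u-p\cdot x)>u(x_0)-p\cdot x_0+\eta
\]
for some $\eta>0$. By uniform convergence on $\overline B_\delta(x_0)$, the same strict separation holds for $u_k$ with $\eta/2$ once $k$ is large. Hence $u_k-p\cdot x$ attains a local minimum at some interior point $y_k\in B_\delta(x_0)$. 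Since $u_k$ is convex, a local minimum of $u_k-p\cdot x$ is a global one, so $p\in\partial u_k(y_k)\subset\partial u_k(U)$. This proves the claim.

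It follows that $\mathbf 1_{\partial u(K)}\leq\liminf_k\mathbf 1_{\partial u_k(U)}$ almost everywhere. These sets are measurable by the Borel measurability in Lemma~\ref{lem:MA-measure}, so Fatou's lemma gives $\mu_u(K)\leq\liminf_k\mu_{u_k}(U)$. Taking the supremum over compact $K\subset U$ with $K\uparrow U$, and using continuity from below of the Borel measure $\mu_u$, yields $\mu_u(U)\leq\liminf_k\mu_{u_k}(U)$. Together with the compact case, this verifies \ref{weak-conv-2} and completes the proof.
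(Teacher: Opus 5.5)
Your proof is correct and follows the paper's approach. The paper verifies condition \ref{weak-conv-2} of Proposition~\ref{prop:weak-convergence-borel-measures} by taking both inequalities from the proof of \cite[Proposition~2.6]{Figalli}, namely the inclusions $\limsup_k\partial u_k(K)\subset\partial u(K)$ and $\partial u(K)\setminus Z_u\subset\liminf_k\partial u_k(U)$, which are exactly what you reprove; it also writes these steps out in the weighted analogue, Proposition~\ref{prop:weighted-weak-cont}, and your use of Fatou's lemma there in place of continuity of measure is only a cosmetic difference.
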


    \begin{proof}
By Lemma~\ref{lem:MA-measure}, the Monge--Ampère measures
$\mu_{u_k}$ and $\mu_u$ are locally finite Borel measures. The proof of \cite[Proposition~2.6]{Figalli} shows that
\[
\limsup_{k\to\infty}\mu_{u_k}(K)
\leq
\mu_u(K)
\]
for every compact set $K\subset\Omega$, and
\[
\mu_u(U)
\leq
\liminf_{k\to\infty}\mu_{u_k}(U)
\]
for every open set $U\Subset\Omega$.

Hence Proposition~\ref{prop:weak-convergence-borel-measures} yields the asserted weak-$*$ convergence.
\end{proof}

\subsection{Dirichlet theory and comparison principles}
\label{sec:Dirichlet-theory}

Two basic results in the theory of Alexandrov solutions are the Alexandrov maximum principle and the comparison principle. Both results are essentially a consequence of Proposition~\ref{prop:scaling}. 

\begin{proposition}[{Alexandrov maximum principle,\cite[Theorem~2.8]{Figalli}}]
\label{prop:max-principle}
    Let $\Omega\subset\R^n$ be a convex bounded open set and let $u\in C(\overline{\Omega})$ be a convex function that vanishes on the boundary $\partial\Omega$. There exists a dimensional constant $C_n>0$ such that
    \begin{equation}
    \label{eq:Alexandrov-max-princ}
        |u(x)|^n\leq C_n \operatorname{diam}(\Omega)^{n-1}\mu_u(\Omega)\dist(x,\partial\Omega)
    \end{equation}
    for every $x\in\Omega$.
\end{proposition}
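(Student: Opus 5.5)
The plan is the classical cone-comparison argument. Fix $x_0\in\Omega$ with $u(x_0)<0$; if $u(x_0)=0$ there is nothing to prove, and convexity together with $u=0$ on $\partial\Omega$ gives $u\le 0$ in $\Omega$. Let $\Gamma$ be the convex cone with vertex $(x_0,u(x_0))$ and base $\partial\Omega$. That is, $\Gamma$ is the convex function on $\overline\Omega$ that is affine along each segment from $x_0$ to $\partial\Omega$, equal to $u(x_0)$ at $x_0$ and to $0$ on $\partial\Omega$. Since $u$ is convex with the same values at $x_0$ and on $\partial\Omega$, we have $u\le\Gamma$ on $\overline\Omega$.

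The first step is to show $\partial\Gamma(x_0)\subset\partial u(\Omega)$. Take $p\in\partial\Gamma(x_0)$ and slide the affine function $\ell(y)=u(x_0)+p\cdot(y-x_0)$ downward until it touches $u$. Because $\ell\le\Gamma$ vanishes at most on $\partial\Omega$, while $\ell(x_0)=u(x_0)$, the maximum of $\ell-u$ over $\overline\Omega$ is at least $0$ and is attained. If it is attained only on $\partial\Omega$, then $\ell\le0=u$ there while $\ell-u\ge0$ at $x_0$, which forces the maximum $0$ to be attained at $x_0$ as well. In either case there is an interior touching point, so $p\in\partial u(x)$ for some $x\in\Omega$. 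This is the comparison principle of Proposition~\ref{prop:scaling}\ref{monotonicity} in disguise, and I could alternatively cite that result applied to $u$ and $\Gamma$ on $\Omega$. Hence $\mathcal L^n(\partial\Gamma(x_0))\le\mu_u(\Omega)$.

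The second step is the lower bound on $\mathcal L^n(\partial\Gamma(x_0))$. Set $h=|u(x_0)|$ and $d=\operatorname{diam}\Omega$. Choose $z\in\partial\Omega$ with $|z-x_0|=\dist(x_0,\partial\Omega)$, and put $e=(z-x_0)/|z-x_0|$. I claim $\partial\Gamma(x_0)$ contains:
\begin{enumerate}[(i)]
\item every $p$ with $|p|\le h/d$, since $\ell(y)=u(x_0)+p\cdot(y-x_0)\le -h+h=0$ on $\partial\Omega$, so $\ell\le\Gamma$ by convexity of the cone;
\item the vector $p_1=\frac{h}{\dist(x_0,\partial\Omega)}e$, since the supporting hyperplane of $\Omega$ at $z$ is orthogonal to $e$, so $e\cdot(y-x_0)\le\dist(x_0,\partial\Omega)$ on $\overline\Omega$, and hence the affine function with slope $p_1$ through $(x_0,u(x_0))$ is $\le0$ on $\partial\Omega$.
\end{enumerate}
Since $\partial\Gamma(x_0)$ is convex, it contains the convex hull of the ball $B_{h/d}$ and the point $p_1$. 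This hull contains a cone of height $|p_1|$ over an $(n-1)$-dimensional disk of radius $h/d$, whose volume is $c_n(h/d)^{n-1}h/\dist(x_0,\partial\Omega)$.

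Combining the two steps gives
\[
c_n\,\frac{h^n}{d^{n-1}\dist(x_0,\partial\Omega)}\le\mu_u(\Omega),
\]
which is \eqref{eq:Alexandrov-max-princ} with $C_n=1/c_n$. The main obstacle is the first step, namely verifying that every cone slope is realized as a subgradient of $u$ at an interior point. The second step is only elementary geometry.
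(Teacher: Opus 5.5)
Your proposal is correct and is essentially the proof of \cite[Theorem~2.8]{Figalli}, which the paper cites rather than reproves. You compare $u$ with the cone $\Gamma$ to get $\mathcal L^n(\partial\Gamma(x_0))\le\mu_u(\Omega)$, then bound $\partial\Gamma(x_0)$ from below by the convex hull of $\overline B_{h/d}$ and the slope $p_1=\frac{h}{\dist(x_0,\partial\Omega)}e$. The one thing to tidy is the justification of $\ell\le\Gamma$ in (i) and (ii): it holds because $\Gamma$ is affine on each segment $[x_0,z]$, $z\in\partial\Omega$, while $\ell$ is affine with $\ell(x_0)=\Gamma(x_0)$ and $\ell(z)\le 0=\Gamma(z)$, not because the function $\Gamma$ is convex, which on its own would point the other way.
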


\begin{proposition}[{Comparison principle,\cite[Theorem~2.10]{Figalli}}]
\label{prop:Comparison}
    Let $\Omega\subset\R^n$ be a convex open set and let $u,v\colon \Omega\to\R$ be convex functions. Assume that $O\subset \Omega$ is a bounded open set. If the functions $u,v\in C(\overline{O})$ satisfy
    \[
        u\geq v\text{ on }\partial O
    \]
    and
    \[
        \mu_u\leq \mu_v\quad\text{as Borel measures on }O,
    \]
    then $u\geq v$ in $O$.
\end{proposition}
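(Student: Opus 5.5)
The plan is the classical Alexandrov argument through contact sets and monotonicity of subgradient images, which needs no regularity beyond convexity and continuity on $\overline O$. I argue by contradiction. Suppose $v(x_0)-u(x_0)>0$ for some $x_0\in O$. For small $\delta>0$ I introduce the strictly convex perturbation
\[
w(x)\vcentcolon=v(x)+\delta|x-x_0|^2-\delta d^2,\qquad d\vcentcolon=\operatorname{diam}(O).
\]
Since $|x-x_0|\le d$ on $\overline O$, we have $w\le v\le u$ on $\partial O$. Choosing $\delta$ small so that $\delta d^2<v(x_0)-u(x_0)$ gives $w(x_0)>u(x_0)$. Then the open set
\[
G\vcentcolon=\{x\in O:w(x)>u(x)\}
\]
is nonempty, satisfies $G\Subset O$ up to its boundary in the sense that $u=w$ on $\partial G$ by continuity, and $\overline G\subset\overline O$.

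The key step is the monotonicity property, Proposition~\ref{prop:scaling}\ref{monotonicity}, applied on $G$ to the pair $w$ and $u$: $w\ge u$ in $G$ and $w=u$ on $\partial G$. This yields $\mu_u(G)\ge\mu_w(G)$. I will verify that the cited monotonicity only uses convexity on $\overline G$. If needed, I will reprove it directly: every supporting slope of $w$ at a point of $G$, shifted down to touch $u$, touches $u$ inside $G$ because $u\le w$ with equality on $\partial G$. Hence $\partial w(G)\subset\partial u(G)$.

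Next I bound $\mu_w(G)$ from below using superadditivity, Proposition~\ref{prop:scaling}\ref{superadd}:
\[
\mu_w\ge\mu_v+\mu_{\delta|\cdot-x_0|^2}=\mu_v+(2\delta)^n\mathcal L^n
\]
as Borel measures, where the quadratic is handled by Lemma~\ref{lem:MA-measure}\ref{C11-case} and affine terms are harmless by \ref{affine}. Combining this with the hypothesis $\mu_u\le\mu_v$ on $O$ and the fact that $G$ is Borel gives
\[
\mu_v(G)\ge\mu_u(G)\ge\mu_w(G)\ge\mu_v(G)+(2\delta)^n\mathcal L^n(G).
\]
Since $G$ is nonempty and open, $\mathcal L^n(G)>0$.

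The main obstacle is that this chain yields a contradiction only if $\mu_v(G)<\infty$. Local finiteness from Lemma~\ref{lem:MA-measure} gives finiteness only on compact subsets of $\Omega$, and $G$ may touch $\partial O$, where $\partial O$ may meet $\partial\Omega$. Since $u,v\in C(\overline O)$ and $O$ is bounded, I will first try to show $\overline G$ is a compact subset of $O$ using the strict inequality on $\partial O$. To do this I replace $w$ by $w-\varepsilon$ with small $\varepsilon>0$, so that $w<u$ on $\partial O$ while still $w(x_0)>u(x_0)$. Then $\overline G\subset O\subset\Omega$ is compact, $\mu_v(G)\le\mu_v(\overline G)<\infty$, and the chain gives the contradiction $0\ge(2\delta)^n\mathcal L^n(G)>0$. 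Therefore $u\ge v$ in $O$.
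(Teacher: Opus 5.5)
Your argument is correct and is essentially the classical proof of \cite[Theorem~2.10]{Figalli}, which the paper cites without reproducing: you perturb $v$ to $w=v+\delta(|x-x_0|^2-d^2)$, apply monotonicity to the pair $u\le w$ on $G=\{w>u\}$, and contradict $\mu_u\le\mu_v$ using superadditivity, $\mu_w\ge\mu_v+(2\delta)^n\mathcal L^n$. Your extra shift $w-\varepsilon$ is harmless but unnecessary: since $x_0$ is an interior point, $|x-x_0|\le d-\operatorname{dist}(x_0,\partial O)<d$ on $\overline O$, so already $w<v\le u$ on $\partial O$ and $\overline G\subset O$ is compact, which gives the finiteness of $\mu_v(G)$ that you correctly identified as necessary.
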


We use the following standard well-posedness and stability theory for
Alexandrov solutions.

\begin{proposition}[Well-posedness of the Dirichlet problem]
\label{prop:forward}
Let $\Omega\subset\mathbb R^n$ be a bounded strictly convex domain in the sense of Definition~\ref{def:strictly-convex-domains}.
\begin{enumerate}[(i)]
\item\label{forward-existence}
\textit{Existence and uniqueness:}
For every finite Borel measure $\nu$ on $\Omega$ and every
$\varphi\in C(\partial\Omega)$, there exists a unique convex function
$u\in C(\overline\Omega)$ satisfying
\begin{equation}
\label{eq:MA-well-posedness}
\begin{cases}
\det D^2u=\nu & \text{in }\Omega,\\
u=\varphi & \text{on }\partial\Omega
\end{cases}
\end{equation}
in the Alexandrov sense (see~Definition~\ref{def:alexandrov-solution}).

\item\label{forward-source-stability}
\textit{Stability under source perturbations:}
Let $(\nu_k)_{k\in\mathbb N}$ be finite Borel measures on $\Omega$
such that
\[
\sup_{k\in\mathbb N}\nu_k(\Omega)<\infty
\qquad\text{and}\qquad
\nu_k\weakstar\nu\quad \text{as }k\to\infty.
\]
For a fixed $\varphi\in C(\partial\Omega)$, let $u_k$ and $u$ be the solutions corresponding to $(\nu_k,\varphi)$ and $(\nu,\varphi)$,
respectively. Then
\[
u_k\to u
\qquad\text{uniformly on }\overline\Omega\text{ as }k\to\infty.
\]

\item\label{forward-joint-stability}
\textit{Stability under simultaneous perturbations:}
Let $(\nu_k)_{k\in\N}$ be finite Borel measures satisfying the assumptions in \ref{forward-source-stability}. Let $\varphi_k,\varphi\in C(\partial\Omega)$, $k\in\N$, satisfy
\[
\varphi_k\to\varphi
\qquad\text{uniformly on }\partial\Omega\text{ as }k\to\infty.
\]
If $u_k$ is the solution corresponding to $(\nu_k,\varphi_k)$ and $u$ is the solution corresponding to $(\nu,\varphi)$, then
\[
u_k\to u
\qquad\text{uniformly on }\overline\Omega\text{ as }k\to\infty.
\]
\end{enumerate}
\end{proposition}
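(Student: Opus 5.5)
The plan is to reduce all three assertions to the classical Alexandrov theory on bounded strictly convex domains, via Perron's method and the comparison principle. Uniqueness in \ref{forward-existence} follows at once from Proposition~\ref{prop:Comparison}. If $u,v$ both solve \eqref{eq:MA-well-posedness}, then $\mu_u=\mu_v=\nu$ as Borel measures on $O=\Omega$ and $u=v$ on $\partial\Omega$, so we get $u\geq v$ and $v\geq u$.

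For existence I would proceed in three steps, as in \cite[Theorem~2.13–2.14]{Figalli}.
\begin{enumerate}[(1)]
\item First take $\nu=\sum_{i=1}^N a_i\delta_{x_i}$, a finite combination of Dirac masses, and $\varphi$ continuous. Define the Perron family of convex $w\in C(\overline\Omega)$ with $w\leq\varphi$ on $\partial\Omega$ and $\mu_w\geq\nu$. Its upper envelope is the candidate solution.
\item Strict convexity of $\Omega$ (Definition~\ref{def:strictly-convex-domains}) is used to build, at each $x_0\in\partial\Omega$, affine barriers together with cone-type lower barriers. These show that the envelope attains $\varphi$ continuously at $x_0$. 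This is exactly where exposedness of boundary points enters.
\item A general finite $\nu$ is approximated weakly-$*$ by discrete measures $\nu_k$ with $\nu_k(\Omega)\leq\nu(\Omega)$. The corresponding solutions are uniformly bounded: below by the Alexandrov maximum principle (Proposition~\ref{prop:max-principle}) applied to $u_k-h$, where $h$ is the convex envelope/harmonic-type extension of $\varphi$, and above by $\max\varphi$.
\end{enumerate}

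The main obstacle is equicontinuity up to $\partial\Omega$ in step~(3), needed to extract a uniformly convergent subsequence. I would handle it by using the estimate \eqref{eq:Alexandrov-max-princ}, which gives $|u_k-\ell|\lesssim (\nu(\Omega)\dist(x,\partial\Omega))^{1/n}$ near the boundary after subtracting suitable barriers. Combining this with the boundary modulus of $\varphi$ and the strict convexity barriers yields a uniform modulus of continuity on $\overline\Omega$. Interior equicontinuity follows from Lemma~\ref{lem:diff-convex}. By Arzelà–Ascoli and Proposition~\ref{prop:weak-cont}, the limit $u$ satisfies $\mu_u=\nu$, with $u=\varphi$ on $\partial\Omega$.

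For \ref{forward-source-stability} and \ref{forward-joint-stability} I would reuse the compactness argument of step~(3). The hypotheses $\sup_k\nu_k(\Omega)<\infty$ and uniform convergence $\varphi_k\to\varphi$ give uniform bounds and a common modulus of continuity on $\overline\Omega$ for $u_k$, by the same barrier and maximum principle estimates. Every subsequence therefore has a further subsequence converging uniformly to some convex $w$ with $w=\varphi$ on $\partial\Omega$. By Proposition~\ref{prop:weak-cont}, $\mu_{u_k}\weakstar\mu_w$, while also $\mu_{u_k}=\nu_k\weakstar\nu$. Uniqueness of weak-$*$ limits among Radon regularizations (Proposition~\ref{prop:weak-convergence-borel-measures}) gives $\mu_w=\nu$ on Borel sets. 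Uniqueness from \ref{forward-existence} then gives $w=u$, so the whole sequence converges uniformly.
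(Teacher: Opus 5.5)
Your overall architecture is sound, but the step you single out as the main obstacle is not justified as written. Proposition~\ref{prop:max-principle} applies only to convex functions that vanish on $\partial\Omega$. The function $u_k-h$ is a difference of convex functions and is not convex in general. The function $u_k-\ell$ is convex but does not vanish on $\partial\Omega$. So neither your lower bound in step~(3) nor the estimate $|u_k-\ell|\lesssim(\nu(\Omega)\dist(x,\partial\Omega))^{1/n}$ follows directly from the maximum principle.

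The missing ingredient is to pass the estimate through the comparison principle, and to apply the maximum principle only to $w_k$, the convex solution with source $\nu_k$ and zero boundary values. For example, let $h$ be the convex solution with $\mu_h=0$ and $h=\varphi$ on $\partial\Omega$. It must be the convex envelope of $\varphi$; a harmonic-type extension would not work. Superadditivity (Proposition~\ref{prop:scaling}) gives $\mu_{w_k+h}\geq\nu_k=\mu_{u_k}$, and Proposition~\ref{prop:Comparison} then yields $w_k+h\leq u_k\leq h$. Hence $|u_k-h|\leq C\bigl(\sup_k\nu_k(\Omega)\,\dist(x,\partial\Omega)\bigr)^{1/n}$. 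Alternatively, translation invariance gives the sandwich $w_k+\ell^-_{x,\eps}\leq u_k\leq w_k+\ell^+_{x,\eps}$ with the strict-convexity barriers of \cite[Lemma~2.15]{Figalli}, which is what the paper uses. With either repair, your uniform modulus on $\overline\Omega$, the Arzel\`a--Ascoli extraction, and the identification of the limit via Proposition~\ref{prop:weak-cont} and uniqueness all go through.

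Once repaired, your route differs from the paper's mainly in organization. The paper cites \cite[Theorem~2.14]{Figalli} for existence and the proof of \cite[Proposition~2.16]{Figalli} for \emph{local} uniform convergence in (ii). It proves only the upgrade to uniform convergence on $\overline\Omega$, using pointwise affine barriers at each boundary point and a finite covering of $\partial\Omega$. It then deduces (iii) from (ii) in one line by comparing $u_k$ with $v_k\pm\eps_k$, where $v_k$ solves the problem with data $(\nu_k,\varphi)$ and $\eps_k=\|\varphi_k-\varphi\|_{L^\infty(\partial\Omega)}$.

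Your global equicontinuity plus uniqueness argument is more self-contained and handles (ii) and (iii) together. The global sandwich with $h$ also avoids the covering argument: for (iii), replace $h$ by the corresponding $h_k$ and use $|h_k-h|\leq\eps_k$. The paper's shift trick is, however, the shorter way to obtain (iii).
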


\begin{proof}
    \ref{forward-existence}: By the comparison principle (Proposition~\ref{prop:Comparison}), the uniqueness assertion holds under the weaker conditions that $\Omega\subset \R^n$ is a convex bounded open set and $\nu$ a Borel measure. Existence of a solution to the Dirichlet problem \eqref{eq:MA-well-posedness} is established in \cite[Theorem~2.14]{Figalli}.  \\
    
    \noindent \ref{forward-source-stability}: The proof of \cite[Proposition~2.16]{Figalli} shows that $u_k$ converges locally uniformly to the unique solution of the Dirichlet problem \eqref{eq:MA-well-posedness}. Hence, it remains to show that the convergence holds uniformly in $\overline\Omega$. 
    
    Fix $\varepsilon>0$ and $x\in \partial\Omega$. Since $\Omega$ is a strictly convex domain, \cite[Lemma~2.15]{Figalli} yields the existence of affine functions $\ell_{x,\eps}^{\pm}$ satisfying
    \begin{equation}
    \label{eq:barriers}
       \ell_{x,\eps}^{-}\leq \varphi\leq \ell_{x,\eps}^{+}\quad\text{on }\partial\Omega \quad\text{and}\quad \varphi(x)-\eps\leq \ell_{x,\eps}^{-}(x)\leq \ell_{x,\eps}^{+}(x)\leq \varphi(x)+\eps.
    \end{equation}
    We observe that $u_k$ satisfies
    \begin{equation}
    \label{eq:barrier-est}
        w_k+\ell_{x,\eps}^{-}\leq u_k\leq w_k+\ell_{x,\eps}^+\quad \text{in }\Omega
    \end{equation}
    where $w_k$ is the unique solution to 
    \begin{equation}
    \label{eq:MA-well-posedness-hom}
    \begin{cases}
        \det D^2w=\nu_k & \text{in }\Omega,\\
        w=0 & \text{on }\partial\Omega.
    \end{cases} 
    \end{equation}
    Indeed, Proposition~\ref{prop:scaling} \ref{affine} shows that $w_k+\ell_{x,\eps}^{\pm}$ solves \eqref{eq:MA-well-posedness} with source $\nu_k$ and boundary values $\ell_{x,\eps}^\pm |_{\partial\Omega}$. Thus, \eqref{eq:barrier-est} is a consequence of the comparison principle (Proposition~\ref{prop:Comparison}) and \eqref{eq:barriers}. 
    
    Moreover, by Proposition~\ref{prop:max-principle}, the convexity of $w_k$, and $w_k|_{\partial\Omega}=0$, we also have
    \begin{equation}
    \label{eq:two-sided-hom}
        -C \dist(y,\partial\Omega)^{1/n}\leq w_k(y)\leq 0\quad\text{for all }y\in\Omega.
    \end{equation}
    Therefore \eqref{eq:barrier-est} yields
    \[
        -C\dist(y,\partial\Omega)^{1/n}+\ell_{x,\eps}^{-}(y)\leq u_k (y) \leq \ell_{x,\eps}^+ (y),\quad \text{for all }y\in\Omega.
    \]
   Hence, in a sufficiently small collar of the boundary we have $C\dist(y,\partial\Omega)^{1/n}<\eps$. Thus the continuity of $\ell^\pm_{x,\eps}$ and \eqref{eq:barriers} yield a neighborhood $U_x \subset \overline{\Omega}$ of $x$ such that
\begin{equation}
\label{eq:barrier-uk}
    \varphi (x)-3\varepsilon \leq u_k(y) \leq \varphi(x)+2\varepsilon
\end{equation}
for all $k$ and all $y\in U_x\cap\Omega$ sufficiently close to $\partial\Omega$. Up to shrinking the neighborhood $U_x$, the same estimate holds for $u$ in place of $u_k$, since $u\in C(\overline{\Omega})$ and $u|_{\partial\Omega}=\varphi$.

By compactness of $\partial\Omega$, finitely many of the sets $U_x$ cover the boundary. Hence, there exists $\delta>0$ such that
\[
|u_k(y)-u(y)|\leq  5 \varepsilon
\]
for every $k$ whenever $\operatorname{dist}(y,\partial\Omega)<\delta$. Indeed, if $u_k(y)\geq u(y)$ for some $y\in \Omega$ with $\dist(y,\partial\Omega)<\delta$, then it lies in some neighborhood $U_x$ and \eqref{eq:barrier-uk} implies
\[
    |u_k(y)-u(y)|=(u_k(y)-\varphi(x))-(u(y)-\varphi(x))\leq 5\eps
\]
and similarly for the case $u_k(y)\leq u(y)$. Since on the compact set
\[
K_\delta
\vcentcolon = 
\{z\in\overline\Omega:
\operatorname{dist}(z,\partial\Omega)\geq\delta\}
\Subset\Omega,
\]
the convergence is uniform by local uniform convergence. Therefore
\[
\limsup_{k\to\infty}
\|u_k-u\|_{L^\infty(\Omega)}
\leq5\varepsilon.
\]
Since $\varepsilon>0$ was arbitrary, $u_k\to u$ uniformly on $\overline\Omega$. This completes the proof of \ref{forward-source-stability}.\\
    
    \noindent \ref{forward-joint-stability}: Let $v_k$ denote the solution corresponding to $(\nu_k,\varphi)$ and set
\[
\varepsilon_k
\vcentcolon = 
\|\varphi_k-\varphi\|_{L^\infty(\partial\Omega)}.
\]
Proposition~\ref{prop:scaling} \ref{affine} demonstrates that $v_k\pm \eps_k$ solve \eqref{eq:MA-well-posedness} with source $\nu_k$ and boundary values $\varphi\pm \eps_k$. Since $u_k=\varphi_k$ on $\partial\Omega$, the comparison principle (Proposition~\ref{prop:Comparison}) therefore gives
\[
v_k-\varepsilon_k
\leq u_k
\leq v_k+\varepsilon_k
\qquad\text{on }\overline\Omega.
\]
Consequently,
\[
\|u_k-v_k\|_{L^\infty(\overline\Omega)}
\leq\varepsilon_k.
\]
By assertion~\ref{forward-source-stability}, $v_k\to u$ uniformly on $\overline\Omega$, while $\varepsilon_k\to0$. Hence $u_k\to u$ uniformly on $\overline\Omega$.
\end{proof}

\begin{remark}[Remarks on the existence of solutions]
\label{rem:existence-of-sols}
As mentioned the existence of a solution to \eqref{eq:MA-well-posedness} is provided by \cite[Theorem~2.14]{Figalli}, when $\varphi\in C(\partial\Omega)$ is arbitrary, and by \cite[Theorem~2.13]{Figalli} for the special case $\varphi=0$. Both results apply the Perron method, where the subsolution set is
    \[
        \mathcal{S}[\nu]\vcentcolon = \{w\colon \Omega\to \R:w\in C(\overline{\Omega})\text{ convex},\, w \leq \varphi\text{ on }\partial\Omega,\,\mu_w\geq \nu\text{ as Borel measures}\},
    \]
    the weak-$*$ approximation of $\nu$ by nonnegative finite linear combinations of Dirac masses,  $\sum_{n=1}^N a_n \delta_{x_n}$, $a_n\geq 0$, and the stability under source perturbations \ref{forward-source-stability}. We will also use the same strategy to construct solutions for the weighted Monge--Ampère equation (see Section~\ref{sec:weighted-MA-extension}).

    When $\varphi=0$, the source-stability assertion \ref{forward-source-stability} of Proposition~\ref{prop:forward} holds on arbitrary bounded convex domains and is a special case of \cite[Corollary~2.12]{Figalli}. The proof uses the Alexandrov maximum principle (Proposition~\ref{prop:max-principle}) to control solutions near the boundary, together with compactness, weak continuity of Monge--Ampère measures, and uniqueness.

The argument of \cite[Corollary~2.12]{Figalli} also allows varying bounded convex domains $\Omega_k$, provided\footnote{In \cite[Corollary~2.12]{Figalli}, the condition is written as $\dist(\partial\Omega_k,\partial\Omega)\to 0$. If $\dist$ denotes the usual infimum distance between sets, this condition does not imply the convergence of the domains or their diameters used in the proof: shrinking balls externally tangent to $\Omega$ give a counterexample. We therefore use Hausdorff distance, which provides the required convergence for bounded convex domains.}
\[
d_H(\partial\Omega_k,\partial\Omega)\to 0
\]
and the source measures have uniformly bounded total masses. Here, for nonempty sets $A,B\subset\mathbb R^n$,
\[
d_H(A,B)\vcentcolon = 
\max\left\{
\sup_{x\in A}\operatorname{dist}(x,B),
\sup_{y\in B}\operatorname{dist}(y,A)
\right\}.
\]
The resulting convergence of the related solutions $u_k$ is locally uniform in $\Omega$. For a fixed domain, the maximum principle estimate \eqref{eq:Alexandrov-max-princ} and the uniform boundedness of $\nu_k(\Omega)$ imply
\[
    |u_k(x)|\leq C \dist(x,\partial\Omega)^{1/n},\quad x\in\Omega.
\]
Hence in this case the local uniform convergence can be upgraded to uniform convergence on $\overline\Omega$.

We highlight that for the general case \eqref{eq:MA-well-posedness} the strict convexity of $\Omega$ is needed to obtain boundary barriers at every point of $\partial\Omega$, which ensure that the limiting solution attains an arbitrary prescribed datum $\varphi\in C(\partial\Omega)$ continuously. More precisely, they are needed in the construction of $\ell_{x,\eps}^\pm$ (see~\cite[Lemma~2.15]{Figalli} and proof of Proposition~\ref{prop:forward} \ref{forward-source-stability}).
\end{remark}

\begin{remark}[Interior $C^1$-regularity and strict convexity in dimension two]
\label{rem:two-dimensional-regularity}
In two space dimensions, upper and lower bounds for the density of the Monge--Ampère measure impose strong geometric and regularity properties on Alexandrov solutions. Suppose that $\Omega\subset\mathbb R^2$ and that
\[
\mu_u=f\,dx
\quad\text{in }\Omega.
\]
Then the following assertions hold:
\begin{enumerate}[(i)]
\item\label{smoothness-2d} If $f\in L^\infty_{\mathrm{loc}}(\Omega)$, then either
      $u\in C^1(\Omega)$ or there exists a segment $\Sigma$ crossing
      $\Omega$ along which $u$ is affine and non-differentiable; see
      \cite[Theorem~2.17]{Figalli}.
\item\label{strict-convexity-2d} If $1/f\in L^\infty_{\mathrm{loc}}(\Omega)$, then $u$ is
      strictly convex in $\Omega$; see
      \cite[Theorem~2.19]{Figalli}.
\end{enumerate}
Consequently, if for any compact subset $K\Subset \Omega$ there exist constants $c_K,C_K>0$ such that
\[
    c_K\leq f(x)\leq C_K\quad \text{for a.e. }x\in K,
\]
then $u$ is strictly convex and belongs to $C^1(\Omega)$. Indeed, strict convexity rules out the affine-segment alternative in \ref{smoothness-2d}. 
\end{remark}

\section{One-sided convex boundary slope}
\label{sec:one-side-slope}

In this section, we introduce the one-sided convex boundary slope $q_u$, which turns out to be the appropriate generalization of the Neumann derivative $\partial_\nu u$. First, in Section~\ref{subsec:further-notation}, we review the subdifferential of convex functions $v\colon \Omega\to (-\infty,\infty]$. Afterwards in Section~\ref{subsec:extremal-slope} we introduce the one-sided convex boundary slope $q_u$, show that it suitably generalizes the normal derivative and demonstrate that it is Borel measurable under suitable convexity assumptions. Finally, in Section~\ref{subsec:interior-normal-derivative}, we show that the boundary slope coincides with interior normal derivative for Lipschitz functions.

\subsection{Further convex analysis notation}
\label{subsec:further-notation}

We need to slightly extend our convex-analytic preliminaries from our earlier sections. Everything covered in this section can be found in the classical monograph \cite{Rockafellar}. Let $C\subset \R^n$ be a convex set and $D\subset \R^n$ an arbitrary set.

\begin{enumerate}[(a)]
    \item\label{domain-epigraph-extended} Let $f\colon C\to (-\infty,\infty]$ be a function, then the \emph{(effective) domain} and the \emph{epigraph} of $f$ are the sets
    \[
        \operatorname{dom}(f)\vcentcolon = \{x\in C: f(x)<\infty\}\subset C
    \]
    and 
    \[
        \operatorname{epi}(f)\vcentcolon =\{(x,t)\in C\times \R:t\geq f(x)\}\subset C\times \R,
    \]
    respectively.
    \item\label{convexity-extended} A function $f\colon C\to (-\infty,\infty]$ is called \emph{convex} if its epigraph is convex, or equivalently, for all $x,y\in \operatorname{dom}(f)$ and all $0<t<1$
    \[
        f(tx+(1-t)y)\leq tf(x)+(1-t)f(y);
    \]
    see \cite[Theorem~4.1]{Rockafellar}.
    \item\label{proper} A convex function $f\colon C\to (-\infty,\infty]$ is called \emph{proper} if $\operatorname{dom}(f)$ is nonempty.
    \item\label{subgradient-subdifferential-extended} If $f\colon C\to (-\infty,\infty]$ is a convex function, then $p\in \R^n$ is called \emph{subgradient} of $f$ at $x\in C$, whenever
    \begin{equation}
    \label{eq:subgradient-ineq}
        f(y)\geq f(x)+p\cdot (y-x),\quad \text{for all }y\in C.
    \end{equation}
    The subgradient inequality \eqref{eq:subgradient-ineq} only needs to be checked for vectors $y$ in $\operatorname{dom}(f)$. The collection of all subgradients of $f$ at $x$ is called the \emph{subdifferential} of $f$ at $x$ and denoted by $\partial f(x)$. The subdifferential $\partial f(x)$ is always a convex and closed subset of $\R^n$. Furthermore, by \cite[Theorem~23.4]{rockafellar1966characterization}, $\partial f(x)=\varnothing$ for all $x\notin \operatorname{dom}(f)$ and $\partial f(x)$ is nonempty and bounded, when $x\in \operatorname{int}(\operatorname{dom}(f))$. 
    \item\label{indicator-function} The \emph{indicator function} $I_D\colon \R^n\to (-\infty,\infty]$ of the set $D$ is defined by
    \[
    I_D(x)=\begin{cases}
        0,\quad x\in D,\\
        \infty,\quad x\notin D,
    \end{cases}
    \]
    which is convex if and only if $D$ is convex \cite[p.~28]{Rockafellar}.
\end{enumerate}

Throughout this section, we assume that $\Omega\subset \R^n$ is at least a (nonempty) convex bounded open set. We observe that every convex function $u\colon\Omega\to\R$ can be extended to a proper lower semicontinuous convex function on $\R^n$ by
\begin{equation}
\label{eq:extension-convex}
  \bar u(x)\vcentcolon =\begin{cases}
    u(x),\quad& \text{if }x\in \Omega,\\
      \liminf_{\Omega\ni y\to x}u(y),\quad&\text{if }x\in \partial\Omega,\\
      +\infty,\quad&\text{if }x\in \R^n\setminus \overline{\Omega}.
  \end{cases}
\end{equation}
Indeed, the convexity of $\bar u$ follows from \ref{convexity-extended} and the convexity of $\Omega$ and $u$. The function $\bar u$ is proper, because $\Omega$ is nonempty. The lower semicontinuity of $\bar u$ follows from the continuity of $u$ in $\Omega$ (see \cite[Corollary~A.23]{Figalli}) and the definition \eqref{eq:extension-convex}. 

Given such a convex function $u\colon \Omega \to \R$, then we will always denote by $\bar u \colon \R^n\to (-\infty,\infty]$ its proper lower semicontinuous convex extension defined by \eqref{eq:extension-convex}, and write 
\begin{equation}
\label{eq:extended-subdifferential}
     \bar{\partial} u (x)\vcentcolon = \partial \bar{u}(x),\quad x\in\R^n,
\end{equation}
where $\partial \bar{u}$ is the subdifferential of $\bar{u}\colon \R^n\to (-\infty,\infty]$ (see~\ref{subgradient-subdifferential-extended}). 
We also call $\bar\partial u$ the \emph{relative boundary subdifferential} of $u$. Next we note the following simple lemma.

\begin{lemma}[Boundary subdifferential]
\label{lemma:bdry-subdifferential}
    Let $\Omega\subset \R^n$ be a convex open set and let $u\colon \Omega\to \R$ be a convex function.
    \begin{enumerate}[(i)]
        \item\label{consistency} For any $x\in \Omega$, one has $\bar{\partial }u(x)=\partial u(x)$.
        \item\label{C1-functions} Assume, in addition, that $\Omega$ is a $C^1$ domain and $u\in C^1(\overline\Omega)$. Then, for every $x\in\partial\Omega$, we have
        \begin{equation}
        \label{eq:subgradient-c1}
            \bar{\partial} u(x)=\{\nabla u(x)+s\nu(x):s\geq 0\},
        \end{equation}
        where $\nu$ is the outward pointing unit normal vector field.
    \end{enumerate}
\end{lemma}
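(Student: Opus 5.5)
For \ref{consistency}, I will fix $x\in\Omega$ and compare the two subgradient inequalities directly. If $p\in\bar\partial u(x)$, then $\bar u(y)\geq \bar u(x)+p\cdot(y-x)$ holds for all $y\in\mathbb R^n$. Restricting to $y\in\Omega$, where $\bar u=u$, gives $p\in\partial u(x)$.

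Conversely, let $p\in\partial u(x)$. The inequality must be extended from $\Omega$ to all of $\mathbb R^n$.
\begin{itemize}
\item[(a)] For $y\notin\overline\Omega$ it is trivial, since $\bar u(y)=+\infty$.
\item[(b)] For $y\in\partial\Omega$, take any sequence $\Omega\ni y_k\to y$. The inequality $u(y_k)\geq u(x)+p\cdot(y_k-x)$ passes to the $\liminf$ and yields $\bar u(y)\geq u(x)+p\cdot(y-x)$ by definition \eqref{eq:extension-convex}.
\end{itemize}
Hence $p\in\bar\partial u(x)$, which proves \ref{consistency}.

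For \ref{C1-functions}, fix $x\in\partial\Omega$. Since $u\in C^1(\overline\Omega)$, the function $u$ is continuous up to the boundary, so $\bar u=u$ on $\overline\Omega$. I will prove the two inclusions of \eqref{eq:subgradient-c1} separately.

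\emph{Inclusion $\supseteq$.} Let $p=\nabla u(x)+s\nu(x)$ with $s\geq0$, and let $y\in\overline\Omega$. The segment $x+t(y-x)$, $t\in(0,1]$, lies in $\overline\Omega$ by convexity. Convexity of $u$ along this segment (extended to $\overline\Omega$ by continuity) together with differentiability up to the boundary gives
\[
u(y)-u(x)\geq \lim_{t\downarrow0}\frac{u(x+t(y-x))-u(x)}{t}=\nabla u(x)\cdot(y-x).
\]
For the one-sided directional derivative one uses that $u\in C^1(\overline\Omega)$, so the mean value theorem applies on the segment, whose points other than $x$ can be taken in $\Omega$ when $y\in\Omega$; the case $y\in\partial\Omega$ follows by approximation. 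By \eqref{eq:left-tangent-plane} in Lemma~\ref{lemma:convex-dom}, $s\nu(x)\cdot(y-x)\leq0$. Therefore
\[
u(y)\geq u(x)+\nabla u(x)\cdot(y-x)\geq u(x)+p\cdot(y-x),
\]
so $p\in\bar\partial u(x)$.

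\emph{Inclusion $\subseteq$.} Let $p\in\bar\partial u(x)$ and set $w=p-\nabla u(x)$. For any direction $d$ with $x+td\in\overline\Omega$ for small $t>0$, divide the subgradient inequality by $t$ and let $t\downarrow0$ to obtain
\[
\nabla u(x)\cdot d\geq p\cdot d,\qquad\text{that is,}\qquad w\cdot d\leq0.
\]
This holds in particular for every $d=z-x$ with $z\in\Omega$.

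The main step is to show that the cone generated by $\Omega-x$ is dense in the half-space $\{d:\nu(x)\cdot d<0\}$.
\begin{itemize}
\item[(a)] The direction $d=-\nu(x)$ is admissible, because $x-s\nu(x)\in\Omega$ for small $s>0$, as used in the proof of Lemma~\ref{lemma:convex-dom}.
\item[(b)] For a tangent vector $\tau\in\nu(x)^\perp$, the direction $\tau-\varepsilon\nu(x)$ is admissible for every $\varepsilon>0$. Indeed, $\Omega$ is a $C^1$ domain, so locally it is the region below a $C^1$ graph over $H_x$ whose gradient vanishes at $x$, and the point $x+t(\tau-\varepsilon\nu(x))$ lies below that graph for small $t>0$.
\end{itemize}
From (b), $w\cdot\tau\leq\varepsilon\,w\cdot\nu(x)$; letting $\varepsilon\to0$ gives $w\cdot\tau\leq0$ for all $\tau\perp\nu(x)$. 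Replacing $\tau$ by $-\tau$ shows $w\perp\nu(x)^\perp$, so $w=s\nu(x)$ for some $s\in\mathbb R$. From (a), $w\cdot(-\nu(x))\leq0$, so $s\geq0$. This gives $p=\nabla u(x)+s\nu(x)$ with $s\geq0$, completing the proof of \eqref{eq:subgradient-c1}.

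The local-graph argument in (b) is where I expect the only real care to be needed.
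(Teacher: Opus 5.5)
Your proof is correct. Part (i) is the paper's argument, but part (ii) takes a different route.

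\textbf{The inclusion $\supseteq$.} The paper obtains $\nabla u(x)\in\bar\partial u(x)$ by passing to the limit in the supporting planes $u(z)\geq u(x_k)+\nabla u(x_k)\cdot(z-x_k)$ at interior points $x_k\to x$. This uses (i) and the continuity of $\nabla u$ up to $\partial\Omega$. You instead use convexity along segments and identify the one-sided derivative at $x$ via the mean value theorem. The two cost about the same.

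\textbf{The inclusion $\subseteq$.} Here the difference is more substantive. The paper restricts the subgradient inequality to $C^1$ curves $\gamma\subset\partial\Omega$ with $\gamma'(0)=\tau$ and differentiates from both sides. This fixes the tangential component of $p$ as an equality in one step; a test at $x-r\nu(x)$ then gives the sign. You use only directions entering $\Omega$. In effect you show that $w=p-\nabla u(x)$ lies in the normal cone of $\overline\Omega$ at $x$. You then recover the tangential equality from one-sided inequalities via the limit $\varepsilon\to0$ and the substitution $\tau\mapsto-\tau$.

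\textbf{What each route buys.} Your version needs only one-sided derivatives along segments into $\Omega$, and you justify these. The paper's version relies on a first-order expansion of $u$ at the boundary point $x$, both along $\gamma$ and along $x-r\nu(x)$. That expansion holds for $u\in C^1(\overline\Omega)$ on a convex domain, but the paper does not spell it out. In return, the paper's tangential step uses only the regularity of the trace $u|_{\partial\Omega}$. This is what lets the same argument be reused in Lemma~\ref{lem:ray}, where $u$ is merely continuous up to $\partial\Omega$ and no $\nabla u(x)$ is available, so your interior-direction argument would not transfer there.

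\textbf{A simplification.} The local-graph step you flag as delicate can be bypassed. You already have $w\cdot(z-x)\leq0$ for all $z\in\Omega$, hence for all $z\in\overline\Omega$. If $w\neq0$, the hyperplane $\{z:w\cdot(z-x)=0\}$ therefore supports $\overline\Omega$ at $x$. Lemma~\ref{lemma:convex-dom}\ref{convex-c1-dom} then identifies it with $H_x$, and the orientation is fixed by $x-r\nu(x)\in\Omega$. Hence $w=|w|\nu(x)$ directly, with no graph representation over $H_x$ needed.
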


\begin{proof}
    \ref{consistency}: The inclusion $\bar{\partial }u(x)\subset \partial u(x)$ follows from the definitions. Now, suppose that $p\in \partial u(x)$. Then
    \[
        u(y)\geq u(x)+p\cdot (y-x)\quad\text{for all }y\in \Omega.
    \]
    Since $x\in \Omega$, this is equivalent to 
     \begin{equation}
     \label{eq:subgrad-proof}
        u(y)\geq \bar{u}(x)+p\cdot (y-x)\quad\text{for all }y\in \Omega.
    \end{equation}
    Now, let $y\in \partial \Omega$ and choose a sequence $(y_n)_{n\in\N}\subset \Omega$ such that
    \[
        \lim_{n\to\infty}u(y_n)=\liminf_{\Omega\ni z\to y}u(z)=\bar{u}(y).
    \]
    Using \eqref{eq:subgrad-proof} with $y=y_n$ and passing to the limit $n\to\infty$, we find
    \[
        \bar u (y)\geq \bar{u}(x)+p\cdot (y-x).
    \]
    Since $\bar u=\infty$ in $\R^n\setminus\overline{\Omega}$, we deduce that $p\in \bar{\partial}u(x)$. \\

    \noindent \ref{C1-functions}: Fix $x\in\partial\Omega$ and choose $x_k\in\Omega$ with $x_k\to x$. By \ref{consistency} and $u\in C^1(\Omega)$, we have $\bar{\partial}u(x_k)=\partial \bar{u}(x_k)=\{\nabla u(x_k)\}$ for each $k\in\N$. Thus, 
\[
 u(z)\geq u(x_k)+\nabla u(x_k)\cdot(z-x_k),\qquad z\in\Omega
\]
for each $k\in\N$. Therefore, since $u\in C^1(\overline{\Omega})$, passing to the limit $k\to\infty$ shows 
\[
 u(z)\geq u(x)+\nabla u(x)\cdot(z-x),\qquad z\in\Omega.
\]
By continuity of $u$ on $\overline{\Omega}$ this holds for all $z\in\overline{\Omega}$. Since $u=\bar{u}$ on $\partial\Omega$ and $\bar u=\infty$ in $\R^n\setminus \overline{\Omega}$, the above inequality implies that
$\nabla u(x)\in\bar{\partial} u(x)$. By \eqref{eq:left-tangent-plane} we know that
$\nu(x)\cdot(y-x)\leq 0$ for $y\in \overline{\Omega}$, and hence it follows that $\nabla u(x)+s\nu(x)\in\bar{\partial} u(x)$ for every $s\geq 0$. Therefore, 
\[
    \{\nabla u(x)+s\nu(x):s\geq 0\}\subset \bar{\partial} u(x).
\]

Conversely, let $p\in\bar{\partial} u(x)$. Choose a $C^1$ boundary curve $\gamma:(-\varepsilon,\varepsilon)\to
\partial\Omega$ with $\gamma(0)=x$ and
$\gamma'(0)=\tau_0\in T_x\partial\Omega$. Since $p\in \bar\partial u(x)$ and $\bar u=u$ on $\partial \Omega$, we have
\[
 u(\gamma(t))-u(x)
 \geq p\cdot(\gamma(t)-x)\quad\text{for all }|t|<\eps.
\]
Dividing by positive and negative $t$ and passing to zero shows
\[
 p\cdot\tau_0=\nabla_{\tau} u(x)\cdot\tau_0,
\]
where $\nabla_\tau u \vcentcolon = \nabla u- (\partial_\nu u)\nu$ is the tangential derivative of $u$ along $\partial\Omega$.

Thus, by varying $\tau_0\in T_x\partial\Omega$, we see that every element of $\bar{\partial} u(x)$ has the form
$\nabla_\tau u(x)+a\nu(x)$.
For all sufficiently small $r>0$, the point $x-r\nu(x)$ lies in $\Omega$.
The subgradient inequality and the first-order expansion of $u$ at $x$ give
\[
 u(x)-r\nabla u(x)\cdot\nu(x)+o(r)
 =u(x-r\nu(x))\geq u(x)-ar.
\]
After division by $r$ and passage to the limit, this yields 
\[
    a\geq\nabla u(x)\cdot\nu(x)=\partial_\nu u(x).
\]
Since $\nabla u(x) =\nabla_\tau u(x)+(\partial_\nu u (x))\nu(x)$, formula \eqref{eq:subgradient-c1} follows. 
\end{proof}


\subsection{Extremal boundary slope}
\label{subsec:extremal-slope}

In this section we introduce the extremal boundary slope $q_u$, which turns out to be the appropriate generalization of the normal derivative. We study its compatibility with other notions of normal derivatives in the next section. From now one, when $\Omega\subset \R^n$ is a $C^1$ domain, we will denote its outward pointing normal vector field by $\nu(x)$ and $\tau$ for a tangential vector. Accordingly, $\partial_\nu u$ denotes the normal derivative and $\nabla_\tau u$ the tangential derivative. 

\begin{proposition}[Extremal boundary slope]
\label{def:slope}
    Let $\Omega\subset\mathbb R^n$ be a convex domain with $C^1$ boundary, let $\varphi\in C^1(\partial\Omega)$, and suppose that $u\in C(\overline\Omega)$ is convex and satisfies $u|_{\partial\Omega}=\varphi$. For every $x\in\partial\Omega$, define the \emph{extremal boundary slope} by
\begin{equation}
\label{eq:defq}
q_u(x)
\vcentcolon=
\inf\left\{
a\in\mathbb R:
\nabla_\tau\varphi(x)+a\nu(x)\in\bar\partial u(x)
\right\},
\end{equation}
with the convention $\inf\varnothing=+\infty$. Then
\[
q_u(x)\in(-\infty,\infty]
\qquad\text{for every }x\in\partial\Omega.
\]
\end{proposition}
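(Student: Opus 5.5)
The claim is that $q_u(x)$ never equals $-\infty$; the value $+\infty$ is allowed by the convention $\inf\varnothing=+\infty$. It therefore suffices to treat $x\in\partial\Omega$ for which the set
\[
A_x\vcentcolon=\{a\in\mathbb R:\nabla_\tau\varphi(x)+a\nu(x)\in\bar\partial u(x)\}
\]
is nonempty, and to show that $A_x$ is bounded below. The plan is to test the subgradient inequality at interior points along the inward normal and use continuity of $u$ on $\overline\Omega$.

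Fix $a\in A_x$ and set $p=\nabla_\tau\varphi(x)+a\nu(x)$. Since $\Omega$ is a $C^1$ domain, there is $r_0>0$ with $x-r\nu(x)\in\Omega$ for $0<r\le r_0$; this is the same fact used in the proof of Lemma~\ref{lemma:convex-dom}. Because $\bar u=u$ on $\overline\Omega$ and $\bar u(x)=u(x)=\varphi(x)$, the subgradient inequality for $\bar u$ at $x$, evaluated at $y=x-r_0\nu(x)$, gives
\[
u(x-r_0\nu(x))\ \ge\ \varphi(x)+p\cdot(-r_0\nu(x))=\varphi(x)-a r_0,
\]
where we used $\nabla_\tau\varphi(x)\cdot\nu(x)=0$. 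Hence
\[
a\ \ge\ \frac{\varphi(x)-u(x-r_0\nu(x))}{r_0},
\]
which is a finite real number independent of $a$. Taking the infimum over $a\in A_x$ gives $q_u(x)>-\infty$.

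There is a subtle point about the extension $\bar u$. One must check that $\bar u(x)=\varphi(x)$ when $\bar u$ is defined by \eqref{eq:extension-convex} via a $\liminf$. This holds because $u\in C(\overline\Omega)$ with $u|_{\partial\Omega}=\varphi$, so the $\liminf$ equals $u(x)=\varphi(x)$. With that settled, the argument reduces to this single inequality and no real obstacle remains. One may also note that $A_x$ is an up-ray, by \eqref{eq:left-tangent-plane}, but this is not needed for the finiteness claim.
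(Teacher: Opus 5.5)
Your proposal is correct and follows the paper's own argument. You test the subgradient inequality at the interior point $x-r\nu(x)$, use $\nabla_\tau\varphi(x)\cdot\nu(x)=0$ to obtain the bound $a\geq \bigl(u(x)-u(x-r\nu(x))\bigr)/r$ for every admissible $a$, and then take the infimum.
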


\begin{proof}
   Fix $x\in\partial\Omega$. If the set appearing in
\eqref{eq:defq} is empty, then $q_u(x)=+\infty$, and there is nothing to prove. Otherwise, let $a\in\R$ satisfy
    \[
        \nabla_\tau \varphi(x)+a\nu(x)\in \bar \partial u(x).
    \]
    Then the subgradient inequality implies that
    \[
        u(z)\geq u(x)+(\nabla_\tau \varphi(x)+a\nu(x))\cdot (z-x)\quad\text{for all }z\in \overline{\Omega}.
    \]
    Since $\Omega\in C^1$ and $\nu(x)$ is outward pointing, there exists a small $r>0$ such that $z=x-r\nu(x)\in \Omega$. Choosing this vector in the subgradient inequality and rearranging terms we get
    \[
        a\geq \frac{u(x)-u(x-r\nu(x))}{r},
    \]
    because $\nabla_\tau\varphi(x)\cdot\nu(x)=0$. The right-hand side is a finite real number and the estimate holds for every admissible $a$. Taking the infimum over all such $a$ yields
\[
q_u(x)\geq
\frac{u(x)-u(x-r\nu(x))}{r}>-\infty.
\]
Thus $q_u(x)\in(-\infty,\infty]$.
\end{proof}

The following lemma shows that this is the correct weak interpretation of the exterior normal derivative $\partial_\nu u$.

\begin{lemma}[Boundary subdifferential ray]
\label{lem:ray}
 Let $\Omega\subset\mathbb R^n$ be a convex domain with $C^1$ boundary, let $\varphi\in C^1(\partial\Omega)$, and suppose that $u\in C(\overline\Omega)$ is convex and satisfies $u|_{\partial\Omega}=\varphi$. For every $x\in\partial\Omega$,
\begin{equation}
\label{eq:ray}
 \bar{\partial} u(x)=
 \begin{cases}
 \{\nabla_\tau\varphi(x)+a\nu(x):a\geq q_u(x)\},
     &q_u(x)<+\infty,\\
 \varnothing,&q_u(x)=+\infty.
 \end{cases}
\end{equation}
In particular, if $q_u(x)<\infty$, the infimum in
\eqref{eq:defq} is attained.
\end{lemma}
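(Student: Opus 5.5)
The plan is to show two things. First, every element of $\bar\partial u(x)$ has the form $\nabla_\tau\varphi(x)+a\nu(x)$. Second, the admissible set $A_x\vcentcolon=\{a\in\mathbb R:\nabla_\tau\varphi(x)+a\nu(x)\in\bar\partial u(x)\}$ is a closed, upward-closed interval. The case $q_u(x)=+\infty$ follows at once from the first step: if $\bar\partial u(x)$ contained some $p$, then $p=\nabla_\tau\varphi(x)+a\nu(x)$ for some $a\in A_x$, and $A_x=\varnothing$ would be contradicted.

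\emph{Tangential component.} Fix $x\in\partial\Omega$ and $p\in\bar\partial u(x)$. Since $\bar u=u=\varphi$ on $\partial\Omega$, the subgradient inequality gives $\varphi(\gamma(t))-\varphi(x)\geq p\cdot(\gamma(t)-x)$ for every $C^1$ curve $\gamma\colon(-\varepsilon,\varepsilon)\to\partial\Omega$ with $\gamma(0)=x$ and $\gamma'(0)=\tau_0\in T_x\partial\Omega$. I would divide by $t>0$ and by $t<0$ and let $t\to0$, exactly as in Lemma~\ref{lemma:bdry-subdifferential}\ref{C1-functions}. Because $\varphi\in C^1(\partial\Omega)$, this gives $p\cdot\tau_0=\nabla_\tau\varphi(x)\cdot\tau_0$ for all $\tau_0\in T_x\partial\Omega=\nu(x)^\perp$. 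Hence $p-\nabla_\tau\varphi(x)$ is parallel to $\nu(x)$, that is, $p=\nabla_\tau\varphi(x)+a\nu(x)$ with $a=p\cdot\nu(x)$. This is the only step that uses the $C^1$ regularity of $\varphi$ and of the boundary, and I expect it to be the main (mild) obstacle: one must check that tangential derivatives of $\varphi$ along boundary curves are what $\nabla_\tau\varphi$ encodes.

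\emph{Upward closedness.} Let $a\in A_x$ and $b\geq a$. For $z\in\overline\Omega$, \eqref{eq:left-tangent-plane} gives $(b-a)\nu(x)\cdot(z-x)\leq0$. Therefore
\[
u(z)\geq u(x)+(\nabla_\tau\varphi(x)+a\nu(x))\cdot(z-x)\geq u(x)+(\nabla_\tau\varphi(x)+b\nu(x))\cdot(z-x).
\]
Outside $\overline\Omega$ we have $\bar u=+\infty$, so the inequality is trivial there. Hence $b\in A_x$.

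\emph{Closedness and attainment.} The subdifferential $\partial\bar u(x)$ is closed, and $a\mapsto\nabla_\tau\varphi(x)+a\nu(x)$ is continuous, so $A_x$ is closed. Proposition~\ref{def:slope} shows that $A_x$ is bounded below by $(u(x)-u(x-r\nu(x)))/r$. So if $q_u(x)<\infty$, then $q_u(x)=\inf A_x$ is finite and belongs to $A_x$, which means the infimum is attained. Combined with upward closedness this gives $A_x=[q_u(x),\infty)$. Together with the first step, this yields \eqref{eq:ray} in the finite case.
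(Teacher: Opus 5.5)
Your proposal is correct and follows essentially the same route as the paper: you pin down the tangential component by differentiating the subgradient inequality along boundary curves, obtain upward closedness from the supporting half-space \eqref{eq:left-tangent-plane}, and combine closedness of $\bar\partial u(x)$ with the lower bound from the inward point $x-r\nu(x)$ to get the closed ray $[q_u(x),\infty)$. The differences are cosmetic: you quote that lower bound from Proposition~\ref{def:slope} instead of re-deriving it, and you argue directly that $A_x$ is closed instead of listing the three possible shapes of the set of normal components.
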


\begin{proof}
 We first show that every subgradient $p$ of $u$ at $x\in \partial\Omega$ can be decomposed as 
 \begin{equation}
 \label{eq:decomp-subgrad}
    p=\nabla_\tau\varphi(x)+a\nu(x)\quad\text{for some }a\in\R.
 \end{equation}
 We argue similarly as in the second half of the proof of Lemma \ref{lemma:bdry-subdifferential} \ref{C1-functions}. Fix $p\in\bar\partial u(x)$, $\tau_0\in T_x\partial\Omega$ and let $\gamma\colon (-\varepsilon,\varepsilon)\to\partial\Omega$ be a $C^1$ boundary curve with $\gamma(0)=x$ and $\gamma'(0)=\tau_0$. The subgradient inequality with $\varphi=u|_{\partial\Omega}$ and $y=\gamma(t)$ gives
\[
 \varphi(\gamma(t))-\varphi(x)
 \geq p\cdot(\gamma(t)-x),\quad \text{for all }|t|<\eps.
\]
Dividing by positive and negative $t$ and taking the limit $t\downarrow 0$ ($t\uparrow 0$) shows
\[
 p\cdot\tau_0=\nabla_\tau\varphi(x)\cdot\tau_0.
\]
Since $\tau_0$ is arbitrary, every element of $\bar{\partial} u(x)$ has the form $\nabla_\tau\varphi(x)+a\nu(x)$ for some $a\in\R$. This establishes \eqref{eq:decomp-subgrad}.

Recall that the convexity of $\Omega$ implies
\[
    \nu(x)\cdot(z-x)\leq 0\quad\text{for all }z\in \overline{\Omega}
\]
and hence, if $p\in\bar \partial u(x)$ and $s\geq0$, then the subgradient inequality shows that
\[
 p+s\nu(x)\in\bar\partial u(x).
\]
Consequently, since $\bar \partial u(x)$ is always a closed convex set and every subgradient $p$ can be written in the form $p=\nabla_\tau \varphi(x)+a\nu(x)$ (see~\eqref{eq:decomp-subgrad}), the possible normal components $a=p\cdot \nu(x)$ form either the empty set, a closed upper ray $[a_0,\infty)$, or all of $\R$.

Finally, we show that the last possibility cannot occur. To this end, fix a sufficiently small $r>0$ such that $z=x-r\nu(x)\in\Omega$. If $p=\nabla_\tau\varphi(x)+a\nu(x)\in\bar\partial u(x)$ for some $a\in\R$, then the subgradient inequality at $z$ gives
\[
 u(z)\geq u(x)-ra,
 \qquad\text{hence}\qquad
 a\geq\frac{u(x)-u(z)}{r}.
\]
Thus the normal components of the subgradients $p\in \bar\partial u(x)$ are bounded below whenever they are nonempty. By definition of $q_u$ (see~\eqref{eq:defq}), the closed-ray description is exactly \eqref{eq:ray}.
\end{proof}

We next prove that the boundary slope $x\mapsto q_u$ Borel measurable. This will be useful when proving that its graph is a null set (Lemma~\ref{lem:S-null}).

\begin{lemma}[Measurability]
\label{lem:measurable}
Let $\Omega\subset\mathbb R^n$ be a bounded strictly convex domain with $C^1$ boundary, let $\varphi\in C^1(\partial\Omega)$, and suppose that $u\in C(\overline\Omega)$ is convex and satisfies $u|_{\partial\Omega}=\varphi$. Then $q_u\colon \partial\Omega\to(-\infty,\infty]$ is Borel measurable.
\end{lemma}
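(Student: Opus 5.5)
The plan is to write $q_u$ as a pointwise supremum of countably many continuous functions on $\partial\Omega$. Such a supremum is lower semicontinuous with values in $(-\infty,\infty]$, hence Borel measurable.

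\emph{Step 1 (reduction to interior test points).} Fix $x\in\partial\Omega$ and $a\in\mathbb R$, and put $p_a(x)\vcentcolon=\nabla_\tau\varphi(x)+a\nu(x)$. Since $\bar u=+\infty$ outside $\overline\Omega$, we have $p_a(x)\in\bar\partial u(x)$ if and only if
\[
u(z)\geq \varphi(x)+p_a(x)\cdot(z-x)\qquad\text{for all }z\in\overline\Omega .
\]
Both sides are continuous in $z\in\overline\Omega$, because $u\in C(\overline\Omega)$. Hence it suffices to test $z$ in a fixed countable dense subset $D\subset\Omega$; the inequality then extends to $\overline\Omega=\overline D$ by passing to limits. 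The set $D$ does not depend on $x$ or $a$.

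\emph{Step 2 (solving for $a$).} For $z\in\Omega$ and $x\in\partial\Omega$ we have $\nu(x)\cdot(z-x)<0$. This follows from \eqref{eq:strictly-convex}, which holds by Corollary~\ref{cor:supporting-hyperplanes}. For interior $z$ it also follows from \eqref{eq:left-tangent-plane}, since a point of $\Omega$ cannot lie on the supporting hyperplane $H_x$. Rearranging the inequality of Step~1, it is equivalent to $a\geq g_z(x)$, where
\[
g_z(x)\vcentcolon=\frac{\varphi(x)+\nabla_\tau\varphi(x)\cdot(z-x)-u(z)}{-\nu(x)\cdot(z-x)} .
\]
Consequently $\{a\in\mathbb R:p_a(x)\in\bar\partial u(x)\}=\{a\in\mathbb R: a\geq g_z(x)\ \forall z\in D\}$. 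Taking the infimum, with the convention $\inf\varnothing=+\infty$, gives
\[
q_u(x)=\sup_{z\in D}g_z(x)\qquad\text{for all }x\in\partial\Omega .
\]
This identity covers both cases: the set of admissible $a$ is empty exactly when the supremum equals $+\infty$.

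\emph{Step 3 (continuity and conclusion).} For fixed $z\in\Omega$, the function $g_z$ is continuous on $\partial\Omega$. Indeed, $\varphi$, $\nabla_\tau\varphi$ and $\nu$ are continuous because $\varphi\in C^1(\partial\Omega)$ and $\partial\Omega\in C^1$. The denominator is continuous and strictly positive on the compact set $\partial\Omega$. A countable supremum of continuous functions is lower semicontinuous, so every superlevel set $\{q_u>c\}=\bigcup_{z\in D}\{g_z>c\}$ is relatively open in $\partial\Omega$. Therefore $q_u$ is Borel measurable.

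The main point requiring care is Step~1. One must check that the defining infimum \eqref{eq:defq} is captured exactly by countably many interior test points, including the boundary points $z$ and the case $q_u=+\infty$. Density of $D$ in $\overline\Omega$ together with continuity of $u$ up to the boundary settles this.
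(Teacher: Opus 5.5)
Your proof is correct and follows the paper's argument: both write $q_u$ as the supremum, over a countable dense set $D\subset\Omega$, of the continuous quotients $g_z$, with strict convexity making the denominator $-\nu(x)\cdot(z-x)$ positive. The only difference is that you identify the admissible set directly as $\bigcap_{z\in D}[g_z(x),\infty)$, whereas the paper first proves the supremum formula over all of $\Omega$ by a case split on $q_u(x)<\infty$ using Lemma~\ref{lem:ray}; your route makes that case split and the appeal to attainment unnecessary.
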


\begin{proof}
 First note that by Corollary \ref{cor:supporting-hyperplanes} strict convexity of $\Omega$ is equivalent to the condition that for each $x\in \partial\Omega$ one has
 \begin{equation}
 \label{eq:equiv-cond-strict-proof}
        -\nu(x)\cdot(z-x)>0\quad\text{for all }z\in \overline{\Omega}\setminus \{x\}.
 \end{equation}
We claim that
\begin{equation}
\label{eq:q-sup}
 q_u(x)=\sup_{z\in\Omega}
 \frac{u(x)+\nabla_\tau\varphi(x)\cdot(z-x)-u(z)}
 {-\nu(x)\cdot(z-x)},\quad \text{for each }x\in\partial\Omega.
\end{equation}
If $q_u(x)<\infty$, then Lemma~\ref{lem:ray} implies that 
\[
    u(z)\geq u(x)+(\nabla_\tau \varphi(x)+a\nu(x))\cdot (z-x)\quad\text{for all }z\in \overline{\Omega}
\]
for every $a\geq q_u(x)$. Equivalently,
\[
    -a\nu(x)\cdot (z-x)\geq u(x)+\nabla_\tau \varphi(x)\cdot (z-x)-u(z),\quad \text{for all }z\in \overline{\Omega}.
\]
Now, we choose $a=q_u(x)$ and use the positivity condition \eqref{eq:equiv-cond-strict-proof} to obtain
\[
     q_u(x)\geq \frac{u(x)+\nabla_\tau \varphi(x)\cdot (z-x)-u(z)}{-\nu(x)\cdot (z-x)},\quad \text{for all }z\in \Omega.
\]
Hence 
\[
q_u(x)\geq \sup_{z\in\Omega} \frac{u(x)+\nabla_\tau \varphi(x)\cdot (z-x)-u(z)}{-\nu(x)\cdot (z-x)}.
\]
Conversely, set
\[
S(x)\vcentcolon = 
\sup_{z\in\Omega}
\frac{u(x)+\nabla_\tau\varphi(x)\cdot(z-x)-u(z)}
{-\nu(x)\cdot(z-x)}.
\]
By the inequality $q_u(x)\geq S(x)$ and $q_u(x)<\infty$, we have $S(x)<\infty$. Moreover, the condition $u\in C(\overline{\Omega})$ implies $S(x)>-\infty$. Hence, by the definition of
$S(x)$,
\[
u(z)\geq
u(x)+\bigl(\nabla_\tau\varphi(x)+S(x)\nu(x)\bigr)\cdot(z-x)
\qquad\text{for every }z\in\Omega.
\]
Since $u\in C(\overline{\Omega})$, this inequality also holds for every $z\in\overline\Omega$. Since $\bar u=+\infty$ outside $\overline\Omega$, it follows that
\[
\nabla_\tau\varphi(x)+S(x)\nu(x)\in\bar\partial u(x).
\]
Hence Lemma~\ref{lem:ray} gives $q_u(x)\leq S(x)$. Hence, we have verified \eqref{eq:q-sup}, when $q_u(x)<\infty$.

Next, let us consider the case $q_u(x)=\infty$. We need to show $S(x)=+\infty$. If $S(x)<\infty$, then we again have $\nabla_\tau\varphi(x)+S(x)\nu(x)\in\bar\partial u(x)$, but this contradicts \eqref{eq:ray}. Thus, $S(x)<\infty$ cannot hold. Hence also in this case \eqref{eq:q-sup} holds true.

To prove measurability, choose a countable dense subset
$D\subset\Omega$. For fixed $x\in\partial\Omega$, the function
\[
z\mapsto
\frac{u(x)+\nabla_\tau\varphi(x)\cdot(z-x)-u(z)}
{-\nu(x)\cdot(z-x)}
\]
is continuous on $\Omega$. Therefore,
\[
q_u(x)
=
\sup_{z\in D}
\frac{u(x)+\nabla_\tau\varphi(x)\cdot(z-x)-u(z)}
{-\nu(x)\cdot(z-x)}.
\]
For every fixed $z\in D$, the function
\[
x\mapsto
\frac{\varphi(x)+\nabla_\tau\varphi(x)\cdot(z-x)-u(z)}
{-\nu(x)\cdot(z-x)}
\]
is continuous on $\partial\Omega$, because the denominator is strictly positive by \eqref{eq:equiv-cond-strict-proof}. Consequently, $q_u$ is the supremum of countably many continuous functions and is therefore Borel measurable.
\end{proof}

\subsection{Generalized global Cauchy data}
\label{subsec:Def-Cauchy-data}

Now, using the forward theory for the Monge--Ampère equation and the definition of the extremal slope, we can rigorously introduce the generalized global Cauchy data set.

\begin{proposition}[Generalized global Cauchy data]
\label{def:gen-cauchy-data}
    Let $\Omega\subset\mathbb R^n$ be a bounded strictly convex domain with $C^1$ boundary. For every finite Borel measure $\nu$, we define the \emph{generalized global Cauchy data set} by
    \begin{equation}
    \label{eq:generalized-global-Cauchy-data}
        \mathcal{C}^{\rm glob}_{\nu}\vcentcolon = \left\{(\varphi,q_{u_{\nu,\varphi}}):\varphi\in C^1(\partial\Omega)\right\}.
    \end{equation}
    Here, $u_{\nu,\varphi}\in C(\overline{\Omega})$ is the Alexandrov solution of the Dirichlet problem 
    \begin{equation}
    \label{eq:MA-cauchy}
        \begin{cases}
            \det D^2 u=\nu,\quad&\text{in }\Omega,\\
            u=\varphi,\quad&\text{on }\partial\Omega,
        \end{cases}
    \end{equation}
    and $q_{u_{\nu,\varphi}}$ is the extremal boundary slope. The Cauchy data set is well-defined and is a subset of $C^1(\partial\Omega)\times\mathcal{B}(\partial\Omega)$. 
\end{proposition}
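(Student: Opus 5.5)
The statement is a well-definedness claim, and I would check three things in order. First, for each $\varphi\in C^1(\partial\Omega)$ the solution $u_{\nu,\varphi}$ exists and is unique. Second, $q_{u_{\nu,\varphi}}$ is a well-defined function on $\partial\Omega$ with values in $(-\infty,\infty]$. Third, that function is Borel measurable, so the pair lies in $C^1(\partial\Omega)\times\mathcal B(\partial\Omega)$.

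For the first step I invoke Proposition~\ref{prop:forward}\ref{forward-existence}. It applies because $\Omega$ is bounded and strictly convex, $\nu$ is a finite Borel measure, and $\varphi\in C^1(\partial\Omega)\subset C(\partial\Omega)$. This yields a unique convex $u_{\nu,\varphi}\in C(\overline\Omega)$ solving \eqref{eq:MA-cauchy} in the Alexandrov sense, with $u_{\nu,\varphi}=\varphi$ pointwise on $\partial\Omega$. Uniqueness is what makes the map $\varphi\mapsto u_{\nu,\varphi}$, and hence the set \eqref{eq:generalized-global-Cauchy-data}, unambiguous.

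For the second step, strict convexity together with the $C^1$ boundary implies in particular that $\Omega$ is a convex $C^1$ domain. The hypotheses of Proposition~\ref{def:slope} therefore hold for $u=u_{\nu,\varphi}$ with boundary value $\varphi$. The extremal slope $q_{u_{\nu,\varphi}}(x)$ is thus defined at every $x\in\partial\Omega$ and takes values in $(-\infty,\infty]$. The definition involves only $u$, its boundary value, $\nu(x)$, and $\nabla_\tau\varphi(x)$, all of which are determined by $\varphi$ and the unique solution.

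For the third step I apply Lemma~\ref{lem:measurable}. Its hypotheses are a bounded strictly convex $C^1$ domain, $\varphi\in C^1(\partial\Omega)$, and convex $u\in C(\overline\Omega)$ with $u|_{\partial\Omega}=\varphi$, and these are exactly what we have. The lemma gives Borel measurability of $q_{u_{\nu,\varphi}}\colon\partial\Omega\to(-\infty,\infty]\subset[-\infty,\infty]$, so $q_{u_{\nu,\varphi}}\in\mathcal B(\partial\Omega)$. Each element $(\varphi,q_{u_{\nu,\varphi}})$ therefore lies in $C^1(\partial\Omega)\times\mathcal B(\partial\Omega)$. There is no real obstacle here; the only point needing care is that the strict convexity assumption is used twice, once for boundary barriers in the existence theorem and once for the positive denominator in the measurability lemma.
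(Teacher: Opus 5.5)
Your proposal is correct and follows the paper's own proof almost step for step. The paper also takes existence and uniqueness from Proposition~\ref{prop:forward}\ref{forward-existence}, well-definedness of $q_{u_{\nu,\varphi}}$ with values in $(-\infty,\infty]$ from Proposition~\ref{def:slope}, and Borel measurability from Lemma~\ref{lem:measurable}, and your account of where strict convexity enters is accurate.
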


\begin{proof}
    We first argue that all objects in \eqref{eq:generalized-global-Cauchy-data} are well-defined. First, for any finite Borel measure $\nu$ and every boundary datum $\varphi\in C(\partial\Omega)$, the Dirichlet problem \eqref{eq:MA-cauchy} is well-posed by Proposition~\ref{prop:forward} \ref{forward-existence}. We will denote this unique Alexandrov solution by $u_{\nu,\varphi}\in C(\overline{\Omega})$. Second, Proposition~\ref{def:slope} shows that the boundary slope
    \begin{equation}
    \label{eq:slope-proof}
        \partial\Omega\ni x\mapsto q_{u_{\nu,\varphi}}(x)\in (-\infty,\infty]
    \end{equation}
    is well-defined, when $\varphi\in C^1(\partial\Omega)$. Hence, all objects in \eqref{eq:generalized-global-Cauchy-data} are well-defined. Moreover, Proposition~\ref{lem:measurable} ensures that the map \eqref{eq:slope-proof} is Borel measurable, which completes the proof.
\end{proof}

\begin{definition}[Generalized Cauchy data set for $L^1$ sources]
    Let $\Omega\subset\mathbb R^n$ be a bounded strictly convex domain with $C^1$ boundary. For any nonnegative $f\in L^1(\Omega)$, we set
    \[
        \mathcal{C}^{\rm glob}_{f}\vcentcolon = \mathcal{C}^{\rm glob}_{f\,dx},
    \]
    where $\mathcal{C}^{\rm glob}_{f\,dx}$ is the generalized Cauchy data set provided by Proposition~\ref{def:gen-cauchy-data}.
\end{definition}

\subsection{Comparison of generalized, interior, and classical Cauchy data sets}
\label{subsec:interior-normal-derivative}

In this section we prove two compatibility results for the generalized global Cauchy data set. In particular, we introduce classical Cauchy data sets and interior Cauchy data sets. 

\begin{definition}[Classical Cauchy data]
    Let $\Omega\subset\R^n$ be a bounded convex $C^1$ domain. Assume that $f\in C(\Omega)$ is nonnegative. We say that a tuple $(\varphi,\psi)\in C(\partial\Omega)\times C(\partial\Omega)$ is a \emph{classical Cauchy data pair} for the Monge--Ampère equation with source $f$ if $\varphi \in C^{1,1}(\partial\Omega)$ and $\psi=\partial_\nu u|_{\partial\Omega}$, where $u\in C^{1,1}(\overline{\Omega})\cap C^2(\Omega)$ is a convex pointwise solution of
     \begin{equation}
    \label{eq:classical-Cauchy}
    \begin{cases}
    \det D^2u=f,\quad&\text{in }\Omega,\\
     u=\varphi,\quad&\text{on }\partial\Omega.
    \end{cases}
    \end{equation}
\end{definition}

Our first result compares generalized Cauchy data to classical Cauchy data.

\begin{proposition}[Compatibility with classical Cauchy data]
\label{prop:classical-compatibility}
Let $\Omega\subset\mathbb R^n$ be a bounded convex domain with $C^1$ boundary. Let $\varphi\in C^1(\partial\Omega)$ and suppose that $f\in L^1(\Omega)\cap C(\Omega)$ is nonnegative. Assume that $u\in C^1(\overline\Omega)$ is convex and $\varphi=u|_{\partial\Omega}$. Then, for every $x\in\partial\Omega$,
\begin{equation}
\label{eq:classical-ray}
 q_u(x)=\partial_\nu u(x).
\end{equation}
If, in addition, $\varphi\in C^{1,1}(\partial\Omega)$ and $u$ is an Alexandrov solution of the Monge--Ampère problem 
\begin{equation}
\label{eq:classical-measure}
\begin{cases}
    \det D^2u=f,\quad&\text{in }\Omega,\\
 u=\varphi,\quad&\text{on }\partial\Omega
\end{cases}
\end{equation}
satisfying $u\in C^{1,1}(\overline{\Omega})\cap C^2(\Omega)$, then the generalized and classical Cauchy pairs coincide. 
\end{proposition}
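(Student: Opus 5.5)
The plan is to obtain \eqref{eq:classical-ray} directly from the two descriptions of the boundary subdifferential already at hand, and then to read off the classical Cauchy pair. Fix $x\in\partial\Omega$. Since $u\in C^1(\overline\Omega)$ is convex on the convex $C^1$ domain $\Omega$, Lemma~\ref{lemma:bdry-subdifferential}\ref{C1-functions} gives
\[
\bar\partial u(x)=\{\nabla u(x)+s\nu(x):s\geq0\}.
\]
On the other hand, since $u|_{\partial\Omega}=\varphi\in C^1(\partial\Omega)$, Lemma~\ref{lem:ray} describes $\bar\partial u(x)$ as either empty or the ray $\nabla_\tau\varphi(x)+[q_u(x),\infty)\nu(x)$.

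The first description shows that $\bar\partial u(x)\neq\varnothing$, so $q_u(x)<\infty$ and the second description is a genuine ray. The next step identifies the tangential parts. For every $C^1$ curve $\gamma$ in $\partial\Omega$ through $x$ we have $\varphi\circ\gamma=u\circ\gamma$. Differentiating at $t=0$ gives $\nabla_\tau\varphi(x)\cdot\tau_0=\nabla u(x)\cdot\tau_0$ for all $\tau_0\in T_x\partial\Omega$. Hence
\[
\nabla_\tau\varphi(x)=\nabla_\tau u(x)=\nabla u(x)-\partial_\nu u(x)\,\nu(x).
\]
Consequently the first description can be rewritten as $\nabla_\tau\varphi(x)+[\partial_\nu u(x),\infty)\nu(x)$. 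Two rays with the same base direction $\nu(x)$ and the same tangential offset coincide only if their endpoints coincide. Equivalently, the minimal normal component $\min\{p\cdot\nu(x):p\in\bar\partial u(x)\}$ equals both $q_u(x)$ and $\partial_\nu u(x)$. This proves \eqref{eq:classical-ray}.

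For the second assertion, assume $\varphi\in C^{1,1}(\partial\Omega)$ and that $u\in C^{1,1}(\overline\Omega)\cap C^2(\Omega)$ is an Alexandrov solution of \eqref{eq:classical-measure} with $f\in C(\Omega)$. By Remark~\ref{rem:Alexandrov-Classica-sol}, $u$ solves $\det D^2u=f$ pointwise in $\Omega$, so $(\varphi,\partial_\nu u|_{\partial\Omega})$ is a classical Cauchy data pair. Since $f\,dx$ is a finite Borel measure ($f\in L^1$), and since Proposition~\ref{prop:forward} gives uniqueness on convex bounded domains via the comparison principle, $u=u_{f,\varphi}$. The generalized pair is therefore $(\varphi,q_u)$, and by the first part $q_u=\partial_\nu u$ on $\partial\Omega$. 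Hence the two pairs coincide.

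The only delicate point is bookkeeping rather than analysis. One must make sure the tangential-gradient identity uses only $C^1$ regularity of $\partial\Omega$ and of $u$ up to the boundary. One must also note that uniqueness of the Alexandrov solution needs no strict convexity: for identifying $u$ with $u_{f,\varphi}$, only the comparison principle is used, not existence.
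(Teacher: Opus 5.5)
Your proposal is correct and follows essentially the same route as the paper: you combine Lemma~\ref{lemma:bdry-subdifferential}\ref{C1-functions} with the splitting $\nabla u(x)=\nabla_\tau\varphi(x)+\partial_\nu u(x)\,\nu(x)$ and the definition of $q_u$, and then invoke Remark~\ref{rem:Alexandrov-Classica-sol} for the second part. You make explicit two points the paper leaves implicit, namely the identity $\nabla_\tau u=\nabla_\tau\varphi$ and the use of Alexandrov uniqueness to identify $u$ with $u_{f,\varphi}$, while the paper additionally writes out the converse that every classical pair is a generalized one.
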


\begin{proof}
    Since $u\in C^1(\overline{\Omega})$, Lemma~\ref{lemma:bdry-subdifferential} implies
    \[
        \bar{\partial}u(x)=\{\nabla u(x)+s\nu(x):s\geq 0\},\quad x\in\partial\Omega.
    \]
    Hence the decomposition $\nabla u(x)=\nabla_\tau \varphi(x)+(\partial_\nu u(x))\nu(x)$ and \eqref{eq:defq} show formula \eqref{eq:classical-ray}. 

    If, in addition, the Alexandrov solution $u$ belongs to $C^{1,1}(\overline{\Omega})\cap C^2(\Omega)$, then Remark~\ref{rem:Alexandrov-Classica-sol} yields that $u$ is a classical solution of \eqref{eq:classical-measure}. Using the identity \eqref{eq:classical-ray}, we conclude that a generalized Cauchy pair is also a classical Cauchy pair. Conversely, by Remark~\ref{rem:Alexandrov-Classica-sol} every classical solution $u\in C^{1,1}(\overline{\Omega})\cap C^2(\Omega)$ is an Alexandrov solution and hence, once again, \eqref{eq:classical-ray} implies that every classical Cauchy pair is a generalized Cauchy pair.
\end{proof}

Next we introduce the interior Cauchy data.

\begin{definition}[Interior Cauchy data]
\label{def:Interior-cauchy}
    Let $\Omega\subset\R^n$ be a bounded uniformly convex $C^{1,1}$ domain (see Definition~\ref{def:uniformly-convex-domains}). Assume that $f\in L^\infty (\Omega)$ is nonnegative. We say that a tuple $(\varphi,\psi)\in C(\partial\Omega)\times \mathcal{B}(\partial\Omega)$ is an \emph{interior Cauchy data pair} for the Monge--Ampère equation with source $f$ if $\varphi \in C^{1,1}(\partial\Omega)$ and 
    \begin{equation}
    \label{eq:interior-normal}
        \psi(x)=\partial^{-}_\nu u(x)\vcentcolon =\lim_{r\downarrow 0}\frac{u(x)-u(x-r\nu(x))}{r},\quad x\in\partial\Omega,
    \end{equation}
    where $u\in C^{0}(\overline{\Omega})$ is an Alexandrov solution of
     \begin{equation}
    \label{eq:interior-Cauchy}
    \begin{cases}
    \det D^2u=f,\quad&\text{in }\Omega,\\
     u=\varphi,\quad&\text{on }\partial\Omega.
    \end{cases}
    \end{equation}
\end{definition}

\begin{remark}[One-sided limits of convex functions]
\label{rem:one-sided-der}
For any proper convex function
$g:\mathbb R^n\to(-\infty,\infty]$, one may define the \emph{one-sided directional derivative} of $g$ at $x$ into direction $y$ by
\begin{equation}
\label{eq:one-sided-limit}
g'(x;y)
\vcentcolon=
\lim_{\lambda\downarrow0}
\frac{g(x+\lambda y)-g(x)}{\lambda}\in [-\infty,\infty].
\end{equation}
By \cite[Theorem~23.1]{Rockafellar}, this limit exists at any $x$, where $g$ is finite. Indeed, the difference quotient is a nondeceasing function of $\lambda>0$, so that
\begin{equation}
\label{eq:one-sided-der-inf}
    g'(x;y)=\inf_{\lambda>0}\frac{g(x+\lambda y)-g(x)}{\lambda}.
\end{equation}
Now, let $\bar u:\mathbb R^n\to(-\infty,\infty]$ be the canonical lower-semicontinuous convex extension of a convex function $u\colon \overline{\Omega}\to \R$ (see~\eqref{eq:extension-convex}). If $\Omega$ is a $C^1$ domain, then we have $x-r\nu(x)\in\Omega$ for all sufficiently small $r>0$ and hence
\[
\bar u'(x;-\nu(x))
=
\lim_{r\downarrow0}
\frac{\bar u(x-r\nu(x))-\bar u(x)}{r}
=
-\partial_\nu^-u(x).
\]
Thus $-\partial_\nu^-u(x)$ is precisely the one-sided directional derivative of $\bar u$ at $x$ in the inward direction $-\nu(x)$. Since $u\in C(\Omega)$ and the terms in the infimum are real-valued an nondecreasing, it follows that $\bar u'(x;-\nu(x))\in [-\infty,\infty)$ and hence $\partial_\nu^-u(x)\in (-\infty,\infty]$. 
\end{remark}

The following observation, together with Lemma~\ref{lem:measurable}, shows that Lipschitz continuous Alexandrov solutions yield well-defined interior Cauchy data pairs.

\begin{lemma}[Boundary slope as interior normal derivative]
\label{lem:radial-slope}
Let $\Omega\subset\mathbb R^n$ be a bounded strictly convex $C^1$ domain. Assume that $u\in C^{0,1}(\overline{\Omega})$ is a convex function and suppose that
\[
    u|_{\partial\Omega}=\varphi
    \qquad\text{with }\varphi\in C^1(\partial\Omega).
\]
    Then, for every $x\in\partial\Omega$, the boundary subdifferential $\bar \partial u(x)$ is nonempty and the \emph{interior normal derivative} of $u$ at $x$ is given by
    \begin{equation}
\label{eq:radial-slope}
    \partial^{-}_\nu u(x)\vcentcolon =
\lim_{r\downarrow0}
\frac{u(x)-u(x-r\nu(x))}{r}=q_u(x).
\end{equation}
    Moreover, the limit in \eqref{eq:radial-slope} is finite and there holds
\begin{equation}
\label{eq:bound-interior-normal}
|\partial^{-}_\nu u (x)|
\leq
    [u]_{C^{0,1}(\overline{\Omega})}
\qquad
\text{for every }x\in\partial\Omega.
\end{equation}
    In particular, generalized and interior Cauchy data pairs coincide, whenever the unique Alexandrov solution is Lipschitz on $\overline{\Omega}$.
\end{lemma}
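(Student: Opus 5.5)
The proof proceeds in three steps: existence of the limit, the identification with $q_u(x)$, and the bound, with nonemptiness of $\bar\partial u(x)$ obtained along the way. Fix $x\in\partial\Omega$ and write $L\vcentcolon=[u]_{C^{0,1}(\overline\Omega)}$.

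\textbf{Existence and bound.} By Remark~\ref{rem:one-sided-der}, the difference quotient
\[
D(r)\vcentcolon=\frac{u(x)-u(x-r\nu(x))}{r}
\]
is nonincreasing in $r>0$ on the range where $x-r\nu(x)\in\overline\Omega$. Hence the limit as $r\downarrow0$ exists in $(-\infty,\infty]$ and equals $\sup_{r}D(r)$. The Lipschitz bound gives $|D(r)|\le L$ for every admissible $r$. Therefore the limit $\partial_\nu^-u(x)$ is finite, and \eqref{eq:bound-interior-normal} holds.

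\textbf{The inequality $q_u(x)\ge\partial_\nu^-u(x)$.} Proposition~\ref{def:slope} already gives $q_u(x)\ge D(r)$ for every small $r$. Letting $r\downarrow0$ yields $q_u(x)\ge\partial^-_\nu u(x)$. This is trivial when $q_u(x)=\infty$.

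\textbf{The inequality $q_u(x)\le a\vcentcolon=\partial_\nu^-u(x)$ (main step).} It suffices to show that $p\vcentcolon=\nabla_\tau\varphi(x)+a\nu(x)\in\bar\partial u(x)$. This simultaneously gives nonemptiness of $\bar\partial u(x)$, and the definition \eqref{eq:defq} then yields $q_u(x)\le a$.

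Since $\bar u$ is finite on $\overline\Omega$, it suffices to show $\bar u'(x;y-x)\ge p\cdot(y-x)$ for all $y\in\overline\Omega$. Indeed, by the monotonicity of convex difference quotients \eqref{eq:one-sided-der-inf}, this gives $u(y)-u(x)\ge \bar u'(x;y-x)\ge p\cdot(y-x)$.

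Set $g(h)\vcentcolon=\bar u'(x;h)$, defined for $h$ in the tangent cone $T=\{h:\nu(x)\cdot h<0\}\cup\{0\}$. Every $y-x$ with $y\in\overline\Omega\setminus\{x\}$ lies in $T$, by \eqref{eq:strictly-convex} and Corollary~\ref{cor:supporting-hyperplanes}. On this cone $g$ is sublinear, i.e.\ positively homogeneous and convex. It is also finite with $|g(h)|\le L|h|$: for small $\lambda$ the point $x+\lambda h$ lies in $\overline\Omega$ when $h$ points strictly inward, and the Lipschitz bound applies.

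Decompose $h=\tau-s\nu(x)$ with $\tau\in T_x\partial\Omega$ and $s>0$. Write $e_\tau\vcentcolon=\nabla_\tau\varphi(x)\cdot\tau$ for the tangential derivative of $\varphi$ in direction $\tau$. The key is to show
\[
g(\tau-s\nu)\ge e_\tau-s\,g(-\nu)\quad\text{up to sign conventions, i.e.}\quad g(\tau-s\nu)\ge e_\tau+s\,(-a)\cdot(-1)\cdots,
\]
more precisely $g(\tau-s\nu)\ge p\cdot(\tau-s\nu)=e_\tau-sa$.

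I would argue by extending $g$ to the boundary of the cone using the Lipschitz bound. Because $g$ is $L$-Lipschitz on the open cone, it extends continuously to tangent directions $\tau$. The value there is obtained by approaching along boundary curves $\gamma$ with $\gamma'(0)=\tau$. The $C^1$ boundary together with the Lipschitz bound gives $|u(\gamma(t))-u(x+t\tau)|\le L\,o(t)$, so $\bar g(\tau)=e_\tau$, and $\bar g(-\tau)=-e_\tau$. Convexity of the extended sublinear function $\bar g$ on the closed half-space then gives
\[
\bar g(\tau-s\nu)\ge 2\bar g\!\left(\tfrac{\tau-s\nu}{2}\right)\ge \cdots,
\]
and more cleanly, subadditivity gives
\[
g(-s\nu)\le g(\tau-s\nu)+\bar g(-\tau).
\]
Hence
\[
g(\tau-s\nu)\ge -sa+e_\tau,
\]
using $g(-\nu)=\bar u'(x;-\nu)=-a$ from Remark~\ref{rem:one-sided-der}. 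This is exactly the needed inequality.

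The delicate point I expect is justifying the continuous extension of $g$ to tangent directions and the identity $\bar g(\pm\tau)=\pm e_\tau$. Here both the Lipschitz hypothesis (to control $u$ off $\partial\Omega$ near $x$) and the $C^1$ regularity of $\varphi$ and $\partial\Omega$ enter. Finally, the statement about interior Cauchy data follows from the identity \eqref{eq:radial-slope} applied to the Lipschitz Alexandrov solution.
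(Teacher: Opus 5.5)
Your argument is correct, and it takes a genuinely different route from the paper's.

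\textbf{The paper's route.} It obtains nonemptiness of $\bar\partial u(x)$ and $q_u(x)\le\partial_\nu^-u(x)$ by compactness. It picks $p_k\in\partial u(x-r_k\nu(x))$ with $r_k\downarrow0$, and the Lipschitz bound gives $|p_k|\le[u]_{C^{0,1}(\overline\Omega)}$. It extracts a limit $p\in\bar\partial u(x)$ and identifies $p\cdot\nu(x)=\partial_\nu^-u(x)$. This uses the one-dimensional fact that subgradients of $r\mapsto u(x-r\nu(x))$ at $r_k$ converge to its right derivative at $0$. The tangential component is then read off from boundary curves.

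\textbf{Your route.} You verify the subgradient inequality directly for the explicit candidate $p=\nabla_\tau\varphi(x)+\partial_\nu^-u(x)\,\nu(x)$. You do this by analysing the one-sided directional derivative $g=\bar u'(x;\cdot)$. It is a finite, Lipschitz, sublinear function on the open inward half-space. Its continuous extension to $T_x\partial\Omega$ is the linear map $\tau\mapsto\nabla_\tau\varphi(x)\cdot\tau$. The single subadditivity inequality $g(-s\nu)\le g(\tau-s\nu)+\bar g(-\tau)$ then yields $g(h)\ge p\cdot h$.

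\textbf{What each buys.} In its main step the paper uses the Lipschitz hypothesis only to bound interior subgradients. As a by-product it shows that the extremal subgradient is a limit of interior subgradients along the normal. Your argument avoids subsequence extraction and makes transparent why $\bar\partial u(x)$ is a ray with the stated endpoint. The directional derivative stays finite up to the tangent hyperplane and is linear there, which is exactly what the Lipschitz bound and $\varphi\in C^1(\partial\Omega)$ provide.

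\textbf{Repair needed in the delicate step.} For $t\neq0$ and $\tau\neq0$, strict convexity puts $x+t\tau$ outside $\overline\Omega$. So $u(x+t\tau)$ is undefined (indeed $\bar u(x+t\tau)=+\infty$), and the comparison $|u(\gamma(t))-u(x+t\tau)|\le L\,o(t)$ cannot be used.

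Compare instead with $x+t(\tau-\varepsilon\nu(x))$, which lies in $\Omega$ for small $t>0$. Since $|x+t(\tau-\varepsilon\nu(x))-\gamma(t)|\le t\varepsilon+o(t)$, the Lipschitz bound with constant $L=[u]_{C^{0,1}(\overline\Omega)}$ gives $|g(\tau-\varepsilon\nu(x))-\nabla_\tau\varphi(x)\cdot\tau|\le L\varepsilon$. Hence $\bar g(\tau)=\lim_{\varepsilon\downarrow0}g(\tau-\varepsilon\nu(x))=\nabla_\tau\varphi(x)\cdot\tau$. The curve $t\mapsto\gamma(-t)$ gives the value at $-\tau$ in the same way.

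You also assert the $L$-Lipschitz property of $g$ on the open half-space without proof. It follows by comparing the difference quotients for $h_1$ and $h_2$ at a common small $\lambda$.

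Finally, delete the two abandoned false starts: the display containing ``up to sign conventions'' and the chain beginning with $\bar g(\tau-s\nu)\ge 2\bar g(\frac{\tau-s\nu}{2})$. The versions introduced by ``more precisely'' and ``more cleanly'' are the correct ones.
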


\begin{proof}
Fix $x\in\partial\Omega$. Since $\Omega$ is of class $C^1$ and $\nu$ is the outward pointing normal field, we have 
\[
x-r\nu(x)\in\Omega
\qquad
\text{for }0<r<r_0,
\]
when $r_0>0$ is sufficiently small. Define 
\[
g\colon [0,r_0)\to\R,\quad g(r)\vcentcolon = u(x-r\nu(x)).
\]
Since $u$ is convex, $g$ is a convex function of one variable on $[0,r_0)$. In particular, the difference quotient $r\mapsto \frac{g(r)-g(0)}{r}$ is monotonically nondecreasing on $(0,r_0)$. Indeed, if $0<r_1<r_2<r_0$, then
\[
    g(r_1)=g(r_2 t)=g((1-t)0+tr_2)\leq (1-t)g(0)+tg(r_2),
\]
where we set $t=r_1/r_2\in (0,1)$. Thus, $g(0)=u(x)\in \R$ implies
\[
    g(r_1)-g(0)\leq t(g(r_2)-g(0))=\frac{r_1}{r_2}(g(r_2)-g(0)).
\]
This establishes the claim.

Now, since the difference quotient of $g$ is nondecreasing on $(0,r_0)$, its right-sided derivative $g'_+(0)$ exists and is given by
\[
    g'_+(0)\vcentcolon = \lim_{r\downarrow 0}\frac{g(r)-g(0)}{r}=\inf_{0<r<r_0}\frac{g(r)-g(0)}{r}\in [-\infty,\infty).
\]
Since $u$ is Lipschitz on $\overline{\Omega}$, the function $g$ is Lipschitz on $[0,r_0]$ and hence the right-sided derivative $g'_+(0)$ is finite with
\[
|g'_+ (0)|\leq [g]_{C^{0,1}([0,r_0])}\leq [u]_{C^{0,1}(\overline{\Omega})}.
\]
By definition of $g$, we conclude that $\partial^{-}_{\nu}u(x)$ exists, it is given by $-g'_+ (0)$, and the estimate \eqref{eq:bound-interior-normal} holds.

We next show that $\partial^{-}_{\nu}u(x)=q_u(x)$. Let
\[
    p=\nabla_\tau\varphi(x)+a\nu(x)\in\bar\partial u(x).
\]
Applying the subgradient inequality at $x-r\nu(x)$ gives
\[
u(x-r\nu(x))
\geq
u(x)-ra.
\]
Hence
\[
\frac{u(x)-u(x-r\nu(x))}{r}\leq a.
\]
Passing to the limit as $r\downarrow0$ and then taking the infimum over all admissible $a$ yields
\begin{equation}
\label{eq:ell-lower-q}
    \partial^{-}_{\nu}u(x)\leq q_u(x);
\end{equation}
see Proposition~\ref{def:slope}.
If there is no such subgradient $p$, then $q_u(x)=\infty$ and \eqref{eq:ell-lower-q} holds trivially.

It remains to prove the reverse inequality and, at the same time, the nonemptiness of $\bar\partial u(x)$. Let $r_k\downarrow0$ and set
\[
x_k\vcentcolon = x-r_k\nu(x)\in\Omega.
\]
Since $u\in C(\Omega)$ and $x_k\in \Omega$, we may choose $p_k\in\partial u(x_k)$. By the subgradient inequality and the Lipschitz continuity of $u$, we have
\begin{equation}
\label{eq:boundedness-p-lipschitz}
|p_k|\leq [u]_{C^{0,1}(\overline{\Omega})}.
\end{equation}
Indeed, if $p_k\neq 0$, then apply the subgradient inequality at $x_k$ with $z\vcentcolon =x_k+r\frac{p_k}{|p_k|}$, where $r>0$ is chosen sufficiently small such that $z\in\Omega$. We have
\[
    u(x_k+r p_k/|p_k|)-u(x_k)\geq r|p_k|.
\]
Hence, the Lipschitz continuity of $u$ yields \eqref{eq:boundedness-p-lipschitz}.

Therefore after passing to a subsequence, we may assume that $p_k\to p\in\mathbb R^n$. Since $u=\bar u$ in $\overline{\Omega}$ and $u$ is Lipschitz continuous up to the boundary, the subgradient inequality
\begin{equation}
\label{eq:subgrad-ineq-interior}
u(z)\geq u(x_k)+p_k\cdot(z-x_k)
\end{equation}
holds for all $z\in \overline{\Omega}$ and thus passing to the limit $k\to\infty$ shows that $p\in\bar\partial u(x)$. Thus, $\bar\partial u(x)$ is indeed nonempty.

Moreover, restriction of the preceding supporting inequality to the normal segment $z=x-t\nu(x)\in \Omega$, $t>0$, gives
\[
    g(t)\geq g(r_k)-p_k\cdot \nu(x)(t-r_k).
\]
Thus, $-p_k\cdot \nu(x)\in \partial g(r_k)$. For a one-dimensional convex function, if $r_k\downarrow 0$ and $\xi_k\in \partial g(r_k)$, then $\xi_k\to g'_+(0)$. Indeed, the subgradient inequality at $r_k$ with $t=0$, $t=s>0$, and subgradient $\xi_k$ gives
\[
    \frac{g(r_k)-g(0)}{r_k}\leq \xi_k\leq \frac{g(s)-g(r_k)}{s-r_k}.
\]
Hence, by $g\in C([0,r_0))$, passing to the limit $k\to\infty$ and then $s\downarrow 0$ yields $\xi_k\to g'_+(0)$.

Therefore, $p_k\to p$ yields
\[
-p\cdot\nu(x)=g'_+(0),
\qquad\text{and hence}\qquad
p\cdot\nu(x)=\partial^-_\nu u(x).
\]
Applying the subgradient inequality
\[
    u(z)\geq u(x)+p\cdot (z-x),\quad z\in \overline{\Omega}
\]
with $z=\gamma(t)$, where $\gamma\colon (-\eps,\eps)\to \partial \Omega$ is a $C^1$ boundary curves with $\gamma(0)=x$ and $\gamma'(0)=\tau_0\in T_x\partial\Omega$, we get
\[
    p\cdot \tau_0=\nabla_\tau\varphi(x)\cdot \tau_0.
\]
Since this holds for all tangential vectors $\tau_0$, we find
\[
p=\nabla_\tau\varphi(x)+a\nu(x)\in \bar \partial u(x)
\]
for some $a\in\R$. Since $p\cdot \nu(x)=\partial^-_\nu u(x)$, we have 
\[
    p=\nabla_\tau\varphi(x)+(\partial^-_\nu u(x))\nu(x)\in \bar \partial u(x).
\]
From the definition of  $q_u$ (see~\eqref{eq:defq}), we may conclude that 
\[
q_u(x)\leq \partial^-_\nu u(x).
\]
Together with \eqref{eq:ell-lower-q}, this proves \eqref{eq:radial-slope}.

Moreover, using $p\cdot\nu(x)=\partial^-_\nu u(x)$, $p_k\to p$, and \eqref{eq:boundedness-p-lipschitz} we may calculate
\[
    |\partial^-_\nu u(x)|=|p\cdot \nu(x)|=\lim_{k\to\infty}|p_k\cdot \nu(x)|\leq [u]_{C^{0,1}(\overline{\Omega})},
\]
which establishes \eqref{eq:bound-interior-normal}.

The last part of the lemma is a direct consequence of \eqref{eq:radial-slope} and Definition~\ref{def:Interior-cauchy}.
\end{proof}

Next we record the barrier statement, which will also be needed in the proof of
Corollary~\ref{cor:Linfty}.

\begin{proposition}[Boundary Lipschitz bound]
\label{prop:boundary-Lipschitz}
Let $\Omega \subset\R^n$ be a bounded uniformly convex $C^{1,1}$ domain, let $\varphi\in C^{1,1}(\partial\Omega)$, and suppose that $f\in L^\infty (\Omega)$ satisfies
\begin{equation}
\label{eq:upper-bound-f}
 0\leq f\leq \lambda\quad\text{a.e. in }\Omega.
\end{equation}
Then the unique Alexandrov solution $u$ of 
  \begin{equation}
    \label{eq:bdry-Lipschitz}
    \begin{cases}
    \det D^2u=f,\quad&\text{in }\Omega,\\
     u=\varphi,\quad&\text{on }\partial\Omega,
    \end{cases}
    \end{equation}
    provided by Proposition~\ref{prop:forward} \ref{forward-existence}, is Lipschitz on $\overline\Omega$. Moreover, its extremal boundary slope $q_u\colon \partial\Omega\to\R$ is a bounded Borel measurable function.
\end{proposition}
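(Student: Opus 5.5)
The plan is a barrier argument. We build affine-plus-defining-function upper and lower barriers at every boundary point, which gives a uniform boundary Lipschitz bound. Convexity then propagates this bound to the interior, and Lemma~\ref{lem:radial-slope} together with Lemma~\ref{lem:measurable} gives the statement about $q_u$.

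\textbf{Step 1: extend the boundary datum.} First extend $\varphi$ to a function $\Phi\in C^{1,1}(\mathbb R^n)$ with $\|\Phi\|_{C^{1,1}}\le C\|\varphi\|_{C^{1,1}(\partial\Omega)}$. This is possible because $\partial\Omega$ is $C^{1,1}$: use local straightening together with extension constant in the normal direction, or Whitney extension. Let $\rho$ be the uniformly convex defining function from Lemma~\ref{lem:defining-function}, with $U=\mathbb R^n$, $D^2\rho\ge c_0I$, and $|\nabla\rho|\ge c_1>0$ on $\partial\Omega$. The last bound holds because $\rho<0$ inside, $\rho=0$ on $\partial\Omega$, and strict convexity gives $\nabla\rho(x)\cdot(x-y)\ge\rho(x)-\rho(y)>0$ for $y\in\Omega$.

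\textbf{Step 2: lower barrier.} Set $w^-:=\Phi+M\rho$. For $M$ large, $D^2w^-\ge(Mc_0-\|D^2\Phi\|_\infty)I\ge\lambda^{1/n}I$ a.e., so $w^-$ is convex and $C^{1,1}$. By Lemma~\ref{lem:MA-measure}\ref{C11-case},
\[
\mu_{w^-}=\det D^2w^-\,dx\ge\lambda\,dx\ge f\,dx=\mu_u .
\]
Since $w^-=\varphi=u$ on $\partial\Omega$, Proposition~\ref{prop:Comparison} gives $w^-\le u$ in $\Omega$.

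\textbf{Step 3: upper barrier.} At each $x_0\in\partial\Omega$, take the affine function $\ell_{x_0}(y):=\Phi(x_0)+\nabla\Phi(x_0)\cdot(y-x_0)+K\rho$-type correction. More cleanly, use $\Phi-K\rho$. For $y\in\overline\Omega$, Taylor expansion gives
\[
\Phi(y)\le\Phi(x_0)+\nabla\Phi(x_0)\cdot(y-x_0)+C|y-x_0|^2 .
\]
Uniform convexity \eqref{eq:geometric-uniform-convexity} gives $|y-x_0|^2\le-\kappa_\Omega^{-1}\nu(x_0)\cdot(y-x_0)$. Hence on $\partial\Omega$ we have $\varphi\le L_{x_0}$, where
\[
L_{x_0}(y):=\Phi(x_0)+\bigl(\nabla\Phi(x_0)-C\kappa_\Omega^{-1}\nu(x_0)\bigr)\cdot(y-x_0).
\]
This is affine, so $\mu_{L_{x_0}}=0\le\mu_u$, and comparison gives $u\le L_{x_0}$ in $\Omega$, with equality at $x_0$.

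\textbf{Step 4: boundary Lipschitz bound.} Combining Steps 2 and 3, for $x_0\in\partial\Omega$ and $y\in\overline\Omega$,
\[
|u(y)-u(x_0)|\le C'|y-x_0|,
\]
with $C'$ depending only on $\|\varphi\|_{C^{1,1}}$, $\lambda$, $\kappa_\Omega$ and $\rho$. The lower bound uses $w^-(x_0)=u(x_0)$ and the Lipschitz bound of $w^-$.

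\textbf{Step 5: interior Lipschitz bound.} Let $y,z\in\Omega$. Extend the segment from $z$ through $y$ until it hits $\partial\Omega$ at $x_0$, beyond $y$. Convexity of the one-variable restriction gives
\[
\frac{u(y)-u(z)}{|y-z|}\le\frac{u(x_0)-u(y)}{|x_0-y|}\le C'.
\]
Swapping the roles of $y$ and $z$ gives the reverse inequality. Hence $u\in C^{0,1}(\overline\Omega)$ with $[u]_{C^{0,1}}\le C'$.

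\textbf{Step 6: conclusion for $q_u$.} Lemma~\ref{lem:radial-slope} now gives $\bar\partial u(x)\ne\varnothing$, $q_u=\partial_\nu^-u$, and $|q_u|\le C'$. Lemma~\ref{lem:measurable} gives Borel measurability, since $\Omega$ is strictly convex.

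\textbf{Main obstacle.} The delicate step is Step 1: building the $C^{1,1}$ extension $\Phi$ and a defining function with $|\nabla\rho|$ bounded below on $\partial\Omega$, at only $C^{1,1}$ regularity. If the extension becomes awkward, I would instead construct only local barriers at each $x_0$. For the lower bound, a natural candidate is
\[
L^-_{x_0}(y)+M\bigl(\rho(y)\bigr),
\]
with $L^-_{x_0}$ the symmetric affine minorant of $\varphi$ obtained as in Step 3 with the opposite sign. This uses only the Taylor bound of $\varphi$ along $\partial\Omega$, which follows from $\varphi\in C^{1,1}(\partial\Omega)$ and the $C^{1,1}$ charts.
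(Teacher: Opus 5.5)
Your proof is correct. Its decisive ingredient is the same as in the paper: the lower barrier $w^-=\Phi+M\rho$, compared with $u$ through Proposition~\ref{prop:Comparison}. The difference is in how you turn $w^-\le u$ into the conclusion.

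The paper works with boundary subgradients:
\begin{itemize}
\item From $w\le u$ with equality on $\partial\Omega$ it gets $\bar\partial w(x)\subset\bar\partial u(x)$. Hence $\bar\partial u(x)\neq\varnothing$ and $q_u\le q_w=\partial_\nu w$ by Proposition~\ref{prop:classical-compatibility}.
\item The supporting inequality at $x-r_0\nu(x)$ gives $q_u\ge(\min_{\partial\Omega}\varphi-\max_{\partial\Omega}\varphi)/r_0$.
\item The Lipschitz bound then follows from the chord estimate $p_a\cdot e\le\frac{g(t)-g(s)}{t-s}\le p_b\cdot e$, with bounded boundary subgradients $p_a,p_b$.
\end{itemize}
You work with function values instead: boundary-point difference quotients, then the three-slope inequality. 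Only at the end do you obtain $\bar\partial u\neq\varnothing$ and the bound on $q_u$, via Lemma~\ref{lem:radial-slope}. So the paper goes from bounded $q_u$ to Lipschitz and gets explicit one-sided slope bounds without that lemma. You go from Lipschitz to bounded $q_u$ and let Lemma~\ref{lem:radial-slope} do the subgradient work. Both are valid.

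\textbf{Simplifications and clean-up.}
\begin{itemize}
\item In Step~5 the only inequality you use is $u(x_0)-u(y)\le C'|x_0-y|$ for $x_0\in\partial\Omega$. This already follows from Step~2: $u(x_0)=w^-(x_0)$ and $u(y)\ge w^-(y)$ give
\[
u(x_0)-u(y)\le w^-(x_0)-w^-(y)\le\|\nabla w^-\|_{L^\infty(\Omega)}|x_0-y|.
\]
Applying this at the exit points at both ends of the chord through $y$ and $z$ gives $[u]_{C^{0,1}(\overline\Omega)}\le\|\nabla w^-\|_{L^\infty(\Omega)}$. So the affine upper barriers of Step~3 are correct but superfluous, and uniform convexity enters only through $\rho$, as in the paper.
\item If you keep Step~3, delete the stray suggestion to use $\Phi-K\rho$. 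That function is concave for large $K$, so Proposition~\ref{prop:Comparison} does not apply to it.
\item The bound $|\nabla\rho|\ge c_1$ is never used.
\item Step~1 is not a real obstacle; the paper uses the same $C^{1,1}$ extension via graph charts and a partition of unity. Extend constantly in the chart's vertical variable, or use Whitney's theorem with the jet $(\varphi,\nabla_\tau\varphi)$. Do not extend constantly along the geometric normals via the nearest-point projection: for a merely $C^{1,1}$ boundary this gives only a Lipschitz function.
\end{itemize}
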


\begin{proof}
    Since $\Omega$ is a bounded uniformly convex $C^{1,1}$ domain, Lemma~\ref{lem:defining-function} implies that there exist $\rho\in C^{1,1}(\mathbb R^n)$ and $c_0>0$ such that
\[
\Omega=\{\rho<0\},\qquad
\partial\Omega=\{\rho=0\},\qquad
D^2\rho\geq c_0I
\quad\text{a.e. in }\mathbb R^n.
\]
Using boundary flattening charts and a finite partition of unity, we may construct an extension $\widetilde\varphi\in C^{1,1}(\mathbb R^n)$ of $\varphi\in C^{1,1}(\partial\Omega)$ (see, e.g., \cite[p.~94]{GT}). Since $\Omega$ is bounded, \cite[Section~4.2.3]{evans-mt} implies that $\widetilde \varphi \in W^{2,\infty}(\Omega)$. By Rademacher's theorem the almost-everywhere Hessian $D^2 \widetilde \varphi$ represents its weak Hessian. Therefore, we may estimate
\[
D^2\widetilde\varphi\geq -M_\varphi I
\quad\text{a.e. in }\Omega,
\]
where we set $M_\varphi\vcentcolon = \|D^2 \widetilde \varphi\|_{L^{\infty}(\Omega)}$. Set
\begin{equation}
\label{eq:lambda0}
\lambda_0\vcentcolon = \max\{1,\lambda^{1/n}\}
\end{equation}
and choose $A>0$ so that
\[
Ac_0-M_\varphi\geq\lambda_0.
\]
Then
\[
w\vcentcolon = \widetilde\varphi+A\rho
\]
satisfies
\begin{equation}
\label{eq:properties-w-lipschitz}
w=\varphi\quad\text{on }\partial\Omega,
\qquad
D^2w\geq\lambda_0I
\quad\text{a.e. in }\Omega.
\end{equation}
In particular, $w$ is strongly convex, that is, 
\[
    w(y)\geq w(x)+\nabla w(x)\cdot (y-x)+\frac{\lambda_0}{2}|x-y|^2
\]
for all $x,y\in \Omega$. If $w$ is of class $C^2$ this is a classical result and in our more general setup it can be justified by mollification. This argument is carried out in detail at the beginning of the proof of Lemma~\ref{lem:defining-function}.

By Lemma~\ref{lem:MA-measure} \ref{C11-case}, the Monge--Amp\`ere measure of $w$ is $\mu_w=\det D^2w\,dx$. Moreover, \eqref{eq:properties-w-lipschitz}, monotonicity of the determinant, \eqref{eq:lambda0}, \eqref{eq:upper-bound-f}, and the fact that $u$ is the Alexandrov solution of \eqref{eq:bdry-Lipschitz} imply
\[
\mu_w=\det D^2w\,dx
\geq\lambda_0^n\,dx
\geq\lambda\,dx
\geq f\,dx=\mu_u
\]
as Borel measures. Because $w=\varphi=u$ on $\partial\Omega$, the comparison
principle (Proposition~\ref{prop:Comparison}) gives
\[
w\leq u\quad\text{in }\Omega.
\]

Fix $x\in\partial\Omega$. Every relative supporting plane of $w$
at $x$ also supports $u$ there, since $w\leq u$ in
$\overline\Omega$ and $w(x)=u(x)$. Hence
\[
\bar\partial w(x)\subset \bar\partial u(x)
\]
and therefore \eqref{eq:defq} yields
\[
q_u(x)\leq q_w(x).
\]
Since $w\in C^{1,1}(\overline{\Omega})$ is convex, Proposition~\ref{prop:classical-compatibility} demonstrates that
\[
q_w(x)=\partial_\nu w(x)\in L^{\infty}(\partial\Omega).
\]
Thus $q_w$, and hence $q_u$ from above, is uniformly bounded on $\partial\Omega$.

The $C^{1,1}$ tubular-neighborhood theorem and compactness of $\partial\Omega$ give $r_0>0$ such that
\[
    z_x\vcentcolon = x-r_0\nu(x)\in\Omega \qquad\text{for every }x\in\partial\Omega.
\]
Let
\[
p=\nabla_\tau\varphi(x)+a\nu(x)\in\bar \partial u(x).
\]
The supporting inequality at $z_x$ gives
\[
u(z_x)\geq\varphi(x)-r_0a,
\]
and hence
\[
a\geq\frac{\varphi(x)-u(z_x)}{r_0}.
\]
Since a convex function on a compact convex set cannot exceed the
maximum of its boundary values,
\[
u(z_x)\leq\max_{\partial\Omega}\varphi.
\]
It follows that
\[
a\geq
\frac{\min_{\partial\Omega}\varphi
      -\max_{\partial\Omega}\varphi}{r_0}.
\]
Recalling the definition \eqref{eq:defq}, we see that taking the infimum over all admissible $a$ gives a uniform lower bound for $q_u$, namely,
\[
    q_u(x)\geq \frac{\min_{\partial\Omega}\varphi
      -\max_{\partial\Omega}\varphi}{r_0}.
\]
We have therefore proved $q_u\in L^{\infty}(\partial\Omega)$ with
\[
\sup_{x\in\partial\Omega}|q_u(x)|\leq \max\left(\frac{\max_{\partial\Omega}\varphi-\min_{\partial\Omega}\varphi}{r_0},\|\partial_\nu w\
\|_{L^{\infty}(\partial\Omega)}\right).
\]
Since Lemma~\ref{lem:measurable} guarantees the Borel measurability of $q_u$, we have $q_u\in \mathcal{B}_b(\partial\Omega)$.

It remains to prove the global Lipschitz estimate. By Lemma~\ref{lem:ray}, the infimum defining $q_u(x)$ is attained. Thus
\[
p_x\vcentcolon = \nabla_\tau\varphi(x)+q_u(x)\nu(x)
\in\bar\partial u(x),
\qquad x\in\partial\Omega.
\]
Since $\varphi\in C^{1,1}(\partial\Omega)$, $q_u\in L^{\infty}(\partial\Omega)$, and $\nu(x)\in \mathbb S^{n-1}$, there is $M>0$ such that
\[
|p_x|\leq M
\qquad\text{for every }x\in\partial\Omega.
\]
Now, let $x,y\in \overline{\Omega}$,
$x\neq y$, and let the line through $x$ and $y$ intersect $\overline{\Omega}$ in the chord $[a,b]$ for some boundary points $a,b\in \partial\Omega$. Let us set
\[
    \ell\vcentcolon = |b-a|,\quad  e\vcentcolon =\frac{b-a}{\ell}\in \mathbb S^{n-1}
\]
and define the function
\[
    g\colon [0,\ell]\to\R,\quad g(s)\vcentcolon = u(a+se).
\]
Since $u\in C(\overline{\Omega})$ is convex, the function $g$ is convex as well. Convexity of $g$ and the subgradient inequalities of $u$ associated with $p_a$ and $p_b$ imply
\begin{equation}
\label{eq:two-sided-bound}
p_a\cdot e
\leq\frac{g(t)-g(s)}{t-s}
\leq p_b\cdot e
\qquad
(0\leq s<t\leq|b-a|).
\end{equation}
Indeed, using the subgradient inequality and $b=a+\ell \, e$, we get
\begin{equation}
\label{eq:subgrad-1}
    g(t)=u(a+te)\geq u(a)+t\, p_a\cdot e=g(0)+t\,p_a\cdot e
\end{equation}
and
\begin{equation}
\label{eq:subgrad-2}
    g(s)=u(a+s e)\geq u(b)+(s-\ell)\,p_b \cdot e=g(\ell)+(s-\ell)\,p_b \cdot e.
\end{equation}
Moreover, the convexity gives
\[
\begin{split}
     g(s)&=g\left((1-s/t)0+(s/t)t\right)\\
     &\leq (1-s/t)g(0)+(s/t)g(t)\\
     &=(1-s/t)(g(0)-g(t))+g(t)
\end{split}
\]
and hence
\begin{equation}
\label{eq:cons-conv}
    \frac{g(t)-g(s)}{t-s}\geq \frac{g(t)-g(0)}{t}.
\end{equation}
Combined with \eqref{eq:subgrad-1}, we get
\[
    \frac{g(t)-g(s)}{t-s}\geq \frac{g(t)-g(0)}{t}\geq p_a\cdot e.
\]
Similarly, since
\[
    t=\frac{\ell-t}{\ell-s}s+\frac{t-s}{\ell-s}\ell,
\]
convexity of $g$ implies
\[
    g(t)\leq \frac{\ell-t}{\ell-s}g(s)+\frac{t-s}{\ell-s}g(\ell).
\]
Therefore,
\[
   \frac{g(t)-g(s)}{t-s}\leq \frac{g(\ell)-g(s)}{\ell-s}\leq p_b\cdot e. 
\]
This completes the verification of \eqref{eq:two-sided-bound}.

Since $|p_a|,|p_b|\leq M$ and $e\in \mathbb S^{n-1}$, we obtain
\[
|g(t)-g(s)|\leq M|t-s|.
\]
Applying this to the parameters corresponding to $x$ and $y$
gives
\[
|u(x)-u(y)|\leq M|x-y|.
\]
Thus $u\in C^{0,1}(\overline\Omega)$, and the proposition follows.
\end{proof}

The above result allows us to introduce rigorously the Dirichlet-to-interior-Neumann map.

\begin{proposition}[Dirichlet-to-interior-Neumann map]
\label{prop:DiNmap}
    Let $\Omega$ be uniformly convex with $C^{1,1}$ boundary and suppose that $f\in L^{\infty}(\Omega)$ satisfies $0\leq f\leq C$. Then, the \emph{global Dirichlet-to-interior-Neumann map} $\Lambda_f^-
\colon
C^{1,1}(\partial\Omega)
\to
\mathcal B_b(\partial\Omega)$, given by
\begin{equation}
\label{eq:def-DiN-one-sided-slope}
\Lambda_f^-(\varphi)
\vcentcolon = \partial_\nu^-u_{f,\varphi}\vcentcolon =\lim_{r\downarrow0}
\frac{u_{f,\varphi}(x)-u_{f,\varphi}(x-r\nu(x))}{r},
\end{equation}
is well-defined. Moreover, $\Lambda_f^-$ is completely determined by the generalized Cauchy data set $\mathcal{C}^{\rm glob}_f$.
\end{proposition}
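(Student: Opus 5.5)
The plan is to combine the forward theory, the Lipschitz barrier estimate, and the identification of the interior normal derivative with the extremal slope.

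\emph{Well-definedness.} Fix $\varphi\in C^{1,1}(\partial\Omega)$. Since $f\in L^\infty(\Omega)$ is nonnegative and $\Omega$ is bounded, $f\,dx$ is a finite Borel measure. Uniform convexity implies strict convexity (see the discussion after Definition~\ref{def:uniformly-convex-domains}), so Proposition~\ref{prop:forward}~\ref{forward-existence} gives a unique Alexandrov solution $u_{f,\varphi}\in C(\overline\Omega)$.

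Next, apply Proposition~\ref{prop:boundary-Lipschitz} with $\lambda=C$. This shows that $u_{f,\varphi}$ is Lipschitz on $\overline\Omega$ and that $q_{u_{f,\varphi}}$ is a bounded Borel function on $\partial\Omega$.

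Because $u_{f,\varphi}\in C^{0,1}(\overline\Omega)$ and $\varphi\in C^1(\partial\Omega)$, Lemma~\ref{lem:radial-slope} applies. It shows that the limit in \eqref{eq:def-DiN-one-sided-slope} exists and is finite at every $x\in\partial\Omega$, and that it equals $q_{u_{f,\varphi}}(x)$. It also gives the bound $|\partial_\nu^-u_{f,\varphi}|\le[u_{f,\varphi}]_{C^{0,1}(\overline\Omega)}$. Hence $\Lambda_f^-(\varphi)=q_{u_{f,\varphi}}$ pointwise. In particular, $\Lambda_f^-(\varphi)$ is bounded, and it is Borel by Lemma~\ref{lem:measurable}. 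So $\Lambda_f^-(\varphi)\in\mathcal B_b(\partial\Omega)$, and the map is well defined.

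\emph{Determination by $\mathcal C_f^{\rm glob}$.} Since $C^{1,1}(\partial\Omega)\subset C^1(\partial\Omega)$, every $\varphi$ in the domain of $\Lambda_f^-$ occurs as the first component of exactly one pair in $\mathcal C^{\rm glob}_f$, namely $(\varphi,q_{u_{f,\varphi}})$. Uniqueness of the forward solution makes the second component a function of $\varphi$. By the identity above, $\Lambda_f^-(\varphi)$ equals that second component. Therefore $\Lambda_f^-$ is read off from $\mathcal C^{\rm glob}_f$ by restricting the data set to first components in $C^{1,1}(\partial\Omega)$.

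\emph{Main obstacle.} There is essentially none, since all substantive work is in Proposition~\ref{prop:boundary-Lipschitz} and Lemma~\ref{lem:radial-slope}. The only checks are that their hypotheses hold: uniform convexity gives strict convexity and a $C^{1,1}$ (hence $C^1$) boundary, and $C^{1,1}$ data are in particular $C^1$.
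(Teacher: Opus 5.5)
Your proposal is correct and follows essentially the same route as the paper. Both arguments use forward well-posedness (Proposition~\ref{prop:forward}), the barrier-based Lipschitz bound (Proposition~\ref{prop:boundary-Lipschitz}), and the identification $\partial_\nu^- u_{f,\varphi}=q_{u_{f,\varphi}}$ from Lemma~\ref{lem:radial-slope} together with Lemma~\ref{lem:measurable}, and then read $\Lambda_f^-$ off $\mathcal C^{\rm glob}_f$; your explicit checks that uniform convexity gives strict convexity and that $C^{1,1}$ data are $C^1$ are the hypothesis verifications the paper leaves implicit.
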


\begin{proof}
    First, the Alexandrov solution $u_{f,\varphi}$ is well-defined by Proposition~\ref{prop:forward} \ref{forward-existence}. Then Proposition~\ref{prop:boundary-Lipschitz}  yields that $u_{f,\varphi}$ is Lipschitz continuous up to the boundary. Therefore, we can apply Lemma~\ref{lem:radial-slope} to deduce that the right hand side of \eqref{eq:def-DiN-one-sided-slope} is well-defined and coincides with the extremal boundary slope $q_{u_{f,\varphi}}$. Combined with Lemma~\ref{lem:measurable}, this shows that $\partial_\nu^-u_{f,\varphi}$ is a bounded Borel measurable function.

    The last part of the assertion follows from the formulas \eqref{eq:radial-slope} and \eqref{eq:def-DiN-one-sided-slope}. This completes the proof.
\end{proof}

\section{Determination of the interior subgradient image}
\label{sec:det-subgradient-image}

The goal of this section is to show that the extremal boundary slopes $q_u$ determine the subgradient images (Proposition~\ref{prop:D-characterization} and Corollary~\ref{cor:D-equality}). 

\subsection{Subdifferential images}

We start with the following definition, which introduces the interior and boundary parts of the subdifferential image $\bar\partial u(\overline{\Omega})$.

\begin{definition}[Subdifferential images]
\label{def:subdiff-image}
    Let $\Omega\subset\mathbb R^n$ be a bounded convex $C^1$ domain, let $\varphi\in C^1(\partial\Omega)$, and suppose that $u\in C(\overline\Omega)$ is convex and satisfies $u|_{\partial\Omega}=\varphi$. Moreover, let $q_u$ be its extremal boundary slope as defined in Proposition~\ref{def:slope}. Then we denote the \emph{subdifferential image} by
    \begin{equation}
    \label{eq:subdiff-image}
         \mathrm D_u\vcentcolon =\partial u(\Omega)\subset\R^n,
    \end{equation}
    the \emph{boundary subdifferential image} by 
    \begin{equation}
    \label{eq:bdry-subdiff-image}
         \mathrm B_u\vcentcolon =\bar\partial u(\partial \Omega)\subset\R^n,
    \end{equation}
    and the \emph{boundary slope image} by
    \begin{equation}
    \label{eq:S}
     \mathrm S_u\vcentcolon =\{\nabla_\tau\varphi(x)+q_u(x)\nu(x):
             x\in\partial\Omega,\ q_u(x)<\infty\}\subset \mathrm B_u.
    \end{equation}
    Moreover, for every $\psi\in C(\partial\Omega)$ and $p\in \R^n$, we denote by 
    \[
         \mathcal{X}_\psi (p)\vcentcolon = \argmax_{x\in\partial\Omega}
     \bigl(p\cdot x-\psi(x)\bigr)\subset \partial\Omega
    \]
    the set of \emph{boundary Legendre maximizers}.
\end{definition}

\begin{remark}[Boundary Legendre minimizers]
    We note that the continuiuty of $\psi$ and the compactness of $\partial\Omega$ imply that $\mathcal{X}_\psi (p)$ is always nonempty and compact.
\end{remark}

In view of Lemma~\ref{lem:ray}, whenever $x\in\partial\Omega$ satisfies $q_u(x)<\infty$, the boundary subdifferential $\bar\partial u(x)$ is the outward normal ray starting from the corresponding point of $\mathrm S_u$:
\[
\bar\partial u(x)
=
p_x+[0,\infty)\nu(x),
\qquad
p_x\vcentcolon = \nabla_\tau\varphi(x)+q_u(x)\nu(x)\in \mathrm S_u.
\]
On the other hand, when $q_u(x)=\infty$, then $\bar \partial u(x)=\varnothing$. Consequently,
\[
\mathrm B_u
=
\bar\partial u(\{q_u<\infty\})=\bigcup_{x\in\{q_u<\infty\}}(p_x+[0,\infty)\nu(x)).
\]
Moreover, Lemma~\ref{lemma:bdry-subdifferential} gives
\begin{equation}
\label{eq:equal-interior}
    \mathrm D_u=\bar\partial u(\Omega).
\end{equation}

Next, we recall that every interior subgradient belongs to the closed convex hull\footnote{Recall that for a set $A\subset\R^n$ its closed convex hull, denoted as $\overline{\operatorname{conv}}(A)$ is the smallest closed and convex set containing $A$.} of the gradients at differentiability points. This follows from the Krein--Milman type theorem \cite[Theorem~A.9]{Figalli} and the fact that every exposed point $p\in\partial u(x)$, $x\in\Omega$, can be approximated by $\nabla u(x_k)$ along a sequence of differentiability points $x_k\to x$; see \cite[Proposition~A.26]{Figalli}.
\begin{lemma}[{\cite[Corollary~A.27]{Figalli}}]
    Let $\Omega\subset\R^n$ be an open convex set and assume that $u\colon \Omega\to \R$ is convex. Let $\mathrm d_u$ denote the set of differentiability points of $u$. Then $D_u=\partial u(\Omega)$ is contained in the closed convex hull of $\nabla u(\mathrm d_u)$.
\end{lemma}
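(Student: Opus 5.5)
The plan is to show that every $p\in \mathrm D_u=\partial u(\Omega)$ lies in $\overline{\operatorname{conv}}(\nabla u(\mathrm d_u))$. We combine two facts already cited: the description of $\partial u(x)$ as the closed convex hull of its exposed points (\cite[Theorem~A.9]{Figalli}), and the approximation of exposed subgradients by gradients at differentiability points (\cite[Proposition~A.26]{Figalli}). Fix $x\in\Omega$. By Lemma~\ref{lemma:compactness-subgradinets}, the set $\partial u(x)=\partial u(\{x\})$ is nonempty and compact; it is convex and closed by definition. The Krein--Milman type theorem therefore gives $\partial u(x)=\overline{\operatorname{conv}}(E_x)$, where $E_x$ denotes the set of exposed points of $\partial u(x)$.

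Now take an exposed point $p\in E_x$. We will produce differentiability points $x_k\in\mathrm d_u$ with $x_k\to x$ and $\nabla u(x_k)\to p$. To do so, pick a direction $e$ whose supporting hyperplane exposes $p$, meaning that $p$ is the unique maximizer of $q\mapsto q\cdot e$ over $\partial u(x)$. By Rademacher's theorem (Lemma~\ref{lem:diff-convex}), $\mathrm d_u$ has full measure, so we can choose $x_k\in \mathrm d_u$ lying in small cones around the ray $x+te$, $t\downarrow0$. Monotonicity of the subdifferential gives $(\nabla u(x_k)-q)\cdot(x_k-x)\geq0$ for every $q\in\partial u(x)$. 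The vectors $\nabla u(x_k)$ are bounded by the local Lipschitz estimate \eqref{eq:Lipschitz}. Any subsequential limit $\bar p$ belongs to $\partial u(x)$ by Lemma~\ref{lem:gradient-convergence}\ref{conv-subgrad-1}, and passing to the limit in the monotonicity inequality, after normalizing $x_k-x$ so that it tends to $e$, yields $\bar p\cdot e\geq q\cdot e$ for all $q\in\partial u(x)$. Uniqueness of the exposed maximizer then forces $\bar p=p$. Hence $E_x\subset\overline{\nabla u(\mathrm d_u)}$.

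To conclude, we have $\partial u(x)=\overline{\operatorname{conv}}(E_x)\subset\overline{\operatorname{conv}}(\overline{\nabla u(\mathrm d_u)})=\overline{\operatorname{conv}}(\nabla u(\mathrm d_u))$. Taking the union over $x\in\Omega$ gives the claim.

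The main obstacle is the approximation step. It requires choosing the differentiability points so that the normalized directions $(x_k-x)/|x_k-x|$ converge to the exposing direction $e$. This is possible because $\mathrm d_u$ has full measure inside every open cone, and that is exactly the content of \cite[Proposition~A.26]{Figalli}.
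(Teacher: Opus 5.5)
Your proposal is correct and follows the same route as the paper. The paper derives the lemma from the Krein--Milman type theorem \cite[Theorem~A.9]{Figalli} for the compact convex set $\partial u(x)$, combined with the approximation of exposed subgradients by gradients at nearby differentiability points \cite[Proposition~A.26]{Figalli}; your monotonicity argument, with $x_k\in\mathrm d_u$ chosen so that $(x_k-x)/|x_k-x|\to e$, is a valid self-contained proof of that approximation step.
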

Hence, by combining the above observations, we have
\begin{equation}
\label{eq:whole-subdiff}
    \bar\partial u(\overline{\Omega})=\mathrm{D}_u\cup \mathrm{B}_u\subset \overline{\operatorname{conv}}(\nabla u(\mathrm{d}_u))\cup \bigcup_{x\in\{q_u<\infty\}}(p_x+[0,\infty)\nu(x)) .
\end{equation}

\subsection{Boundary slope image is Lebesgue null}

Our next result shows that the boundary slope image has vanishing Lebesgue measure.

\begin{lemma}[Nullity of the boundary slope image]
\label{lem:S-null}
Let $\Omega\subset\mathbb R^n$ be a bounded strictly convex $C^{1,1}$ domain and $\varphi\in C^{1,1}(\partial\Omega)$. Suppose that $u\in C(\overline\Omega)$ is convex and satisfies $u|_{\partial\Omega}=\varphi$. Then
\[
  \cL^n(\mathrm S_u)=0.
\]
\end{lemma}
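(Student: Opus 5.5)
We must show $\mathrm S_u$ is Lebesgue null in $\R^n$. The plan is to realize $\mathrm S_u$ as the image of an $(n-1)$-dimensional object under a Lipschitz map, and then use that Lipschitz maps do not increase Hausdorff dimension. Let $E\vcentcolon=\{x\in\partial\Omega:q_u(x)<\infty\}$, which is a Borel set by Lemma~\ref{lem:measurable}. Define
\[
F\colon \partial\Omega\times\R\to\R^n,\qquad F(x,a)\vcentcolon=\nabla_\tau\varphi(x)+a\nu(x).
\]
Then $\mathrm S_u=F(\Gamma)$, where $\Gamma\vcentcolon=\{(x,q_u(x)):x\in E\}$ is the graph of $q_u$ over $E$. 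Since $\partial\Omega$ is $C^{1,1}$, the normal $\nu$ is Lipschitz. Since $\varphi\in C^{1,1}(\partial\Omega)$, the tangential gradient $\nabla_\tau\varphi$ is Lipschitz on $\partial\Omega$; for instance, write $\nabla_\tau\varphi=\nabla\widetilde\varphi-(\nabla\widetilde\varphi\cdot\nu)\nu$ with the $C^{1,1}$ extension $\widetilde\varphi$ used in Proposition~\ref{prop:boundary-Lipschitz}. Hence $F$ is locally Lipschitz on the $n$-dimensional Lipschitz manifold $\partial\Omega\times\R$.

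\textbf{Step 1 (exhaust by bounded slopes).} Write $E=\bigcup_{m\in\N}E_m$ with $E_m\vcentcolon=\{x\in\partial\Omega:|q_u(x)|\leq m\}$. Correspondingly, $\mathrm S_u=\bigcup_m F(\Gamma_m)$, where $\Gamma_m$ is the graph of $q_u$ over $E_m$. On $\partial\Omega\times[-m,m]$ the map $F$ is Lipschitz with constant $L_m$. By countable subadditivity it therefore suffices to show $\mathcal H^n(\Gamma_m)=0$. Indeed, Lipschitz maps satisfy $\mathcal H^n(F(A))\leq L_m^n\mathcal H^n(A)$, and $\mathcal H^n=\cL^n$ on $\R^n$.

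\textbf{Step 2 (the graph of a measurable function is $\mathcal H^n$-null).} This is the main point, because $q_u$ is merely Borel, not continuous. I would pass to local charts. Cover $\partial\Omega$ by finitely many $C^{1,1}$ graph charts $\Psi_i\colon V_i\subset\R^{n-1}\to\partial\Omega$, which are bi-Lipschitz. In each chart, $g\vcentcolon=q_u\circ\Psi_i$ is a Borel function on $V_i$, and the graph of $g$ over $\Psi_i^{-1}(E_m)$ is a Borel subset $G$ of $\R^{n-1}\times\R=\R^n$. Each vertical slice $\{y\}\times\R$ meets $G$ in at most one point, so Fubini gives
\[
\cL^n(G)=\int\cL^1(G_y)\,dy=0.
\]
Hence $\mathcal H^n(G)=0$. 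Mapping back with the bi-Lipschitz map $(y,a)\mapsto(\Psi_i(y),a)$ gives $\mathcal H^n(\Gamma_m\cap(\Psi_i(V_i)\times\R))=0$, and summing over the finitely many charts gives $\mathcal H^n(\Gamma_m)=0$. Measurability of $q_u$ (Lemma~\ref{lem:measurable}) is exactly what makes $G$ Borel and so justifies Fubini. This is where strict convexity enters.

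\textbf{Alternative route (composition).} One can also avoid $\mathcal H^n$ on the product altogether. Define $\widetilde F_i(y,a)\vcentcolon=F(\Psi_i(y),a)$, a map from $V_i\times[-m,m]\subset\R^n$ to $\R^n$ that is Lipschitz. Lipschitz maps from $\R^n$ to $\R^n$ send $\cL^n$-null sets to $\cL^n$-null sets, so $\cL^n(\widetilde F_i(G))=0$ directly. Then $\mathrm S_u\subset\bigcup_{i,m}\widetilde F_i(G_{i,m})$ is null. I expect the only delicate point to be verifying that $\nabla_\tau\varphi\circ\Psi_i$ and $\nu\circ\Psi_i$ are Lipschitz, which follows from the $C^{1,1}$ regularity of both the boundary and $\varphi$.
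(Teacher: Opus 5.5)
Your proposal is correct and follows the paper's proof essentially step for step: you exhaust by the Borel sets $\{|q_u|\le m\}$, use Lipschitz continuity of $(x,a)\mapsto\nabla_\tau\varphi(x)+a\nu(x)$ on $\partial\Omega\times[-m,m]$, and show the graph of $q_u$ is null via local charts and Tonelli, with Borel measurability of $q_u$ (and hence strict convexity) as the key input. Your alternative route, which composes the chart with $F$ into a single Lipschitz map $V_i\times[-m,m]\to\R^n$, is a harmless streamlining that avoids Hausdorff measure in $\R^{n+1}$.
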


\begin{proof}
The result follows by standard measure theory arguments. Recall that by Lemma~\ref{lem:measurable} the extremal boundary slope $q_u\colon \partial\Omega\to (-\infty,\infty]$ is Borel measurable. Let us denote the graph of $q_u$ by
\[
    \Gamma\vcentcolon = \operatorname{graph}(q_u)=\{(x,q_u(x)):x\in \partial\Omega\}\subset \partial\Omega\times (-\infty,\infty].
\]
For each $k\in\mathbb N$, define the Borel sets
\[
 E_k\vcentcolon =\{x\in\partial\Omega:|q_u(x)|\leq k\}\subset\R^n,
\]
which cover $\{q_u<\infty\}$ as $k$ runs through the whole natural numbers. Moreover, we write $\Gamma_k$ for the restriction of $\Gamma$ to $E_k$, that is, it is the graph of the Borel function $q_u^k\vcentcolon =q_u|_{E_k}$. More explicitly,
\[
 \Gamma_k\vcentcolon = \operatorname{graph}(q_u^k)=\{(x,q_u(x)):x\in E_k\}
       \subset\partial\Omega\times [-k,k]\subset \R^{n+1}.
\]
Since $q_u^k\colon E_k\to \R$ is Borel measurable, the sets $\Gamma_k$ are Borel as well. For any $k\in\N$, we define the Lipschitz continuous function 
\[
    \Psi_k\colon \partial\Omega\times [-k,k]\to \R^n,\quad \Psi_k(x,a)\vcentcolon = \nabla_\tau\varphi(x)+a\nu(x).
\]
Note that Lipschitz continuity of $\Psi_k$ follows from the fact that $\varphi\in C^{1,1}(\partial\Omega)$ and $\partial\Omega\in C^{1,1}$, which implies $\nu\in C^{0,1}$. More precisely,
\[
    |\Psi_k(x,a)-\Psi_k(y,b)|\leq ([\nabla_\tau \varphi]_{C^{0,1}(\partial\Omega)}+k[\nu]_{C^{0,1}(\partial\Omega)})|x-y|+|a-b|.
\]
Then, we may decompose $\mathrm S_u$ as
\[
 \mathrm S_u=\bigcup_{k=1}^\infty\Psi_k(\Gamma_k).
\]
By $\sigma$-subadditivity, we have
\begin{equation}
\label{eq:subadditivity-estimate}
    \mathcal{L}^n(\mathrm S_u)\leq \sum_{k\in\N}\mathcal{L}^n(\Psi_k (\Gamma_k)).
\end{equation}
We claim that $\mathcal H^n(\Gamma_k)=0$. To see this, cover $\partial\Omega$ by finitely many $C^{1,1}$ boundary charts $X_j:V_j\subset\mathbb R^{n-1}\to\partial\Omega$, $1\leq j\leq N$. For each $1\leq j\leq N$ and $k\in\N$, set
\[
    V_{j,k}\vcentcolon = X_j^{-1}(E_k),\quad X_{j,k}\vcentcolon = X_j|_{V_{j,k}}
\]
and
\[
G_{j,k}
\vcentcolon =\operatorname{graph}(q_u^k\circ X_{j,k})=
\left\{
(z,a):
z\in V_{j,k},\ a=q_u(X_j(z))
\right\}\subset \R^n.
\]
Since $E_k$ is a Borel set, $V_{j,k}$ and $G_{j,k}$ are Borel too. The vertical section $(G_{j,k})_z$ is always a singleton, namely $\{q_u (X_j(z))\}$, and hence $\mathcal{L}^1((G_{j,k})_z)=0$ for every $z\in V_{j,k}$. Since $G_{j,k}$ is Borel, Tonelli's theorem yields
\[
\begin{aligned}
\mathcal L^n(G_{j,k})
&=
\int_{V_{j,k}}\int_{-k}^{k}
\chi_{G_{j,k}}(z,a)\,da\,dz\\
&=
\int_{V_{j,k}}\mathcal L^1((G_{j,k})_z)\,dz
=0.
\end{aligned}
\]
We have
\[
\begin{split}
     \Gamma_k&=\bigcup_{j=1}^N\{(x,q_u(x)):x\in E_k\cap X_j(V_j)\}\\
     &=\bigcup_{j=1}^N\{(X_j(z),q_u(X_j(z)):z\in X_j^{-1}(E_k)\cap V_j \}\\
     &=\bigcup_{j=1}^N\{(X_j(z),q_u(X_j(z)):z\in V_{j,k} \}\\
     &=\bigcup_{j=1}^N\Phi_j(G_{j,k}),
\end{split}
\]
where $\Phi_j\colon V_j\times \R\to\R^{n+1}$ is the Lipschitz map $\Phi_j(z,a)\vcentcolon = (X_j(z),a)$. Hence, the Lipschitz estimate for Hausdorff measure \cite[Section~2.4.1]{evans-mt} therefore implies 
\[
    \mathcal H^n(\Gamma_k)\leq \sum_{j=1}^N \mathcal{H}^n(\Phi_j(G_{j,k}))\leq \sum_{j=1}^N [\Phi_j]^n_{C^{0,1}(V_j\times \R)}\mathcal{H}^n(G_{j,k})=0.
\]
Finally, since the maps $\Psi_k$ are Lipschitz continuous and $\mathcal{H}^n(\Gamma_k)=0$, the Lipschitz estimate for Hausdorff measure \cite[Section~2.4.1]{evans-mt} and $\mathcal{H}^n=\mathcal{L}^n$ on $\R^n$ imply that
\[
    \cL^n(\Psi_k(\Gamma_k))\leq [\Psi_k]^n_{C^{0,1}(\partial\Omega\times [-k,k])}\mathcal{H}^n(\Gamma_k)=0.
\]
Thus, \eqref{eq:subadditivity-estimate} yields $\mathcal{L}^n(\mathrm S_u)=0$.
\end{proof}

\subsection{Equality of subdifferential images}

Finally, we show that equal generalized Cauchy data lead to equal subdifferential images $\mathrm D_u$ in a measure theoretic sense. In the proof we will need a few facts on the Legendre transform \cite[p.~104]{Rockafellar}.

\begin{definition}[{Legendre transform}]
\label{def:Legendre-transform}
    Let $v\colon \R^n \to (-\infty,\infty]$ be a proper convex function. Then its \emph{Legendre transform} of $v$ (also called \emph{conjugate}) is the function $v^\star\colon \R^n\to (-\infty,\infty]$ defined by
    \begin{equation}
    \label{eq:Legendre-trafo}
        v^\star (p)\vcentcolon = \sup_{x\in\R^n}[p\cdot x-v(x)],\quad p\in\R^n.
    \end{equation}
    Similarly, if $u\colon \Omega\to \R$ is a convex function over a convex open set $\Omega\subset \R^n$, then we define its Legendre transform $u^\star\colon \R^n \to (-\infty,\infty]$ as
    \[
        u^\star (p)\vcentcolon = \sup_{x\in\Omega}[p\cdot x-u(x)],\quad p\in\R^n.
    \]
\end{definition}

The Legendre transform has the following useful properties.

\begin{proposition}[{Properties of Legendre transform}]
\label{prop:legendre-transform}
    Let $v\colon \R^n\to (-\infty,\infty]$ be a proper convex function.
    \begin{enumerate}[(i)]
        \item\label{Legendre-convex} The Legendre transform $v^\star\colon \R^n\to (-\infty,\infty]$ is a proper convex lower semicontinuous function. 
        \item\label{involution-legendre} The Legendre transform is an involution on the collection of proper convex lower semicontinuous functions, that is, $(v^\star)^\star=v$.
        \item\label{subddif-legendre} Assume that $v$ is lower semicontinuous. Then for all $x\in \R^n$ and $p\in\R^n$, the following conditions are equivalent:
        \begin{enumerate}[(a)]
            \item $p\in \partial v(x)$;
            \item $x\in \partial v^\star (p)$;
            \item $z\mapsto p\cdot z-v(z)$ achieves its supremum at $z=x$, or equivalently $v^\star (p)=p\cdot x-v(x)$;
            \item $q\mapsto q\cdot x-v^\star (q)$ achieves its supremum at $q=p$, or equivalently $(v^\star)^\star (x)=p\cdot x-v^\star (p)$;
            \item $v(x)+v^\star (p)=p\cdot x$.
        \end{enumerate}
    \end{enumerate}
\end{proposition}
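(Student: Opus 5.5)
The three assertions are the classical Fenchel--Moreau theory, and the plan is to prove them in the order (i), (ii), (iii). Part (ii) is the only place where a genuine separation argument is needed. Everything else follows from the definition \eqref{eq:Legendre-trafo} and the Fenchel--Young inequality
\[
v(x)+v^\star(p)\geq p\cdot x,\qquad x,p\in\R^n,
\]
which is immediate from the definition of $v^\star$; it is understood with the convention $+\infty\geq$ anything.

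For \ref{Legendre-convex}, I write
\[
v^\star(p)=\sup_{x\in\operatorname{dom}(v)}\bigl(p\cdot x-v(x)\bigr),
\]
which is a pointwise supremum of affine, hence continuous, functions of $p$. Therefore $v^\star$ is convex and lower semicontinuous. Since $\operatorname{dom}(v)\neq\varnothing$, we have $v^\star>-\infty$ everywhere. For properness it remains to find one $p$ with $v^\star(p)<\infty$, that is, an affine minorant of $v$. I take a point $x_0$ in the relative interior of $\operatorname{dom}(v)$, which is nonempty for a nonempty convex set in $\R^n$. There $\partial v(x_0)\neq\varnothing$ by \cite[Theorem~23.4]{Rockafellar}, when the subdifferential is taken relative to the affine hull and extended by any normal component. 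A subgradient $p_0$ gives
\[
v(y)\geq v(x_0)+p_0\cdot(y-x_0)\quad\text{for all }y,
\]
and hence $v^\star(p_0)\leq p_0\cdot x_0-v(x_0)<\infty$.

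For \ref{involution-legendre}, Fenchel--Young gives $v^{\star\star}\leq v$ at once. For the reverse inequality, I will show that a proper lsc convex $v$ is the supremum of its affine minorants; since $v^{\star\star}(x)=\sup_p\bigl(p\cdot x-v^\star(p)\bigr)$ is exactly that supremum, this suffices. Fix $x_1$ and $t<v(x_1)$. The epigraph $\operatorname{epi}(v)$ is closed and convex by lower semicontinuity, and $(x_1,t)\notin\operatorname{epi}(v)$. Strict separation (Hahn--Banach \cite[Theorem~1.7]{Brezis}) yields $(q,\beta)\neq 0$ and $\gamma$ with
\[
q\cdot x+\beta s>\gamma>q\cdot x_1+\beta t\qquad\text{for all }(x,s)\in\operatorname{epi}(v).
\]
Letting $s\to\infty$ forces $\beta\geq0$. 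If $\beta>0$, dividing by $\beta$ gives an affine minorant of $v$ whose value at $x_1$ exceeds $t$. If $\beta=0$ (a vertical hyperplane, which can only occur when $x_1\notin\operatorname{dom}(v)$), I add a large multiple of the affine function $x\mapsto\gamma-q\cdot x$ to the affine minorant from (i). This function is negative on $\operatorname{dom}(v)$ and positive at $x_1$, so the sum is still a minorant and exceeds $t$ at $x_1$. I expect this vertical case to be the main technical point; it is exactly where the existence of an affine minorant from (i) is used.

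For \ref{subddif-legendre}, I first prove (a)$\Leftrightarrow$(c)$\Leftrightarrow$(e). Condition (a) says $p\cdot z-v(z)\leq p\cdot x-v(x)$ for all $z$. This is (c), and it is equivalent to $v^\star(p)=p\cdot x-v(x)$, that is, to (e); here (e) forces $v(x)<\infty$, so there is no $\infty-\infty$ issue. Applying the same equivalence to the proper lsc convex function $v^\star$, in place of $v$ and with the roles of $x$ and $p$ swapped, gives (b)$\Leftrightarrow$(d)$\Leftrightarrow$ $v^\star(p)+v^{\star\star}(x)=p\cdot x$. By \ref{involution-legendre}, $v^{\star\star}=v$, so this last identity is (e), which closes the chain of equivalences.
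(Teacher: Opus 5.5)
Your proof is correct. It differs from the paper's only in being an actual argument: the paper does not prove the proposition but cites \cite[Theorem~12.2]{Rockafellar} for (i)--(ii) and \cite[Theorem~23.5]{Rockafellar} for (iii). You instead rebuild the Fenchel--Moreau theory from the definition, Fenchel--Young, and strict separation in $\R^{n+1}$.

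The citation buys brevity, and it gives the sharper general identity $v^{\star\star}=\operatorname{cl}v$ for proper convex $v$. That identity also shows that (ii) must be read, as you read it, only for lsc $v$. Your route shows exactly where properness and lower semicontinuity enter. Your handling of (iii) is clean: you prove the single equivalence (a)$\Leftrightarrow$(c)$\Leftrightarrow$(e) once for an arbitrary proper convex function, then apply it to $v^\star$ together with $v^{\star\star}=v$.

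Two small remarks. First, the qualification about the affine hull in (i) is unnecessary. \cite[Theorem~23.4]{Rockafellar} already gives $\partial v(x_0)\neq\varnothing$ as a subset of $\R^n$ for every $x_0\in\operatorname{ri}(\operatorname{dom}v)$.

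Second, if self-containedness is the goal, note that in \cite{Rockafellar} Theorem~23.4 is proved through the support-function description of $\partial v(x)$ (Theorem~23.2). That description itself rests on the conjugacy theorem you are establishing. For (ii)--(iii) you can avoid it entirely. Separate $(x_1,t)$ from the closed set $\operatorname{epi}(v)$ with $x_1\in\operatorname{dom}(v)$; by your own observation this forces $\beta>0$ and so already produces an affine minorant. Only the properness claim in (i), where $v$ need not be lsc, genuinely needs a relative-interior argument.
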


\begin{proof}
    The properties \ref{Legendre-convex}--\ref{involution-legendre} are reformulations of \cite[Theorem~12.2]{Rockafellar}. The equivalent conditions \ref{subddif-legendre} can be found in \cite[Theorem~23.5]{Rockafellar}.
\end{proof}

\begin{proposition}[Boundary characterization of $\mathrm D_u$]
\label{prop:D-characterization}
Let $\Omega\subset\mathbb R^n$ be a bounded strictly convex $C^{1,1}$ domain and $\varphi\in C^{1,1}(\partial\Omega)$. Suppose that $u\in C(\overline\Omega)$ is convex and satisfies $u|_{\partial\Omega}=\varphi$. For every $p\notin \mathrm S_u$, we have the following characterization:
\begin{equation}
\label{eq:D-characterization}
 p\notin \mathrm D_u
 \,\Leftrightarrow\,
 \exists x\in \mathcal{X}_\varphi(p):\,
 q_u(x)<\infty\text{ and }p\cdot\nu(x)>q_u(x).
\end{equation}
Moreover, the pair $(\varphi,q_u)$ determines $\mathrm D_u$ up to a Lebesgue null set.
\end{proposition}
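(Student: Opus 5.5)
The plan is to recast membership in $\mathrm D_u$ as a question about where the function $z\mapsto p\cdot z-\bar u(z)$ attains its maximum. Since $u\in C(\overline\Omega)$ and $\overline\Omega$ is compact, the supremum of $z\mapsto p\cdot z-u(z)$ over $\overline\Omega$ is attained for every $p\in\mathbb R^n$. By Proposition~\ref{prop:legendre-transform}\ref{subddif-legendre} applied to $\bar u$, a point $z\in\overline\Omega$ satisfies $p\in\bar\partial u(z)$ if and only if $z$ is such a maximizer. Together with \eqref{eq:equal-interior}, this gives
\[
p\in\mathrm D_u \iff \text{some maximizer lies in }\Omega .
\]
The maximizer set is convex, because $z\mapsto p\cdot z-u(z)$ is concave.

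For the direction ``$\Rightarrow$'' of \eqref{eq:D-characterization}, let $p\notin\mathrm D_u\cup\mathrm S_u$. Then every maximizer lies on $\partial\Omega$; pick one, say $x$. Since the global maximum over $\overline\Omega$ is attained at $x\in\partial\Omega$ and $u=\varphi$ there, $x$ also maximizes $p\cdot z-\varphi(z)$ over $\partial\Omega$, so $x\in\mathcal X_\varphi(p)$. Moreover $p\in\bar\partial u(x)$, so Lemma~\ref{lem:ray} gives $q_u(x)<\infty$ and $p=\nabla_\tau\varphi(x)+a\nu(x)$ with $a=p\cdot\nu(x)\ge q_u(x)$. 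If $a=q_u(x)$, then $p\in\mathrm S_u$, which is excluded. Hence $p\cdot\nu(x)>q_u(x)$.

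For the direction ``$\Leftarrow$'', let $x\in\mathcal X_\varphi(p)$ with $q_u(x)<\infty$ and $a\vcentcolon=p\cdot\nu(x)>q_u(x)$.
\begin{enumerate}[(1)]
\item Tangential part. Differentiating $t\mapsto p\cdot\gamma(t)-\varphi(\gamma(t))$ along $C^1$ boundary curves through $x$ (as in the proof of Lemma~\ref{lem:ray}) gives that the tangential part of $p$ equals $\nabla_\tau\varphi(x)$. Hence $p=\nabla_\tau\varphi(x)+a\nu(x)$.
\item Boundary subgradient. By Lemma~\ref{lem:ray} and $a>q_u(x)$, we get $p\in\bar\partial u(x)$. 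Hence $x$ is a global maximizer over $\overline\Omega$.
\item No interior maximizer. Suppose some $z\in\Omega$ were also a maximizer. Then $u(z)=u(x)+p\cdot(z-x)$. On the other hand, $p'\vcentcolon=p-(a-q_u(x))\nu(x)$ also lies in $\bar\partial u(x)$ by Lemma~\ref{lem:ray}. Using $p'$ together with the strict convexity condition \eqref{eq:strictly-convex}, namely $\nu(x)\cdot(z-x)<0$, we obtain
\[
u(z)\ge u(x)+p\cdot(z-x)-(a-q_u(x))\,\nu(x)\cdot(z-x)>u(x)+p\cdot(z-x),
\]
which is a contradiction.
\end{enumerate}
Thus no maximizer lies in $\Omega$, and $p\notin\mathrm D_u$. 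This strict-inequality step, using the slack $a-q_u(x)>0$ together with strict convexity of $\Omega$, is the main obstacle; the remaining steps are bookkeeping with Lemma~\ref{lem:ray}.

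For the final claim, observe that $\mathcal X_\varphi(p)$, $q_u$ and $\nu$ depend only on the pair $(\varphi,q_u)$ and $\Omega$, and so does $\mathrm S_u$. Hence the right-hand side of \eqref{eq:D-characterization} determines $\mathrm D_u\setminus\mathrm S_u$. Consequently, two convex functions $u,v$ with the same pair satisfy $\mathrm S_u=\mathrm S_v$ and
\[
\mathrm D_u\mathbin\triangle\mathrm D_v\subset\mathrm S_u .
\]
The set $\mathrm S_u$ is Lebesgue null by Lemma~\ref{lem:S-null}, so $\mathrm D_u$ is determined up to a Lebesgue null set.
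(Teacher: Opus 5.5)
Your proposal is correct and follows the paper's proof. The forward direction picks a boundary maximizer of $z\mapsto p\cdot z-u(z)$ and uses Lemma~\ref{lem:ray} together with $p\notin\mathrm S_u$ to get the strict inequality. The reverse direction compares $p$ with $p_0=\nabla_\tau\varphi(x)+q_u(x)\nu(x)\in\bar\partial u(x)$ and uses $\nu(x)\cdot(z-x)<0$. The null-set claim is exactly Corollary~\ref{cor:D-equality} combined with Lemma~\ref{lem:S-null}.

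The only difference is cosmetic: the paper skips your step (2). It derives $p\cdot x-u(x)>p\cdot z-u(z)$ directly for every $z\in\Omega$, which already rules out $p\in\partial u(z)$.
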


\begin{proof}
    Suppose first that $x\in \mathcal{X}_\varphi(p)$ and
    \begin{equation}
    \label{eq:pos-cond}
      q_u(x)<\infty \quad\text{and}\quad p\cdot\nu(x)>q_u(x).
    \end{equation}
By definition of the set $\mathcal{X}_\varphi (p)$, the boundary function $\Phi_p\vcentcolon \partial\Omega\to\R$.
$\Phi_p (y)\vcentcolon =p\cdot y-\varphi(y)$ has a maximum at $y=x$. Thus, let $\gamma\colon (-\eps,\eps)\to \partial\Omega$ be a $C^1$ curve with $\gamma(0)=x$ and $\gamma'(0)=\tau_0\in T_x\partial\Omega$. Then $\Phi_p\circ \gamma\colon (-\eps,\eps)\to\R$ has a maximum at $t=0$ and therefore
\[
    0=\left.\frac{d}{dt}\right|_{t=0}\Phi_p(\gamma(t))=(p-\nabla_{\tau}\varphi(x))\cdot \tau_0.
\]
Since $\tau_0\in T_x\partial\Omega$ is arbitrary, this yields
\[
    p=\nabla_\tau \varphi(x)+a\nu(x)\quad\text{for some }a\in\R.
\]
The condition \eqref{eq:pos-cond} implies $a>q_u(x)$. By Lemma~\ref{lem:ray},
\[
 p_0\vcentcolon =\nabla_\tau\varphi(x)+q_u(x)\nu(x) \in \bar \partial u(x).
\]
Hence, for every $z\in\Omega$,
\begin{align*}
 &[u(z)-p\cdot z]-[u(x)-p\cdot x]\\
 &\qquad\geq(p_0-p)\cdot(z-x)\\
 &\qquad=(q_u(x)-a)\nu(x)\cdot(z-x)>0.
\end{align*}
Here we used that $a>q_u(x)$ and $\nu(x)\cdot(z-x)<0$ for every $z\in\Omega$, which follows from the strict convexity assumption on $\Omega$. Hence, 
\[
    p\cdot x-u(x) > p\cdot z -u(z)\quad\text{for all }z\in \Omega
\]
and therefore the maximum of $\overline{\Omega}\ni z\mapsto p\cdot z-u(z)\in\R$ not achieved at an interior point. Thus, Proposition~\ref{prop:legendre-transform} \ref{subddif-legendre} guarantees that $p\notin \mathrm D_u$.

Conversely, suppose $p\notin \mathrm D_u$. Since $u\in C(\overline\Omega)$ and $\overline\Omega$ is compact,
the function $z\mapsto p\cdot z-u(z)$ attains its maximum. By Proposition~\ref{prop:legendre-transform}
\ref{subddif-legendre} and $p\notin\mathrm D_u$, no maximizer lies in $\Omega$. Hence we may choose a maximizer $x\in\partial\Omega$. Then $x\in \mathcal{X}_\varphi(p)$ and, by Proposition~\ref{prop:legendre-transform} \ref{subddif-legendre}, $p\in \bar \partial u(x)$. Lemma~\ref{lem:ray} gives $q_u(x)<\infty$ and
$p\cdot\nu(x)\geq q_u(x)$. Equality would imply
\[
 p=\nabla_\tau\varphi(x)+q_u(x)\nu(x)\in \mathrm S_u,
\]
contrary to the assumption $p\notin \mathrm S_u$. Hence the inequality is strict.

Therefore we have established the equivalence \eqref{eq:D-characterization}.

Finally, we show that $(\varphi,q_u)$ determines $\mathrm D_u$ up to a Lebesgue null set. We may write
\[
    \mathrm D_u=(\mathrm D_u\cap \mathrm S_u)\cup (\mathrm D_u\setminus \mathrm S_u).
\]
The set $\mathrm D_u\cap \mathrm S_u$ has vanishing $n$-dimensional Lebesgue measure by Lemma~\ref{lem:S-null}. On the other hand, we have
\[
(\mathrm D_u\setminus \mathrm S_u)=\{p\in\R^n:\forall x\in \mathcal{X}_\varphi(p)\text{ one has }x\in \{q_u=\infty\}\text{ or }p\cdot \nu(x)< q_u(x)\}.
\]
Indeed, if $p\in\mathrm S_u$, then for some $x\in\partial\Omega$ with $q_u(x)<\infty$,
\[
p=\nabla_\tau\varphi(x)+q_u(x)\nu(x)\in\bar\partial u(x);
\]
see Lemma~\ref{lem:ray}. Consequently, $x\in\mathcal X_\varphi(p)$ and
$p\cdot\nu(x)=q_u(x)$, so $p$ does not belong to the
right-hand side of the above identity. 

Since the identity for $\mathrm D_u\setminus \mathrm S_u$ only depends on $(\varphi,q_u)$, we deduce that it determines $\mathrm D_u$ up to a Lebesgue null set.
\end{proof}

\begin{corollary}[Equality of subdifferential images]
\label{cor:D-equality}
Let $\Omega\subset\mathbb R^n$ be a bounded strictly convex $C^{1,1}$ domain and $\varphi\in C^{1,1}(\partial\Omega)$. Let $u_1,u_2\in C(\overline{\Omega})$ be convex, have the same boundary value $\varphi\in C^{1,1}(\partial\Omega)$, and satisfy
$q_{u_1}=q_{u_2}$ on $\partial\Omega$. Then\footnote{Recall that we denote by $A\mathbin\triangle B$ the symmetric difference between the sets $A\subset \R^n$ and $B\subset\R^n$. More concretely,
\[
    A\mathbin\triangle B=(A\setminus B)\cup (B\setminus A).
\]}
\[
 \cL^n(\mathrm D_{u_1}\mathbin\triangle \mathrm D_{u_2})=0.
\]
\end{corollary}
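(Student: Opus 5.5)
The plan is to apply Proposition~\ref{prop:D-characterization} to $u_1$ and $u_2$ separately and compare the resulting descriptions. Since $u_1$ and $u_2$ share the boundary value $\varphi$, the set of boundary Legendre maximizers $\mathcal X_\varphi(p)$ is the same for both functions and every $p$. Since $q_{u_1}=q_{u_2}=:q$ on $\partial\Omega$, the boundary slope images coincide:
\[
\mathrm S\vcentcolon=\mathrm S_{u_1}=\mathrm S_{u_2}=\{\nabla_\tau\varphi(x)+q(x)\nu(x):x\in\partial\Omega,\ q(x)<\infty\}.
\]
By Lemma~\ref{lem:S-null} (the hypotheses hold: strictly convex $C^{1,1}$ domain, $\varphi\in C^{1,1}$), this common set satisfies $\cL^n(\mathrm S)=0$.

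Next, fix $p\notin\mathrm S$. The characterization \eqref{eq:D-characterization} says that $p\notin\mathrm D_{u_j}$ if and only if there exists $x\in\mathcal X_\varphi(p)$ with $q(x)<\infty$ and $p\cdot\nu(x)>q(x)$. The right-hand side involves only $\varphi$, $q$ and $\nu$, and so is independent of $j$. Hence $p\in\mathrm D_{u_1}$ if and only if $p\in\mathrm D_{u_2}$ for every $p\in\mathbb R^n\setminus\mathrm S$. This gives
\[
\mathrm D_{u_1}\mathbin\triangle\mathrm D_{u_2}\subset\mathrm S.
\]
Combining this with $\cL^n(\mathrm S)=0$ and the monotonicity of the outer measure $\cL^n$ yields the claim. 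No measurability of $\mathrm D_{u_j}$ is needed, since the estimate is one of outer measures.

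There is no real obstacle here; the substantive work already sits in Proposition~\ref{prop:D-characterization} and Lemma~\ref{lem:S-null}. The one point to check is that the exceptional set in the characterization is the same for both functions. That holds because $\mathrm S_u$ is defined purely in terms of $(\varphi,q_u)$, so equal boundary data give the same exceptional set.
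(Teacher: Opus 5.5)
Your proposal is correct and matches the paper's proof: the common boundary slope image $\mathrm S$ is the only exceptional set, Proposition~\ref{prop:D-characterization} gives $\mathrm D_{u_1}\mathbin\triangle\mathrm D_{u_2}\subset\mathrm S$, and Lemma~\ref{lem:S-null} shows this set is null. Your remark that only monotonicity of the outer measure is needed is also accurate.
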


\begin{proof}
Since $u_1,u_2$ have the same boundary value $\varphi$ and $q_{u_1}=q_{u_2}$ on $\partial\Omega$, their boundary slope images coincide:
\[
\mathrm S_{u_1}=\mathrm S_{u_2}=:\mathrm S.
\]
Proposition~\ref{prop:D-characterization} implies that, for every $p\notin\mathrm S$,
\[
p\in\mathrm D_{u_1}
\quad\Longleftrightarrow\quad
p\in\mathrm D_{u_2}.
\]
Consequently,
\[
\mathrm D_{u_1}\mathbin\triangle\mathrm D_{u_2}
\subset\mathrm S.
\]
By Lemma~\ref{lem:S-null}, $\mathcal L^n(\mathrm S)=0$, which proves the assertion.
\end{proof}

\section{Integral identity for equal Cauchy pairs}
\label{sec:Alexandrov-change-of-var}

In this section, we establish a change of variables formula for bounded Borel functions when the Monge–Ampère measure is absolutely continuous. This formula yields an integral identity for generalized Cauchy pairs. The preceding geometric results do not require absolute continuity of the Monge--Amp\`ere measure.

\begin{proposition}[Change of variables formula for Borel functions]
\label{prop:weighted}
    Let $\Omega\subset \R^n$ be an open convex set and let $f\in L^1(\Omega)$ be nonnegative. Suppose that the convex function $u\colon \Omega\to \R$ is an Alexandrov solution of the Monge--Ampère equation   
    \begin{equation}
    \label{eq:MA-integral}
        \det D^2 u=f\quad\text{in }\Omega.
    \end{equation}
Then, for every bounded Borel function $H\colon \R^n\to\R$, one has 
\begin{equation}
\label{eq:weighted}
     \int_\Omega H(\nabla u(x))f(x)\,dx =\int_{\mathrm D_u}H(p)\,dp.
\end{equation}
\end{proposition}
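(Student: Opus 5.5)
The plan is to prove \eqref{eq:weighted} first for indicator functions $H=\chi_E$ of Borel sets $E\subset\R^n$, and then to pass to general bounded Borel $H$ by linearity and uniform approximation with simple functions; both sides are linear in $H$ and bounded by $\|H\|_{L^\infty}\|f\|_{L^1(\Omega)}$ and $\|H\|_{L^\infty}\mathcal L^n(\mathrm D_u)$, respectively. Before doing so, we check that the right-hand side makes sense. Since $\mathrm D_u=\partial u(\Omega)$ and $\Omega$ is Borel, the measurability argument from \cite[Theorem~2.3]{Figalli} used in Lemma~\ref{lem:MA-measure} shows that $\mathrm D_u$ is Lebesgue measurable with $\mathcal L^n(\mathrm D_u)=\mu_u(\Omega)=\|f\|_{L^1}<\infty$. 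The left-hand side only uses $\nabla u$ on the differentiability set $\mathrm d_u$, whose complement is $\mathcal L^n$-null by Lemma~\ref{lem:diff-convex} and Rademacher. Moreover, $\nabla u$ is Borel on the Borel set $\mathrm d_u$, as a limit of difference quotients.

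For the indicator case, we set $A_E\vcentcolon=\{x\in\mathrm d_u:\nabla u(x)\in E\}$. This is a Borel set, so
\[
\int_\Omega\chi_E(\nabla u)f\,dx=\int_{A_E}f\,dx=\mu_u(A_E)=\mathcal L^n(\partial u(A_E)).
\]
It therefore suffices to show that $\partial u(A_E)=\nabla u(A_E)$ and $E\cap\mathrm D_u$ differ by a Lebesgue null set. The inclusion $\nabla u(A_E)\subset E\cap\mathrm D_u$ is trivial. For the converse, we need to control $p\in E\cap\mathrm D_u$ such that $p\in\partial u(x)$ with $x\notin A_E$. There are two cases. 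Either $x\in\mathrm d_u$, in which case $p=\nabla u(x)\in E$ forces $x\in A_E$, which is impossible. Or $x\in N\vcentcolon=\Omega\setminus\mathrm d_u$, in which case $p\in\partial u(N)$. Thus $E\cap\mathrm D_u\setminus\nabla u(A_E)\subset\partial u(N)$.

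The key step is to show that $\mathcal L^n(\partial u(N))=0$, and this is exactly where absolute continuity enters. The set $N$ is Borel and Lebesgue null, so $\mu_u(N)=\int_N f\,dx=0$, because $\mu_u=f\,dx$ as Borel measures. This uses that $N$ is Borel. If that proves delicate, we instead enclose $N$ in a Borel $\mathcal L^n$-null $G_\delta$ set $N'\supset N$; then $\mathcal L^n(\partial u(N))\le\mu_u(N')=\int_{N'}f=0$. We also need Lebesgue measurability of $E\cap\mathrm D_u$ and of $\nabla u(A_E)$: the former is an intersection of measurable sets, and the latter equals $\partial u(A_E)$ up to the null set $Z_u$ of Lemma~\ref{lem:unique-subgradients}, and $\partial u(A_E)$ is measurable since $A_E$ is Borel. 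Combining these facts gives $\mathcal L^n(E\cap\mathrm D_u)=\mathcal L^n(\partial u(A_E))$, which is \eqref{eq:weighted} for $H=\chi_E$.

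I expect the main obstacle to be this measure-theoretic bookkeeping, namely ensuring that every set to which $\mu_u=f\,dx$ is applied is genuinely Borel, since the identity is only asserted on Borel sets. The central idea, that the nondifferentiability set carries no Monge--Amp\`ere mass, is simple. Once the indicator case is established, the extension to simple Borel functions follows by linearity, and the extension to bounded Borel $H$ follows by uniform approximation together with the finiteness of $\|f\|_{L^1}$ and $\mathcal L^n(\mathrm D_u)$.
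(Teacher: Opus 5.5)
Your proposal is correct and follows essentially the same route as the paper. Both arguments use $\mu_u=f\,dx$ on a Lebesgue-null set containing the nondifferentiability points to get $\mathcal L^n(\partial u(N))=0$, deduce that $E\cap\mathrm D_u$ and $\nabla u(A_E)$ agree up to a null set, and then extend from indicators to bounded Borel $H$. The differences are cosmetic: the paper works on $\Omega\setminus\mathcal N$ for a Borel null $\mathcal N\supset N$ (your fallback), which also avoids having to prove that $\mathrm d_u$ is Borel, a fact your definition of $A_E$ uses but your fallback does not cover; it also phrases the indicator case as a pushforward identity and extends by monotone rather than uniform convergence.
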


\begin{remark}[Gradient in change of variables formula]
    We note that the gradient in the left hand side of the change of variables formula \eqref{eq:weighted} may be defined arbitrarily on the nondifferentiability set of $u$, because convex functions are almost everywhere differentiable.
\end{remark}

\begin{proof}
    Since $u$ is a convex function, Lemma~\ref{lem:diff-convex} shows that $u$ is locally Lipschitz and hence, by Rademacher's theorem, $u$ is almost everywhere differentiable. Let us denote this exceptional set by
    \[
        N\vcentcolon = \{x\in\Omega : u\text{ is not differentiable in }x\}.
    \]
    Since $\mathcal{L}^n$ is Borel regular, there exists a Borel set $\mathcal{N}\subset\Omega$ such that $N\subset \mathcal{N}$ and $\mathcal{L}^n(\mathcal{N})=\mathcal{L}^n(N)=0$. Moreover, by definition of an Alexandrov solution to \eqref{eq:MA-integral}, we have
    \begin{equation}
    \label{eq:MA-change-variables}
        \mu_u=f\,dx\quad\text{as Borel measures}
    \end{equation}
   and hence
    \[
         \mu_u(\mathcal{N})=\int_{\mathcal{ N}} f\,dx=0.
    \]
By the definition of Alexandrov measure,
\[
 \cL^n(\partial u(\mathcal{N}))=0.
\]

Let us define the Borel set $A\vcentcolon =\Omega\setminus \mathcal{N}$.  By Remark~\ref{rem:rel-subgrad-grad}, we have $\partial u(x)=\{\nabla u(x)\}$ for every $x\in A$. Therefore
\[
 \mathrm D_u=\nabla u(A)\cup\partial u(\mathcal{N}).
\]
We deduce that
\begin{equation}
\label{eq:D-grad}
\begin{split}
     \cL^n(\mathrm D_u\mathbin\triangle\nabla u(A))&=\mathcal{L}^n(D_u\setminus \nabla u(A))=\mathcal{L}^n(\partial u(\mathcal{N})\setminus \nabla u(A))=0.
\end{split}
\end{equation}

Moreover, for any Borel set $B\subset \R^n$, the preimage
\[
 E_B\vcentcolon =(\nabla u|_A)^{-1}(B)=\{x\in A:\nabla u(x)\in B\}
\]
is Borel, because $\nabla u\colon A\to\R^n$ is Borel measurable. The Borel measurability of $\nabla u|_A$ follows from the fact that the components of $\nabla u(x)$, $x\in A$, are the pointwise limit of continuous functions, which are by themselves Borel measurable (see \cite[Section~3.1.2]{evans-mt}). Also, $\mathrm D_u$ is Borel measurable. Indeed, let $(K_n)_{n\in\N}\subset \Omega$ be an exhaustion by compact sets, then 
\[
    \mathrm D_u=\bigcup_{n\in\N}\partial u(K_n).
\]
Since $\partial u(K_n)$ are compact by Lemma~\ref{lemma:compactness-subgradinets}, $\mathrm D_u$ is a Borel set. Then
\[
 \mu_u(E_B)=\cL^n(\partial u(E_B))
 =\cL^n(\nabla u(E_B)).
\]
Using that for every function $F\colon X\to Y$ between two sets $X$ and $Y$, one has the $F(F^{-1}(Y'))=Y'\cap F(X)$, we have $\nabla u(E_B)=B\cap \nabla u(A)$. Hence \eqref{eq:D-grad} yields
\[
 \mu_u(E_B)=\cL^n(\nabla u(E_B))=\cL^n(B\cap \nabla u(A))=\cL^n(B\cap \mathrm D_u).
\]
Finally, let us define
\[
    \overline{\nabla u}(x)\vcentcolon =\begin{cases}
        \nabla u(x),\quad & x\in A,\\
        0,\quad & x\in \mathcal{N}=\Omega\setminus A,
    \end{cases}
\]
which is again a Borel measurable function. By
\eqref{eq:MA-change-variables} and $\mathcal{L}^n(\mathcal{N})=0$ we get
\begin{equation}
\label{eq:equal-as-borel}
 (\overline{\nabla u})_\#(f\,dx)=\cL^n\res \mathrm D_u\text{ as Borel measures}.
\end{equation}

Now, the integral identity \eqref{eq:weighted} follows from \eqref{eq:equal-as-borel}, since $\overline{\nabla u}_{A}=\nabla u$ and $\mathcal{L}^n(\Omega\setminus A)=\mathcal{L}^n(\mathcal{N})=0$. Indeed, it first gives the desired identity for Borel measurable indicator functions $\chi_B$ and then by linearity for Borel simple functions $\sum_{n=1}^N a_n\chi_{B_n}$. Finally, $\mu_u(\Omega)=\int_{\Omega}f\,dx<\infty$ allows us to conclude formula \eqref{eq:weighted} for bounded Borel functions, since we may invoke monotone convergence applied to positive and negative parts of $H$. This completes the proof.
\end{proof}

As an immediate corollary we obtain the following nonlinear integral identity for equal Cauchy data pairs, which will be used in the uniqueness proofs for the inverse problem.

\begin{corollary}[Integral identity for equal Cauchy pairs]\label{cor:identity}
Let $\Omega\subset\mathbb R^n$ be a bounded strictly convex $C^{1,1}$ domain, let $f_j\in L^1(\Omega)$ be nonnegative functions, and assume that $\varphi\in C^{1,1}(\partial\Omega)$. For $j=1,2$, let $u_j\in C(\overline\Omega)$ be an Alexandrov solution of the Dirichlet problem
\[
     \begin{cases}
    \det D^2u=f_j,\quad&\text{in }\Omega,\\
     u=\varphi,\quad&\text{on }\partial\Omega.
    \end{cases}
\]
If the extremal boundary slopes of the two solutions agree,
\[
    q_{u_1}(x)=q_{u_2}(x),\quad x\in \partial \Omega,
\]
then the integral identity
\begin{equation}
\label{eq:two-weighted}
 \int_\Omega H(\nabla u_1)f_1\,dx
 =\int_\Omega H(\nabla u_2)f_2\,dx
\end{equation}
holds for every bounded Borel function $H\colon \R^n\to \R$.
\end{corollary}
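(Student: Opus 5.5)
The plan is to combine the change of variables formula of Proposition~\ref{prop:weighted} with the equality of subgradient images from Corollary~\ref{cor:D-equality}.

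First, apply Proposition~\ref{prop:weighted} to each $u_j$ separately. Each $u_j$ is convex on the open convex set $\Omega$ and is an Alexandrov solution of $\det D^2u=f_j$ with $f_j\in L^1(\Omega)$ nonnegative. Hence, for every bounded Borel function $H$,
\[
\int_\Omega H(\nabla u_j)f_j\,dx=\int_{\mathrm D_{u_j}}H(p)\,dp,\qquad j=1,2.
\]
The gradient on the left only needs to be defined almost everywhere, as noted in the remark after Proposition~\ref{prop:weighted}. The right-hand sides are finite, because $\mathrm D_{u_j}$ is Borel and $\cL^n(\mathrm D_{u_j})=\mu_{u_j}(\Omega)=\|f_j\|_{L^1(\Omega)}<\infty$.

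Second, use the hypotheses of the corollary. The domain $\Omega$ is bounded strictly convex $C^{1,1}$, the boundary value $\varphi\in C^{1,1}(\partial\Omega)$ is common to both solutions, and $q_{u_1}=q_{u_2}$. These are exactly the assumptions of Corollary~\ref{cor:D-equality}, which gives $\cL^n(\mathrm D_{u_1}\mathbin\triangle\mathrm D_{u_2})=0$. Consequently,
\[
\left|\int_{\mathrm D_{u_1}}H\,dp-\int_{\mathrm D_{u_2}}H\,dp\right|\leq\|H\|_{L^\infty}\,\cL^n(\mathrm D_{u_1}\mathbin\triangle\mathrm D_{u_2})=0.
\]
Chaining these equalities yields \eqref{eq:two-weighted}.

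No step is a real obstacle, since all the work is contained in the two cited results. The only point to check is that their hypotheses match. Proposition~\ref{prop:weighted} only needs the restriction $u_j|_\Omega$ to be convex. Corollary~\ref{cor:D-equality} needs $u_j\in C(\overline\Omega)$ with boundary value $\varphi$, which holds by the definition of an Alexandrov solution of the Dirichlet problem.
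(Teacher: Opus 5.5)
Your proposal is correct and follows the paper's proof exactly: Corollary~\ref{cor:D-equality} gives $\mathcal L^n(\mathrm D_{u_1}\mathbin\triangle\mathrm D_{u_2})=0$, and Proposition~\ref{prop:weighted} applied to each $u_j$ then chains the three integrals together. Your extra remarks on the finiteness of $\mathcal L^n(\mathrm D_{u_j})$ and the bound $\|H\|_{L^\infty}\,\mathcal L^n(\mathrm D_{u_1}\mathbin\triangle\mathrm D_{u_2})$ make explicit what the paper leaves implicit.
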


\begin{proof}
    First, from Corollary~\ref{cor:D-equality} we deduce that
    \[
        \mathcal{L}^n(\mathrm{D}_{u_1}\mathbin \triangle \mathrm{D}_{u_2})=0.
    \]
    Thus, using Proposition~\ref{prop:weighted}, we get for any bounded Borel function $H\colon \R^n\to\R$
    \[
    \begin{split}
        \int_\Omega H(\nabla u_1)f_1\,dx&=\int_{\mathrm D_{u_1}}H(p)\,dp\\
        &=\int_{\mathrm D_{u_2}}H(p)\,dp\\
        &=\int_\Omega H(\nabla u_2)f_2\,dx.
    \end{split}
    \]
    This completes the proof.
\end{proof}

\section{Large-amplitude boundary values}
\label{sec:large-amplitude-bdry-values}

In this section we study the asymptotic behaviour of Alexandrov solutions to the Dirichlet problem 
\begin{equation}
\label{eq:asymp-MA}
     \begin{cases}
    \det D^2u=f,\quad&\text{in }\Omega,\\
     u=t\Phi_\omega,\quad&\text{on }\partial\Omega,
    \end{cases}
\end{equation}
as $t\to\infty$. Throughout this section $\Omega\subset\R^n$ is a bounded strictly convex $C^{1,1}$ domain, $f\in L^1(\Omega)$ is nonnegative, $\omega\in \mathbb{S}^{n-1}$ is a fixed direction, and the boundary value $\Phi_\omega\colon \R^n\to \R$ is given by
\begin{equation}
\label{eq:Phi}
 \Phi_\omega(x)\vcentcolon =\frac12|P_\omega x|^2,
\end{equation}
where $P_\omega$ is the orthogonal projection
\[
 P_\omega\vcentcolon =I-\omega\otimes\omega
\]
onto $\omega^\perp$. Indeed, for each $t>0$, the well-posedness of \eqref{eq:asymp-MA} follows from Proposition~\ref{prop:forward} \ref{forward-existence}. We shall denote this unique solution by $u_{f,t}\in C(\overline{\Omega})$. Moreover, by the scaling property of the Monge--Ampère measure Proposition~\ref{prop:scaling} \ref{scaling} we see that the convex function $U_{f,t}\in C(\overline{\Omega})$
\begin{equation}
\label{eq:sol-U}
 U_{f,t}\vcentcolon =t^{-1}u_{f,t}
\end{equation}
solves 
\begin{equation}
\label{eq:rescaled-PDE}
    \begin{cases}
    \det D^2 U=t^{-n} f,\quad&\text{in }\Omega,\\
     U=\Phi_\omega,\quad&\text{on }\partial\Omega
    \end{cases}
\end{equation}
in the Alexandrov sense. Moreover, by Proposition~\ref{prop:forward} \ref{forward-existence}, the functions $U_{f,t}$ are the unique solutions of \eqref{eq:rescaled-PDE}.

\begin{lemma}[Degenerate limit]
\label{lem:limit}
  Let $\Omega\subset\R^n$ be a bounded strictly convex $C^{1,1}$ domain, let $f\in L^1(\Omega)$ be nonnegative, and suppose that $\omega\in \mathbb{S}^{n-1}$ is a fixed direction. Moreover, for each $t>0$, let $U_{f,t} \in C(\overline{\Omega})$ be the unique Alexandrov solution of \eqref{eq:rescaled-PDE}. Then
\[
 U_{f,t}\to\Phi_\omega
 \quad\text{uniformly on }\overline\Omega
\]
as $t\to \infty$. Furthermore, along every sequence $t_k\to\infty$,
\begin{equation}
\label{eq:gradient-conv}
 \nabla U_{f,t_k}(x)\to P_\omega x
 \quad\text{for a.e. }x\in\Omega.
\end{equation}
\end{lemma}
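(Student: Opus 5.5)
The plan is to treat $U_{f,t}$ as solutions of a fixed Dirichlet problem whose source $t^{-n}f\,dx$ tends to zero, and to identify the limit solution. First I check that the limit problem
\[
\det D^2 U=0\ \text{in }\Omega,\qquad U=\Phi_\omega\ \text{on }\partial\Omega
\]
is solved by $\Phi_\omega$ itself. Indeed, $\Phi_\omega$ is convex and smooth on $\R^n$, with $D^2\Phi_\omega=P_\omega$ and $\det P_\omega=0$. Lemma~\ref{lem:MA-measure}\ref{C11-case} gives $\mu_{\Phi_\omega}=(\det D^2\Phi_\omega)\,dx=0$. One can also see this directly: $\nabla\Phi_\omega(\Omega)=P_\omega\Omega\subset\omega^\perp$ is Lebesgue null. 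By the uniqueness in Proposition~\ref{prop:forward}\ref{forward-existence}, $\Phi_\omega|_{\overline\Omega}$ is therefore the unique Alexandrov solution with zero source.

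Next I apply the source-stability statement, Proposition~\ref{prop:forward}\ref{forward-source-stability}, with $\nu_k\vcentcolon=t_k^{-n}f\,dx$ for an arbitrary sequence $t_k\to\infty$. These are finite Borel measures with $\nu_k(\Omega)=t_k^{-n}\|f\|_{L^1(\Omega)}$, which is uniformly bounded. They converge weak-$*$ to $0$, since $|\int\varphi\,d\nu_k|\le t_k^{-n}\|\varphi\|_\infty\|f\|_{L^1}\to0$ for $\varphi\in C_c(\Omega)$. Strict convexity of $\Omega$ is part of the standing hypotheses, so the proposition gives $U_{f,t_k}\to\Phi_\omega$ uniformly on $\overline\Omega$. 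Because every sequence $t_k\to\infty$ gives the same limit, the full family converges uniformly as $t\to\infty$. One could bypass sequences with a direct comparison $\Phi_\omega+t^{-1}w_1\le U_{f,t}\le\Phi_\omega$, using the Alexandrov maximum principle, but the sequential argument is enough.

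For the gradients, I fix a sequence $t_k\to\infty$ and use Lemma~\ref{lem:gradient-convergence}\ref{conv-subgrad-2} with $u_k=U_{f,t_k}$ and $u=\Phi_\omega$. Both are convex on $\Omega$, and uniform convergence implies locally uniform convergence. The lemma yields $\nabla U_{f,t_k}(x)\to\nabla\Phi_\omega(x)=P_\omega x$ for a.e.\ $x\in\Omega$. The exceptional set is the countable union of the nondifferentiability sets of the $U_{f,t_k}$, and it depends on the sequence. This is why the statement is phrased along sequences.

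I do not expect a real obstacle, since the heavy lifting sits in Proposition~\ref{prop:forward}. The only points that need care are confirming that $\Phi_\omega$ qualifies as the limit solution, namely that its Monge--Amp\`ere measure vanishes as a Borel measure, and checking that the weak-$*$ hypothesis and the mass bound of the stability proposition hold.
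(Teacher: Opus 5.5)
Your proposal is correct and follows the paper's proof essentially step for step. Like the paper, you identify $\Phi_\omega$ as the unique zero-source Alexandrov solution (the paper uses $\nabla\Phi_\omega(\Omega)\subset\omega^\perp$, which is your second justification), invoke Proposition~\ref{prop:forward}\ref{forward-source-stability}, and then apply Lemma~\ref{lem:gradient-convergence}\ref{conv-subgrad-2} outside a countable union of null nondifferentiability sets; your explicit passage from sequences to the full limit $t\to\infty$ fills a step the paper leaves implicit.
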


\begin{proof}
   Let us first observe that the convex function $\Phi_\omega\in C(\overline{\Omega})$ is the unique Alexandrov solution of the Dirichlet problem
   \begin{equation}
   \label{eq:MA-zero-source-Phi}
       \begin{cases}
    \det D^2 U=0,\quad&\text{in }\Omega,\\
     U=\Phi_\omega,\quad&\text{on }\partial\Omega.
    \end{cases}
   \end{equation}
   Indeed, $\Phi_\omega$ satisfies
\[
 \nabla\Phi_\omega (x)=P_\omega x\quad\text{for each }x\in\R^n,
\]
which yields
\[
    \mu_{\Phi_\omega}(A)=\mathcal{L}^n(\nabla \Phi_\omega (A))=\mathcal{L}^n(P_\omega (A))\leq \mathcal{L}^n(\omega^\perp)=0
\]
for every set $A\subset\R^n$, where we used that $\omega^\perp$ is an $(n-1)$ dimensional subspace of $\R^n$.

On the other hand, since $f\in L^1(\Omega)$,
\[
 t^{-n}\int_B f\,dx\to 0\quad \text{as }t\to\infty
\]
for every Borel set $B\subset \Omega$. In particular, the source stability of the Monge--Ampère problem (Proposition~\ref{prop:forward} \ref{forward-source-stability}) implies that 
\[
    U_{f,t}\to \Phi_\omega\quad\text{uniformly on }\overline{\Omega}.
\]
For a fixed sequence $t_k\to \infty$, let $\mathcal{N}_k\subset \Omega$ be a Borel set, which has vanishing Lebesgue measure and contains the nondifferentiability points of $U_{f,t_k}$ (see proof of Proposition~\ref{prop:weighted}). Hence, 
\[
    \mathcal{N}\vcentcolon = \bigcup_{k\in\N}\mathcal{N}_k
\]
is a Borel set with vanishing Lebesgue measure such that for each $k\in\N$ the function $U_{f,t_k}$ is differentiable on $\Omega\setminus \mathcal{N}$. Lemma~\ref{lem:gradient-convergence} \ref{conv-subgrad-2}, applied at every remaining point, gives the gradient convergence \eqref{eq:gradient-conv}.
\end{proof}

\section{Proof of the uniqueness theorems}
\label{sec:Proof-uniqueness}

\begin{proof}[Proof of Theorem~\ref{thm:cylindrical}]
Fix $\omega\in \mathbb{S}^{n-1}$ and let $t_k\vcentcolon = t_k^\omega\to \infty$ be the sequence
from the hypothesis of the theorem. By \eqref{eq:countably-equal-slopes}, the two solutions $u_{j,k}\vcentcolon = u^{\omega}_{j,t_k^\omega}$, $j=1,2$, of \eqref{eq:MA-main-thm} have
the same one-sided convex boundary slopes $q_{u_{j,k}}$, $j=1,2$. As in Section~\ref{sec:large-amplitude-bdry-values}, the scaling property of the Monge--Ampère measures (Proposition~\ref{prop:scaling} \ref{scaling}) and the well-posedness theorem of the Monge--Ampère equation (Proposition~\ref{prop:forward} \ref{forward-existence}) show that the convex function $U_{j,k}\vcentcolon = t_k^{-1}u_{j,k}\in C(\overline{\Omega})$ is the unique solution of 
\begin{equation}
\label{eq:rescaled-main}
    \begin{cases}
    \det D^2 U=t_k^{-n} f_j,\quad&\text{in }\Omega,\\
     U=\Phi_\omega,\quad&\text{on }\partial\Omega,
    \end{cases}
\end{equation}
for $j=1,2$. Moreover, Proposition~\ref{def:slope} and the scaling property $\partial (\lambda u)=\lambda \partial u$, for each $\lambda>0$, show that $q_{U_{j,k}}=t_{k}^{-1} q_{u_{j,k}}$, $j=1,2$, and hence
\begin{equation}
\label{eq:equal-slopes-reduced}
    q_{U_{1,k}}=q_{U_{2,k}}\quad\text{on }\partial\Omega,\quad k\in\N.
\end{equation}
Let $\psi\in C_c(\omega^\perp)$ and apply
Corollary~\ref{cor:identity} to the two equations \eqref{eq:rescaled-main}, with
$t=t_k$, and with
\[
H\colon \R^n\to\R,\quad H(p)\vcentcolon =\psi(P_\omega p).
\]
We obtain
\[
 t_k^{-n}\int_\Omega
 \psi(P_\omega\nabla U_{1,k})f_1\,dx
=t_k^{-n}\int_\Omega
 \psi(P_\omega\nabla U_{2,k})f_2\,dx.
\]
After cancelling $t_k^{-n}$,
\begin{equation}
\label{eq:prelimit}
 \int_\Omega\psi(P_\omega\nabla U_{1,k})f_1\,dx
 =\int_\Omega\psi(P_\omega\nabla U_{2,k})f_2\,dx.
\end{equation}
By Lemma~\ref{lem:limit}, we have
\[
 \nabla U_{j,k}(x)\to P_\omega x
 \quad\text{for a.e. }x \in \Omega
\]
as $k\to\infty$. Since $\psi$ is bounded, $f_j\in L^1(\Omega)$, and $P^2_\omega = P_\omega$, dominated convergence in \eqref{eq:prelimit} yields
\begin{equation}
\label{eq:projection-identity}
 \int_\Omega\psi(P_\omega x)(f_1-f_2)(x)\,dx=0.
\end{equation}
Now, extend $g=f_1-f_2$ by zero to an $L^1$ function on $\R^n$. Each $x\in \R^n$ can be written uniquely as $x=x_\perp+ x_\parallel$ with $x_\perp\vcentcolon = P_\omega x \in \omega^\perp$ and $x_\parallel=s\omega$ for some $s\in\R$. Thus, Fubini's theorem allows us to rewrite \eqref{eq:projection-identity} as
\begin{equation}
\label{eq:int-id}
 \int_{\omega^\perp}\psi(x_\perp )
 \left(\int_\R g(x_\perp+s\omega)\,ds\right)dx_\perp =0.
\end{equation}
By Fubini's theorem, the \emph{$X$-ray transform}
\[
    X_\omega g(x_\perp)\vcentcolon =\int_\R g(x_\perp+s\omega)\,ds
\]
is well-defined for a.e.~$x_\perp \in \omega^\perp$. Indeed, $g\in L^1(\R^n)$ and Fubini's theorem yield
\[
    X_\omega g\in L^1(\omega^\perp)\quad\text{with}\quad \|X_\omega g\|_{L^1(\omega^\perp)}\leq \|g\|_{L^1(\R^n)}.
\]
Since \eqref{eq:int-id} holds for every $\psi\in C_c(\omega^\perp)$, the fundamental lemma of the calculus of variations yields
\[
 X_\omega g(x_\perp) =\int_\R g(x_\perp+s\omega)\,ds=0
 \quad\text{for a.e. }x_\perp\in\omega^\perp.
\]
For every fixed vector $\eta\in\omega^\perp$, multiplying $\int_\R g(x_\perp+s\omega)\,ds$ by $e^{-i x_\perp\cdot \eta}$ and then integrating it over $\omega^\perp$ yields
\[
 \mathcal F_{\omega^\perp}(X_\omega g)(\eta)
 \vcentcolon =\int_{\omega^\perp}\int_{\R}g(x_\perp +s\omega)e^{-i x_\perp \cdot\eta}\,ds\,dx_\perp
 =\widehat g(\eta),
\]
where $\widehat{g}$ denotes the Fourier transform of $g$ in $\R^n$ and the last equality follows from $g\in L^1(\R^n)$, Fubini's theorem, and $x\cdot \eta= x_\perp \cdot \eta$. This is the Fourier-slice identity; see
\cite[Chapter~I, Section~2]{Helgason}. Consequently
\[
    \widehat g(\eta)=0\quad \text{whenever $\eta\in\R^n$ satisfies $\eta\cdot\omega=0$.}
\]
Since $\omega\in \mathbb{S}^{n-1}$ is arbitrary and for  every vector $\eta\in\R^n$ one can choose a unit vector $\omega\in\mathbb{S}^{n-1}$ such that $\omega\perp\eta$, we deduce that $\widehat g (\eta)= 0$ for all $\eta\in\R^n$. As the Fourier transform is injective on
$L^1(\R^n)$, we get $g=0$ almost everywhere.
\end{proof}

\begin{proof}[Proof of Corollary~\ref{thm:main}]
Fix $\omega\in \mathbb{S}^{n-1}$ and $t>0$. The pair
\[
 (t\Phi_\omega|_{\partial\Omega},q_{u_{1,t}^\omega})
\]
belongs to $\cC_{f_1}^{\mathrm{glob}}$ and therefore to
$\cC_{f_2}^{\mathrm{glob}}$. Here and throughout the proof, $u_{j,t}^\omega \in C(\overline{\Omega})$ for  $j=1,2$, $t>0$, and $\omega\in\mathbb S^{n-1}$, denotes the unique Alexandrov solution to the Monge--Ampère problem with source $f_j$ and boundary value $t\Phi_\omega$ (see~Proposition~\ref{prop:forward} \ref{forward-existence}). Uniqueness of the Alexandrov Dirichlet problem for the source $f_2$ implies that the element of the latter data set with
Dirichlet component $t\Phi_\omega|_{\partial\Omega}$ is
$(t\Phi_\omega|_{\partial\Omega},q_{u_{2,t}^\omega})$.  Consequently
$q_{u_{1,t}^\omega}=q_{u_{2,t}^\omega}$.  Theorem~\ref{thm:cylindrical},
applied for instance with $t_k=k$, gives $f_1=f_2$ almost everywhere.
\end{proof}

We next prove Corollary~\ref{cor:Linfty}. This result considers bounded sources as unknowns and the Dirichlet-to-interior-Neumann maps as measurement operators.

\begin{proof}[{Proof of Corollary~\ref{cor:Linfty}}]
First, Proposition~\ref{prop:boundary-Lipschitz} and
Lemma~\ref{lem:radial-slope} yield
\[
\Lambda_{f_j}^-(\varphi)
=
q_{u_{f_j,\varphi}}
\qquad\text{on }\partial\Omega
\]
for $j=1,2$ and every
$\varphi\in C^{1,1}(\partial\Omega)$. In particular, for every $\omega\in\mathbb S^{n-1}$ and $t>0$,
the extremal boundary slopes corresponding to the boundary value
$t\Phi_\omega|_{\partial\Omega}$ agree. Theorem~\ref{thm:cylindrical}, applied with $t_k^\omega=k$, therefore gives $f_1=f_2$ almost everywhere in $\Omega$.
\end{proof}


\section{Extension to gradient-weighted Monge--Amp\`ere equations}
\label{sec:weighted-MA-extension}

In this section, we establish existence for the weighted Alexandrov
Dirichlet problem and prove
Theorem~\ref{thm:weighted-MA-uniqueness}. 

\subsection{Weighted Monge--Ampère measures}

We begin by introducing
the weighted Monge--Ampère measure and the corresponding notion of
an Alexandrov solution. Let
$\vartheta\colon\mathbb R^n\to(0,\infty)$ be a locally integrable
Borel function. Let $\lambda_\vartheta$ be the outer measure induced by the
weighted Lebesgue measure $\vartheta\,dp$, that is,
\[
\lambda_\vartheta(A)
\vcentcolon = 
\inf\left\{
\int_B\vartheta(p)\,dp:
A\subset B,\ B\subset\mathbb R^n
\text{ Lebesgue measurable}
\right\}.
\]
This outer measure is Borel regular and satisfies
\begin{equation}
\label{eq:equal-Lebesgue-set}
\lambda_\vartheta(B)=\int_B\vartheta(p)\,dp
\end{equation}
for every Lebesgue measurable set $B\subset\mathbb R^n$.
For a convex function $u\colon\Omega\to\mathbb R$, define its
\emph{$\vartheta$-weighted Monge--Ampère measure} by
\begin{equation}
\label{eq:weighted-MA-measure}
\mu_u^\vartheta(A)
\vcentcolon =
\lambda_\vartheta(\partial u(A)),
\qquad A\subset\Omega.
\end{equation}
The arguments used in the proof of Lemma~\ref{lem:MA-measure}, with $\lambda_\vartheta$ in place of $\mathcal L^n$, show that $\mu_u^\vartheta$ is a locally finite Borel measure on $\Omega$. Indeed, local finiteness follows because $\partial u(K)$ is compact for every $K\Subset\Omega$ (Lemma~\ref{lemma:compactness-subgradinets}) and $\vartheta\in L^1_{\mathrm{loc}}(\mathbb R^n)$.

We point out that the weighted Monge–Ampère measure is in general neither superadditive nor translation invariant. Nevertheless, the proof of superadditivity in \cite[Lemma~2.9]{Figalli} can be restored for superadditive weights $\vartheta$, but superadditivity of $\vartheta$ is incompatible with strict positivity of $\vartheta$. Luckily, we do not need this property for our existence proof.

Moreover, if $u\in C^{1,1}(\overline\Omega)$, then
$\partial u(x)=\{\nabla u(x)\}$ for every $x\in\Omega$, and the
change-of-variables formula
\cite[Section~3.3.3, Theorem~3.9]{evans-mt} gives, for every Borel
set $A\subset\Omega$,
\[
\int_A\vartheta(\nabla u(x))\det D^2u(x)\,dx
=
\int_{\mathbb R^n} 
\vartheta(y)\,\mathcal{H}^0(A\cap (\nabla u)^{-1}(y))\,d\mathcal{H}^n(y);
\]
cf.~proof of Lemma~\ref{lemma:Area-formula-grad}.
The set of slopes supporting a convex function at more than one
point is Lebesgue-null; see Lemma~\ref{lem:unique-subgradients}. Hence
$\mathcal{H}^0(A\cap (\nabla u)^{-1}(y))=\mathbf 1_{\nabla u(A)}(y)$ for almost every $y\in\R^n$.
Since $\partial u(A)=\nabla u(A)$, we conclude that
\[
\int_A\vartheta(\nabla u(x))\det D^2u(x)\,dx
=\int_{\partial u(A)} 
\vartheta(y)\,dy=\mu_u^\vartheta(A).
\]
Therefore
\[
\mu_u^\vartheta
=
\vartheta(\nabla u)\det D^2u\,dx
\]
as Borel measures on $\Omega$. If, in addition,
$\vartheta\in L^\infty(\mathbb R^n)$, then the density
$\vartheta(\nabla u)\det D^2u$ belongs to $L^\infty(\Omega)$. Thus the measure equation
\begin{equation}
\label{eq:Alexandrov-cond}
 \mu_u^\vartheta=f\,dx\quad\text{as Borel measures on }\Omega
\end{equation}
is the Alexandrov formulation of
\begin{equation}
\label{eq:gradient-weighted-MA}
 \vartheta(\nabla u)\det D^2u=f
 \quad\text{in }\Omega.
\end{equation}
More generally, given any Borel measure $\nu$ and any boundary condition $\varphi\in C(\partial\Omega)$, we call a convex function $u\colon\Omega\to\R$ an Alexandrov solution of the Dirichlet problem
\begin{equation}
\label{eq:weighted-MA-Dirichlet}
\begin{cases}
     \vartheta(\nabla u)\det D^2u=\nu
 \quad& \text{in }\Omega,\\
 u=\varphi
 \quad&\text{on }\partial\Omega,
\end{cases}
\end{equation}
if $u\in C(\overline{\Omega})$, $\mu^{\vartheta}_u=\nu$ as Borel measures on $\Omega$, and $u=\varphi$ on $\partial\Omega$ pointwise.

\subsection{Existence of solutions to gradient-weighted Monge--Ampère equations}

We next establish the forward existence result needed in Theorem~\ref{thm:weighted-MA-uniqueness}. We first record the weighted counterpart of the weak continuity of ordinary Monge--Amp\`ere measures (Lemma~\ref{prop:weak-cont}).

\begin{proposition}[Weak continuity of weighted Monge--Amp\`ere measures]
\label{prop:weighted-weak-cont}
Let $\Omega\subset\R^n$ be a convex open set, and let
$u_k,u\colon\Omega\to\R$ be convex functions such that
$u_k\to u$ locally uniformly in $\Omega$. Then
\[
\mu_{u_k}^\vartheta\weakstar\mu_u^\vartheta
\qquad\text{as }k\to\infty.
\]
\end{proposition}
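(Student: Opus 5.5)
We follow the unweighted argument behind Proposition~\ref{prop:weak-cont}. By Proposition~\ref{prop:weak-convergence-borel-measures} it suffices to show
\[
\limsup_{k\to\infty}\mu^\vartheta_{u_k}(K)\le\mu^\vartheta_u(K),
\qquad
\mu^\vartheta_u(U)\le\liminf_{k\to\infty}\mu^\vartheta_{u_k}(U)
\]
for every compact $K\subset\Omega$ and every open $U\Subset\Omega$. As in Lemma~\ref{lem:MA-measure}, all sets $\partial u(K)$, $\partial u_k(K)$, $\partial u(U)$ are Lebesgue measurable, so $\lambda_\vartheta$ of them is the integral of $\vartheta$ over them by \eqref{eq:equal-Lebesgue-set}. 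The weight only enters through dominated or monotone convergence with the locally integrable density $\vartheta$, once the sets involved are confined to a fixed bounded region.

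\textbf{Compact sets.} Fix a compact $K$ and a compact neighbourhood $K'\Subset\Omega$ of it. The bound in the proof of Lemma~\ref{lem:gradient-convergence}\ref{conv-subgrad-2} (local uniform convergence gives uniform sup bounds on a neighbourhood) shows that $\partial u_k(K)\subset \overline B_R$ for all large $k$, with $R$ independent of $k$. Lemma~\ref{lem:gradient-convergence}\ref{conv-subgrad-1} gives
\[
\limsup_{k\to\infty}\chi_{\partial u_k(K)}\le\chi_{\partial u(K)}
\]
pointwise. Indeed, if $p\in\partial u_k(x_k)$ for infinitely many $k$, we extract $x_k\to x\in K$ and conclude $p\in\partial u(x)$. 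Reverse Fatou with the dominating function $\vartheta\chi_{\overline B_R}\in L^1$ then yields $\limsup_k\int_{\partial u_k(K)}\vartheta\le\int_{\partial u(K)}\vartheta$.

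\textbf{Open sets.} Fix an open $U\Subset\Omega$. Following \cite[Proposition~2.6]{Figalli}, we use the Lebesgue-null set $Z_u$ of Lemma~\ref{lem:unique-subgradients}, together with the set of $p$ for which the supporting set $\{x:p\in\partial u(x)\}$ is not a single point. We show: if $p\in\partial u(U)\setminus Z_u$, then $p\in\partial u_k(U)$ for all large $k$. In that case $p$ supports $u$ at exactly one point $x_0\in U$. Tilting by $p$, the function $u-p\cdot y$ has a strict minimum at $x_0$ relative to the boundary of a small ball $B\Subset U$ around $x_0$. By uniform convergence, $u_k-p\cdot y$ attains its minimum over $\overline B$ in the interior for large $k$, so $p\in\partial u_k(B)$. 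Hence $\chi_{\partial u(U)}\le\liminf_k\chi_{\partial u_k(U)}$ almost everywhere. Fatou's lemma with the nonnegative weight $\vartheta$ gives $\int_{\partial u(U)}\vartheta\le\liminf_k\int_{\partial u_k(U)}\vartheta$; no integrability bound is needed here.

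The main obstacle is that $\lambda_\vartheta$ is not translation invariant and the weight may be unbounded. We therefore avoid any affine-shift or comparison argument that moves subgradients, and use only pointwise convergence of indicator functions. The delicate points are the uniform gradient bound on compacts, which makes reverse Fatou applicable with the locally integrable $\vartheta$, and the observation that $Z_u$ is Lebesgue-null. Since $\vartheta\,dp\ll dp$, a Lebesgue-null set is also $\lambda_\vartheta$-null, so discarding it costs nothing.
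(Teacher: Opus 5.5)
Your proposal is correct and follows essentially the same route as the paper. You reduce via Proposition~\ref{prop:weak-convergence-borel-measures}, use $\limsup_k\partial u_k(K)\subset\partial u(K)$ with a uniform subgradient bound to control the locally integrable weight, and use the inclusion of $\partial u\setminus Z_u$ into $\liminf_k\partial u_k$ together with $\lambda_\vartheta(Z_u)=0$. The differences are cosmetic: you pass to the limit by (reverse) Fatou on indicator functions rather than by continuity of $\lambda_\vartheta$ from above and below, and you prove the liminf inclusion directly on the open set $U$ instead of first reducing to compact $K\subset O$ by inner regularity.
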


\begin{proof}
We adapt the proof of Proposition~\ref{prop:weak-cont}. Again by Proposition~\ref{prop:weak-convergence-borel-measures}, it is enough to show that
\[
    \limsup_{k\to\infty}\mu^{\vartheta}_{u_k}(K)\leq \mu^{\vartheta}_u (K)\quad\text{and}\quad \mu^{\vartheta}_u (O)\leq \liminf_{k\to\infty}\mu^{\vartheta}_{u_k}(O)
\]
for every compact set $K\subset \Omega$ and every relatively compact open set $O\Subset \Omega$. 

Let
$K\Subset\Omega$ be compact. Following \cite[Proof of Proposition~2.6]{Figalli}, we observe that local uniform convergence and the
subgradient inequality imply
\begin{equation}
\label{eq:weighted-subgradient-limsup}
\limsup_{k\to\infty}\partial u_k(K)\vcentcolon = \bigcap_{j=1}^\infty \bigcup_{k=j}^\infty \partial u_k (K)
\subset \partial u(K).
\end{equation}
Using $0\leq \vartheta\in
L^1_{\mathrm{loc}}(\R^n)$ and continuity of the Borel regular measure $\lambda_\vartheta$, we get
\[
\begin{split}
     \mu^{\vartheta}_u(K)= \lambda_\vartheta (\partial u(K))&\geq \lambda_{\vartheta}\left(\bigcap_{j=1}^\infty \bigcup_{k=j}^\infty \partial u_k (K)\right)\\
     &=\lim_{j\to\infty}\lambda_{\vartheta}\left(\bigcup_{k=j}^\infty \partial u_k (K)\right)\\
     &\geq \limsup_{j\to \infty}\lambda_{\vartheta}(\partial u_j(K))\\
     &=\limsup_{j\to\infty}\mu^{\vartheta}_{u_j}(K).
\end{split}
\]
In the third line, we used that local uniform convergence $u_k\to u$ implies that the sets $\partial u_k(K)$ are contained in a fixed compact subset of $\mathbb R^n$ for all sufficiently large $k$.

Now let $O\Subset\Omega$ be open. Since $\partial u(A)$ is Lebesgue measurable for every Borel set $A\subset \Omega$ (see Lemma~\ref{lem:MA-measure}) and $\mu_u^\vartheta$ is a locally finite Borel measure, \cite[Section~1.1.1]{evans-mt} implies that 
\begin{equation}
\label{eq:inner-reg}
    \mu_u^\vartheta (O)=\sup_{K\subset O:\,K\text{ compact}}\mu_u^\vartheta (K).
\end{equation}
Therefore it suffices to prove 
\[
    \mu_u^\vartheta (K) \leq \liminf_{k\to\infty}\mu_{u_k}^\vartheta (O)
\]
for any compact set $K\subset O$. Let us fix such a compact set $K\subset O$ and let $Z_u$ be the set of slopes that support $u$ at more than one point. By Lemma~\ref{lem:unique-subgradients}, $Z_u$ has Lebesgue measure zero. By \cite[Eq.~(2.6)]{Figalli}, we have 
\[
    \partial u(K)\setminus Z_u\subset \liminf_{k\to\infty}\partial u_k(O)\vcentcolon = \bigcup_{j=1}^\infty \bigcap_{k=j}^\infty \partial u_k (O).
\]
Since $\lambda_\vartheta\ll\mathcal L^n$, we have $\lambda_\vartheta(Z_u)=0$. Therefore
\[
\begin{split}
    \mu_u^\vartheta(K)=\lambda_\vartheta (\partial u(K)\setminus Z_u)&\leq \lambda_\vartheta \left(\bigcup_{j=1}^\infty \bigcap_{k=j}^\infty \partial u_k (O)\right)\\
    &=\lim_{j\to\infty}\lambda_\vartheta \left(\bigcap_{k=j}^\infty \partial u_k (O)\right)\\
    &\leq \liminf_{j\to\infty}\lambda_\vartheta \left(\partial u_j (O)\right)\\
    &=\liminf_{j\to\infty}\mu_{u_j}^\vartheta(O).
\end{split}
\]
Hence, we can complete the proof.
\end{proof}

\begin{theorem}[Existence for the weighted Dirichlet problem]
\label{thm:weighted-forward-existence}
Let $\Omega\subset\R^n$ be a bounded strictly convex domain, let $\vartheta\colon\R^n\to(0,\infty)$ be a locally integrable Borel function satisfying \eqref{eq:bounds-weight-vartheta}, and let $\nu$ be a finite Borel measure on $\Omega$. Then, for every $\varphi\in C(\partial\Omega)$, there exists an Alexandrov solution $u\in C(\overline\Omega)$ of the Dirichlet problem
\begin{equation}
\label{eq:weighted-general-Dirichlet}
\begin{cases}
     \vartheta(\nabla u)\det D^2u=\nu
 \quad& \text{in }\Omega,\\
 u=\varphi
 \quad&\text{on }\partial\Omega.
\end{cases}
\end{equation}
\end{theorem}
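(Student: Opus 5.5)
We follow the strategy of Remark~\ref{rem:existence-of-sols}: approximate $\nu$ by atomic measures, solve the atomic problems by a Perron-type argument, and pass to the limit using Proposition~\ref{prop:weighted-weak-cont}. Since the weighted measure $\mu^\vartheta$ is not superadditive and not translation invariant, we avoid the sub-solution supremum over general measures. Instead, for atomic data we use a finite-dimensional construction.

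\textbf{Step 1 (atomic data).} Let $\nu_N=\sum_{i=1}^N a_i\delta_{x_i}$ with distinct $x_i\in\Omega$ and $a_i\ge0$. For heights $h=(h_1,\dots,h_N)$, let $u_h$ be the largest convex function on $\overline\Omega$ with $u_h\le\varphi$ on $\partial\Omega$ and $u_h(x_i)\le h_i$. Concretely, $u_h$ is the convex envelope of the data $\{(x_i,h_i)\}$ together with the boundary values $\varphi$, and strict convexity of $\Omega$ (via the barriers $\ell^\pm_{x,\eps}$ of \cite[Lemma~2.15]{Figalli}) gives $u_h\in C(\overline\Omega)$ with $u_h=\varphi$ on $\partial\Omega$. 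Away from the nodes, $u_h$ is locally the supremum of affine functions touching the graph over at least two points, so $\partial u_h(\Omega\setminus\{x_i\})\subset Z_{u_h}\cup\{\text{nondiff.\ image}\}$. Therefore $\mu^\vartheta_{u_h}$ is concentrated on $\{x_i\}$, using $\lambda_\vartheta\ll\mathcal L^n$ and Lemma~\ref{lem:unique-subgradients}. The resulting map $h\mapsto m_i(h)\vcentcolon=\lambda_\vartheta(\partial u_h(x_i))$ is monotone in the standard way: it is nonincreasing in $h_i$ and nondecreasing in $h_j$ for $j\neq i$.

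We then run the discrete Perron/Oliker--Prussner iteration. Consider the set $\mathcal H$ of heights with $m_i(h)\ge a_i$ for all $i$. This set is nonempty: take $h_i$ very negative, and note that $\partial u_h(x_i)$ then contains a large ball, so the lower bound $\vartheta\ge c_\vartheta$ gives $m_i\ge c_\vartheta|B_R|\ge a_i$. The set $\mathcal H$ is also bounded above by $\max\varphi$. Take $h^*$ maximizing $\sum h_i$ over the closure of $\mathcal H$. Continuity of $m_i$ in $h$, which follows from Proposition~\ref{prop:weighted-weak-cont} applied to atoms, together with the monotonicity above, forces $m_i(h^*)=a_i$: if some inequality were strict, raising $h_i$ slightly would stay in $\mathcal H$ and contradict maximality.

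\textbf{Step 2 (uniform bounds and compactness).} For a general $\nu$, choose atomic $\nu_N\weakstar\nu$ with $\nu_N(\Omega)=\nu(\Omega)$, and let $u_N$ be the corresponding solutions. Since $\vartheta\ge c_\vartheta$, we get $\mathcal L^n(\partial u_N(\Omega))\le c_\vartheta^{-1}\nu(\Omega)$, so the ordinary measures $\mu_{u_N}$ have uniformly bounded mass. Let $h$ denote the harmonic-type convex extension of $\varphi$, that is, the $\mu=0$ solution from Proposition~\ref{prop:forward}. We obtain $u_N\le h$. For the lower bound, write $u_N\ge w_N+\ell^-_{x,\eps}$, where $w_N$ solves $\det D^2 w=\mu_{u_N}$ with zero boundary data (ordinary comparison, Proposition~\ref{prop:Comparison}). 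The Alexandrov maximum principle (Proposition~\ref{prop:max-principle}) then gives $|w_N|\le C\dist^{1/n}$. Exactly as in the proof of Proposition~\ref{prop:forward}\ref{forward-source-stability}, this yields equicontinuity near $\partial\Omega$, uniformly in $N$. Interior equicontinuity follows from Lemma~\ref{lem:diff-convex}. By Arzel\`a--Ascoli, a subsequence converges uniformly on $\overline\Omega$ to a convex $u$ with $u=\varphi$ on $\partial\Omega$.

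\textbf{Step 3 (limit).} Proposition~\ref{prop:weighted-weak-cont} gives $\mu^\vartheta_{u_N}\weakstar\mu^\vartheta_u$, while $\mu^\vartheta_{u_N}=\nu_N\weakstar\nu$. Two locally finite Borel measures on $\Omega$ with the same action on $C_c(\Omega)$ agree on Borel sets, by the regularization in Proposition~\ref{prop:weak-convergence-borel-measures}. Hence $\mu^\vartheta_u=\nu$.

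\textbf{Main obstacle.} We expect Step 1 to be the hardest part. The difficulty is showing that $m_i$ is continuous and has the stated monotonicity without translation invariance. It also requires that raising one height does not destroy the others' inequalities, and this uses only inclusions of subdifferential sets, not the weight. A fallback is a Brouwer-degree or Kakutani fixed-point argument on the compact height box.
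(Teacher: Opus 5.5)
Your proposal is correct, and Steps~2--3 are essentially the paper's limiting argument. The paper sandwiches $w_k\le u_k\le h$, where $w_k$ is the ordinary solution with source $c_\vartheta^{-1}\nu_k$ and data $\varphi$. It then invokes source stability for $w_k$ instead of re-running the barrier and Alexandrov maximum principle estimates as you do; the ingredients are the same.

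The genuine difference is the atomic case. The paper runs Perron's method on the whole family $\mathcal S_\vartheta[\nu]$ of weighted subsolutions, which is nonempty thanks to the cone function $v_L=h+L\sum_iC_i$. Closure under maxima and Proposition~\ref{prop:weighted-weak-cont} show that the supremum $u$ is a subsolution. The paper then proves $\mu_u^\vartheta=\nu$ by two local surgeries:
\begin{itemize}
\item replacing $u$ by $\max\{u,\ell_{\bar x,\bar p}+\eps\}$ rules out mass off the atoms;
\item the contraction $\max\{U,(1-\delta)U+\delta a\}$ shrinks $\partial u(x_j)$ to $K_\delta$, with $\lambda_\vartheta(K_\delta)\to\lambda_\vartheta(\partial u(x_j))$ because the boundary of a convex body is null; this rules out excess mass.
\end{itemize}
In your height parametrization these steps are handled differently:
\begin{itemize}
\item concentration on the nodes comes for free from maximality of the envelope $u_h$;
\item monotonicity of $m_i$ is a pure inclusion of subdifferentials;
\item continuity of $m_i$ follows from $\|u_h-u_{h'}\|_\infty\le\max_i|h_i-h_i'|$ together with weak continuity tested on the compact set $\{x_i\}$ and on a small open ball isolating $x_i$;
\item the excess-mass surgery is replaced by the observation that raising $h_i$ cannot decrease $m_j$ for $j\neq i$.
\end{itemize}
Your route is more elementary and constructive: it is the Oliker--Prussner scheme and reduces the atomic problem to a maximization in $\R^N$. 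The paper's route stays closer to \cite{Figalli} and directly produces the maximal subsolution. Contrary to your stated motivation, the paper's Perron supremum needs neither superadditivity nor translation invariance of $\mu^\vartheta$; only inclusions of subdifferential images enter.

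One step in Step~1 must be repaired: nonemptiness of $\mathcal H$ does not follow from taking all $h_i$ very negative. Because $m_i$ is nondecreasing in $h_j$, lowering the other heights shrinks $\partial u_h(x_i)$. With equal heights, a node in the interior of the convex hull of the other nodes has $\partial u_h(x_i)=\{0\}$, so $m_i=0$. You need a strictly convex height pattern. One choice is $h_i=L|x_i|^2-M$ with $M$ large relative to $L$; this places the ball of radius $L\min_{j\neq i}|x_j-x_i|$ centred at $2Lx_i$ inside $\partial u_h(x_i)$. Another is $h_i=v_L(x_i)$ with the paper's cone function: then $v_L\le u_h$ with equality at the nodes, so $\partial v_L(x_i)\subset\partial u_h(x_i)$.

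The same cone device supplies the lower barrier $\ell^-_{x,\eps}+L\sum_jC_j$ needed for $u_h=\varphi$ on $\partial\Omega$, since $\ell^-_{x,\eps}$ by itself may violate the node constraints. Finally, discard atoms with $a_i=0$, since for them $h_i$ is unconstrained above. For $a_i>0$ the bound $h_i\le\max_{\partial\Omega}\varphi$ on $\mathcal H$ holds, because $m_i(h)>0$ forces $u_h(x_i)=h_i$. A maximizer of $\sum_ih_i$ then exists on the compact set $\mathcal H\cap\{\sum_ih_i\ge\sum_ih_i^0\}$ for any $h^0\in\mathcal H$.
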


\begin{proof}
We follow the Perron construction in the proof of
\cite[Theorems~2.13 and~2.14]{Figalli} (see the discussion in Remark~\ref{rem:existence-of-sols}). We first assume that
\begin{equation}
\label{eq:weighted-atomic-source}
\nu=\sum_{i=1}^N\alpha_i\delta_{x_i},
\qquad
x_i\in\Omega,\quad \alpha_i>0.
\end{equation}
Let $h\in C(\overline\Omega)$ be the unique ordinary Alexandrov
solution of
\begin{equation}
\label{eq:MA-for-h}
\begin{cases}
     \det D^2 h=0
 \quad& \text{in }\Omega,\\
 h=\varphi
 \quad&\text{on }\partial\Omega.
\end{cases}
\end{equation}
whose existence is provided by Proposition~\ref{prop:forward}.
For every $x_i$, let $C_i$ be the conical convex function that vanishes on $\partial\Omega$ and satisfies $C_i(x_i)=-1$; see
Figure~\ref{fig:conical-function} and \cite[Figure~2.5]{Figalli}.

\begin{figure}[ht]
\centering
\begin{tikzpicture}[
    x={(1cm,0cm)},
    y={(0.48cm,0.25cm)},
    z={(0cm,1.6cm)},
    line join=round,
    line cap=round,
    scale=0.9
]
  \draw[->,gray!75] (-3.7,0,0) -- (4.1,0,0)
    node[right] {$x_1$};
  \draw[->,gray!75] (0,-2.6,0) -- (0,2.9,0)
    node[above right] {$x_2$};
  \draw[->,gray!75] (0,0,-1.25) -- (0,0,0.65)
    node[above] {$z$};

  \foreach \t in {0,10,...,350}{
    \pgfmathsetmacro{\tnext}{\t+10}
    \fill[red!65,opacity=0.22]
      (0.55,-0.25,-1)
      -- ({3*cos(\t)},{2*sin(\t)},0)
      -- ({3*cos(\tnext)},{2*sin(\tnext)},0)
      -- cycle;
  }

  \fill[blue!10,opacity=0.45]
    plot[smooth cycle, samples=80, domain=0:360]
      ({3*cos(\x)},{2*sin(\x)},0);

  \draw[blue!65!black,thick]
    plot[smooth cycle, samples=80, domain=0:360]
      ({3*cos(\x)},{2*sin(\x)},0);

  \draw[dashed,gray!65]
    (0.55,-0.25,0) -- (0.55,-0.25,-1);

  \fill (0.55,-0.25,0) circle (1.2pt)
    node[above left] {$x_i$};

  \fill (0.55,-0.25,-1) circle (1.7pt)
    node[below right]
      {$\bigl(x_i,C_i(x_i)\bigr)=(x_i,-1)$};

  \node[blue!65!black] at (2.55,1.35,0.12)
    {$\partial\Omega\times\{0\}$};
  \node[gray!75] at (-2.45,1.25,0.08)
    {$z=0$};
  \node[red!60!black] at (-1.65,-0.75,-0.45)
    {$\operatorname{graph} C_i$};
\end{tikzpicture}

\caption{Schematic graph of the conical convex function $C_i$.
Its boundary values lie in the plane $z=0$, while
$C_i(x_i)=-1$.}
\label{fig:conical-function}
\end{figure}
The Monge--Amp\`ere measure of $C_i$ is
concentrated at $x_i$, and
\[
    \mathcal L^n(\partial C_i(x_i))>0;
\]
see Step 1 in the proof of \cite[Theorem~2.13]{Figalli} and Example~\ref{Example:singular-MA-measures} \ref{first-sing-MA} (see also \cite[Proof of Theorem~2.8]{Figalli}).

Now, for every positive constant $L>0$, introduce the convex function
\[
v_L\vcentcolon =h+L\sum_{i=1}^N C_i.
\]
Then
\begin{equation}
\label{eq:correct-bdry-values}
    v_L =\varphi\quad\text{on }\partial\Omega.
\end{equation}
For each $i\in \{1,\ldots,N\}$, choose vectors
\[
    p_i^h \in \partial h(x_i),\quad p_i^j\in \partial C_j(x_i),\quad j\neq i,
\]
and set
\[
    a_{i,L}\vcentcolon = p_i^h +L\sum_{j\neq i}p_i^j.
\]
Since subdifferentials and scalings commute, the sum rule for subdifferentials yields
\[
    a_{i,L}+L\partial C_i(x_i)\subset \partial v_L (x_i).
\]
By Lemma~\ref{lemma:compactness-subgradinets}, $a_{i,L}+L\partial C_i(x_i)$ and $\partial v_L (x_i)$ are compact. Hence, $\mu^{\vartheta}_{v_L}(\{x_i\})<\infty$. Moreover, the monotonicity of $\lambda_\vartheta$, $\vartheta (p)\geq c_{\vartheta}>0$, and the properties of the Lebesgue measure imply that
\[
\begin{split}
    \mu^{\vartheta}_{v_L}(\{x_i\})&= \lambda_\vartheta (\partial v_L(x_i))\\
    &\geq \lambda_{\vartheta}(a_{i,L}+L\partial C_i(x_i))\\
    &=\int_{a_{i,L}+L\partial C_i(x_i)}\vartheta \,dp\\
    &\geq c_{\vartheta}\mathcal{L}^n(a_{i,L}+L\partial C_i(x_i))\\
    &=c_{\vartheta}L^n \mathcal{L}^n(\partial C_i(x_i))>0.
\end{split}
\]
Therefore, if we set
\[
    L\vcentcolon =\max_{1\leq i\leq N}\left(\frac{\alpha_i}{c_{\vartheta}\mathcal{L}^n(\partial C_i(x_i))}\right)^{1/n},
\]
then $\mu^{\vartheta}_{v_L}(\{x_i\})\geq \alpha_i$. This implies that 
\begin{equation}
\label{eq:supersol-prop}
    \mu^{\vartheta}_{v_L}\geq \sum_{i=1}^N \alpha_i \delta_{x_i}\quad\text{as Borel measures on }\Omega.
\end{equation}
Combining this with \eqref{eq:correct-bdry-values}, we see that the Perron family
\[
\mathcal S_\vartheta[\nu]
\vcentcolon =
\left\{
v\in C(\overline\Omega):
v\text{ is convex},\,
v\leq \varphi\text{ on }\partial\Omega,\,
\mu_v^\vartheta\geq\nu\text{ as Borel measures}
\right\}
\]
is nonempty.

If $v_1,v_2\in\mathcal S_\vartheta[\nu]$, then
$\max\{v_1,v_2\}\in\mathcal S_\vartheta[\nu]$. Indeed, since the proof of
Step~2 in \cite[Theorem~2.13]{Figalli} uses only properties of the subdifferential and the fact that $\mu_u$ is a measure, we can apply the same reasoning with $\lambda_\vartheta$ in place of $\mathcal L^n$. 

Now, let us define
\begin{equation}
\label{eq:definitio-u}
u(x)\vcentcolon =\sup_{v\in\mathcal S_\vartheta[\nu]}v(x),\quad \text{for all }x\in \overline{\Omega}.
\end{equation}
By \cite[Theorem~5.5]{Rockafellar}, $u\colon \overline{\Omega}\to [-\infty,\infty]$ is a convex function. The function $u$ is bounded from below on $\overline \Omega$, because $v_L\in\mathcal{S}_\vartheta [\nu]$ is finite-valued on $\overline{\Omega}$ we have 
\[
    u(x)\geq v_L(x)>-\infty,\quad \text{for every }x\in\overline\Omega.
\]
Since $\Omega$ is bounded and each $v\in \mathcal S_\vartheta[\nu]$ is convex and continuous on $\overline\Omega$, it attains its maximum on the boundary. Thus, the inequality $v\leq \varphi$ on $\partial\Omega$, which holds for each $v\in \mathcal S_\vartheta[\nu]$, implies that 
\[
    \max_{x\in \overline \Omega}v(x)\leq  \max_{x\in \partial\Omega}\varphi,\quad\text{for all }v\in \mathcal S_\vartheta [\nu].
\]
Thus, $u$ is indeed a real-valued convex function over $\overline{\Omega}$, satisfying
\[
    u(x)\leq \max_{x\in\partial\Omega} \varphi(x),\quad\text{for all }x\in \overline{\Omega}.
\]
Next, we show that $u$ is continuous up to the boundary and has boundary value $\varphi$. Observe that $\mathcal{S}_\vartheta [\nu]\subset \mathcal{S}[0]$, where 
\[
    \mathcal{S}[0]\vcentcolon =\left\{
w\in C(\overline\Omega):
w\text{ is convex},\,
w\leq \varphi\text{ on }\partial\Omega
\right\}.
\]
Moreover, by \cite[Proof of Theorem~2.14]{Figalli}, the Alexandrov solution $h\in C(\overline{\Omega})$ of \eqref{eq:MA-for-h} can be represented as
\begin{equation}
\label{eq:construct-h}
    h(x)=\sup_{w\in \mathcal{S}[0]}w(x),\quad x\in\overline{\Omega}.
\end{equation}
This in particular implies that
\[
    h(x)\geq u(x),\quad \text{for all }x\in \overline{\Omega}.
\]
On the other hand, we know that $v_L\in \mathcal{S}_\vartheta [\nu]$, and hence
\begin{equation}
\label{eq:sandwich-u}
    v_L\leq u\leq h\quad\text{for all }x\in \overline{\Omega}.
\end{equation}
By the sandwich criterion, $u$ is continuous up to the boundary and $v_L=h=\varphi$ on $\partial\Omega$ implies that $u=\varphi$ on $\partial\Omega$. Hence, 
\begin{equation}
\label{eq:continuity-of-u-bdry-value}
    u\in C(\overline{\Omega})\quad\text{with}\quad u=\varphi\text{ on }\partial\Omega.
\end{equation}

Since $C(\overline \Omega)$ is separable with respect to local uniform convergence, there exists a countable dense subset $(v_k)_{k\in\N}$ of $\mathcal S_{\vartheta}[\nu]\subset C(\overline{\Omega})$. By density and monotone convergence, it follows that
\[
    u=\sup_{k\in\N}v_k=\lim_{n\to\infty}w_n\quad\text{with }w_n\vcentcolon = \sup_{1\leq k\leq n}v_k.
\]
We note that the pointwise limit $w_n\uparrow u$ holds locally uniformly, by Lemma~\ref{lem:diff-convex} and the Arzelà--Ascoli theorem, or more directly by Dini's theorem. Furthermore, by iteration and Step 2, we deduce that $w_n\in \mathcal{S}_{\vartheta}[\nu]$, which yields that
\[
    \int_{\R^n}\psi \,d\mu_{w_n}^{\vartheta}\geq \int_{\R^n}\psi \,d\nu
\]
for every nonnegative $\psi\in C_c(\Omega)$ and $n\in\N$. Applying Proposition~\ref{prop:weighted-weak-cont}, we obtain $u\in \mathcal{S}_\vartheta[\nu]$ or more precisely
\begin{equation}
\label{eq:u-admissible}
u\in C(\overline\Omega),\quad
u=\varphi\text{ on }\partial\Omega,\quad
\mu_u^\vartheta\geq\nu\quad\text{as Borel measures}.
\end{equation}

We claim that
\begin{equation}
\label{eq:equality-borel-meas}
    \mu_u^\vartheta =\nu\quad\text{as Borel measures}.
\end{equation}
First, suppose that $\mu_u^\vartheta$ is not supported on the finite set
$\{x_1,\ldots,x_N\}$. By definition, this implies that there exists a Borel set $E\subset \Omega$ such that
\begin{equation}
\label{eq:counter-support}
   E\subset \Omega\setminus \{x_1,\ldots,x_N\}\quad\text{and}\quad \mu_u^\vartheta (E)>0.
\end{equation}
The set $\partial u(E)$ has necessarily positive Lebesgue measure as otherwise $\mu_u^\vartheta (E)=0$ (see~\eqref{eq:equal-Lebesgue-set}). Let $Z_u$ denote the set of slopes supporting $u$ at two distinct points. By Lemma~\ref{lem:unique-subgradients},
$\mathcal L^n(Z_u)=0$. Since $E\cap \{x_1,\ldots,x_N\}=\varnothing$,
\[
    \partial u(E)\cap\partial u(x_i)\subset Z_u,
    \qquad i=1,\ldots,N.
\]
Likewise,
\[
    \partial u(E)\cap\partial u(\partial\Omega)\subset Z_u.
\]
Consequently,
\[
\mathcal L^n\left(
\partial u(E)\setminus
\left[
\bigcup_{i=1}^N\partial u(x_i)
\cup\partial u(\partial\Omega)
\right]\right)>0.
\]
We may therefore choose $\bar p$ in this nonempty Lebesgue measurable set. Since
$\bar p\in\partial u(E)$, there exists $\bar x\in E$ such that
$\bar p\in\partial u(\bar x)$. By \cite[Remark~A.19]{Figalli}, there exists $\eps>0$ such that
\[
    u\geq \ell_{\bar x, \bar p}+2\eps,\quad\text{on }\{x_1,\ldots,x_N\}\cup \partial\Omega,
\]
where 
\[
\ell_{\bar x,\bar p}(z)
\vcentcolon = u(\bar x)+\bar p\cdot(z-\bar x).
\]
Now, we consider the convex function
\[
\widetilde u
\vcentcolon = 
\max\{u,\ell_{\bar x,\bar p}+\varepsilon\}\in C(\overline{\Omega}).
\]
Note that $\widetilde u\geq u$ and $\widetilde u= u$ in a neighborhood of $\{x_1,\ldots,x_N\}\cup \partial\Omega$. In particular, $\widetilde u=\varphi$ on $\partial\Omega$ and $\partial \widetilde  u(x_j)=\partial u(x_j)$, for $j=1,\ldots,N$, and consequently $\widetilde u\in\mathcal S_\vartheta[\nu]$. Since $u$ is the largest element in $\mathcal{S}_\vartheta [\nu]$ and $\widetilde u>u$ somewhere, we obtain a contradiction. Thus $\mu_u^\vartheta$ is necessarily supported on the prescribed atoms.

It remains to identify their masses $\mu_u^\vartheta (\{x_j\})$. Suppose, for some $j\in\{1,\ldots,N\}$, that
\[
\beta_j\vcentcolon =\mu_u^\vartheta(\{x_j\})>\alpha_j.
\]
Again, by absolute continuity of $\lambda_\vartheta$, $\partial u(x_j)$ has positive Lebesgue measure. Indeed, if $\mathcal{L}^n(\partial u(x_j))=0$, then Lemma~\ref{lemma:compactness-subgradinets} and \eqref{eq:equal-Lebesgue-set} would imply that
\[
    \mu_u^{\vartheta}(\{x_j\})=\lambda_{\vartheta}(\partial u(x_j))=\int_{\partial u(x_j)} \vartheta \,dp=0,
\]
which contradicts the fact that $\alpha_j>0$.
Since a convex subset of $\R^n$ with empty interior is contained in a proper affine subspace and therefore has zero $n$-dimensional Lebesgue measure, we conclude that $\partial u(x_j)$ has nonempty interior. Choose $p\in\operatorname{int}\partial u(x_j)$ and set
\begin{equation}
\label{eq:def-U}
U\vcentcolon =u-\ell\in C(\overline{\Omega})\quad\text{with}\quad \ell(z)\vcentcolon =u(x_j)+p\cdot(z-x_j).
\end{equation}
Then $\partial U(x_j)=\partial u(x_j)-p$ and $p\in \operatorname{int}\partial u(x_j)$ imply that $0\in\operatorname{int}\partial U(x_j)$. Hence there exists $r>0$ such that $\overline B_r\subset \partial U(x_j)$ and therefore, by the subgradient inequality, the vectors
\[
    q_z\vcentcolon = r\frac{z-x_j}{|z-x_j|}\in \partial U(x_j),\quad \text{for every }z\in \overline{\Omega}\text{ with }z\neq x_j
\]
satisfy
\[
    U(z)\geq U(x_j)+q_z\cdot (z-x_j)=U(x_j)+r|z-x_j|>U(x_j)
\]
for all $z\in\overline\Omega$ with $z\neq x_j$. This shows that $x_j$ is the unique minimizer of the continuous function $U$ over $\overline{\Omega}$. Since 
\[
    F_j\vcentcolon = \{x_i:i\neq j\}\cup\partial\Omega
\]
is compact, we have
\[
    \min_{z\in F_j} U(z)>U(x_j).
\]
Therefore, we may choose $a\in\R$ such that
\begin{equation}
\label{eq:min-point-U}
 U(z)>a>U(x_j)\quad\text{for every }z\in F_j.
\end{equation}
For $0<\delta<1$, define
\begin{equation}
\label{eq:def-U-tilde}
\widehat U
\vcentcolon =
\max\{U,(1-\delta)U+\delta a\}\in C(\overline \Omega).
\end{equation}
This function is convex and satisfies $\widehat U\geq U$. It agrees with $(1-\delta)U+\delta a$ near $x_j$, and with $U$ near the atoms $x_i$, $i\neq j$, or the boundary $\partial\Omega$. Moreover, it obeys 
\[
\partial\widehat U(x_j)
=(1-\delta)\partial U(x_j).
\]
Thus, the function 
\[
\widehat u\vcentcolon =\widehat U+\ell,
\]
satisfies
\[
\partial\widehat u(x_j)
=
p+(1-\delta)(\partial u(x_j)-p).
\]
Let us put
\[
    K\vcentcolon = \partial u(x_j)\quad \text{and}\quad K_\delta\vcentcolon = p+(1-\delta)(K-p),\quad 0<\delta\leq 1.
\]
Note that if $0<\delta_1<\delta_2\leq 1$, then $1-\delta_2<1-\delta_1$, convexity of $K$, and $p\in K$ give
\[
    K_{\delta_2}\subset K_{\delta_1}.
\]
Thus, $(K_\delta)_{\delta>0}$ increases monotonically as $\delta\downarrow 0$. Next, we show that the limit is $\operatorname{int}K$, that is,
\begin{equation}
\label{eq:exhaustion}
    \operatorname{int}K=\bigcup_{0<\delta\leq 1}K_\delta.
\end{equation}
Since $p\in\operatorname{int}K$, there exists $\varepsilon>0$ such that $B_\varepsilon (p)\subset K$. For every $x\in K$ and $0\leq t<1$, convexity gives
\[
tx+(1-t)B_\varepsilon(p)\subset K.
\]
Since
\[
tx+(1-t)B_\varepsilon(p)
=
B_{(1-t)\varepsilon}\bigl(p+t(x-p)\bigr),
\]
it follows that
\[
p+t(x-p)\in\operatorname{int}K.
\]
Consequently,
\[
p+t(K-p)\subset\operatorname{int}K,
\qquad 0\leq t<1.
\]
Setting $\delta\vcentcolon= 1-t$ yields $K_\delta \subset \operatorname{int}K$ for every $0<\delta\leq 1$.

Conversely, let $q\in\operatorname{int}K$, $q\neq p$, and set
\[
e\vcentcolon = \frac{q-p}{|q-p|}.
\]
Since $q$ is an interior point of $K$, there exists
$\eta>0$ such that
\[
x\vcentcolon = q+\eta e\in K.
\]
Then
\[
q=p+t(x-p),
\qquad
t\vcentcolon =\frac{|q-p|}{|q-p|+\eta}<1.
\]
Setting $\delta\vcentcolon = 1-t$ implies that $q\in K_\delta$. The case $q=p$ corresponds to $t=0$ or $\delta=1$. This completes the verification of \eqref{eq:exhaustion}.

Recall that the boundary of a convex body $\mathcal{K}$ (compact convex set with nonempty interior) is
Lebesgue null. Indeed, up to translations, we may assume that $0\in \operatorname{int}\mathcal{K}$. By convexity, we have $\lambda \mathcal K\subset \operatorname{int}\mathcal K$ for every $0\leq \lambda< 1$, and therefore
\[
    \partial \mathcal K\subset \mathcal K\setminus \lambda\mathcal K.
\]
Thus,
\[
    \mathcal{L}^n(\partial\mathcal K)\leq \mathcal{L}^n(\mathcal K\setminus \lambda\mathcal K)=\mathcal{L}^n(\mathcal{K})-\mathcal{L}^n(\lambda\mathcal{K})=(1-\lambda^n)\mathcal{L}^n(\mathcal{K}),
\]
which converges to 0 as $\lambda\uparrow 1$.

Since $K$ is a convex body and $\vartheta$ is locally integrable, we obtain
\[
\lambda_\vartheta (K_\delta)
\to\lambda_\vartheta (\operatorname{int}(K))=
\lambda_\vartheta(K)\quad\text{as }\delta\downarrow 0.
\]
Thus,
\[
    \mu_{\widehat u}^{\vartheta}(\{x_j\})\to \mu_u^{\vartheta}(\{x_j\})>\alpha_j\quad\text{as }\delta\to 0.
\]
Hence for all sufficiently small $\delta>0$, we have
$\mu_{\widehat u}^\vartheta(\{x_j\})>\alpha_j$. The modification $\widehat u$ does not change the other atomic masses or the boundary values (see \eqref{eq:def-U}, \eqref{eq:min-point-U}, and \eqref{eq:def-U-tilde}), so
$\widehat u\in\mathcal S_\vartheta[\nu]$ and
$\widehat u>u$, again contradicting maximality of $u$. 

Therefore, we have proved
$\mu_u^\vartheta=\nu$ for every source of the form
\eqref{eq:weighted-atomic-source}.

\medskip

We now consider an arbitrary finite Borel measure $\nu$. Choose
finite atomic measures $\nu_k$ such that
\[
\nu_k\weakstar\nu,
\qquad
\sup_k\nu_k(\Omega)<\infty,
\]
and let $u_k$ be the solutions just constructed with source $\nu_k$ and boundary value $\varphi$. The lower bound on $\vartheta$ (see \eqref{eq:bounds-weight-vartheta}) gives the measure inequality
\begin{equation}
\label{eq:ordinary-weighted-measure-bound}
\mu_{u_k}
\leq
c_\vartheta^{-1}\mu_{u_k}^\vartheta
=c_\vartheta^{-1}\nu_k\quad \text{as Borel measures}.
\end{equation}
Let $w_k\in C(\overline \Omega)$ be the unique Alexandrov solution of
\begin{equation}
\label{eq:MA-source-vk}
    \begin{cases}
     \det D^2 w=c_\vartheta^{-1}\nu_k
 \quad& \text{in }\Omega,\\
 w=\varphi
 \quad&\text{on }\partial\Omega.
\end{cases}
\end{equation}
Since $\mu_h=0\leq \mu_{u_k}\leq \mu_{w_k}$ as Borel measures on $\Omega$ and $u_k=w_k=h=\varphi$ on $\partial\Omega$, the comparison principle (Proposition~\ref{prop:Comparison}) yields
\begin{equation}
\label{eq:weighted-forward-sandwich}
w_k\leq u_k\leq h
\qquad\text{in }\Omega.
\end{equation}
By Proposition~\ref{prop:forward}\,\ref{forward-source-stability}, $w_k$ converges uniformly on $\overline\Omega$ to the solution $w$ of \eqref{eq:MA-source-vk} with source $c_\vartheta^{-1}\nu$ and boundary value $\varphi$. Using \eqref{eq:weighted-forward-sandwich}, we see that $u_k\in C(\overline \Omega)$ is uniformly bounded. Moreover, by Lemma~\ref{lem:diff-convex}, for every compact set $K\Subset \Omega$ there exists a constant $C>0$ such that
\[
    \|\nabla u_k\|_{L^\infty (K)}\leq C
\]
for all $k\in\N$. By the Arzelà--Ascoli theorem, there exists a subsequence, still denoted as $u_k$, and a function $u\colon \Omega\to\R$ such that $u_k\to u$ as $k\to\infty$. The limit function $u$ is necessarily convex, because all $u_k$ are. Additionally, Proposition~\ref{prop:weighted-weak-cont} and $\nu_k\weakstar \nu$ yield
\[
\mu_u^\vartheta=\nu.
\]
Passing to the limit in \eqref{eq:weighted-forward-sandwich} gives $w\leq u\leq h$. Since $w$ and $h$ extend continuously to $\partial\Omega$ with the same boundary value $\varphi$, the
sandwich $w\leq u\leq h$ implies that $u\in C(\overline\Omega)$ and
$u=\varphi$ on $\partial\Omega$. This completes the proof.
\end{proof}

\begin{remark}[Uniqueness of the weighted forward problem]
\label{rem:weighted-forward-uniqueness}
Theorem~\ref{thm:weighted-forward-existence} proves existence but
does not establish uniqueness under the minimal assumptions on
$\vartheta$. The missing ingredient is a weighted comparison
principle of the form
\[
\begin{cases}
    \mu_u^\vartheta\leq\mu_v^\vartheta&\text{ as Borel measures on }\Omega,\\
    u\geq v &\text{ on }\partial\Omega
\end{cases}
\quad \Longrightarrow\quad
u\geq v\text{ in }\Omega.
\]
The ordinary proof cannot be transferred directly. Its strict
perturbation uses the superadditivity and homogeneity of Lebesgue
measure to obtain, for a strictly convex quadratic $w$,
\[
\mu_{v+\varepsilon w}
\geq
\mu_v+\varepsilon^n\mu_w.
\]
As already pointed out, for a general Borel weight $\vartheta$, translations and dilations of
subgradient images change the density $\vartheta(p)$, and no
corresponding inequality for $\mu_{v+\varepsilon w}^\vartheta$ is available. The Perron construction above therefore produces a maximal weighted Alexandrov solution, but it does not show that every
other solution coincides with it. This issue does not affect Theorem~\ref{thm:weighted-MA-uniqueness}, which applies to any chosen families of solutions whose measured boundary slopes agree.
\end{remark}

\subsection{Cylindrical-data uniqueness for gradient weights}

Next, we prove our second main theorem, Theorem~\ref{thm:weighted-MA-uniqueness}.

\begin{proof}[Proof of Theorem~\ref{thm:weighted-MA-uniqueness}]
We first record the weighted change of variables formula. Let $u\colon\Omega\to\mathbb R$ be convex and suppose that it solves
\[
 \vartheta (\nabla u)\det D^2 u=f\quad \text{in }\Omega
\]
in the Alexandrov sense, where $f\in L^1(\Omega)$ is nonnegative. Then
\begin{equation}
\label{eq:weighted-MA-change-of-variables}
 \int_\Omega H(\nabla u(x))f(x)\,dx
 =
 \int_{\mathrm D_u}H(p)\vartheta(p)\,dp
\end{equation}
for every bounded Borel function
$H\colon\mathbb R^n\to\mathbb R$.

Indeed, let $\mathcal N\subset\Omega$ be a Borel set of Lebesgue measure zero containing the nondifferentiability set of $u$. Since $\mu_u^\vartheta=f\,dx$ as Borel measures in $\Omega$, we have
\[
 0
 =\int_{\mathcal{N}}f\,dx=
 \mu_u^\vartheta(\mathcal N)
 =
 \int_{\partial u(\mathcal N)}\vartheta(p)\,dp.
\]
Here and below, we use the observation that $\partial u(A)$ is Lebesgue measurable, whenever $A\subset \Omega$ is Borel (see~\cite[Theorem~2.3, Step~1]{Figalli}). Thus, we can use formula \eqref{eq:equal-Lebesgue-set} with $B=\partial u(A)$ for every Borel set $A\subset\Omega$. 

Using the lower bound \eqref{eq:bounds-weight-vartheta}, we infer from the previous identity that
\[
 \mathcal L^n(\partial u(\mathcal N))=0.
\]
Consequently, up to a Lebesgue null set,
$\mathrm D_u\vcentcolon =\partial u(\Omega)$ agrees with
$\nabla u(\Omega\setminus\mathcal N)$. Repeating the argument in the
proof of Proposition~\ref{prop:weighted}, with
$\vartheta(p)\,dp$ in place of $dp$, gives
\[
 (\nabla u)_\#(f\,dx)
 =
 (\vartheta\,\mathcal L^n)\res\mathrm D_u
\]
as Borel measures, where $\nabla u$ may be defined arbitrarily on
$\mathcal N$. This proves
\eqref{eq:weighted-MA-change-of-variables}.

Now fix $\omega\in\mathbb S^{n-1}$ and abbreviate
$t_k\vcentcolon = t_k^\omega$ and
\[
 u_{j,k}\vcentcolon = u_{j,t_k}^\omega,
 \qquad
 U_{j,k}\vcentcolon = t_k^{-1}u_{j,k},
 \qquad j=1,2.
\]
The equality of the boundary slopes and
Corollary~\ref{cor:D-equality} imply
\[
 \mathcal L^n\bigl(
 \mathrm D_{u_{1,k}}\mathbin\triangle
 \mathrm D_{u_{2,k}}
 \bigr)=0.
\]
Therefore, by \eqref{eq:weighted-MA-change-of-variables},
\begin{equation}
\label{eq:weighted-MA-equal-integrals}
 \int_\Omega H(\nabla u_{1,k})f_1\,dx
 =
 \int_\Omega H(\nabla u_{2,k})f_2\,dx
\end{equation}
for every bounded Borel function $H$.

We next determine the large-amplitude limit. Since
\[
 \partial U_{j,k}(A)
 =
 t_k^{-1}\partial u_{j,k}(A),
\]
the ordinary Monge--Amp\`ere measure of $U_{j,k}$ satisfies, for
every Borel set $A\subset\Omega$,
\begin{align*}
 \mu_{U_{j,k}}(A)
 &=
 t_k^{-n}\mathcal L^n(\partial u_{j,k}(A))\\
 &\leq
 \frac{t_k^{-n}}{c_\vartheta}
 \int_{\partial u_{j,k}(A)}\vartheta(p)\,dp\\
 &=
 \frac{t_k^{-n}}{c_\vartheta}
 \int_A f_j(x)\,dx.
\end{align*}
In particular,
\[
 \mu_{U_{j,k}}(\Omega)
 \leq
 \frac{t_k^{-n}}{c_\vartheta}
 \|f_j\|_{L^1(\Omega)}
 \to 0.
\]
Consequently, $\mu_{U_{j,k}}\weakstar0$ as $k\to\infty$. Moreover,
\[
 U_{j,k}=\Phi_\omega
 \quad\text{on }\partial\Omega.
\]
Hence, $U_{j,k}\in C(\overline{\Omega})$ are Alexandrov solutions of the Dirichlet problem
\[
\begin{cases}
\det D^2U=\nu_{j,k}
&\text{in }\Omega,\\
U=\Phi_\omega
&\text{on }\partial\Omega,
\end{cases}
\]
where we put $\nu_{j,k}\vcentcolon = \mu_{U_{j,k}}$. The source-stability result in Proposition~\ref{prop:forward}\,\ref{forward-source-stability} therefore gives
\[
 U_{j,k}\to\Phi_\omega
 \quad\text{uniformly on }\overline\Omega.
\]
Here we used that $\Phi_\omega$ is the unique convex solution of 
 \begin{equation}
       \begin{cases}
    \det D^2 U=0,\quad&\text{in }\Omega,\\
     U=\Phi_\omega,\quad&\text{on }\partial\Omega;
    \end{cases}
   \end{equation}
see proof of Lemma~\ref{lem:limit}. Lemma~\ref{lem:gradient-convergence} now yields
\[
 \nabla U_{j,k}(x)
 \to
 \nabla\Phi_\omega(x)=P_\omega x
 \quad\text{for a.e. }x\in\Omega.
\]
Given $\psi\in C_c(\omega^\perp)$, apply
\eqref{eq:weighted-MA-equal-integrals} with
\[
 H_k(p)\vcentcolon = \psi\left(\frac{P_\omega p}{t_k}\right).
\]
Since $\nabla u_{j,k}=t_k\nabla U_{j,k}$, we obtain
\[
 \int_\Omega
 \psi(P_\omega\nabla U_{1,k})f_1\,dx
 =
 \int_\Omega
 \psi(P_\omega\nabla U_{2,k})f_2\,dx.
\]
Dominated convergence gives
\[
 \int_\Omega
 \psi(P_\omega x)(f_1-f_2)(x)\,dx=0.
\]
As in the proof of Theorem~\ref{thm:cylindrical}, this means that the
X-ray transform of the zero extension of $f_1-f_2$ vanishes in the
direction $\omega$. Since $\omega\in\mathbb S^{n-1}$ was arbitrary,
the Fourier-slice identity implies
$f_1=f_2$ almost everywhere in $\Omega$.
\end{proof}

\begin{remark}[Choice of weighted solutions]
Theorem~\ref{thm:weighted-forward-existence} guarantees that all
solutions required in Theorem~\ref{thm:weighted-MA-uniqueness} exist.
Since uniqueness of the weighted forward problem is not known under
the present assumptions, the boundary-slope measurement may in
principle depend on the choice of a solution. The inverse theorem is
independent of this issue: for any two chosen families of weighted
Alexandrov solutions, equality of the indicated cylindrical
boundary-slope measurements implies equality of the sources.
\end{remark}

\section{Scope and limitations}
\label{sec:scope-limitations}

In this final section, we discuss the principal limitations of the present work, emphasize the distinction from our earlier work \cite{UhlmannZimmermann}, and explain the role of the extremal boundary slope even for $L^\infty$ sources.

\subsection{The role of absolute continuity}

The proof of the change of variables formula in Proposition~\ref{prop:weighted} requires that the nondifferentiability
set $N$ of the convex solution $u$ satisfies $\mu_u(N)=0$. This condition follows from the absolute continuity of
$\mu_u$, since $\mathcal L^n(N)=0$.

For a singular Monge--Ampère measure, such as those discussed in
Example~\ref{Example:singular-MA-measures}, the measure $\mu_u$ may
charge $N$. Moreover, the union of the subdifferential fibres over
$N$ may carry positive $n$-dimensional volume. Consequently, there
need not exist a canonical single-valued gradient defined
$\mu_u$-almost everywhere whose pushforward is
$\mathcal L^n\res\mathrm D_u$. The expression $H(\nabla u)$ used
in Proposition~\ref{prop:weighted} therefore has no canonical meaning
on the part of $\mu_u$ concentrated on $N$.

Section~\ref{sec:weighted-MA-extension} shows that the argument is
robust under the introduction of a known positive gradient weight
$\vartheta(\nabla u)$. The remaining obstruction is therefore not
the gradient dependence of the equation but the possible concentration
of the Monge--Ampère measure on the nondifferentiability set of the
solution. We do not claim that absolute continuity is an optimal
uniqueness threshold; it is the endpoint of the present
single-valued-gradient argument.

\subsection{Global versus local data}

Our uniqueness proof uses cylindrical boundary values
\[
t\Phi_\omega|_{\partial\Omega},\qquad t\to\infty.
\]
Although only a sequence of amplitudes is needed for each
direction, these data are not confined to a fixed neighborhood of a background boundary value. Consequently, the theorem does not establish uniqueness from the local nonlinear Cauchy data considered in \cite{UhlmannZimmermann}. This distinction is structural rather than a forward regularity issue.

\subsection{Extremal boundary slope versus interior normal derivatives}

At the $L^1$ level, Proposition~\ref{def:slope} is essential. If one defines the measurement only through the normal-line quotient (Definition~\ref{def:Interior-cauchy}), an additional theorem identifying that quotient with the extremal boundary slope is needed. The identification holds for boundary-Lipschitz solutions
(Lemma~\ref{lem:radial-slope}), and hence for nonnegative
$L^\infty$ sources under the assumptions of Corollary~\ref{cor:Linfty}. We do not assert it for arbitrary nonnegative $L^1$ sources.

\section*{Acknowledgments}

G.~Uhlmann was partially supported by the National Science Foundation. P.~Zimmermann was supported by the Swiss National Science Foundation under Grant No.~239130.

  \medskip

  \appendix

\section{~Representation theorem for uniformly convex domains:~proof}
\label{sec:representation-thm}

  In this section we deliver the proof of Lemma~\ref{lem:defining-function}, which uses the following geometric fact.

  \begin{lemma}[Almost-everywhere second-order boundary geometry]
\label{lem:second-order-boundary-geometry}
Let $\Omega\subset\mathbb R^n$ be a bounded domain with
$C^{1,1}$ boundary, and suppose that there exists $\kappa>0$ such
that
\begin{equation}
\label{eq:quadratic-boundary-separation}
\nu(x)\cdot(y-x)
\leq
-\kappa|y-x|^2
\qquad
\text{for every }x\in\partial\Omega
\text{ and }y\in\overline\Omega.
\end{equation}
Then there exists a set
$N\subset\partial\Omega$ satisfying $\mathcal H^{n-1}(N)=0$ such that the Gauss map $\nu:\partial\Omega\to\mathbb S^{n-1}$ is tangentially differentiable at every $\xi\in \partial\Omega\setminus N$.

At every $\xi\in \partial\Omega\setminus N$, the second fundamental form
\[
\mathrm{II}_\xi(\tau,\sigma)
\vcentcolon = 
D_\tau\nu(\xi)\cdot\sigma,
\qquad
\tau,\sigma\in T_\xi\partial\Omega,
\]
is well-defined and satisfies
\begin{equation}
\label{eq:II-lower}
\mathrm{II}_\xi(\tau,\tau)
\geq
2\kappa|\tau|^2,
\qquad
\tau\in T_\xi\partial\Omega.
\end{equation}
More precisely, for every $\xi\in \partial\Omega\setminus N$ and
$\tau\in T_\xi\partial\Omega$, there exists a boundary curve
$\gamma:(-\varepsilon,\varepsilon)\to\partial\Omega$ that is twice
differentiable at the origin and satisfies
\begin{equation}
\label{eq:boundary-curve-second-order}
\gamma(0)=\xi,
\qquad
\gamma'(0)=\tau,
\qquad
\mathrm{II}_\xi(\tau,\tau)
=
-\nu(\xi)\cdot\gamma''(0).
\end{equation}
\end{lemma}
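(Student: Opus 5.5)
Since $\partial\Omega$ is $C^{1,1}$, I would work in finitely many local graph charts. Near a point $\xi_0\in\partial\Omega$, after a rigid motion, $\partial\Omega$ coincides with the graph $\{(z,g(z)):z\in V\}$ of a function $g\in C^{1,1}(V)$, $V\subset\mathbb R^{n-1}$ open, with $\Omega$ lying above the graph locally. Thus $\nu(z,g(z))=(\nabla g(z),-1)/\sqrt{1+|\nabla g(z)|^2}$.

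\emph{Differentiability a.e.} The map $z\mapsto\nabla g(z)$ is Lipschitz, so Rademacher's theorem gives a set $N_V\subset V$ with $\mathcal L^{n-1}(N_V)=0$ off which $\nabla g$ is differentiable. Off $N_V$, $g$ admits the second-order Taylor expansion $g(z+h)=g(z)+\nabla g(z)\cdot h+\tfrac12 D^2g(z)h\cdot h+o(|h|^2)$, obtained by integrating $\nabla g$ along segments. The chart map $X(z)=(z,g(z))$ is bi-Lipschitz, so $N=\bigcup X(N_V)$ over finitely many charts is $\mathcal H^{n-1}$-null. At $\xi=X(z)\notin N$, $\nu\circ X$ is differentiable at $z$. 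Since tangent vectors are images $\tau=DX(z)h$, the tangential derivative $D_\tau\nu(\xi)\vcentcolon=D(\nu\circ X)(z)h$ is well defined, and so is $\mathrm{II}_\xi$.

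\emph{The curve.} Given $\tau=DX(z)h$, I set $\gamma(s)=X(z+sh)=(z+sh,g(z+sh))$. Then $\gamma(0)=\xi$ and $\gamma'(0)=\tau$. By the Taylor expansion, $\gamma$ is twice differentiable at $0$ with $\gamma''(0)=(0,D^2g(z)h\cdot h)$; here second differentiability of $\gamma$ at $0$ is understood via the second-order expansion, and, more carefully, $\gamma'(s)=(h,\nabla g(z+sh)\cdot h)$ is differentiable at $s=0$.

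Next I differentiate the identity $\nu(\gamma(s))\cdot\gamma'(s)=0$ at $s=0$. The product rule applies because $\nu\circ\gamma=(\nu\circ X)(z+s h)$ is differentiable at $0$ and $\gamma'$ is differentiable at $0$. This yields
\[
D_\tau\nu(\xi)\cdot\tau+\nu(\xi)\cdot\gamma''(0)=0,
\]
which is exactly \eqref{eq:boundary-curve-second-order}. Symmetry of $\mathrm{II}_\xi$ is not needed.

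\emph{Lower bound.} I apply \eqref{eq:quadratic-boundary-separation} with $x=\xi$ and $y=\gamma(s)\in\partial\Omega$:
\[
\nu(\xi)\cdot(\gamma(s)-\xi)\leq-\kappa|\gamma(s)-\xi|^2 .
\]
Expanding, the left side equals $s\,\nu(\xi)\cdot\tau+\tfrac{s^2}{2}\nu(\xi)\cdot\gamma''(0)+o(s^2)$, and the first term vanishes. The right side is $-\kappa s^2|\tau|^2+o(s^2)$. Dividing by $s^2$ and letting $s\to0$ gives $\tfrac12\nu(\xi)\cdot\gamma''(0)\leq-\kappa|\tau|^2$, that is, $\mathrm{II}_\xi(\tau,\tau)\ge2\kappa|\tau|^2$.

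The main point requiring care is the justification of the second-order expansion of $\gamma$ at a single Rademacher point of $\nabla g$. This follows from writing
\[
g(z+sh)-g(z)-s\nabla g(z)\cdot h=\int_0^s\bigl(\nabla g(z+rh)-\nabla g(z)\bigr)\cdot h\,dr
\]
and inserting the differentiability of $\nabla g$ at $z$. The remaining work is bookkeeping with the charts to make the null set independent of the chart.
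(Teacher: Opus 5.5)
Your proposal is correct and follows essentially the same route as the paper. Both arguments use local $C^{1,1}$ graph charts, Rademacher's theorem for $\nabla g$, the second-order expansion of $g$ obtained by integrating $\nabla g$ along segments, the curve $\gamma(s)=X(z+sh)$, differentiation of $\nu(\gamma(s))\cdot\gamma'(s)=0$ at $s=0$, and the quadratic separation inequality along $\gamma$; the only differences (the chart orientation and deriving $\mathrm{II}_\xi(\tau,\tau)=-\nu(\xi)\cdot\gamma''(0)$ before rather than after the lower bound) are cosmetic.
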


\begin{proof}
Since $\partial\Omega$ is of class $C^{1,1}$, its Gauss map is Lipschitz continuous. Rademacher's theorem \cite[Chapter~3]{evans1991measure}, applied in local boundary
charts, therefore shows that $\nu$ is tangentially differentiable
at $\mathcal H^{n-1}$-almost every point of $\partial\Omega$.
Denote the set of non-differentiable points by $N$.

Let us describe the corresponding second-order boundary expansion.
After a rigid change of coordinates, a neighborhood of a boundary
point can be represented as
\begin{equation}
\label{eq:local-rep}
\partial\Omega\cap\mathcal U
=
\left\{
X(x')=(x',g(x')):x'\in V
\right\},
\end{equation}
where $V\subset\mathbb R^{n-1}$ is open and
$g\in C^{1,1}(V)$. Choosing the orientation so that the normal is
outward-pointing, the Gauss map is given by
\[
\nu(X(x'))
=
\frac{(-\nabla g(x'),1)}
{\sqrt{1+|\nabla g(x')|^2}}.
\]
The map
\[
q\longmapsto
\frac{(-q,1)}{\sqrt{1+|q|^2}}
\]
is smooth and has a smooth inverse on the relevant open
hemisphere. Consequently, tangential differentiability of $\nu$ at
$\xi=X(x_0')$ is equivalent to differentiability of
$\nabla g$ at $x_0'$. 

Thus, $\nabla g$ is differentiable on the set
\[
    \mathcal{V}\vcentcolon = X^{-1}((\partial\Omega\setminus N)\cap \mathcal{U}).
\]
For all $x'_0\in \mathcal{V}$, we have
\begin{equation}
\label{eq:gradient-g-second-order}
\nabla g(x_0'+h)
=
\nabla g(x_0')+D^2g(x_0')h+o(|h|)
\qquad\text{as }h\to 0.
\end{equation}
Moreover, $g\in C^{1,1}(V)$ implies
\begin{equation}
\label{eq:fund-calc}
g(x_0'+h)-g(x_0')
=
\int_0^1\nabla g(x_0'+th)\cdot h\,dt
\end{equation}
whenever the segment
$[x_0',x_0'+h]$ is contained in $V$. Note that the expansion
\eqref{eq:gradient-g-second-order}, applied with $th$ in place of
$h$, is uniform for $t\in[0,1]$. Indeed, the differentiability of $\nabla g$ at $x'_0$ gives
\[
\nabla g(x_0'+k)
=
\nabla g(x_0')+D^2g(x_0')k+r(k),
\qquad
r(k)=o(|k|).
\]
Thus, for any $\delta>0$, we have
\[
|r(k)|\leq\delta|k|
\]
whenever $|k|$ is sufficiently small. Hence, for sufficiently small
$h$ and every $t\in[0,1]$,
\[
|r(th)|\leq\delta t|h|\leq\delta|h|.
\]
Consequently,
\[
\sup_{0\leq t\leq1}|r(th)|=o(|h|),
\]
yielding the asserted uniform convergence. Combining this convergence result with \eqref{eq:fund-calc} yields
\begin{equation}
\label{eq:g-second-order-expansion}
g(x_0'+h)
=
g(x_0')+\nabla g(x_0')\cdot h
+\frac12D^2g(x_0')[h,h]+o(|h|^2).
\end{equation}
Thus the boundary admits a second-order expansion at every point of
$\partial\Omega\setminus N$.

Fix $\xi=X(x_0')\in \partial\Omega\setminus N$ and
$\tau\in T_\xi\partial\Omega$. Since
\[
DX(x_0'):
\mathbb R^{n-1}\to T_\xi\partial\Omega
\]
is an isomorphism, there exists a unique $v\in\mathbb R^{n-1}$ such that $DX(x_0')v=\tau$. For sufficiently small $\varepsilon>0$, define
\[
\gamma:(-\varepsilon,\varepsilon)\to\partial\Omega,
\qquad
\gamma(s)\vcentcolon = X(x_0'+sv).
\]
The expansion \eqref{eq:g-second-order-expansion} shows that
$\gamma$ is twice differentiable at $s=0$, with
\[
\gamma(0)=\xi,
\qquad
\gamma'(0)=\tau,
\qquad
\gamma''(0)=D^2X(x_0')[v,v].
\]
In particular,
\begin{equation}
\label{eq:curve-second-order-expansion}
\gamma(s)-\xi
=
s\tau+\frac{s^2}{2}\gamma''(0)+o(s^2)
\end{equation}
and
\begin{equation}
\label{eq:curve-distance-expansion}
|\gamma(s)-\xi|^2
=
s^2|\tau|^2+o(s^2).
\end{equation}

Applying \eqref{eq:quadratic-boundary-separation} with
$x=\xi$ and $y=\gamma(s)$ gives
\[
\nu(\xi)\cdot\bigl(\gamma(s)-\xi\bigr)
\leq
-\kappa|\gamma(s)-\xi|^2.
\]
Since $\nu(\xi)\cdot\tau=0$, equations
\eqref{eq:curve-second-order-expansion} and
\eqref{eq:curve-distance-expansion} yield
\[
\frac{s^2}{2}\nu(\xi)\cdot\gamma''(0)+o(s^2)
\leq
-\kappa s^2|\tau|^2+o(s^2).
\]
Dividing by $s^2/2$ and letting $s\to0$, we obtain
\begin{equation}
\label{eq:normal-curve-bound}
-\nu(\xi)\cdot\gamma''(0)
\geq
2\kappa|\tau|^2.
\end{equation}

Finally, differentiating
\[
\nu(\gamma(s))\cdot\gamma'(s)=0
\]
at $s=0$ gives
\[
D_\tau\nu(\xi)\cdot\tau
+
\nu(\xi)\cdot\gamma''(0)=0.
\]
Therefore
\[
\mathrm{II}_\xi(\tau,\tau)
=
D_\tau\nu(\xi)\cdot\tau
=
-\nu(\xi)\cdot\gamma''(0).
\]
Combining this identity with \eqref{eq:normal-curve-bound} proves
\eqref{eq:II-lower}.
\end{proof}

  \begin{proof}[Proof of Lemma~\ref{lem:defining-function}]
Assume first that \ref{unif-convex-2} holds. Let us define the function
\[
\psi\colon U\to\R,\quad \psi(x)\vcentcolon =\rho(x)-\frac{c_0}{2}|x|^2.
\]
Since $\rho\in C^{1,1}(U)$, its almost-everywhere Hessian $D^2 \rho$ agrees with its weak Hessian (see \cite[Section~3.1.2]{evans1991measure}). Hence, the function $\psi\in C^{1,1}(U)$ satisfies
\[
D^2\psi=D^2\rho-c_0I\geq0
\qquad\text{a.e. in }U,
\]
and therefore also in the sense of distributions. Since $U$ is
convex, it follows that $\psi$ is convex. 

We briefly justify the last implication. Since
$\psi\in C^{1,1}(U)$, \cite[Section~4.3.2]{evans1991measure} ensures that
$\psi\in W_{\mathrm{loc}}^{2,\infty}(U)$, and by Rademacher's theorem its almost-everywhere Hessian represents its weak Hessian. Fix $x,y\in U$. Since $U$ is an open convex set, the compact segment $[x,y]$ is contained in $U$, and hence
\[
d\vcentcolon = \operatorname{dist}([x,y],\mathbb R^n\setminus U)>0.
\]
For $0<\varepsilon<d/2$, let
\[
\psi_\varepsilon\vcentcolon = \psi*\eta_\varepsilon
\]
on a neighborhood of $[x,y]$, where $\eta_\varepsilon$ is a standard nonnegative mollifier. Then
\[
D^2\psi_\varepsilon
=
(D^2\psi)*\eta_\varepsilon\geq0
\]
pointwise. Thus, the smooth function
\[
h_\varepsilon(t)
\vcentcolon = 
\psi_\varepsilon((1-t)x+ty),
\qquad 0\leq t\leq1,
\]
satisfies
\[
h_\varepsilon''(t)
=
(y-x)^TD^2\psi_\varepsilon((1-t)x+ty)(y-x)
\geq0.
\]
It is well-known that this implies 
\[
\psi_\varepsilon((1-t)x+ty)
\leq
(1-t)\psi_\varepsilon(x)+t\psi_\varepsilon(y),\quad\text{for all }0\leq t\leq 1.
\]
By \cite[Section~4.2.1]{evans1991measure} and $\psi\in C(U)$, we have $\psi_\eps\to \psi$ uniformly on compact subsets of $U$, and hence passing to the limit as $\varepsilon\downarrow0$ yields
\[
\psi((1-t)x+ty)
\leq
(1-t)\psi(x)+t\psi(y).
\]
Since $x,y\in U$ and $t\in[0,1]$ were arbitrary, $\psi$ is convex.

As a consequence of the convexity of $\psi$, Remark~\ref{rem:rel-subgrad-grad} implies that
\[
\psi(y)\geq \psi(x)+\nabla \psi(x)\cdot(y-x),
\qquad x,y\in U.
\]
Substituting the definition of $\psi$ and rearranging, we obtain
\begin{equation}
\label{eq:strong-convexity-rho}
\rho(y)\geq
\rho(x)+\nabla\rho(x)\cdot(y-x)
+\frac{c_0}{2}|y-x|^2,
\qquad x,y\in U.
\end{equation}
Differentiable functions satisfying an estimate of this type are said to be strongly convex and they are in particular convex. Thus, every sublevel set of $\rho$ is convex, and therefore
$\Omega=\{\rho<0\}$ is convex.

Moreover, $\nabla\rho$ does not vanish on $\partial\Omega$. Indeed, if $x\in\partial\Omega$ and $\nabla\rho(x)=0$, then
\eqref{eq:strong-convexity-rho} would imply
\[
\rho(y)\geq\frac{c_0}{2}|y-x|^2>0
\qquad
\text{for every }y\in U\setminus\{x\},
\]
contradicting $\rho|_{\partial\Omega}=0$.

Furthermore, the conditions \eqref{eq:strong-defining-function} imply that $\nabla \rho$ points in the exterior normal direction on the boundary:
\begin{equation}
\label{eq:gradient-defining-fct}
\nabla\rho(x)=|\nabla\rho(x)|\nu(x),
\qquad x\in\partial\Omega.
\end{equation}
Indeed, since $\rho\in C^{1,1}(U)$ and $\rho|_{\partial\Omega}=0$, the tangential components of $\nabla \rho$ are identically zero and only the normal component of $\nabla \rho$ survives. Using $\nabla \rho\neq 0$ on $\partial\Omega$, we find
\[
    \frac{\nabla \rho(x)}{|\nabla \rho(x)|}=\pm \nu(x).
\]
Suppose that
\begin{equation}
\label{eq:contradiction-sign}
    \nabla \rho(x)=-|\nabla \rho(x)|\nu(x).
\end{equation}
Then $\rho(x)=0$, $x\in \partial\Omega$, implies that
\[
    \rho(x-s\nu(x))\geq s|\nabla \rho(x)|+\frac{c_0}{2}s^2>0,
\]
for all sufficiently small $s>0$. This contradicts $\rho(y)<0$ in $\Omega$, since $x-s\nu(x)\in \Omega$ for small $s>0$. 

Hence, we conclude the validity of \eqref{eq:gradient-defining-fct}. Therefore, recalling that $\nabla \rho\neq 0$ on $\partial\Omega$, this also establishes the representation formula \eqref{eq:rep-normal} of the normal field $\nu$.

For $x\in\partial\Omega$ and $y\in\overline\Omega$, \eqref{eq:strong-convexity-rho} and $\rho(y)\leq 0 = \rho(x)$ yield
\[
0\geq\rho(y)
\geq\nabla\rho(x)\cdot(y-x)
   +\frac{c_0}{2}|y-x|^2.
\]
Consequently, \eqref{eq:gradient-defining-fct} ensures that
\[
\nu(x)\cdot(y-x)
\leq
-\frac{c_0}{2|\nabla\rho(x)|}|y-x|^2.
\]
Since $\partial\Omega$ is compact and $\nabla\rho$ is continuous on the neighborhood $U$,
condition \eqref{eq:geometric-uniform-convexity} follows with
\[
\kappa_\Omega
=\frac{c_0}
       {2\|\nabla\rho\|_{L^{\infty}(\partial\Omega)}}.
\]

Conversely, assume \ref{unif-convex-1}. We first show that the geometric condition \eqref{eq:geometric-uniform-convexity} implies that $\Omega$ is convex. For each $x\in\partial\Omega $, define the closed half-space
\[
H_x^-
\vcentcolon = 
\left\{
y\in\mathbb R^n:
\nu(x)\cdot(y-x)\leq0
\right\}.
\]
We claim that
\begin{equation}
\label{eq:representation-om}
\overline\Omega
=
\bigcap_{x\in\partial\Omega}H_x^-.
\end{equation}
The inclusion ``$\subset$'' follows immediately from
\eqref{eq:geometric-uniform-convexity}. To prove the reverse inclusion,
suppose that
\begin{equation}
\label{eq:choice-y}
y\in
\bigcap_{x\in\partial\Omega}H_x^-
\setminus\overline\Omega.
\end{equation}
Since $\overline\Omega$ is compact and $y\notin \overline{\Omega}$, there exists
$x_0\in\overline\Omega$ such that
\[
0<|y-x_0|
=
\dist(y,\overline\Omega).
\]
Necessarily we have $x_0\in\partial\Omega$. Indeed, if $x_0\in \Omega$, then there exists $0<\eps<1$ such that $x_\eps\vcentcolon = x_0+\eps (y-x_0)\in \Omega$ and 
\[
    |y-x_\eps|=(1-\eps)|y-x_0|<|y-x_0|=\dist(y,\overline{\Omega}),
\]
contradicting minimality of $|y-x_0|$.

Let $\tau\in T_{x_0}\partial\Omega$ be a tangent vector and choose a $C^1$ curve $\gamma\colon(-\varepsilon,\varepsilon)\to\partial\Omega$ satisfying
\[
\gamma(0)=x_0,
\qquad
\gamma'(0)=\tau.
\]
Since $x_0$ minimizes the distance from $y$ to $\overline\Omega$, the function $s\to |y-\gamma(s)|^2$ has a minimum at $s=0$. Therefore,
\[
0
=
\left.\frac{d}{ds}\right|_{s=0}|y-\gamma(s)|^2
=
-2(y-x_0)\cdot\tau.
\]
Since this holds for every
$\tau\in T_{x_0}\partial\Omega$, the vector $y-x_0$ is normal to $\partial\Omega$ at $x_0$. Consequently,
\[
y-x_0=t\nu(x_0)
\]
for some $t\in\mathbb R\setminus\{0\}$. We claim that $t>0$.
Indeed, if $t<0$, then $y-x_0$ points in the inward normal
direction. Since $\partial\Omega$ is of class $C^1$, it follows that
\[
x_0+\varepsilon(y-x_0)\in\Omega
\]
for all sufficiently small $\varepsilon>0$. But then
\[
\bigl|y-\bigl(x_0+\varepsilon(y-x_0)\bigr)\bigr|
=(1-\varepsilon)|y-x_0|
<\operatorname{dist}(y,\overline\Omega),
\]
which is impossible. Hence $t>0$, and therefore
\[
\nu(x_0)\cdot(y-x_0)=t>0.
\]
Thus $y\notin H_{x_0}^-$, contradicting the choice of $y$. This
proves \eqref{eq:representation-om}.

Now, since the right-hand side of \eqref{eq:representation-om} is an intersection of closed half-spaces, $\overline\Omega$ is convex. Moreover, because $\Omega$ is a domain with $C^1$ boundary,
\[
    \Omega=\operatorname{int}(\overline\Omega).
\]
The interior of a convex set is convex, and hence $\Omega$ is
convex.

We now proceed with the construction of the defining function $\rho$. Fix a point $x_\ast\in\Omega$ and define
\[
\Omega_*\vcentcolon = \Omega-x_\ast.
\]
For notational convenience, we continue to denote by $\nu$ the
outward unit normal to $\partial\Omega_*$. It is related to the
original normal field on $\partial\Omega$ by translation.

The translated set $\Omega_*$ contains the origin, $0\in \Omega_*$. Let
\begin{equation}
\label{eq:Minkowski}
p\colon \R^n\to [0,\infty],\qquad p(z)\vcentcolon = \inf\{t>0:t^{-1}z\in \Omega_*\}
\end{equation}
be the Minkowski functional induced by $\Omega_*$. By \cite[Lemma~1.2]{Brezis}, $p$ has the following properties:
\begin{enumerate}[(a)]
    \item\label{vanishing} \textit{Vanishing at the origin}: $p(0)=0$,
    \item\label{pos-hom} \textit{Positive homogeneity}: $p(\lambda x)=\lambda p(x)$, for all $x\in \R^n$ and $\lambda>0$.
    \item\label{sub-add} \textit{Subadditivity}: $p(x+y)\leq p(x)+p(y)$, for all $x,y\in\R^n$.
    \item\label{continuity-p} \textit{Continuity}: There exists $C>0$ such that $0\leq p(x)\leq C|x|$ for all $x\in\R^n$. 
    \item\label{unit-p-ball} \textit{Unit ball}: $\Omega_*=\{p<1\}$.
\end{enumerate}
The properties \ref{pos-hom}, \ref{continuity-p}, and \ref{unit-p-ball} ensure that 
\begin{equation}
\label{eq:Minkowski-on-bdry}
    \partial \Omega_*=\{p=1\}.
\end{equation}
The inclusion $\partial \Omega_*\subset \{p=1\}$ is immediate, since if $p(z)>1$ or $p(z)<1$ for some $z\in \partial\Omega_*$, then, by continuity \ref{continuity-p}, we would have $p<1$ or $p>1$ on some open ball $B$ containing $z$. In each case, \ref{unit-p-ball} yields a contradiction. On the other hand, if $p(z)=1$, then \ref{pos-hom} implies
\[
p((1-\varepsilon)z)<1<p((1+\varepsilon)z),
\]
so every neighborhood of $z$ meets both $\Omega_\ast$ and its
complement. Thus, $z\in \partial\Omega_*$, which shows $\{p=1\}\subset \partial\Omega_*$.

Furthermore, since $0\in\Omega_\ast$ and $\Omega_\ast$ is bounded, there exist constants $0<r<R<\infty$ such that
\begin{equation}
\label{eq:balls-omega-star}
\overline B_r
\subset
\Omega_\ast
\subset
\overline B_R.
\end{equation}
Let $\xi\in\partial\Omega_\ast$. Since the uniform convexity condition \eqref{eq:geometric-uniform-convexity} is invariant under translations, we again have
\begin{equation}
\label{eq:translated-unif-conv}
\nu(\xi)\cdot(z-\xi)\leq -\kappa_\Omega |z-\xi|^2
\qquad
\text{for every }z\in\overline\Omega_\ast.
\end{equation}
Taking $z=r\nu(\xi)\in\overline B_r\subset\Omega_\ast$, we obtain
\[
r-\xi\cdot\nu(\xi)
=
\nu(\xi)\cdot\bigl(r\nu(\xi)-\xi\bigr)
\leq0.
\]
On the other hand, since
$\xi\in\partial\Omega_\ast\subset\overline B_R$,
the Cauchy--Schwarz inequality gives
\[
\xi\cdot\nu(\xi)\leq|\xi|\leq R.
\]
Consequently,
\begin{equation}
\label{eq:gauge-transversality}
r\leq\xi\cdot\nu(\xi)\leq R,
\qquad
\xi\in\partial\Omega_\ast.
\end{equation}

Next, let us define the exit time functional $\ell\colon \mathbb S^{n-1}\to [0,\infty)$ by
\begin{equation}
\label{eq:exit-time}
\ell(\theta)
\vcentcolon = 
\sup\left\{
t>0:t\theta\in\Omega_\ast
\right\},\quad \theta\in \mathbb{S}^{n-1}.
\end{equation}
Then condition \eqref{eq:balls-omega-star} implies that
\begin{equation}
\label{eq:bounds-l}
r\leq\ell(\theta)\leq R.
\end{equation}
Moreover, convexity of $\Omega_\ast$,
$0\in\Omega_\ast$, and definition \eqref{eq:exit-time} imply that
\[
\Omega_\ast\cap\{t\theta:t\geq0\}
=
\{t\theta:0\leq t<\ell(\theta)\}.
\]
Consequently, every ray emanating from the origin intersects
$\partial\Omega_\ast$ in precisely one point, namely
$\ell(\theta)\theta$. Conversely, for every boundary point $\xi\in\partial\Omega_*$, there is a unique ray starting at the origin and passing through that point, which necessarily has velocity vector $\theta \vcentcolon =\xi/|\xi|\in\mathbb S^{n-1}$. 

Thus the radial projection
\begin{equation}
\label{eq:radial-projection}
\pi\colon\partial\Omega_\ast\to\mathbb S^{n-1},
\qquad
\pi(\xi)\vcentcolon =\frac{\xi}{|\xi|},
\end{equation}
is bijective, with $\pi^{-1}(\theta)=\ell(\theta)\theta$. 

We next verify the regularity of $\pi^{-1}$. For
$\xi\in\partial\Omega_\ast$, set $\widehat\xi\vcentcolon = \frac{\xi}{|\xi|}$. The transversality estimate \eqref{eq:gauge-transversality} and the bound
$|\xi|\leq R$ give
\begin{equation}
\label{eq:uniform-radial-transversality}
\widehat\xi\cdot\nu(\xi)
=
\frac{\xi\cdot\nu(\xi)}{|\xi|}
\geq\frac rR>0.
\end{equation}
In particular, the radial direction $\widehat\xi$ does not belong to
$T_\xi\partial\Omega_\ast$.

The differential of the smooth map
\begin{equation}
\label{eq:def-phi}
    \Phi\colon \R^n\setminus\{0\}\to \R^n,\quad \Phi(x)=\frac{x}{|x|}
\end{equation}
at the point $\xi\in \partial\Omega_*$ is
\begin{equation}
\label{eq:diff-Phi}
D\Phi(\xi)v
=
\frac1{|\xi|}
\left(I-\widehat\xi\otimes\widehat\xi\right)v,\quad v\in\R^n,
\end{equation}
which is the orthogonal projection onto $\widehat \xi^\perp$ scaled by $|\xi|^{-1}$. Hence its kernel is $\operatorname{span}\{\xi\}$. Using the chain rule, we see that $D\pi(\xi) \colon
T_\xi\partial\Omega_\ast
\to
T_{\widehat\xi}\mathbb S^{n-1}$ is the restriction of $D\Phi(\xi)$ to $T_\xi\partial\Omega_\ast$. Since \eqref{eq:uniform-radial-transversality} implies that $D\pi(\xi)$ is injective, the rank--nullity theorem yields that $D\pi(\xi)$ is an isomorphism. Indeed, let $\tau\in T_\xi\partial\Omega_\ast$ and decompose $\widehat \xi$ as
\[
\widehat\xi
=
\bigl(\widehat\xi\cdot\nu(\xi)\bigr)\nu(\xi)
+
\widehat\xi_T,
\qquad
\widehat\xi_T\in T_\xi\partial\Omega_\ast.
\]
Then $\tau\cdot\nu(\xi)=0$ and $\widehat \xi \in\mathbb S^{n-1}$ yield
\[
\widehat\xi\cdot\tau
=
\widehat\xi_T\cdot\tau\quad\text{and}\quad|\widehat\xi_T|^2
=
1-\bigl(\widehat\xi\cdot\nu(\xi)\bigr)^2.
\]
Consequently, the Cauchy--Schwarz inequality gives
\[
|\widehat\xi\cdot\tau|^2=|\widehat\xi_T\cdot \tau|^2
\leq
\left(
1-\bigl(\widehat\xi\cdot\nu(\xi)\bigr)^2
\right)|\tau|^2.
\]
It follows that
\begin{align*}
\left|
\left(I-\widehat\xi\otimes\widehat\xi\right)\tau
\right|^2&=|\tau-(\widehat \xi\cdot\tau)\widehat\xi|^2\\
&=
|\tau|^2-|\widehat\xi\cdot\tau|^2\\
&\geq
\bigl(\widehat\xi\cdot\nu(\xi)\bigr)^2|\tau|^2.
\end{align*}
Using \eqref{eq:uniform-radial-transversality}, we therefore obtain
\[
|D\pi(\xi)\tau|
=
\frac{1}{|\xi|}
\left|
\left(I-\widehat\xi\otimes\widehat\xi\right)\tau
\right|
\geq
\frac{r}{R^2}|\tau|.
\]
In particular, the linear map $D\pi(\xi)
\colon
T_\xi\partial\Omega_\ast
\to
T_{\widehat\xi}\mathbb S^{n-1}$ is injective, as we have asserted.

Since $\partial\Omega_\ast$ is of class $C^{1,1}$ and map $\Phi$ is smooth, the radial projection $\pi$ is a $C^{1,1}$ map between $C^{1,1}$ manifolds. This can also be seen directly from formula \eqref{eq:diff-Phi}. Moreover, as $D\pi(\xi)$, $\xi\in \partial\Omega_*$, is an isomorphism, the
$C^{1,1}$ inverse function theorem, applied in local boundary charts, shows that $\pi$ admits a locally $C^{1,1}$ inverse near every point. Because $\pi$ is bijective, these local inverses agree with its set-theoretic inverse
\[
\pi^{-1}(\theta)=\ell(\theta)\theta,
\qquad
\theta\in\mathbb S^{n-1}.
\]
Consequently, $\pi^{-1}\colon
\mathbb S^{n-1}\to\partial\Omega_\ast$ is of class $C^{1,1}$. Since $\ell(\theta)=\pi^{-1}(\theta)\cdot\theta$, it follows that $\ell\in C^{1,1}(\mathbb S^{n-1})$.

Finally, using the definitions of the Minkowski functional $p$ and the exit time functional $\ell$ (see \eqref{eq:Minkowski}, \eqref{eq:exit-time}), together with property positive homogeneity of $p$ \ref{pos-hom}, we find 
\begin{equation}
\label{eq:gauge-radial-representation}
p(z)
=|z|p(z/|z|)=\frac{|z|}{\ell(z/|z|)},\qquad z\neq 0.
\end{equation}
Since $\ell\in C^{1,1}(\mathbb S^{n-1})$ and
$\ell\geq r>0$ (see~\eqref{eq:bounds-l}), formula
\eqref{eq:gauge-radial-representation} shows that $p\in C^{1,1}_{\mathrm{loc}}
\bigl(\mathbb R^n\setminus\{0\}\bigr)$.

Next, let us introduce the function
\[
F\colon\R^n\to [0,\infty],\quad F(z)\vcentcolon = \frac12p(z)^2.
\]
Then \ref{pos-hom} implies that $F$ is positively homogeneous of degree two. To prove that $F\in C^{1,1}(\mathbb R^n)$, note that $p\in C^{1,1}_{\rm loc}(\R^n\setminus\{0\})$ implies that $\nabla F\in C^{0,1}_{\rm loc}(\R^\setminus\{0\})$. Thus, the positive homogeneity of $\nabla F$ yields the existence of a constant $C'>0$ such that
\begin{equation}
\label{eq:gradient-F-est}
|\nabla F(z)|\leq C'|z|.
\end{equation}
Thus $\nabla F$ extends continuously to the origin by 
\begin{equation}
\label{eq:nabla-F-origin}
    \nabla F(0)=0.
\end{equation}
Indeed, since $F(0)=0$ and
\[
0\leq F(z)=\frac12p(z)^2\leq\frac{C^2}{2}|z|^2
\]
(see \ref{vanishing} and \ref{continuity-p}), the function $F$ is differentiable at the origin with
$\nabla F(0)=0$. Moreover, the one-homogeneity of $\nabla F\in C(\R^n)$ and the local Lipschitz continuity
\[
\nabla F\in C_{\mathrm{loc}}^{0,1}
(\mathbb R^n\setminus\{0\})
\]
give $\nabla F\in C^{0,1}(\R^n)$. Indeed, let $x,y\in\mathbb R^n$, and put $M=\max\{|x|,|y|\}$. If $M=0$, then 
\[
    |\nabla F(x)-\nabla F(y)|=0\leq C |x-y|
\]
for any $C>0$ and hence we may assume that $M>0$. If $|x-y|\geq M/2$, then \eqref{eq:gradient-F-est}--\eqref{eq:nabla-F-origin} yield
\[
|\nabla F(x)-\nabla F(y)|
 \leq C'(|x|+|y|)\leq 2C'M
 \leq 4C'|x-y|.
\]
If $|x-y|\leq M/2$, then both $x/M$ and $y/M$ belong to the annulus
\[
    \mathcal{A}\vcentcolon = \{z\in\R^n:\,1/2\leq |z|\leq 1\}.
\]
The lower bound follows from the triangle inequality and the choice $M=\max(|x|,|y|)$. Thus positive homogeneity of $\nabla F$ and $\nabla F\in C^{0,1}_{\rm loc}(\R^n\setminus\{0\})$ yield
\[
\begin{aligned}
|\nabla F(x)-\nabla F(y)|
&=M\left|
\nabla F\left(\frac{x}{M}\right)
-\nabla F\left(\frac{y}{M}\right)
\right| \\
&\leq C_{\mathcal A}|x-y|.
\end{aligned}
\]
Therefore $\nabla F$ is globally Lipschitz and
$F\in C^{1,1}(\mathbb R^n)$.

Thus, it remains to show that $F$ is uniformly convex, that is, it satisfies 
\begin{equation}
\label{eq:uniform-conv-F}
    D^2 F\geq cI\quad\text{a.e. in }\R^n.
\end{equation}

By \eqref{eq:Minkowski-on-bdry}, $F$ satisfies $F=1/2$ on
$\partial \Omega_*$ and hence the tangential components of $\nabla F$ vanish on $\partial \Omega_*$. This shows that $\nabla F|_{\partial\Omega_*}$ points into the normal direction to $\partial \Omega_*$, that is, for all $\xi\in \partial\Omega_*$,
\begin{equation}
\label{eq:nablaF-bdry}
    \nabla F(\xi)=s(\xi)\nu(\xi)\quad\text{for some }s(\xi)\in\R.
\end{equation}
Since $F$ is two-homogeneous, Euler's theorem gives
\begin{equation}
\label{eq:Eulers-id}
    2F(z)=\nabla F(z)\cdot z,\qquad z\neq 0.
\end{equation}
Thus, $F=1/2$ on $\partial\Omega_*$ and \eqref{eq:nablaF-bdry} imply 
\[
    1=s(\xi)(\nu(\xi)\cdot \xi).
\]
Thus, \eqref{eq:uniform-radial-transversality} yields
\begin{equation}
\label{eq:gradient-gauge}
    \nabla F(\xi)=\frac{\nu(\xi)}{\xi\cdot\nu(\xi)},\qquad \xi\in\partial \Omega_*.
\end{equation}
By homogeneity,
\begin{equation}
\label{eq:gradient-gauge-ray}
\nabla F(t\xi)
=t\frac{\nu(\xi)}{\xi\cdot\nu(\xi)},
\qquad t>0,\quad \xi\in\partial \Omega_*.
\end{equation}

Now, let $N\subset\partial \Omega_*$ be the set from Lemma~\ref{lem:second-order-boundary-geometry}. We already know that $\mathcal H^{n-1}(N)=0$ and hence the product
$[1/j,j]\times N$ has zero $n$-dimensional Hausdorff measure. Now, for each $j\in\mathbb N$, the restriction of the radial parametrization $\Psi$,
\[
\Psi|_{[1/j,j]\times\partial \Omega_*}\colon [1/j,j]\times\partial \Omega_*\to \R^n,\quad (t,\xi)\mapsto t\xi,
\] 
is Lipschitz, so the image of $[1/j,j]\times N$ has zero
$n$-dimensional Lebesgue measure \cite[Chapter~2]{evans1991measure}. Taking the union over $j$ shows
that the cone generated by $N$ is Lebesgue-null, that is, 
\begin{equation}
\label{eq:cone-gen-N}
    \mathcal{L}^n(\{t\xi:t>0,\xi\in N\})=0.
\end{equation}
Thus, if we show that $F$ is twice differentiable at every point on the ray generated by a good point $\xi \in \partial\Omega_*\setminus N$, then by \eqref{eq:cone-gen-N} $F$ is twice differentiable almost everywhere in $\R^n$.

To prove differentiability on the good rays, define
\[
A(\xi)\vcentcolon = 
\frac{\nu(\xi)}{\xi\cdot\nu(\xi)},
\qquad \xi\in\partial\Omega_\ast.
\]
Then equation~\eqref{eq:gradient-gauge-ray} becomes
\begin{equation}
\label{eq:gradient-radial-coordinates}
\nabla F(t\xi)=tA(\xi).
\end{equation}
Next fix $\xi\in\partial\Omega_\ast\setminus N$. Since $\nu$ is
tangentially differentiable at $\xi$ and
$\xi\cdot\nu(\xi)\geq r>0$, the map $A$ is tangentially
differentiable at $\xi$. More precisely, for every
$\tau\in T_\xi\partial\Omega_\ast$,
\[
D_{\partial\Omega_\ast}A(\xi)[\tau]
=
\frac{D_\tau\nu(\xi)}{\xi\cdot\nu(\xi)}
-
\frac{\xi\cdot D_\tau\nu(\xi)}
     {(\xi\cdot\nu(\xi))^2}\nu(\xi),
\]
where we have used $\tau\cdot\nu(\xi)=0$. Let $X\colon V\subset\mathbb R^{n-1}
\to\partial\Omega_\ast$ be a local $C^{1,1}$ parametrization with $X(x'_0)=\xi$ (cf.~\eqref{eq:local-rep}), and set
\[
\widetilde\Psi\colon (0,\infty)\times V\to \R^n,\quad\widetilde\Psi(s,x')\vcentcolon = sX(x').
\]
Its differential satisfies
\[
D\widetilde\Psi(t,x'_0)[a,h]
=
a\xi+tDX(x'_0)h.
\]
Since $\mathbb R^n
=
\operatorname{span}\{\xi\}
\oplus T_\xi\partial\Omega_\ast$
this differential is invertible and hence
$\widetilde\Psi$ is a local $C^{1,1}$ diffeomorphism at $(t,x'_0)$. In these coordinates, \eqref{eq:gradient-radial-coordinates} becomes
\[
\nabla F\bigl(\widetilde\Psi(s,x')\bigr)
=
sA(X(x'))
=:\widetilde G(s,x').
\]
The map $\widetilde G$ is differentiable at $(t,x'_0)$ because $X$ is $C^{1,1}$ at $x'_0$, $\nu\circ X$ is differentiable at $x'_0$, and $\xi\cdot\nu(\xi)\geq r>0$. Therefore 
\[
\nabla F
=
\widetilde G\circ\widetilde\Psi^{-1}
\]
is differentiable at $t\xi=\widetilde\Psi(t,x'_0)$. Consequently, $F$ is twice differentiable at $t\xi$, as we wanted to see.

Finally, let $\gamma$ be the twice differentiable curve from Lemma~\ref{lem:second-order-boundary-geometry} passing through a point $\xi\in \partial\Omega_*\setminus N$ into the direction $\tau\in T_\xi \partial\Omega_*$. Since $F=\frac12$ on $\partial\Omega_\ast$, we have $F(\gamma(s))=\frac12$, for small $s$. Since $F$ is twice differentiable at $\xi$, differentiating this identity twice at $s=0$ gives
\[
0
=
D^2F(\xi)[\tau,\tau]
+
\nabla F(\xi)\cdot\gamma''(0).
\]
Using \eqref{eq:boundary-curve-second-order}, we obtain
\begin{equation}
\label{eq:lower-bound-tangential-second}
\begin{split}
D^2F(\xi)[\tau,\tau]
&=
-\frac{\nu(\xi)\cdot\gamma''(0)}
       {\xi\cdot\nu(\xi)}=
\frac{\mathrm{II}_\xi(\tau,\tau)}
     {\xi\cdot\nu(\xi)}\geq 
\frac{2\kappa_\Omega}{R}|\tau|^2,
\qquad
\tau\in T_\xi\partial\Omega_\ast.
\end{split}
\end{equation}
Moreover, differentiating Euler's identity \eqref{eq:Eulers-id} at $\xi\in \partial\Omega_*\setminus N$ gives 
\[
    D^2F(\xi)\xi=\nabla F(\xi).
\]
Hence, \eqref{eq:gradient-gauge} implies
\begin{equation}
\label{eq:second-ord-xi-xi}
D^2F(\xi)[\xi,\xi]=1,
\qquad
D^2F(\xi)[\xi,\tau]=0
\quad
\text{for }\tau\in T_{\xi}\partial \Omega_*.
\end{equation}
Now, let $v\in\mathbb R^n$ be an arbitrary vector. By \eqref{eq:gauge-transversality}, we have $\xi\notin T_\xi \partial\Omega_*$ and thus
\[
\mathbb R^n
=
\operatorname{span}\{\xi\}
\oplus
T_\xi\partial\Omega_\ast.
\]
Consequently, $v\in\R^n$ can be decomposed as
\begin{equation}
\label{eq:radial-tangential-decomposition}
v=a\xi+\tau,
\qquad
a\in\mathbb R,
\quad
\tau\in T_\xi\partial\Omega_\ast.
\end{equation}
More explicitly,
\[
a
=
\frac{v\cdot\nu(\xi)}
{\xi\cdot\nu(\xi)},
\qquad
\tau=v-a\xi.
\]
The transversality estimate
\eqref{eq:gauge-transversality} gives
\[
|a|
\leq
\frac{|v|}{r},
\qquad
|\tau|
\leq
|v|+|a||\xi|
\leq
\left(1+\frac Rr\right)|v|.
\]
Thus the decomposition is uniformly nondegenerate.

Using
\eqref{eq:lower-bound-tangential-second} and
\eqref{eq:second-ord-xi-xi}, we obtain
\begin{align*}
D^2F(\xi)[v,v]
&=
a^2D^2F(\xi)[\xi,\xi]
+2aD^2F(\xi)[\xi,\tau]
+D^2F(\xi)[\tau,\tau]\\
&\geq
a^2+\frac{2\kappa_\Omega}{R}|\tau|^2.
\end{align*}
Set
\[
\lambda_0
\vcentcolon = 
\min\left\{1,\frac{2\kappa_\Omega}{R}\right\}.
\]
Then
\[
D^2F(\xi)[v,v]
\geq
\lambda_0\bigl(a^2+|\tau|^2\bigr).
\]
On the other hand, since $|\xi|\leq R$,
\[
|v|
\leq
|a||\xi|+|\tau|
\leq
R|a|+|\tau|
\leq
\sqrt{R^2+1}\,
\bigl(a^2+|\tau|^2\bigr)^{1/2}.
\]
It follows that
\[
a^2+|\tau|^2
\geq
\frac{|v|^2}{R^2+1}.
\]
Consequently,
\begin{equation}
\label{eq:second-diff-a.e.-bdry-point}
D^2F(\xi)[v,v]
\geq
c|v|^2,
\qquad
c
\vcentcolon = 
\frac{1}{R^2+1}
\min\left\{1,\frac{2\kappa_\Omega}{R}\right\}>0,
\end{equation}
for every $v\in\mathbb R^n$ and every
$\xi\in\partial\Omega_\ast \setminus N$. 

Finally, since $\nabla F$ is positively homogeneous of degree one, we have
\[
\nabla F(tz)=t\nabla F(z),
\qquad t>0.
\]
Now we can differentiate this identity with respect to $z$ at $\xi\in \partial\Omega_*\setminus N$, which yields
\[
D^2F(t\xi)=D^2F(\xi).
\]
Therefore, \eqref{eq:second-diff-a.e.-bdry-point} implies
\[
D^2F(t\xi )\geq cI
\qquad\text{for all }t>0,\,\xi\in\partial\Omega_*\setminus N.
\]
The complement of these good rays is contained in the cone
generated by $N$, together with the origin. By
\eqref{eq:cone-gen-N}, this exceptional set has zero Lebesgue
measure. Consequently,
\[
D^2F\geq cI
\qquad\text{a.e. in }\mathbb R^n.
\]

Finally, let us set
\[
\rho(x)\vcentcolon =2F(x-x_*)-1=p(x-x_\ast)^2-1.
\]
We deduce from the properties of $F$ that $\rho$ belongs to $C^{1,1}(\mathbb R^n)$ and satisfies
\[
\Omega=\{\rho<0\},\qquad
\partial\Omega=\{\rho=0\},\qquad
D^2\rho\geq2cI
\quad\text{a.e. in }\mathbb R^n.
\]
This proves \ref{unif-convex-2}.
\end{proof}

    \bibliography{refs} 

@article{CNS,
  author  = {Caffarelli, Luis and Nirenberg, Louis and Spruck, Joel},
  title   = {The {D}irichlet problem for nonlinear second-order elliptic equations. {I}. {Monge--Amp{\`e}re} equation},
  journal = {Communications on Pure and Applied Mathematics},
  volume  = {37},
  number  = {3},
  year    = {1984},
  pages   = {369--402}
}

@misc{CarsteaGhosh,
  author        = {C{\^a}rstea, C{\u a}t{\u a}lin I. and Ghosh, Tuhin},
  title         = {An inverse source problem for the {Monge--Amp{\`e}re} equation from large boundary data},
  year          = {2026},
  eprint        = {2606.07064},
  archiveprefix = {arXiv},
  url           = {https://arxiv.org/abs/2606.07064},
  note          = {Preprint, arXiv:2606.07064}
}

@book{Figalli,
  author    = {Figalli, Alessio},
  title     = {The {M}onge--{A}mp{\`e}re {E}quation and {I}ts {A}pplications},
  series    = {Zurich Lectures in Advanced Mathematics},
  publisher = {European Mathematical Society},
  address   = {Z{\"u}rich},
  year      = {2017},
  doi       = {10.4171/170},
  url       = {https://doi.org/10.4171/170}
}

@book{Gutierrez,
  author    = {Guti{\'e}rrez, Cristian E.},
  title     = {The {Monge--Amp{\`e}re} Equation},
  edition   = {2},
  series    = {Progress in Nonlinear Differential Equations and Their Applications},
  volume    = {89},
  publisher = {Birkh{\"a}user},
  address   = {Cham},
  year      = {2016}
}

@book{Helgason,
  author    = {Helgason, Sigurdur},
  title     = {The Radon Transform},
  edition   = {2},
  series    = {Progress in Mathematics},
  volume    = {5},
  publisher = {Birkh{\"a}user},
  address   = {Boston},
  year      = {1999}
}

@misc{LiimatainenLin,
  author        = {Liimatainen, Tony and Lin, Yi-Hsuan},
  title         = {An inverse problem for the {Monge--Amp{\`e}re} equation},
  year          = {2025},
  eprint        = {2510.11572},
  archiveprefix = {arXiv},
  url           = {https://arxiv.org/abs/2510.11572},
  note          = {Preprint, arXiv:2510.11572}
}

@book{Rockafellar,
  author    = {Rockafellar, R. Tyrrell},
  title     = {Convex Analysis},
  series    = {Princeton Mathematical Series},
  volume    = {28},
  publisher = {Princeton University Press},
  year      = {1970}
}

@misc{UhlmannZimmermann,
  author        = {Uhlmann, Gunther and Zimmermann, Philipp},
  title         = {Inverse problems for the {Monge--Amp{\`e}re} equation: linearization and nonlinear recovery},
  year          = {2026},
  eprint        = {2606.13939},
  archiveprefix = {arXiv},
  url           = {https://arxiv.org/abs/2606.13939},
  note          = {Preprint; revised manuscript}
}

@book{isakov1990inverse,
  title={Inverse source problems},
  author={Isakov, Victor},
  number={34},
  year={1990},
  publisher={American Mathematical Soc.}
}

@book{aubin2012nonlinear,
  title={Nonlinear analysis on manifolds. {M}onge-{A}mp{\` e}re equations},
  author={Aubin, Thierry},
  volume={252},
  year={2012},
  publisher={Springer Science \& Business Media}
}

@book{le2024analysis,
  title={Analysis of {M}onge--{A}mp{\`e}re {E}quations},
  author={L{\^e}, Nam Q.},
  volume={240},
  year={2024},
  publisher={American Mathematical Society}
}

@book{GT,
  title={Elliptic partial differential equations of second order},
  author={Gilbarg, David and Trudinger, Neil S},
  volume={2},
  number={1},
  year={1998},
  publisher={Springer}
}

@book{evans1991measure,
  title={Measure theory and fine properties of functions},
  author={Evans, Lawrence Craig and Gariepy, Ronald F},
  volume={5},
  year={1991},
  publisher={CRC press}
}

@book{trudinger2015elliptic,
	title={Elliptic partial differential equations of second order},
	author={Gilbarg, David and Trudinger, Neil S},
	year={1983},
	publisher={Second edition, Springer}
}

@book {evans-mt,
	AUTHOR = {Evans, Lawrence C. and Gariepy, Ronald F.},
	TITLE = {Measure theory and fine properties of functions},
	SERIES = {Textbooks in Mathematics},
	EDITION = {Revised},
	PUBLISHER = {CRC Press, Boca Raton, FL},
	YEAR = {2015},
	PAGES = {xiv+299},
	ISBN = {978-1-4822-4238-6},
	MRCLASS = {28-01},
	MRNUMBER = {3409135},
}

@book{Brezis,
	AUTHOR = {Brezis, Haim},
	TITLE = {Functional analysis, {S}obolev spaces and partial differential
	equations},
	SERIES = {Universitext},
	PUBLISHER = {Springer, New York},
	YEAR = {2011},
	PAGES = {xiv+599},
	ISBN = {978-0-387-70913-0},
	MRCLASS = {35-01 (46-01 46E35 46N20 47F05)},
	MRNUMBER = {2759829},
	MRREVIEWER = {Vicen\c{t}iu D. R\u{a}dulescu},
}

@book{federer2014geometric,
  title={Geometric measure theory},
  author={Federer, Herbert},
  year={2014},
  publisher={Springer}
}

@article{AL0,
  author  = {Alexandrov, A. D.},
  title   = {Application of the theorem on domain invariance to
             existence proofs},
  journal = {Izvestiya Akademii Nauk SSSR},
  volume  = {3},
  year    = {1939},
  pages   = {243--256}
}

@article{AL1,
  author  = {Alexandrov, A. D.},
  title   = {Existence and uniqueness of a convex surface with a
             given integral curvature},
  journal = {C. R. (Doklady) Acad. Sci. URSS (N. S.)},
  volume  = {35},
  year    = {1942},
  pages   = {131--134}
}

@article{AL2,
  author  = {Alexandrov, A. D.},
  title   = {Smoothness of the convex surface of bounded
             {G}aussian curvature},
  journal = {C. R. (Doklady) Acad. Sci. URSS (N. S.)},
  volume  = {36},
  year    = {1942},
  pages   = {195--199}
}

@book{AL3,
  author    = {Alexandrov, A. D.},
  title     = {Convex Polyhedra},
  series    = {Springer Monographs in Mathematics},
  publisher = {Springer-Verlag},
  address   = {Berlin},
  year      = {2005},
  note      = {Translated from the 1950 Russian edition by
               N. S. Dairbekov, S. S. Kutateladze, and
               A. B. Sossinsky. With comments and bibliography by
               V. A. Zalgaller and appendices by L. A. Shor and
               Yu. A. Volkov}
}

@article{AL4,
  author  = {Alexandrov, A. D.},
  title   = {Dirichlet's problem for the equation
             {$\det\lVert z_{ij}\rVert
             =\varphi(z_1,\ldots,z_n,z,x_1,\ldots,x_n)$}, {I}},
  journal = {Vestnik Leningrad. Univ. Ser. Mat. Meh. Astr.},
  volume  = {13},
  number  = {1},
  year    = {1958},
  pages   = {5--24},
  note    = {In Russian}
}

@article{rockafellar1966characterization,
  title={Characterization of the subdifferentials of convex functions},
  author={Rockafellar, Ralph},
  journal={Pacific Journal of Mathematics},
  volume={17},
  number={3},
  pages={497--510},
  year={1966},
  publisher={Mathematical Sciences Publishers}
}

    \bibliographystyle{alpha}

\end{document}